\newtheorem{introtheorem}{Theorem}
\newtheorem{introprop}[introtheorem]{Proposition}
\newtheorem{introcorollary}[introtheorem]{Corollary}
\newtheorem*{theorem*}{Theorem}
\newtheorem{theorem}{Theorem}[section]
\newtheorem{proposition}[theorem]{Proposition}
\newtheorem{lemma}[theorem]{Lemma}
\newtheorem{corollary}[theorem]{Corollary}
\newtheorem{conjecture}[theorem]{Conjecture}
\newtheorem{question}[theorem]{Question}
\theoremstyle{definition}
\newtheorem{defn}[theorem]{Definition}
\theoremstyle{definition}
\newtheorem{remark}[theorem]{Remark}
\theoremstyle{definition}
\newtheorem{example}[theorem]{Example}
\theoremstyle{definition}
\DeclareMathOperator{\spec}{Spec}
\DeclareMathOperator{\h}{h}
\DeclareMathOperator{\an}{an}
\DeclareMathOperator{\ord}{ord}
\DeclareMathOperator{\can}{can}
\DeclareMathOperator{\trop}{trop}
\DeclareMathOperator{\vol}{vol}
\DeclareMathOperator{\MI}{MI}
\DeclareMathOperator{\stab}{stab}
\DeclareMathOperator{\Gal}{Gal}
\DeclareMathOperator{\lcm}{lcm}
\DeclareMathOperator{\tr}{tr}
\DeclareMathOperator{\m}{m}
\DeclareMathOperator{\cotrop}{cotrop}
\DeclareMathOperator{\Ron}{Ron}
\DeclareMathOperator{\K}{K}
\newcommand{\iu}{{\mathrm{i}\mkern1mu}}
\renewcommand{\div}{\operatorname{div}}
\newcommand{\longhookrightarrow}{\lhook\joinrel\longrightarrow}
\renewcommand{\and}{\quad \text{and} \quad}
\renewcommand{\Re}{\operatorname{Re}}
\renewcommand{\Im}{\operatorname{Im}}
\newcommand{\Ker}{\operatorname{Ker}}
\newcommand{\eu}{{\mathrm{e}}}
\newcommand{\canD}{{\overline{D}\vphantom{D}^{\can}}}
\newcommand{\ronD}{{\overline{D}\vphantom{D}^{\Ron}}}
\newcommand{\canOC}{ {\overline{O(1)}} \vphantom{{O}(1)}^{\can}}
\newcommand{\ronOC}{{\overline{{O}(1)}\vphantom{{O}(1)}^{\Ron}}}
\newcommand{\trOC}{{\overline{O_{X}}\vphantom{\overline{O_{X}}}^{\tr}}}
\newcommand{\trOcurve}{{\overline{O_{\mathscr{C}(\mathbb{C})}}\vphantom{\overline{O_{\mathscr{C}(\mathbb{C})}}}^{\tr}}}
\newcommand{\trOP}{{\overline{O_{\mathbb{P}^2(\mathbb{C})}}\vphantom{\overline{O_{X}}}^{\tr}}}
\newcommand{\canOZ}{\overline{\mathscr{O}(1)}\vphantom{\mathscr{O}(1)}^{\can}}
\newcommand{\ronOZ}{\overline{\mathscr{O}(1)}\vphantom{\mathscr{O}(1)}^{\Ron}}
\newcommand{\Gm}{\mathbb{G}_{\mathrm{m}}}
\newcommand{\ce}{\mathrm{c}}
\newcommand{\chern}{\mathrm{c}_{1}}
\begin{document} 

\title[Limit heights and special values]{Limit heights and special values of the Riemann zeta function}

\author[Gualdi]{Roberto~Gualdi}
\address{\hspace*{-6.3mm} Fakult\"at f\"ur Mathematik, Universit\"at Regensburg, 93040 Regensburg, Germany \vspace*{-2.8mm}}
\address{\hspace*{-6.3mm} {\it Email address:} {\tt roberto.gualdi@mathematik.uni-regensburg.de}}

\author[Sombra]{Mart{\'\i}n~Sombra}
\address{\hspace*{-6.3mm} ICREA, 
  08010 Barcelona, Spain \vspace*{-2.8mm}}
\address{ 
\hspace*{-6.3mm}  Departament de Matem\`atiques i
  Inform\`atica, Universitat de Barcelona,  08007
  Bar\-ce\-lo\-na, Spain \vspace*{-2.8mm}}
\address{\hspace*{-6.3mm} Centre de Recerca Matem\`atica, 08193 Bellaterra, Spain \vspace*{-2.8mm}}
\address{\hspace*{-6.3mm} {\it Email address:} {\tt sombra@ub.edu}}

\date{\today}
\subjclass[2020]{Primary 11G50; Secondary 11M06, 14G40.}
\keywords{Height of projective points, torsion points, metrized line bundles, Archimedean amoebas, Ronkin functions.}

\begin{abstract}
  We study the distribution of the height of the intersection between
  the projective line defined by the linear polynomial
  $x_{0}+x_{1}+x_{2}$ and its translate by a torsion point.  We show
  that for a strict sequence of torsion points, the corresponding
  heights converge to a real number that is a rational multiple of a
  quotient of special values of the Riemann zeta function. We also
  determine the range of these heights, characterize the extremal
  cases, and study their limit for sequences of torsion points that
  are strict in proper algebraic subgroups.

 In addition, we interpret our main result from the viewpoint of Arakelov
  geometry, showing that for a strict sequence of torsion points the
  limit of the corresponding heights coincides with an
  Arakelov height of the cycle of the projective plane over the
  integers defined by the same linear polynomial. This is a
  particular case of a conjectural asymptotic version of the
  arithmetic B\'ezout theorem.

  Using the interplay between arithmetic and convex objects from
  the Arakelov geometry of toric varieties, we show that this Arakelov
  height can be expressed as the mean of a piecewise linear function
  on the amoeba of the projective line, which  in turn can be computed
  as the aforementioned real number.
\end{abstract}

\thispagestyle{empty}

\maketitle

\vspace{-10mm}

\setcounter{tocdepth}{1}
\tableofcontents

\section*{Introduction}

Solving systems of polynomial equations in several variables is one of
the guiding problems of mathematics, and has motivated the rise of
linear algebra and of algebraic geometry, other than being applied to
disparate areas across the sciences.

While solving a given system of polynomial equations is a business of
algorithmic research, a more theoretical approach is
concerned with obtaining the finest possible information about its
solution set in terms of data that can be read directly from the
system.  Basically, this perspective aims at understanding the complexity
of the solution set in terms of the complexity of the system.

From this point of view, the prototypical result is the classical
B\'ezout's theorem, asserting that for a generic system of $n$-many
homogeneous polynomials in $n+1$ variables, the cardinality of its
solution set in the $n$-dimensional projective space equals the
product of the degrees of these polynomials.

For systems whose coefficients are algebraic numbers we can consider
not only the geometric complexity of its solution set given by its
cardinality, but also its arithmetic complexity. This latter is
usually defined as the maximal bit-length of the integers in a
representation of the solution set and can be measured in terms
of its height, see for instance \cref{rem:4}.

In this context, it is then natural to ask whether the height of the
solution set can be predicted from the arithmetic complexity of the
system, measured for example in terms of the height of the involved
polynomials. The goal of this article is to investigate this question
through the study of an explicit example.

\vspace{\baselineskip}

Let us now set our playground more precisely.  Let
$\mathbb{P}^{2}(\overline{\mathbb{Q}})$ be the projective plane over
the algebraic closure of the field of rational numbers and consider
its (canonical) height function, that is the real-valued
function
\begin{displaymath}
  \h\colon \mathbb{P}^{2}(\overline{\mathbb{Q}})\longrightarrow \mathbb{R}
\end{displaymath}
introduced and studied by Northcott and Weil \cite{Northcott, Weil2},
see also \cref{sec:preliminaries}.  For $\omega_1,\omega_2$ varying in
the set of roots of
unity~$\mu_{\infty} \subset \overline{\mathbb{Q}}$, put
$\omega=(\omega_1,\omega_2)$ and consider the system of linear
equations on $\mathbb{P}^{2}(\overline{\mathbb{Q}})$ given as
\begin{equation}\label{eq: our system in the introduction} 
  x_{0}+x_{1}+x_{2}=x_{0}+ \omega_1^{-1}x_{1}+\omega_2^{-1}x_{2}=0.
\end{equation}
Apart from the degenerate case~$\omega_1=\omega_2=1$ which we exclude
throughout, its solution set consists of a single point, that we
denote by~$P(\omega)$.

From a geometrical perspective, note that the (algebraic) torus
  $\Gm^{2}(\overline{\mathbb{Q}})=(\overline{\mathbb{Q}}^{\times})^{2}$
  has a standard action on~$\mathbb{P}^2(\overline{\mathbb{Q}})$ as
  explained in \eqref{eq:30}, and that the two equations in~\eqref{eq:
    our system in the introduction} correspond respectively to the
  projective line
  $C=Z(x_0+x_1+x_2) \subset \mathbb{P}^{2}(\overline{\mathbb{Q}})$ and
  to its translate $\omega\, C$ by the torsion point $\omega$ of this
  torus.  The intersection of these two lines coincides with the
  solution set of the system of equations in \eqref{eq: our system in
    the introduction} and hence, when~$\omega$ is nontrivial,
\[
C\cap \omega\, C=\{P(\omega)\}.
\]

Our aim is to understand how the height of the intersection
$C\cap \omega\, C$, or equivalently that of the point $P(\omega)$,
depends on the choice~of~$\omega$. The first observation is that one
cannot determine it from the usual measures for the complexity of the
system. Indeed any such measure, like the degrees or the Newton
polytopes of the defining polynomials, their Mahler measures or any
norm depending only on the absolute values of the coefficients, is
constant as $\omega$ varies. On the contrary, different choices of the
torsion point $\omega$ can produce projective points $P(\omega)$ with
very different heights \cite[Example~5.1.1]{GualdiThesis}, see also
\cref{ex: height of some solutions} or \cref{fig:figure in
  introduction} in this introduction.

In view of this situation, we turn to the study of the distribution of
the height of~$P(\omega)$, starting by determining its range of
values.

\begin{introprop}[\cref{prop: bounds for the height}]
  \label{introprop: bounds for height}
  Let  $\omega \in \Gm^{2}(\overline{\mathbb{Q}})$ be a  nontrivial torsion point. Then
    \begin{displaymath}
    0\leq\h(C\cap \omega\, C)\leq\log(2).
  \end{displaymath}
\end{introprop}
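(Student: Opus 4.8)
The plan is to compute the height of the point $P(\omega)$ explicitly from the system \eqref{eq: our system in the introduction} and then bound it using basic properties of local heights and Mahler measures. First I would solve the linear system: Cramer's rule gives homogeneous coordinates for $P(\omega)$ as the $2\times 2$ minors of the coefficient matrix $\left(\begin{smallmatrix} 1 & 1 & 1 \\ 1 & \omega_1^{-1} & \omega_2^{-1}\end{smallmatrix}\right)$, namely
\[
P(\omega) = \bigl[\,\omega_2^{-1}-\omega_1^{-1}\;:\;1-\omega_2^{-1}\;:\;\omega_1^{-1}-1\,\bigr].
\]
Multiplying through by $\omega_1\omega_2$ (a root of unity, hence of height $0$ and trivial at every place), we may equally take the integral-looking coordinates $P(\omega) = [\,\omega_1-\omega_2 : \omega_2(\omega_1-1) : \omega_1(1-\omega_2)\,]$. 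The canonical height of a projective point is the sum over all places $v$ of a number field $K$ containing $\omega_1,\omega_2$ of the local contributions $\frac{1}{[K:\mathbb{Q}]}\sum_{v}\log\max_i\lvert x_i\rvert_v$, suitably normalized, so everything reduces to estimating $\log\max(\lvert\omega_1-\omega_2\rvert_v,\lvert\omega_2(\omega_1-1)\rvert_v,\lvert\omega_1(1-\omega_2)\rvert_v)$ place by place.

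For the lower bound $\h(C\cap\omega\,C)\ge 0$, I would simply invoke the general fact that the canonical (Weil) height on projective space is nonnegative — it is a sum of local heights each of which, for the standard metric, is nonnegative; equality to $0$ would say $P(\omega)$ is a torsion point, which is exactly the degenerate behaviour one should keep track of. For the upper bound, the idea is that at every non-archimedean place $v$ all the $\omega_i$ are $v$-adic units, so $\lvert\omega_2(\omega_1-1)\rvert_v = \lvert\omega_1-1\rvert_v \le 1$ and likewise for the other two coordinates, giving $\log\max_i\lvert x_i\rvert_v \le 0$; in fact by the ultrametric inequality and the fact that the gcd-type coordinate $\omega_1-\omega_2$, $\omega_1-1$, $\omega_2-1$ cannot all be simultaneously small (their pairwise differences recover each other), one sees the non-archimedean local heights contribute at most $0$ and in aggregate something controlled. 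At the archimedean places one has the crude bound $\lvert\omega_1-1\rvert \le 2$, $\lvert\omega_2-1\rvert\le 2$, $\lvert\omega_1-\omega_2\rvert\le 2$ since each $\omega_i$ lies on the unit circle, so $\log\max_i\lvert x_i\rvert_v \le \log 2$ at each archimedean place; summing with the correct normalization (the archimedean local heights, appropriately weighted, average to at most $\log 2$) yields $\h(C\cap\omega\,C)\le\log 2$.

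The cleanest way to package this is probably to recognize $\h(P(\omega))$ as a sum of local Mahler-type measures or heights of the $2\times 2$ minors and to split $\h = \sum_{v\nmid\infty}\h_v + \sum_{v\mid\infty}\h_v$, bounding the finite part by $0$ and the infinite part by $\log 2$; one could also phrase the archimedean estimate as saying that the Ronkin/amoeba data of the line $C$ — which reappears later in the paper — is bounded by $\log 2$, consistent with the global picture. The main obstacle I anticipate is the non-archimedean bookkeeping: one must check carefully that, after clearing the root-of-unity denominators, the resulting coordinates have no common non-archimedean factor forcing a positive contribution that could push the total above $\log 2$ — equivalently, that the content of the coordinate vector is trivial at each finite place. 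This is where the specific structure of the minors of a matrix whose entries are $1$ and roots of unity is essential, and handling it uniformly in $\omega$ (rather than case by case) is the crux. The extremal cases — when is $\h(C\cap\omega\,C)$ equal to $0$ or to $\log 2$ — should then fall out of tracking equality in these estimates, but that refinement presumably belongs to a later statement in the paper rather than to this proposition.
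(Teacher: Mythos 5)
Your proposal is correct and follows essentially the same route as the paper: write the height of the explicit point $P(\omega)$ as a sum of local heights of a fixed coordinate vector, bound every non-Archimedean local term by $0$ via the ultrametric inequality (the coordinates are differences of roots of unity, hence of $v$-adic absolute value at most $1$), bound the Archimedean term by $\log 2$ via the triangle inequality on the unit circle, and obtain the lower bound from the standard nonnegativity of the canonical height. Two side remarks in your write-up are inaccurate, though neither harms the argument: individual local heights of a coordinate vector need not be nonnegative (only the global height is, e.g.\ by the product formula; indeed the paper later computes strictly negative non-Archimedean local heights when $\ord(\omega)$ is a prime power), and the ``content'' issue you single out as the main obstacle is illusory — a common non-Archimedean factor of the coordinates makes that local term more negative, never positive, so it cannot push the total above $\log 2$, and since the height does not depend on the chosen homogeneous coordinates no such bookkeeping is required.
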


More precisely, we show that these bounds are sharp and characterize
the cases in which each of them is attained. These results
follow from an explicit expression for the projective
point~$P(\omega)$, the basic properties of the height, and a formula
for the value of a cyclotomic polynomial evaluated at~$1$.

\vspace{\baselineskip}

Having established the range of these height values, the next question
is to understand how they distribute within the
interval~$[0,\log(2)]$.  We can gain insight into it by means of
numerical experimentation, as we detail in
\cref{sec:visualizing-result} and in the accompanying {\tt SageMath}
notebook~\cite{GS_Notebook}.

In practice, let $d$ be a positive integer and denote by $\mu_{d}$ the
set of $d$-roots of unity, so that $\mu_{d}^{2}$ coincides with the
set of $d$-torsion points of~$\Gm^{2}(\overline{\mathbb{Q}})$.
Subdividing the unit square into $d^2$-many cells, we can assign to
each of them a $d$-torsion point $\omega\in \mu_{d}^{2}$ and,
excluding the case when $\omega$ is trivial, color it with a tone of
gray that is as dark as the height of~$P(\omega)$ is large within the
range~$[0,\log(2)]$.  As shown in \cref{fig:figure in introduction},
these tones distribute on the square without following any clear
pattern.
  
\begin{figure}[!htbp] 
  \includegraphics[scale=0.3]{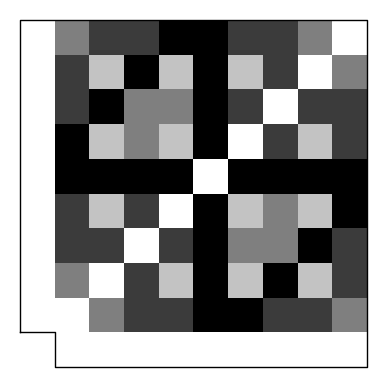} \quad\quad  \includegraphics[scale=0.3]{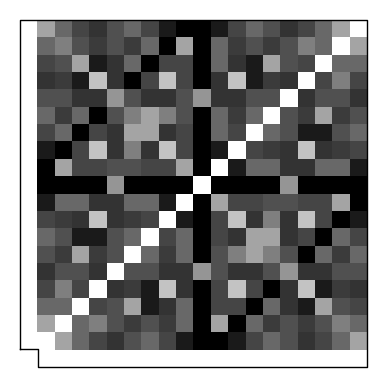}\quad\quad  \includegraphics[scale=0.3]{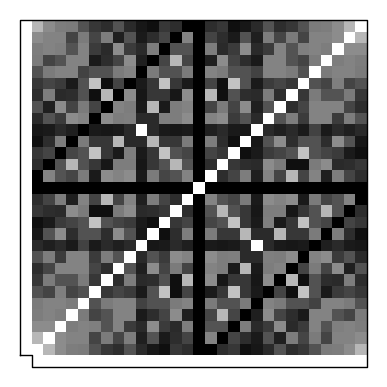}\quad\quad  \includegraphics[scale=0.3]{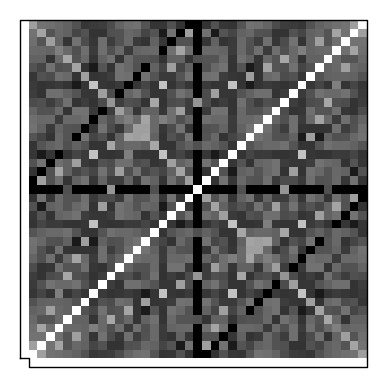}
  \caption{Heights associated to nontrivial $d$-torsion points for $d=10,20,30,40$}
\label{fig:figure in introduction}
\end{figure}

However, a careful look at these images reveals an interesting
phenomenon: as $d$ grows bigger, most of the cells get colored with a
very similar tone of gray, suggesting that most of the corresponding
heights are close to a precise real number. This becomes even more
evident for higher values of~$d$, such as those in \cref{fig:3,fig:4}.

Guided by this intuition, we focus on the study of the height
of~$P(\omega)$ for a generic torsion point~$\omega$, that is for
$\omega$ varying in a generic sequence of torsion points. Because of
the former toric Manin--Mumford conjecture, the latter is nothing else
but a \emph{strict} sequence, that is a sequence which eventually
avoids any fixed proper algebraic subgroup
of~$\Gm^{2}(\overline{\mathbb{Q}})$. The following is the main result
of the article.

\begin{introtheorem}[\cref{thm:1}]
  \label{introthm: main thm}
  Let $(\omega_{\ell})_{\ell\ge 1}$ be a strict sequence in  $ \Gm^{2}(\overline{\mathbb{Q}})$ of   nontrivial torsion points. Then
  \begin{displaymath}
\lim_{\ell\to +\infty}    \h( C\cap \omega_{\ell} C)=\frac{2\, \zeta(3)}{3\, \zeta(2)}=0.487175\dots,
  \end{displaymath}
where $\zeta$ is the Riemann zeta function.
\end{introtheorem}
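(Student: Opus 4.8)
The plan is to reduce the limit computation to an equidistribution statement for torsion points together with an explicit formula for the height of $P(\omega)$. First I would derive a closed-form expression for the projective point $P(\omega)$: solving the linear system \eqref{eq: our system in the introduction} by Cramer's rule, one gets $P(\omega) = [\,(1-\omega_1)(1-\omega_2)\,:\,-(1-\omega_2)\omega_1\cdot(\text{something})\,:\,\dots\,]$, i.e. the coordinates of $P(\omega)$ are (up to sign and common factors) products of the quantities $1-\omega_1$, $1-\omega_2$, $1-\omega_1\omega_2^{-1}$ (or their cyclic analogues). Then, since the canonical height of a projective point with algebraic coordinates is $\h(P(\omega)) = \sum_{v} \frac{1}{[K:\mathbb Q]}\sum_{w\mid v} [K_w:\mathbb Q_v]\,\log\max_i |x_i|_w$ after clearing denominators, and the non-archimedean contributions will cancel because each coordinate is an algebraic integer with the product of coordinates a unit-like cyclotomic quantity (here the formula $\Phi_n(1)$ being a prime or $1$ enters, exactly as invoked for \cref{introprop: bounds for height}), the height collapses to a sum over the archimedean places of $\log\max_i |x_i|_\sigma$ averaged over embeddings $\sigma$.

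Next I would rewrite that archimedean average. Writing $\omega_j = e^{2\pi i t_j}$ with $t_j \in \mathbb Q/\mathbb Z$, running over all conjugates of $\omega$ amounts to running $(t_1,t_2)$ over a Galois orbit in $(\mathbb Q/\mathbb Z)^2$, and $\log\max_i|x_i|_\sigma$ becomes a fixed continuous (indeed, explicitly piecewise-smooth) function $F\colon (\mathbb R/\mathbb Z)^2 \to \mathbb R$ of $(t_1,t_2)$, namely $F(t) = \max$ of the logarithms of the absolute values of the three bilinear-in-$(1-e^{2\pi i t_j})$ expressions. Thus
\[
\h(C\cap\omega_\ell C) \;=\; \frac{1}{\#O(\omega_\ell)}\sum_{\sigma\in O(\omega_\ell)} F\bigl(t(\sigma)\bigr),
\]
an average of $F$ over the Galois orbit of a torsion point. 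Because $(\omega_\ell)$ is strict, the classical equidistribution theorem for Galois orbits of torsion points (a consequence of the toric Manin--Mumford / Bilu-type equidistribution, valid here since $F$ is bounded and Riemann-integrable with the singularities of $\log|1-e^{2\pi i t}|$ being mild — integrable logarithmic poles) gives
\[
\lim_{\ell\to\infty}\h(C\cap\omega_\ell C) \;=\; \int_{(\mathbb R/\mathbb Z)^2} F(t_1,t_2)\,dt_1\,dt_2.
\]

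The final step is the evaluation of this integral. Here one uses that $F$ is a maximum of three explicit functions and partitions the torus $(\mathbb R/\mathbb Z)^2$ into the regions where each of the three is dominant; on each region $F$ is a sum of terms of the form $\log|1-e^{2\pi i s}|$ for various linear forms $s$ in $t_1,t_2$. Integrating such terms over sub-polytopes of the torus produces dilogarithm / polylogarithm values, and after collecting terms the polylogarithmic special values organize into $\zeta(3)$ and $\zeta(2)$; more structurally, the Arakelov-geometric interpretation in the paper identifies $\int F$ with the mean of the roof (Ronkin-type piecewise linear) function over the amoeba of $C$, whose computation via toric Arakelov geometry yields exactly $\tfrac{2\zeta(3)}{3\zeta(2)}$.

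I expect the \textbf{main obstacle} to be twofold: first, justifying the passage to the limit despite $F$ having logarithmic singularities (one needs the equidistribution statement to apply to this particular unbounded-below but integrable test function, which requires either a truncation argument controlling the mass near the singular locus uniformly along strict sequences, or appealing to the stronger equidistribution with respect to functions with log-singularities); and second, the explicit integration producing the zeta values — carving the torus into the correct chambers where one of the three coordinates dominates and carefully bookkeeping the resulting Bloch--Wigner / trilogarithm contributions so that everything miraculously reduces to $\zeta(3)/\zeta(2)$. The Arakelov-theoretic route sketched in the abstract is likely the cleaner way to organize this last computation, replacing a brute-force integral by the evaluation of an arithmetic intersection number on $\mathbb P^2_{\mathbb Z}$.
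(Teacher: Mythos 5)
Your overall strategy is the same as the paper's: split the height of $P(\omega)$ into local terms over a Galois orbit, show the non-Archimedean part is asymptotically negligible, apply equidistribution of torsion orbits to the Archimedean term, and evaluate the resulting integral. However, two of your steps have genuine problems. First, your claim that the non-Archimedean contributions \emph{cancel} (so that the height ``collapses'' to the Archimedean average) is false: by \cref{prop: explicit non-Archimedean height} and \cref{cor:1}, the sum of the finite-place local heights equals $-\Lambda(\ord\omega)/\varphi(\ord\omega)$, which is strictly negative whenever $\ord(\omega)$ is a prime power (e.g.\ it is $-\log(p)/(p-1)$ for $\omega=(\zeta,1)$ with $\zeta\in\mu_p^\circ$). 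The correct statement, and the one actually needed, is that this quantity tends to $0$ along any sequence whose order diverges (\cref{cor:2}); strictness guarantees the divergence of the orders. Your own invocation of $\Phi_d(1)$ contains the germ of this argument, but as written the step asserts an exact vanishing that does not hold, and without the asymptotic replacement the reduction to the Archimedean place is unjustified. (Your handling of the log-singularity in the equidistribution step is fine as a sketch: citing the logarithmic equidistribution theorems of Chambert-Loir--Thuillier or Dimitrov--Habegger, or a truncation argument, is exactly the fork the paper mentions before giving its elementary sandwich argument in \cref{prop:1}.)

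Second, the evaluation of the limit integral --- which is the actual content of the numerical value $2\zeta(3)/(3\zeta(2))$ --- is not carried out. Saying that a chamber decomposition produces dilogarithm/trilogarithm contributions that ``organize into $\zeta(3)$ and $\zeta(2)$'' is an assertion, not a proof; and deferring to the Arakelov-geometric route replaces the computation by the entire machinery of Part II (Ronkin metrics, the metric Weil reciprocity law, the toric dictionary, and the Passare--Rullg{\aa}rd description of the Monge--Amp\`ere measure of the Ronkin function), which is a much heavier and logically independent argument, not a completion of the elementary one you set up. The paper closes this gap concretely: it exploits a $12$-element symmetry group of the torus with fundamental triangle $\overline D$ on which the integrand reduces to $\log|\eu^{\iu u_1}-1|$ (\cref{prop:4}), and then computes two one-variable integrals by Fourier expansion, namely $\int_0^\pi s\log|\eu^{\iu s}-1|\,ds=\tfrac74\zeta(3)$ and $\int_\pi^{4\pi/3}(4\pi-3s)\log|\eu^{\iu s}-1|\,ds=\tfrac{11}{12}\zeta(3)$ (\cref{lem:3}), from which $\zeta(2)=\pi^2/6$ gives the stated value (\cref{prop:2}). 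Some such explicit computation (or a complete execution of the Arakelov route) is indispensable for the theorem as stated.
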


This appearance of the Riemann zeta function is intriguing.  However,
there are already several known relations between heights and special
values of $L$-functions. Just to cite a few, akin links are given by
the Gross--Zagier formula for the height of Heegner points on modular
curves~\cite{GrossZagier} and by the fulfillment of similar properties
like the Northcott one~\cite{PP}. Moreover, such connections have
motivated far-reaching conjectures, like that by Colmez~\cite{Colmez}
and its generalization by Maillot and
R\"ossler~\cite{MaillotRoessler}.

Closer to our setting, the height of projective points can be extended
to projective subvarieties. In the particular case of a projective
hypersurface defined over $\mathbb{Q}$, this notion coincides with the
logarithmic Mahler measure of a primitive defining
polynomial~\cite{DP, Mail}, and there is an active line of research
relating Mahler measures with special values of~$L$-functions, see for
instance~\cite{BZ}.  For example, it follows from a classical result
of Smyth~\cite{Smyth} that the height of the projective line $C$ can
be computed as
\begin{displaymath}
  \h(C)=     \frac{3\sqrt{3}}{4\pi} L(\chi_{-3},2)= 0.323065\dots
\end{displaymath}
for the $L$-function corresponding to the odd Dirichlet character
modulo~$3$. Indeed, this value also shows up in our investigations as
the limit of the height of $P(\omega)$ for~$\omega$ varying in a
strict sequence of torsion points of a certain $1$-dimensional
algebraic subtorus (\cref{exm:3}) and as the conjectural minimal
accumulation value of the set of heights of~$P(\omega)$ for
$\omega \in \mu_{\infty}^{2}$ (\cref{conj:1}).

A formal argument allows to deduce from \cref{introthm: main thm} the
following result, which justifies our previous observation from the
numerical experiments.

\begin{introcorollary}[\cref{cor:3}]
  \label{introcorollary: cor average}
For each $\varepsilon >0$ we have that
\begin{displaymath}
  \lim_{d\to +\infty}  \frac{1}{d^{2}-1} \, \#\Big\{\omega\in \mu_{d}^{2}\setminus\{(1,1)\} \, \Big| \ \Big|\h(C\cap \omega C)
  -\frac{2\, \zeta(3)}{3\, \zeta(2)}\Big|<\varepsilon \Big\} =1.
\end{displaymath}
\end{introcorollary}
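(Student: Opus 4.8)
The plan is to deduce Corollary (\cref{cor:3}) from Theorem (\cref{thm:1}) by a standard measure-theoretic / pigeonhole argument: the main theorem says that along \emph{every} strict sequence of nontrivial torsion points the heights converge to $L:=\frac{2\zeta(3)}{3\zeta(2)}$, and we want to upgrade this to the statement that the \emph{proportion} of $d$-torsion points whose height is $\varepsilon$-close to $L$ tends to $1$. The key observation is that ``strictness'' of a sequence is equivalent to equidistribution-type avoidance of proper algebraic subgroups, and that a proper algebraic subgroup of $\Gm^2(\overline{\mathbb{Q}})$ is cut out by a single character $\chi_{(a,b)}\colon(t_1,t_2)\mapsto t_1^a t_2^b$; there are countably many of these. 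So I would argue by contradiction.

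\medskip

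First, suppose the conclusion fails. Then there exist $\varepsilon>0$ and $\delta>0$ and an infinite set of degrees $d_1<d_2<\cdots$ such that for each $k$,
\begin{displaymath}
  \frac{1}{d_k^2-1}\,\#\Big\{\omega\in\mu_{d_k}^2\setminus\{(1,1)\}\ \Big|\ \big|\h(C\cap\omega\, C)-L\big|\ge\varepsilon\Big\}\ \ge\ \delta.
\end{displaymath}
For each $k$ let $S_k\subset\mu_{d_k}^2\setminus\{(1,1)\}$ be this ``bad'' set, so $\#S_k\ge\delta(d_k^2-1)$. The second step is to extract from the $S_k$ a single strict sequence of torsion points whose heights all stay at distance $\ge\varepsilon$ from $L$, contradicting \cref{thm:1}. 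To do this, enumerate the proper algebraic subgroups $H_1,H_2,\dots$ of $\Gm^2(\overline{\mathbb{Q}})$ (equivalently, the nonzero integer vectors $(a,b)$ up to scaling). For a fixed $j$, the number of $d$-torsion points lying on $H_1\cup\cdots\cup H_j$ is $O_j(d)$ — each $H_i$ is a translate-free subtorus of dimension $\le 1$, so it contains $O(d)$ points of $\mu_d^2$ — hence $\#\big(S_k\setminus(H_1\cup\cdots\cup H_j)\big)\ge\delta(d_k^2-1)-O_j(d_k)>0$ for $k$ large. Therefore, by a diagonal argument, we can pick $\omega^{(k)}\in S_k$ avoiding $H_1\cup\cdots\cup H_{j(k)}$ for some $j(k)\to\infty$; any such sequence $(\omega^{(k)})_k$ is strict. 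By construction $|\h(C\cap\omega^{(k)}C)-L|\ge\varepsilon$ for all $k$, so $\h(C\cap\omega^{(k)}C)\not\to L$, contradicting \cref{introthm: main thm}. (A cosmetic point: one should also check that all but finitely many $\omega^{(k)}$ are nontrivial, which is automatic since $(1,1)$ was excluded from each $S_k$.)

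\medskip

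\textbf{The main obstacle} I expect is making precise and uniform the count ``$H_i\cap\mu_d^2$ has $O(d)$ elements'' and managing the dependence of the implied constant on $i$ so that the diagonal extraction genuinely produces a strict sequence. The cleanest way is to note that if $H=Z(t_1^a t_2^b-1)^{\circ}$ (connected component) with $(a,b)$ primitive, then $H\cap\mu_d^2$ is contained in the kernel of a nontrivial character on $(\mathbb{Z}/d)^2$, hence has exactly $d\cdot\gcd$-type size, in any case $\le Cd$ with $C$ depending only on $(a,b)$; summing over $i\le j$ gives a bound $C_j d$. Since for each fixed $j$ we have $C_j d_k = o(d_k^2)$, the inequality $\delta(d_k^2-1)>C_j d_k$ holds once $k\ge k_0(j)$, and choosing $j(k)$ to be the largest $j$ with $k\ge k_0(j)$ (which tends to $\infty$) makes the argument go through. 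Everything else is bookkeeping: the only nontrivial input is \cref{introthm: main thm} itself, and the passage from it to the corollary is purely formal, as the text already advertises (``A formal argument allows to deduce\dots'').
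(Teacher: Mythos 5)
Your argument is correct and is essentially the paper's own proof: the paper packages the same contradiction-plus-diagonal-extraction argument as a general transfer principle (\cref{lem:4}) for strict sequences of finite subsets, proves \cref{thm:2}, and then specializes to $\mu_d^2$ via the count in \cref{exm:1} (which is exactly your $\#(\Ker\chi^a\cap\mu_d^2)=d\,\gcd(a_1,a_2,d)=O_a(d)$ bound). The only difference is that you inline the argument for the sets $\mu_d^2\setminus\{(1,1)\}$ rather than stating it for arbitrary strict sequences of finite subsets, which changes nothing of substance.
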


Loosely speaking, it asserts that the \emph{typical value}
of the heights corresponding to $d$-torsion points is the mentioned
rational multiple of a quotient of special values of the Riemann zeta
function. Actually, this property holds in greater generality for strict
sequences of finite subsets of torsion points (\cref{thm:2}).

\vspace{\baselineskip}

Let us now outline the proof of \cref{introthm: main thm}.  First we
write the height corresponding to a nontrivial torsion point $\omega =(\omega_1,\omega_2)$
of order~$d$ as the sum
\begin{equation}
  \label{eq:32}
\h(P(\omega))=\sum_{v}\frac{1}{\varphi(d)} \hspace{-1mm} \sum_{k \in (\mathbb{Z}/d\, \mathbb{Z})^{\times}} \hspace{-3mm} \log \max ( 
  |\iota_{v}(\omega_{2}^{k})-\iota_{v}(\omega_{1}^{k})|_{v}, |\iota_{v}(\omega_{2}^{k})-1|_{v},  |\iota_{v}(\omega_{1}^{k})-1|_{v}).
\end{equation}
Here $(\mathbb{Z}/d\, \mathbb{Z})^{\times}$ denotes the group of
modular units and $\varphi$ the Euler totient function, whereas $v$
ranges over the set of places of $\mathbb{Q}$ and
$\iota_{v}\colon \overline{\mathbb{Q}}\hookrightarrow \mathbb{C}_{v}$
is any embedding of $\overline{\mathbb{Q}}$ into the algebraically
closed complete field of $v$-adic~numbers.

In slightly more sophisticated words, the summand corresponding to a
place $v$ in the previous height formula can be viewed as the mean
over the $v$-adic Galois orbit of~$\omega$ of a certain function on
the $v$-adic torus
$\Gm^{2}(\mathbb{C}_{v})=(\mathbb{C}_{v}^{\times})^{2}$ having a
logarithmic singularity at the point~$(1,1)$.  Hence to compute the
limit of this height for $\omega$ going over a strict sequence of
torsion points, we need to prove an equidistribution result for an
adelic family of functions with logarithmic singularities, with a
simultaneous control of all the $v$-adic summands.

In our concrete situation, we achieve this by following an elementary
approach.  On the one hand, we show that the sum of the
non-Archimedean summands in~\eqref{eq:32} can be computed in terms of
the von Mangoldt and the Euler totient functions (\cref{cor:1}). This
implies that for a sequence of torsion points with diverging order,
the non-Archimedean contribution to the height converges to~$0$
(\cref{cor:2}).

On the other hand, the convergence of the Archimedean summand can be
deduced from either the logarithmic equidistribution theorem of
Chambert-Loir and Thuillier~\cite{CLT} or from that of Dimitrov and
Habegger \cite{DimitrovHabegger}. However, it is also possible to
proceed more directly and deduce this convergence from basic results
about cyclotomic polynomials and the standard equidistribution of
Galois orbits of torsion points of tori (\cref{prop:1}).  In any case,
we obtain that for a strict sequence of torsion points, the
Archimedean summand in \eqref{eq:32} tends to the integral
\[
\frac{1}{(2 \pi)^{2}} \int_{(\mathbb{R}/2 \pi \mathbb{Z})^{2}} \log \max (|\eu^{\iu u_{2}}-\eu^{\iu u_{1}}|,|\eu^{\iu u_{2}}-1|,|\eu^{\iu u_{1}}-1|)\, du_{1} du_{2}.
\]
Taking advantage of the symmetries of the integrand, we compute it as the stated quotient involving special values of the Riemann zeta function (\cref{prop:2}), thus completing the proof.

The results presented so far represent the content of
\hyperref[part 1]{Part I} of the article.  The approach therein is
explicit and the arguments employed are as self-contained as
possible. This whole part requires very little background, with the
purpose of making it accessible to non-experts.

\vspace{\baselineskip}

In \hyperref[part 2]{Part II} we raise the technical level of the
exposition to present an interpretation of \cref{introthm: main thm}
from the viewpoint of Arakelov geometry, as developed by Gillet and
Soul\'e \cite{GilletSoule} and extended by Maillot~\cite{Mail}.  This
allows to recover it in a more intrinsic way through the
interplay between arithmetic and convex objects from the Arakelov
geometry of toric varieties, studied by Burgos
Gil, Philippon and the second named author~\cite{BPS} and by the first
named author~\cite{Gualdi}.

Arakelov geometry provides a vast generalization of the notion of
height, from projective points to cycles of an arithmetic variety
equipped with a family of metrized line bundles. In this context, the
height of a point of $\mathbb{P}^{2}(\overline{\mathbb{Q}})$ with
rational coordinates coincides with the (Arakelov) height of its
associated $1$-dimensional subscheme of the projective plane over the
integers ~$\mathbb{P}^{2}_{\mathbb{Z}}$ with respect to the canonical
metrized line bundle~$\canOZ$.

Another distinguished metrized line bundle
on~$\mathbb{P}^{2}_{\mathbb{Z}}$ is the Ronkin metrized line bundle
$\ronOZ$, constructed from the Ronkin function of~$x_0+x_1+x_2$. It is
relevant in the study of the Arakelov geometry of the hypersurface
$\mathscr{C} $ of $\mathbb{P}^{2}_{\mathbb{Z}}$ defined by this linear
polynomial~\cite{Gualdi}.

Since these two metrized line bundles are semipositive, arithmetic
intersection theory allows to define the  height
\begin{equation}
  \label{eq:36}
\h_{\canOZ,\,\ronOZ}(\mathscr{C}) \in \mathbb{R}.
\end{equation}
As a consequence of the results in \hyperref[part 1]{Part I} and the
metric Weil reciprocity law (\cref{prop: metric Weil reciprocity}),
we can show that the limit value in \cref{introthm: main thm}
coincides with this height.

\begin{introtheorem}[\cref{thm: limit is Ronkin height}]
  \label{introthm: thm2}
  Let $(\omega_{\ell})_{\ell\ge 1}$ be a strict sequence in  $ \Gm^{2}(\overline{\mathbb{Q}})$ of   nontrivial torsion points. Then
\[
  \lim_{\ell\to
    +\infty}\h(C\cap \omega_{\ell}C)=\h_{\canOZ,\,\ronOZ}(\mathscr{C}).
\]
\end{introtheorem}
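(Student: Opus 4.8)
The plan is to compute the Arakelov height $\h_{\canOZ,\,\ronOZ}(\mathscr{C})$ directly and match it with the limit value $\frac{2\zeta(3)}{3\zeta(2)}$ furnished by \cref{introthm: main thm}, rather than trying to compare the two sides term by term along the sequence $(\omega_\ell)_\ell$. By \cref{introthm: main thm} the left-hand limit already equals $\frac{2\zeta(3)}{3\zeta(2)}$, so it suffices to prove the single identity
\[
\h_{\canOZ,\,\ronOZ}(\mathscr{C})=\frac{2\,\zeta(3)}{3\,\zeta(2)}.
\]
First I would unwind the definition of the mixed arithmetic intersection number in \eqref{eq:36}. Since $\canOZ$ and $\ronOZ$ agree on all finite fibres of $\mathbb{P}^2_{\mathbb{Z}}$ (both restrict to the standard bundle $\mathscr{O}(1)$ with a metric that differs from the canonical one only at the Archimedean place), the non-Archimedean local heights of $\mathscr{C}$ for the pair $(\canOZ,\ronOZ)$ coincide with those for $(\canOZ,\canOZ)$; and $\h_{\canOZ,\canOZ}(\mathscr{C})=\h(\mathscr{C})$ is the classical height of the line $C$, whose finite contributions vanish because $x_0+x_1+x_2$ has unit content. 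So the whole of $\h_{\canOZ,\,\ronOZ}(\mathscr{C})$ is concentrated at the Archimedean place, and the key step is to identify this single Archimedean local height.

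The second, and main, step is to evaluate that Archimedean contribution using the toric dictionary of \cite{BPS,Gualdi}. The Archimedean metric on $\ronOZ$ is built from the Ronkin function of $x_0+x_1+x_2$, whose Legendre–Fenchel dual is the roof function on the standard simplex $\Delta^2=\conv\{0,e_1,e_2\}$ attached to this polynomial; the canonical metric corresponds to the zero roof function, i.e. to the support function of $\Delta^2$ itself. The arithmetic toric intersection formula then expresses $\h_{\canOZ,\,\ronOZ}(\mathscr{C})$ as an integral over the amoeba of $C$ (the projection $\Log|C(\mathbb{C})|$) — equivalently, after a change of variables, as the mean of the piecewise linear function $\psi(u)=\log\max(|e^{\iu u_2}-e^{\iu u_1}|,|e^{\iu u_2}-1|,|e^{\iu u_1}-1|)$ over the real torus $(\mathbb{R}/2\pi\mathbb{Z})^2$, which is exactly the integral appearing at the end of the proof sketch of \cref{introthm: main thm}. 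Here is where the metric Weil reciprocity law (\cref{prop: metric Weil reciprocity}) enters: it lets me transfer the arithmetic self-intersection of the mixed bundle on $\mathscr{C}$ to an integral of a Ronkin-type local height against the Haar measure on the compact torus, identifying the bookkeeping of the two metrics with the single logarithmic integrand. Once this identification is in place, \cref{prop:2} evaluates the integral as $\frac{2\zeta(3)}{3\zeta(2)}$ and we are done.

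The main obstacle I anticipate is the precise matching in the toric translation: one must be careful that the arithmetic intersection number, defined via Gillet–Soulé's formalism and Maillot's extension to a singular ambient space, really does reduce — through \cref{prop: metric Weil reciprocity} — to the naive integral of $\psi$, including getting all normalizations ($(2\pi)^{-2}$ factors, the lattice $\mathbb{Z}^2$ versus $2\pi\mathbb{Z}^2$, and the orientation of the roof function as the dual of the Ronkin function) correct. A secondary technical point is justifying that the comparison can be done on $\mathscr{C}$ itself rather than on a resolution; this should follow from the standard fact that $\mathscr{C}\cong\mathbb{P}^1_{\mathbb{Z}}$ is already regular, so no resolution is needed and the arithmetic intersection product is literally the one defined in \cite{GilletSoule}. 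With these normalizations nailed down, the rest is the formal deduction from Part I outlined above.
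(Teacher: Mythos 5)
Your overall strategy is legitimate and genuinely different from the paper's: you reduce everything to the single identity $\h_{\canOZ,\,\ronOZ}(\mathscr{C})=\tfrac{2\zeta(3)}{3\zeta(2)}$ and quote \cref{thm:1} for the left-hand side, whereas the paper's proof of \cref{thm: limit is Ronkin height} evaluates nothing: there, \cref{cor:2} and \cref{prop:1} identify the limit with $\int_{\mathbb{S}}F\,d\nu$, the recursion defining the Arakelov height together with $\h_{\canOZ}(\div(s_0)\cdot\mathscr{C})=\h([0:1:-1])=0$ reduces $\h_{\canOZ,\,\ronOZ}(\mathscr{C})$ to the Archimedean integral of $\log\|s_0^{\an}\|_{\Ron}$ against $\chern(\canOC)\wedge\delta_{\mathscr{C}(\mathbb{C})}$, and \cref{lem: lemma Ronkin-integral on compact torus} (metric Weil reciprocity applied for each fixed $z\in\mathbb{S}$, then Tonelli) shows this integral equals $-\int_{\mathbb{S}}F\,d\nu$. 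In particular the paper needs neither \cref{prop:2} nor \cref{thm:1} for this theorem. Your route could indeed be completed using the paper's Part II toric results, which are independent of \cref{thm: limit is Ronkin height}.

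The gap is in your main step. The toric dictionary (\cref{prop: height of curve as Ronkin height and mixed integral}) gives $\h_{\canOZ,\,\ronOZ}(\mathscr{C})=\MI(0_{\Delta_2},\rho_2^\vee,\rho_2^\vee)$, and turning this into an amoeba integral already requires \cref{thm: mixed integral is integral over amoeba} (Gualdi's Theorem~1.6, the vanishing of $\rho_2^\vee$ on $\partial\Delta_2$, and Passare--Rullg{\aa}rd's identification of the Monge--Amp\`ere measure of the Ronkin function); the result is $-\vol(\mathscr{A})^{-1}\int_{\mathscr{A}}\min(0,u_1,u_2)\,du_1du_2$, an integral of the piecewise linear support function in tropical (log-absolute-value) coordinates. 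This is \emph{not} related by a change of variables to $\tfrac{1}{(2\pi)^2}\int_{(\mathbb{R}/2\pi\mathbb{Z})^2}\log\max(|\eu^{\iu u_2}-\eu^{\iu u_1}|,|\eu^{\iu u_2}-1|,|\eu^{\iu u_1}-1|)\,du_1du_2$, which lives in angular coordinates and whose integrand is not piecewise linear; the paper obtains the equality of these two numbers only by computing each side separately (\cref{prop:2} versus \cref{prop: zeta values and amoebas} with \cref{ex: area of amoeba,exm:4}). So you cannot conclude by citing \cref{prop:2}: along your route you must instead evaluate the amoeba integral itself, i.e. supply the computation of \cref{prop: zeta values and amoebas}. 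Relatedly, the metric Weil reciprocity law (\cref{prop: metric Weil reciprocity}) plays no role in the toric translation you invoke --- it is the engine of the paper's direct argument --- so appealing to it ``to transfer the intersection number to the compact torus'' mixes two different proofs without carrying out either. A smaller point: your claim that the finite places contribute nothing to the mixed height needs the adelic local decomposition of \cite{BPS} to be meaningful for the pair $(\canOZ,\ronOZ)$; the paper sidesteps this via the recursion, where the only non-Archimedean term is the height of the closure of $[0:1:-1]$, which vanishes.
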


We can reformulate this result in the following suggestive form.
Let $ \mathbb{P}^{2}_{\overline{\mathbb{Z}}}$ be the projective plane over the integral closure of the integers. For each $\ell \ge 1$  write
$\omega_{\ell}=(\omega_{\ell,1},\omega_{\ell,2})$ with
$\omega_{\ell,1}, \omega_{\ell,2}\in \mu_{\infty}$ and consider the
$1$-dimensional integral subscheme
\begin{displaymath}
  Z(x_{0}+x_{1}+x_{2},x_{0}+\omega_{\ell,1}^{-1}x_{1}+\omega_{\ell,2}^{-1}x_{2})  \subset  \mathbb{P}^{2}_{\overline{\mathbb{Z}}}.
\end{displaymath}
It coincides with the closure of $P(\omega_{\ell}) $ in
$ \mathbb{P}^{2}_{\overline{\mathbb{Z}}}$, and so its height with
respect to $\canOZ$ agrees with the height of the point
$P(\omega_{\ell})$~(\cref{rem: heights of points over Qbar}).

The main result of \cite{Gualdi} shows that the height
in~\eqref{eq:36} coincides with the height of the ambient projective
plane with respect to a further Ronkin metrized line bundle
(\cref{prop: height of curve as Ronkin height and mixed
  integral}). Combining these results yields the following one. 

  \begin{introcorollary}[\cref{cor:5}]
    \label{cor:4}
    With notation as in \cref{introthm: thm2}, 
  \begin{displaymath}
    \lim_{\ell\to +\infty}\h_{\canOZ}(Z(x_{0}+x_{1}+x_{2},x_{0}+\omega_{\ell,1}^{-1}x_{1}+\omega_{\ell,2}^{-1}x_{2}))=
    \h_{\canOZ,\,\ronOZ,\,\ronOZ}(\mathbb{P}^{2}_{\mathbb{Z}}).
  \end{displaymath}
  \end{introcorollary}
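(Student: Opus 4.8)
The plan is to combine \cref{introthm: thm2} with the main theorem of \cite{Gualdi}, purely formally. The statement to prove is that
\[
\lim_{\ell\to +\infty}\h_{\canOZ}\big(Z(x_{0}+x_{1}+x_{2},x_{0}+\omega_{\ell,1}^{-1}x_{1}+\omega_{\ell,2}^{-1}x_{2})\big)=\h_{\canOZ,\,\ronOZ,\,\ronOZ}(\mathbb{P}^{2}_{\mathbb{Z}}),
\]
and the key observation is that each of the two sides can be rewritten as one of the quantities already appearing in Part~II. For the left-hand side, I would first invoke \cref{rem: heights of points over Qbar}: the integral subscheme $Z(x_{0}+x_{1}+x_{2},x_{0}+\omega_{\ell,1}^{-1}x_{1}+\omega_{\ell,2}^{-1}x_{2})\subset \mathbb{P}^{2}_{\overline{\mathbb{Z}}}$ is exactly the Zariski closure of the point $P(\omega_{\ell})$, so its $\canOZ$-height equals $\h(P(\omega_{\ell}))=\h(C\cap\omega_{\ell}C)$. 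Hence the left-hand side is $\lim_{\ell}\h(C\cap\omega_{\ell}C)$, which by \cref{introthm: thm2} equals $\h_{\canOZ,\,\ronOZ}(\mathscr{C})$.

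It then remains to identify $\h_{\canOZ,\,\ronOZ}(\mathscr{C})$ with $\h_{\canOZ,\,\ronOZ,\,\ronOZ}(\mathbb{P}^{2}_{\mathbb{Z}})$, and this is precisely the content of the main result of \cite{Gualdi}, recalled in \cref{prop: height of curve as Ronkin height and mixed integral}: the arithmetic intersection number $\h_{\canOZ,\,\ronOZ}(\mathscr{C})$ of the hypersurface $\mathscr{C}\subset\mathbb{P}^{2}_{\mathbb{Z}}$ cut out by $x_0+x_1+x_2$, paired against the canonical and Ronkin metrized line bundles, coincides with the height $\h_{\canOZ,\,\ronOZ,\,\ronOZ}(\mathbb{P}^{2}_{\mathbb{Z}})$ of the ambient plane against the canonical and two copies of the Ronkin metrized line bundle. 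Chaining these equalities gives the claimed limit. So the proof is essentially a two-line citation once the bookkeeping with \cref{rem: heights of points over Qbar} is in place.

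The only genuine subtlety — and the step I would be most careful about — is making sure the two cited results are being applied to compatible objects: one must check that the Ronkin metrized line bundle $\ronOZ$ used in \cref{prop: height of curve as Ronkin height and mixed integral} is the same as the one in \cref{introthm: thm2} (built from the Ronkin function of $x_0+x_1+x_2$), and that the semipositivity hypotheses needed to make sense of all three arithmetic intersection numbers in play hold; these are already recorded in the discussion preceding \eqref{eq:36}. There is no new analysis or geometry here: the work has all been done in \hyperref[part 2]{Part II} and in \cite{Gualdi}, and the corollary is just the synthesis. I would phrase the proof as: ``By \cref{rem: heights of points over Qbar} the left-hand side equals $\lim_{\ell}\h(C\cap\omega_{\ell}C)$, which by \cref{introthm: thm2} equals $\h_{\canOZ,\,\ronOZ}(\mathscr{C})$; by \cref{prop: height of curve as Ronkin height and mixed integral} this in turn equals $\h_{\canOZ,\,\ronOZ,\,\ronOZ}(\mathbb{P}^{2}_{\mathbb{Z}})$, which concludes the proof.''
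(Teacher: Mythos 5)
Your proposal is correct and follows essentially the same route as the paper: identify the left-hand side with $\lim_{\ell}\h(C\cap\omega_{\ell}C)$ via \cref{rem: heights of points over Qbar} (the analogue of \eqref{eq:52} over $\overline{\mathbb{Z}}$), apply \cref{thm: limit is Ronkin height} to get $\h_{\canOZ,\,\ronOZ}(\mathscr{C})$, and conclude with the first equality of \cref{prop: height of curve as Ronkin height and mixed integral}. Your cautionary remark about the compatibility of the Ronkin metrized line bundles is well placed but already settled in the paper, since both statements use the same $\ronOZ$ defined in \eqref{eq:39}.
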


  This limit formula can be considered as a particular case of a
  conjectural arithmetic analogue of the classical geometric fact that
  for a family of $n$-many line bundles on an $n$-dimensional
  algebraic variety, the cardinality of the zero set of a generic
  $n$-tuple of their global sections coincides with the degree of the
  variety with respect to these line bundles.  Indeed, \cref{cor:4}
  shows that a height of the zero set of a pair of global sections of
  $\mathscr{O}(1)$ with a certain arithmetic feature approaches a
  related height of the ambient space as these global sections becomes
  more and more ``generic''.

% Indeed, it is not difficult to see that a section of $\mathscr{O}(1)$
% induces the same Ronkin function as the one of $s_0=x_0+x_1+x_2$ if
% and only if it is obtained from $s_0$ by twisting with a torsion
% point.  Moreover, the notion of ``generic'' has again to be intended
% from an analytic viewpoint, that is as a limit.

  \vspace{\baselineskip}

  Let us give some hints on how to recover \cref{introthm: main thm}
  from \cref{introthm: thm2} through the Arakelov geometry of toric
  arithmetic varieties, applied to the case of~$\mathbb{P}^{2}_{\mathbb{Z}}$.
  A fundamental ingredient of this
  theory is the classification of semipositive toric metrized line
  bundles in terms of certain concave functions on a vector space
  \cite{BPS}. Within this classification, the metrized line bundle
  $\canOZ$ on $\mathbb{P}^{2}_{\mathbb{Z}}$ corresponds to the
  piecewise linear concave
  function~$\Psi \colon \mathbb{R}^{2} \to \mathbb{R}$ defined~as
\begin{displaymath}
  \Psi(u_{1},u_{2}) =\min(0,u_{1},u_{2}). 
\end{displaymath}
On the other hand, we associate to $C$ its Archimedean amoeba
$\mathscr{A}$, that is the tentacle-shaped subset of~$\mathbb{R}^2$
given as the tropicalization the complex line $C(\mathbb{C})$
(\cref{figure amoeba 1+x+y}).

Applying the results of \cite{BPS, Gualdi} we can express the height
in~\eqref{eq:36} in terms of convex geometry (\cref{prop: height of
  curve as Ronkin height and mixed integral}).  Combining this with
the results of Passare and Rullg{\aa}rd on Ronkin functions and their
associated Monge--Amp\`ere measures~\cite{PR}, we deduce the following
relation between the considered height and the average of the concave
function $\Psi$ on the amoeba~$\mathscr{A}$.

\begin{introtheorem}[\cref{cor: limit is integral over amoeba}]
  \label{thm:intro_amoeba}
With notation as above,
\[
\h_{\canOZ,\,\ronOZ}(\mathscr{C})
=
-\frac{1}{\vol(\mathscr{A})}\int_{\mathscr{A}}\Psi(u_{1},u_{2}) \ d u_{1} d u_{2}.
\]
\end{introtheorem}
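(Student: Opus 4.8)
The plan is to translate the arithmetic quantity $\h_{\canOZ,\ronOZ}(\mathscr{C})$ into convex geometry via the Arakelov theory of toric varieties, and then to evaluate the resulting expression using the Passare--Rullg{\aa}rd analysis of Ronkin functions and their Monge--Amp\`ere measures. First I would apply \cref{prop: height of curve as Ronkin height and mixed integral}: since $\mathscr{C}$ is the divisor of the global section $x_{0}+x_{1}+x_{2}$ of $\mathscr{O}(1)$, the basic properties of arithmetic intersection numbers give $\h_{\canOZ,\ronOZ}(\mathscr{C})=\h_{\canOZ,\ronOZ,\ronOZ}(\mathbb{P}^{2}_{\mathbb{Z}})$; by the results of \cite{BPS, Gualdi} this height --- whose non-Archimedean contributions vanish, as both $\canOZ$ and $\ronOZ$ reduce to the canonical model metric at every finite place --- is computed by a mixed integral of the concave functions on $\mathbb{R}^{2}$ attached to the three metrized line bundles. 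These functions are the piecewise linear $\Psi(u)=\min(0,u_{1},u_{2})$ for $\canOZ$ and a concave function $\psi_{\Ron}$, built from the Ronkin function of $1+z_{1}+z_{2}$, for $\ronOZ$; crucially, $\psi_{\Ron}$ coincides with $\Psi$ outside the amoeba $\mathscr{A}$.

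By the elementary calculus of mixed integrals and Monge--Amp\`ere measures of concave functions from \cite{BPS}, together with $\h_{\canOZ,\canOZ,\canOZ}(\mathbb{P}^{2}_{\mathbb{Z}})=0$ and the fact that $\Psi$ and $\psi_{\Ron}$ have the same recession function, this mixed integral simplifies to
\begin{displaymath}
  \h_{\canOZ,\ronOZ}(\mathscr{C})=-\int_{\mathbb{R}^{2}}\Psi\,\mathcal{M}(\psi_{\Ron}),
\end{displaymath}
where $\mathcal{M}(\psi_{\Ron})$ is the Monge--Amp\`ere measure of $\psi_{\Ron}$, a positive measure of total mass $\deg_{\mathscr{O}(1)}(\mathbb{P}^{2})=1$. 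Since $\psi_{\Ron}$ is affine outside $\mathscr{A}$, this measure is supported on $\mathscr{A}$, and that fact together with the coincidence $\psi_{\Ron}=\Psi$ outside $\mathscr{A}$ guarantees the integral converges despite $\mathscr{A}$ being unbounded. It then remains to identify $\mathcal{M}(\psi_{\Ron})$, and this is where \cite{PR} enters: the Monge--Amp\`ere measure of the Ronkin function of $1+z_{1}+z_{2}$ is the \emph{uniform} measure on $\mathscr{A}$. Indeed, the Passare--Rullg{\aa}rd bound gives $\mathcal{M}(\psi_{\Ron})\ge\vol(\mathscr{A})^{-1}\mathbf{1}_{\mathscr{A}}\,du$, and since the right-hand side also has total mass $1$ --- here one invokes the equality $\vol(\mathscr{A})=\pi^{2}\vol(\Delta)$, which holds for the line $C$ as the extremal case of the Passare--Rullg{\aa}rd bound on the volume of an amoeba --- the two positive measures must coincide. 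Plugging $\mathcal{M}(\psi_{\Ron})=\vol(\mathscr{A})^{-1}\mathbf{1}_{\mathscr{A}}\,du$ into the displayed formula gives the claimed equality.

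I expect the main obstacle to lie at the interface of the two formalisms. One must keep careful track of normalizations, so that the Monge--Amp\`ere measure produced by the toric height formula of \cite{BPS} is exactly the one whose Passare--Rullg{\aa}rd description carries the factor $\vol(\mathscr{A})^{-1}$ --- rather than $\vol(\Delta)\,\vol(\mathscr{A})^{-1}$ or a power of $2\pi$ --- and equally of the orientation of the tropicalization map, so that the region of integration is $\mathscr{A}$ and not its reflection. One must also rigorously justify the simplification of the mixed integral and the convergence of the resulting integral over the non-compact amoeba; both rest on the precise agreement of $\psi_{\Ron}$ with $\Psi$ outside $\mathscr{A}$ and on the controlled linear growth of these functions along the tentacles of $\mathscr{A}$. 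Finally, checking that the Ronkin metric does reduce to the canonical one at all finite places --- used in the reduction to the Archimedean contribution --- is a more routine but necessary point.
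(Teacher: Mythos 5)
Your route is the same as the paper's: reduce via \cref{prop: height of curve as Ronkin height and mixed integral} to a convex-geometric quantity, rewrite it as an integral of $\Psi$ against the Monge--Amp\`ere measure of the Ronkin function, and identify that measure with the uniform measure on $\mathscr{A}$ via Passare--Rullg{\aa}rd. Your normalizations do match the paper's chain $\MI(0_{\Delta},\rho^\vee,\rho^\vee)=-2\int_{\mathbb{R}^2}\Psi\,d\mathcal{M}(\rho)$ with $\mathcal{M}(\rho)=\pi^{-2}\,\mathrm{Leb}|_{\mathscr{A}}$ and $\vol(\mathscr{A})=\pi^{2}/2$, so the bookkeeping you worry about is fine.

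The genuine gap is the pivotal simplification $\h_{\canOZ,\,\ronOZ,\,\ronOZ}(\mathbb{P}^{2}_{\mathbb{Z}})=-\int_{\mathbb{R}^{2}}\Psi\,d\mathcal{M}(\psi_{\Ron})$, which you derive from ``the calculus of mixed integrals, $\h_{\canOZ,\,\canOZ,\,\canOZ}(\mathbb{P}^{2}_{\mathbb{Z}})=0$, and the fact that $\Psi$ and $\psi_{\Ron}$ have the same recession function.'' Those hypotheses are insufficient: take $\psi=\Psi+c$ with $c\neq0$, i.e.\ the canonical metric rescaled by $\eu^{c}$. It has the same recession function, its roof function is the constant $-c$ on $\Delta$, and the corresponding height $\h_{\canOZ,\,\overline{L},\,\overline{L}}(\mathbb{P}^{2}_{\mathbb{Z}})$ equals $-2c$, whereas $-\int\Psi\,d\mathcal{M}(\psi)=-\Psi(0)=0$ because $\mathcal{M}(\psi)$ is a point mass at the origin. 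So a normalization of the Ronkin metric beyond its recession behaviour must enter the manipulation itself; in the paper this is exactly \cref{lem: Ronkin dual vanishes on boundary} ($\rho^{\vee}\equiv0$ on $\partial\Delta$, deduced from $\rho=\Psi$ off the amoeba), which is the hypothesis required to apply \cite[Theorem~1.6]{Gualdi} and obtain $\MI(0_{\Delta},\rho^{\vee},\rho^{\vee})=-2\int\Psi\,d\mathcal{M}(\rho)$. You do record that $\psi_{\Ron}=\Psi$ outside $\mathscr{A}$, but in your argument it only serves for the support of $\mathcal{M}(\psi_{\Ron})$ and for convergence; its essential role---pinning down the boundary values of the roof function and killing the boundary terms in the integration by parts---is deferred to your list of obstacles without an argument, and that is precisely the missing step. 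Two smaller points: the first equality in \cref{prop: height of curve as Ronkin height and mixed integral} is not a ``basic property of arithmetic intersection numbers'' but the hypersurface theorem of \cite{Gualdi}, specific to the Ronkin metric of the defining polynomial; and $\vol(\mathscr{A})=\pi^{2}\vol(\Delta)$ does not follow from being ``the extremal case of the Passare--Rullg{\aa}rd bound''---extremality is exactly what must be verified, by the computation in \cite[page~502]{PR} or by \cref{prop: zeta values and amoebas} with $m=0$ as in \cref{ex: area of amoeba}---and your identification of $\mathcal{M}(\psi_{\Ron})$ as the uniform measure on $\mathscr{A}$ relies on it.
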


Finally we compute the integral in this equality (\cref{prop:
  zeta values and amoebas}) and thus recover \cref{introthm: main thm}
from \hyperref[introthm: thm2]{Theorems~\ref*{introthm: thm2}} and
\hyperref[thm:intro_amoeba]{\ref*{thm:intro_amoeba}}.

\vspace{\baselineskip}

Summing up, this article deals with a specific problem that is both
suitable for a down-to-earth analysis and for numerical
experimentation, as well as appearing as an instance of a much more
general situation.

The concreteness of the results in \hyperref[part 1]{Part I} allows
for an elementary and self-contained treatment that requires little
background in algebraic geometry and number theory. Moreover, the
considered problem is well-suited for computations that allow to
visualize the results and to suggest further questions, as done in
\cref{sec:visualizing-result}.

Placing the problem within the context of Arakelov geometry as in
\hyperref[part 2]{Part II}, our investigation takes a deeper
connotation, pointing towards an asymptotic version of the arithmetic
B\'ezout theorem. Indeed, after \cref{cor:4} it seems reasonable to
expect that the limit of the height of the solution set of a system of
polynomial equations sharing a certain arithmetic feature coincides
with a height of the ambient space, see for instance
\cref{conj: conjecture for curves}.  Establishing this extension would
require a substantial technical effort to solve the adelic logarithmic
equidistribution problem that arises. We plan to address this in a
subsequent article.

In a more speculative spirit, it would be also interesting to explore
whether some of these toric heights can be linked to integrals of
piecewise linear functions over amoebas like in
\cref{thm:intro_amoeba}, and try to compute them in terms of special
functions.

\vspace{\baselineskip}
\noindent\textbf{Acknowledgments.}
We are grateful to Fabien Pazuki for his encouragement to write
this article.  We also thank Francesco Amoroso, Jos\'e Ignacio Burgos Gil, Pietro
Corvaja, Xavier Guitart, Joaquim Ortega-Cerd\`a, Riccardo Pengo, Lukas
Pottmeyer, Lukas Prader and Alain Yger for several enjoyable discussions, and for
providing enthusiastic inputs and questions.  Our research was
helped by computer exploration using the open-source mathematical
software \texttt{SageMath} \cite{sagemath}.

Part of this work was done while we met at Centre de Recerca
Matem\`atica, Universitat de Barcelona and Universit\"at Regensburg.
We thank these institutions for their hospitality.

Roberto Gualdi was supported by the Alexander von Humboldt Foundation, the collaborative research center SFB 1085
``Higher Invariants'' funded by the Deutsche Forschungsgemeinschaft,
and the Fundaci\'o Ferran Sunyer i Balaguer. Mart\'{\i}n Sombra was
partially supported by the Spanish MICINN research project
PID2019-104047GB-I00, and by the Spanish AEI project CEX2020-001084-M
of the Severo Ochoa and Mar\'{\i}a de Maeztu program for centers and
units of excellence in R\&D.

\numberwithin{equation}{section}
\numberwithin{figure}{section}

\vspace{\baselineskip}
\phantomsection\label{part 1}
\begin{center} 
\textbf{\textsc{PART I}}
\end{center}

\addcontentsline{toc}{chapter}{\hspace{2.9mm} Part I}

This part is dedicated to the proof of our main results.  We start in
\cref{sec:preliminaries} by setting the notation and recalling the
principal actors of our statements, like strict sequences of torsion points of algebraic tori and the height of a projective point.

In \cref{sec: upper bounds} we give a sharp bound for the height of
the intersection of the projective line defined by the linear
polynomial $x_0+x_1+x_2$ and its translate by a nontrivial torsion
point.

The following four sections culminate with~\cref{thm:1}, which shows
that the limit of such heights for torsion points in a strict sequence
can be computed in terms of special values of the Riemann zeta
function. In \cref{sec:distribution-height} we also extend this result
to strict subsets of torsion points, to show that most of the
corresponding heights concentrate near that limit value.

The treatment throughout is down-to-earth and as self-contained as
possible, except maybe for \cref{sec:interm-sequ-tors}, which might be skipped at a first read.
Its goal is to extend the previous study to torsion points in proper subgroups of
the $2$-dimensional torus.

Finally, \cref{sec:visualizing-result} illustrates the results through
numerical calculations and graphical plottings made with the
\texttt{SageMath} notebook~\cite{GS_Notebook} accompanying this
article.

\section{Preliminaries}
\label{sec:preliminaries}

Here we discuss some of the basic constructions and properties
concerning algebraic tori and canonical heights on projective
spaces.
Our treatment is far from being complete, and we refer the
interested reader to \cite[Chapters 1 and 3]{BG} for the proofs and
more details about the explained facts.

We denote by $\overline{\mathbb{Q}}$ an algebraic closure of the field
of rational numbers~$\mathbb{Q}$.  For an integer $d\ge 1$ we write $\mu_{d}$ for the subgroup of $\overline{\mathbb{Q}}^{\times}$ of
$d$-roots of unity, and $\mu_{d}^{\circ}$ for its subset of
primitive $d$-roots. We also denote by $\mu_{\infty}$ the subgroup
of $\overline{\mathbb{Q}}^{\times}$ of all roots of unity.

For $n \ge 0$ we denote by
$\Gm^{n}(\overline{\mathbb{Q}})=(\overline{\mathbb{Q}}^{\times})^{n}$
the $n$-dimensional (split, algebraic) torus
over~$\overline{\mathbb{Q}}$. It is a group under coordinate-wise
multiplication, with torsion subgroup equal to~$\mu_{\infty}^{n}$.
For $d\geq 1$ and $\zeta\in \mu_{d}^{\circ}$, each $d$-torsion point
$\omega \in \mu_{d}^{n}$ of this torus can be uniquely written as
\begin{equation}
  \label{eq:44}
  \omega=(\zeta^{c_{1}}, \dots, \zeta^{c_{n}})
\end{equation}
with $c_{i}\in \{0,\dots, d-1\}$ for~$i=1,\dots, n$. Its order is~$\ord(\omega)=d/\gcd(c_{1},\dots, c_{n},d)$.

Given an algebraic subset $V$ of~$\Gm^{n}(\overline{\mathbb{Q}})$, a
sequence $(\gamma_{\ell})_{\ell\ge 1}$ in $V$ is \emph{strict} if it
eventually avoids any fixed algebraic subgroup $H$ of this torus not
containing~$V$, that is, if there is $\ell_{0}\ge 1$ such that
$\gamma_{\ell} \notin H$ for all~$\ell\ge \ell_{0}$. Our main case of
interest is $V=\Gm^{n}(\overline{\mathbb{Q}})$.

For each vector $a=(a_{1},\dots, a_{n})\in \mathbb{Z}^{n}$ we denote by $\chi^{a}$ the
corresponding character, so that for
$\gamma=(\gamma_{1},\dots, \gamma_{n})\in
\Gm^{n}(\overline{\mathbb{Q}})$ we have that
\begin{displaymath}
\chi^{a}(\gamma)=\gamma_{1}^{a_{1}}\cdots \gamma_{n}^{a_{n}}.
\end{displaymath}
Any proper algebraic subgroup of $\Gm^{n}(\overline{\mathbb{Q}})$ is
contained in an algebraic subgroup of codimension 1, which are those
defined by a binomial of the form $\chi^{a}-1$ with
$a\in \mathbb{Z}^{n}\setminus \{(0,\dots, 0)\}$. Hence a sequence
$(\gamma_{\ell})_{\ell\ge 1}$ in $\Gm^{n}(\overline{\mathbb{Q}})$ is
strict if and only if for every such $a$ there is $\ell_{0}\ge 1$ with
\begin{equation}
  \label{eq:22}
\chi^{a}(\gamma_{\ell})\ne 1 \quad \text{ for all } \ell\ge \ell_{0}.
\end{equation}

We denote by $\mathbb{P}^{n}(\overline{\mathbb{Q}})$ the
$n$-dimensional projective space over~$\overline{\mathbb{Q}}$.  For a
homogeneous polynomial
$f\in \overline{\mathbb{Q}}[x_{0},\dots, x_{n}]$ we denote by $Z(f)$
the algebraic subset of this projective space that it defines, and for
$\gamma\in \Gm^{n}(\overline{\mathbb{Q}})$ we consider the
\emph{twist} of $f$ by $\gamma$, which is the homogeneous polynomial
$\gamma^{*}f \in \overline{\mathbb{Q}}[x_{0},\dots, x_{n}]$ defined as
\begin{displaymath}
  \gamma^{*}f(x_{0},\dots, x_{n})= f(x_{0},\gamma_{1}x_{1},\dots, \gamma_{n}x_{n}).
\end{displaymath}
For $\gamma\in \Gm^{n}(\overline{\mathbb{Q}})$ we also consider the associated
\emph{translation map}
\begin{equation}
  \label{eq:30}
  \mathbb{P}^{n}(\overline{\mathbb{Q}})\longrightarrow
\mathbb{P}^{n}(\overline{\mathbb{Q}}), \quad
\xi=[\xi_{0}:\xi_{1}:\dots:\xi_{n}]\longmapsto \gamma\, \xi\coloneqq [\xi_{0}:\gamma_{1}\, \xi_{1}:\cdots:\gamma_{n}\, \xi_{n}].
\end{equation}
The translation by   $\gamma$ of the zero set of
$f$ coincides with the zero set of the twist~$(\gamma^{-1})^{*} f$, that is
\begin{equation}
  \label{eq:5}
\gamma Z(f)=Z((\gamma^{-1})^*f)).
\end{equation}

We next recall the basic notations and properties of canonical heights
in projective spaces.  We denote by $M_{\mathbb{Q}}$ the set of places
of $\mathbb{Q}$, that is the set of equivalence classes of nontrivial
absolute values on this field with respect to the topology that they
define. By Ostrowski's theorem, these places are represented by the
usual Archimedean absolute value on $\mathbb{Q}$ and by the $p$-adic
ones as $p$ ranges over the primes of~$\mathbb{Z}$, and so
$M_{\mathbb{Q}}$ can be identified with the set made of the symbol
$\infty$ and these primes.  For each $v\in M_{\mathbb{Q}}$ we denote
by~$|\cdot|_{v}$ its representative, and by $\mathbb{Q}_{v}$ the
completion of $\mathbb{Q}$ with respect to this absolute value. When
$v=\infty$ this complete field coincides with the field of real
numbers~$\mathbb{R}$, whereas when $v=p$ is a prime it is the field of
$p$-adic~numbers.

More generally, for a number field $K$ we denote by $M_{K}$ the set of its
places. For each $w\in M_{K}$ there is a unique $v\in M_{\mathbb{Q}}$
such that $|\cdot|_{v}$ extends to a (unique) absolute value on $K$ in
the equivalence class of $w$, a relation that is indicated by~$w\mid v$. We denote by $|\cdot|_{w}$ this absolute value on $K$, and
by $K_{w}$ the corresponding completion of~$K$.
For each~$v\in M_{\mathbb{Q}}$, the set of places of $K$ extending $v$ is
finite and moreover the sum of the corresponding local degrees
coincides with the degree of the extension:
    \begin{equation}
      \label{eq:16}
      \sum_{w\mid v} [K_{w}:\mathbb{Q}_{v}] = [K:\mathbb{Q}]. 
    \end{equation}
   
Let now
    $\xi=[\xi_{0}:\cdots:\xi_{n}]\in
    \mathbb{P}^{n}(\overline{\mathbb{Q}})$. For each  $v\in M_{\mathbb{Q}}$,
    the \emph{$v$-adic height} of the vector of homogeneous
    coordinates of this projective point is defined as
\begin{displaymath}
  \h_{v}(\xi_{0},\dots, \xi_{n}) = \sum_{w\mid v} \frac{[K_{w}:\mathbb{Q}_{v}]}{[K:\mathbb{Q}]}\, \log\max (|\xi_{0}|_{w},\dots, |\xi_{n}|_{w})
\end{displaymath}
for any number field $K$ containing all these coordinates. Its value does
not depend on the choice of this number field, and it vanishes for all
but a finite number of~$v$'s. The \emph{(canonical) height} of $\xi$
is defined as the sum of these local heights:
\begin{equation}
  \label{eq:4}
\h(\xi)= \sum_{v\in M_{\mathbb{Q}}} \h_{v}(\xi_{0},\dots, \xi_{n}).
\end{equation}
Thanks to the product formula, its value does not depend on the choice
of homogeneous coordinates.

In general $\h(\xi)\ge 0$ and, by Kronecker's theorem, $\h(\xi)=0$ if
and only if the point can be written as $\xi=[\xi_{0}:\dots:\xi_{n}]$
with $\xi_{j}$ equal to either $0$ or a root of unity.

\begin{remark}
  \label{rem:4}
  The height of a projective point is a measure of the complexity of
  its representation. For instance, a point
  $\xi\in \mathbb{P}^{n}(\overline{\mathbb{Q}})$ with rational
  homogeneous coordinates can be written as
  $\xi=[\xi_{0}:\dots:\xi_{n}]$ with coprime integer~$\xi_{i}$'s. In
  this situation, the formula in \eqref{eq:4} boils~down~to
\begin{displaymath}
  \h(\xi)=\log \max (|\xi_{0}|_{\infty},\dots, |\xi_{n}|_{\infty}),
\end{displaymath}
which gives the maximal bit-length of these integers.
\end{remark}

We next give a Galois-theoretic formula for the local heights
of the vector of homogeneous coordinates of a projective point. Set
$\Xi=(\xi_{0},\dots, \xi_{n})\in \overline{\mathbb{Q}}^{n+1} $ and let
\begin{displaymath}
O(\Xi)=  \Gal (\overline{\mathbb{Q}}/\mathbb{Q}) \cdot \Xi  
\end{displaymath}
be the orbit of this vector under the coordinate-wise action
of the absolute Galois group of~$\mathbb{Q}$. It is a finite subset of~$ \overline{\mathbb{Q}}^{n+1}$.

For each $v\in M_{\mathbb{Q}}$ we choose an algebraic closure of the
complete field~${\mathbb{Q}}_{v}$, and we denote by $\mathbb{C}_{v}$
its completion with respect to the unique extension of $|\cdot|_{v}$
to it. This field is both algebraically closed and complete with
respect to the induced absolute value, that we also denote
by~$|\cdot|_{v}$.

We also choose an embedding
    \begin{equation}
      \label{eq:12}
      \iota_{v}\colon \overline{\mathbb{Q}}\longhookrightarrow \mathbb{C}_{v},
    \end{equation}
    which induces an embedding
      $\overline{\mathbb{Q}}^{n+1}\hookrightarrow
      \mathbb{C}_{v}^{n+1}$ that we denote with the same symbol. The
      \emph{$v$-adic Galois orbit} of $\Xi$ is then defined as the
      image of $O(\Xi)$ under it, namely
    \begin{displaymath}
      O(\Xi)_{v}=\iota_{v}(O(\Xi)).
    \end{displaymath}
    It is a finite subset of $\mathbb{C}_{v}^{n+1}$ with the same
    cardinality of $O(\Xi)$, and which does not depend on the choice
    of~$\iota_{v}$.
    
    \begin{proposition}
      \label{prop:3}
      Let~$\xi\in \mathbb{P}^{n}(\overline{\mathbb{Q}})$ and
      $\Xi\in \overline{\mathbb{Q}}^{n+1}$ a corresponding vector of homogeneous
      coordinates. Then for each $v\in M_{\mathbb{Q}}$
\begin{displaymath}
  \h_{v}(\Xi)= \frac{1}{\# O(\Xi)_{v}} \sum_{\Lambda\in O(\Xi)_{v}}\log\max (|\Lambda_{0}|_{v},\dots, |\Lambda_{n}|_{v})      
\end{displaymath}
and so 
\begin{displaymath}
  \h(\xi)= \frac{1}{\# O(\Xi)} \sum_{v\in M_{\mathbb{Q}}}   \sum_{\Lambda\in O(\Xi)_{v}}\log\max (|\Lambda_{0}|_{v},\dots, |\Lambda_{n}|_{v}).      
\end{displaymath}
In particular, the height is invariant under the action of the
absolute Galois group~of~$\mathbb{Q}$.
    \end{proposition}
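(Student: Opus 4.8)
The plan is to unwind the two definitions---the local height $\h_v(\xi_0,\dots,\xi_n)$ given as a weighted sum over places $w\mid v$ of a number field $K$, and the $v$-adic Galois orbit $O(\Xi)_v$---and show that they produce the same quantity. The key point is a classical compatibility between the places $w$ of $K$ lying above $v$ and the orbits of embeddings $K \hookrightarrow \mathbb{C}_v$ under the decomposition group.

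First I would fix a number field $K$ that is Galois over $\mathbb{Q}$ and contains all the coordinates $\xi_0,\dots,\xi_n$; replacing $K$ by its Galois closure changes neither side, by the independence statements already recorded in the excerpt (the local height is independent of the field of definition, and $O(\Xi)_v$ is independent of $\iota_v$). Write $G=\Gal(K/\mathbb{Q})$ and $N=\#O(\Xi)=[G:\mathrm{Stab}(\Xi)]$, where $\mathrm{Stab}(\Xi)$ is the stabilizer of $\Xi$ under the coordinate-wise $G$-action. Since $\mathbb{C}_v$ is algebraically closed, the chosen embedding $\iota_v\colon\overline{\mathbb{Q}}\hookrightarrow\mathbb{C}_v$ restricts to an embedding $K\hookrightarrow\mathbb{C}_v$, and every embedding $K\hookrightarrow\mathbb{C}_v$ is of the form $\iota_v\circ\sigma$ for a unique $\sigma\in G$. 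Two embeddings $\iota_v\circ\sigma$ and $\iota_v\circ\tau$ induce the same absolute value on $K$, hence the same place $w\mid v$, precisely when they differ by an element of the decomposition group $G_w = \Gal(K_w/\mathbb{Q}_v)$; thus the $\#\{w\mid v\} = [G:G_w]$ places above $v$ correspond bijectively to the $G_w$-cosets in $G$, each coset $\sigma G_w$ having size $[K_w:\mathbb{Q}_v]$, and for each such $w$ the absolute value satisfies $|\sigma(\xi_i)|_{\iota_v} = |\xi_i|_w$ after identifying $K_w$ with the completion of $\iota_v(\sigma(K))$. This is the content of, e.g., \cite[Ch.~1]{BG}.

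Granting this, I would rewrite
\[
\h_v(\Xi)=\sum_{w\mid v}\frac{[K_w:\mathbb{Q}_v]}{[K:\mathbb{Q}]}\log\max_i|\xi_i|_w
=\frac{1}{\#G}\sum_{\sigma\in G}\log\max_i|\iota_v(\sigma(\xi_i))|_v,
\]
the second equality because grouping the $\#G$ summands according to the place $w$ they determine recovers exactly the $[K_w:\mathbb{Q}_v]$-fold multiplicities. Now the map $\sigma\mapsto\iota_v(\sigma(\Xi))$ from $G$ to $O(\Xi)_v=\iota_v(O(\Xi))$ is surjective with all fibers of the same size $\#\mathrm{Stab}(\Xi)=\#G/N$, so averaging over $\sigma\in G$ is the same as averaging over $\Lambda\in O(\Xi)_v$:
\[
\frac{1}{\#G}\sum_{\sigma\in G}\log\max_i|\iota_v(\sigma(\xi_i))|_v
=\frac{1}{\#O(\Xi)_v}\sum_{\Lambda\in O(\Xi)_v}\log\max_i|\Lambda_i|_v,
\]
which is the first displayed formula. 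Summing over $v\in M_{\mathbb{Q}}$ and using $\#O(\Xi)_v=\#O(\Xi)$ together with \eqref{eq:4} gives the second formula. Finally, Galois invariance of $\h(\xi)$ is immediate: replacing $\Xi$ by $\tau(\Xi)$ for $\tau\in\Gal(\overline{\mathbb{Q}}/\mathbb{Q})$ leaves the set $O(\Xi)$ (hence each $O(\Xi)_v$) unchanged.

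The only real obstacle is the bookkeeping in the place/embedding correspondence---making sure the local degrees $[K_w:\mathbb{Q}_v]$ appear with the correct multiplicity and that \eqref{eq:16} is respected---but this is standard algebraic number theory rather than anything genuinely new; the rest is a counting argument about fiber sizes of the two surjections from $G$.
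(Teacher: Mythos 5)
Your proposal is correct and follows essentially the same route as the paper's proof: fix a Galois closure $K$ with group $G$, rewrite $\h_v(\Xi)$ as an average over $G$ by matching places above $v$ with fibers of the map $\sigma\mapsto$ (place induced by $\iota_v\circ\sigma$), and then pass to the average over $O(\Xi)_v$ by orbit--stabilizer. The only cosmetic difference is that you obtain the multiplicities directly from $\#G_w=[K_w:\mathbb{Q}_v]$ via the decomposition group, whereas the paper deduces from transitivity that all local degrees coincide and computes the common weight $1/\#M_{K,v}$ using \eqref{eq:16}; both amount to the same counting.
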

    
    \begin{proof}
      Let $K\subset \overline{\mathbb{Q}}$ be a finite Galois
      extension of $\mathbb{Q}$ containing all the coordinates of
      $\Xi$, and denote by~$G$ its Galois group. For
      $v\in M_{\mathbb{Q}}$ denote by $M_{K,v}$ the set of places of
      $K$ extending~$v$, and by $w_{0} \in M_{K,v}$ the place
      represented by the absolute value on $K$ induced by the absolute
      value of $\mathbb{C}_{v}$ through the embedding $\iota_{v}$ in~\eqref{eq:12}.

      The group $G$ has an action on the finite set~$M_{K,v}$,
        that can be defined by considering for each pair $\sigma\in G$ and
        $w\in M_{K,v}$ the place $\sigma(w)\in M_{K,v}$
        represented by the absolute value on $K$ given~by
      \begin{displaymath}
        |\alpha|_{\sigma(w)}=|\sigma(\alpha)|_{w}\quad\forall\alpha\in K.
      \end{displaymath}
 By \cite[Chapter II, Proposition 9.1]{Neukirch:ant}, this action
      is transitive.

      The $\mathbb{Q}$-automorphism $\sigma\colon K\to K$ extends to a
      $\mathbb{Q}_{v}$-isomorphism~$ K_{w} \to K_{\sigma(w)}$, and
      so~$[K_{\sigma(w)}:\mathbb{Q}_{v}]=[K_{w}:\mathbb{Q}_{v}]$.
      Since the action is transitive, the local degrees corresponding
      to the places in $M_{K,v}$ coincide. By the formula
      in~\eqref{eq:16}, this implies~that
      \begin{displaymath}
        \frac{[K_{w}:\mathbb{Q}_{v}]}{[K:\mathbb{Q}]}=\frac{1}{\# M_{K,v}}.
      \end{displaymath}
      Moreover, by the orbit-stabilizer theorem one also has that
      $\# M_{K,v} = {\#G}/{\#G_{w_{0}}}$ where $G_{w_{0}}$ denotes the
      stabilizer of the place~$w_{0}$. Writing $\Xi=(\xi_{0},\dots, \xi_{n})$ we have
        \begin{displaymath}
          \h_{v}(\Xi) =  \sum_{w\in M_{K,v}}\frac{1}{\# M_{K,v}}  \log\max_{j} |\xi_{j}|_{w}
          =\frac{1}{\#G} \sum_{\sigma\in G} \log\max_{j} |\xi_{j}|_{\sigma(w_{0})}.
        \end{displaymath}
On the other hand,  $G$ also acts transitively on the Galois orbit of $\Xi$ and so 
      \begin{displaymath}
        \frac{1}{\# O(\Xi)_{v}}=\frac{1}{\# O(\Xi)}=  \frac{\#G_{\Xi}}{\#G},
        \end{displaymath}
where $G_{\Xi}$ is the stabilizer of this vector. Hence
        \begin{multline*}
          \frac{1}{\#G} \sum_{\sigma\in G} \log\max_{j} |\xi_{j}|_{\sigma(w_{0})}=
          \frac{1}{\#G} \sum_{\sigma\in G} \log\max_{j} |(\iota_{v} \circ \sigma)(\xi_{j})|_{v}
\\        = \frac{1}{\# O(\Xi)_{v}} \sum_{\Lambda\in O(\Xi)_{v}}\log\max_{j} |\Lambda_{j}|_{v},
        \end{multline*}
proving the first statement. The other claims follow directly.       
    \end{proof}

    \begin{remark}
      \label{rem:3}
      A version of this result in the more general setting of Arakelov
      geometry appears in \cite[Proposition 2.3]{BPRS}.
   \end{remark}

\section{The range of the height}\label{sec: upper bounds}

In this section we start our study of the height of the intersection
of the line
\begin{equation*} 
  C=Z(x_{0}+x_{1}+x_{2}) \subset \mathbb{P}^{2}(\overline{\mathbb{Q}})
\end{equation*}
with its translate $\omega C$ by a torsion point $\omega$
of~$\Gm^{2}(\overline{\mathbb{Q}})$. As in~\eqref{eq:5}, this
translate coincides with the zero set of the twist by $\omega^{-1}$ of
the linear polynomial~$x_{0}+x_{1}+x_{2}$, that is
\begin{displaymath}
\omega C=Z(x_{0}+\omega_{1}^{-1}x_{1}+\omega_{2}^{-1}x_{2}). 
\end{displaymath}
Hence the intersection $C\cap \omega C$ coincides with the solution
set of the system of linear equations
\begin{displaymath}
x_{0}+x_{1}+x_{2}= x_{0}+\omega_{1}^{-1}x_{1}+\omega_{2}^{-1}x_{2}= 0.
\end{displaymath}
When $\omega$ in nontrivial, that is when $\omega\ne (1,1)$, it
consists of the point
\begin{equation}
  \label{eq:7}
P(\omega)= [\omega_{2}^{-1}-\omega_{1}^{-1}:1-\omega_{2}^{-1}: \omega_{1}^{-1}-1]  \in \mathbb{P}^{2}(\overline{\mathbb{Q}}).
\end{equation}
Its height depends nontrivially on~$\omega$, as the next
example shows. 

\begin{example}\label{ex: height of some solutions}
  For $d\geq 2$ let $\zeta \in \mu_{d}^{\circ}$ be a primitive
  $d$-root of unity, and consider the torsion point~$\omega_{d}=(\zeta,\zeta^{2})$.
  The corresponding intersection point
  can be written as~$P(\omega_{d})=[1:-1-\zeta:\zeta]$, and the
  Galois orbit of the vector of these homogeneous coordinates is 
  \begin{displaymath}
    \{(1,-1-\zeta^{k},\zeta^{k}) \mid k\in (\mathbb{Z}/d
  \mathbb{Z})^{\times}\}.
  \end{displaymath}
  This is a finite set of cardinality~$\varphi(d)$, where $\varphi$ is
  the Euler totient function.

  For every prime $p$ and every
  $ k\in (\mathbb{Z}/k\, \mathbb{Z})^{\times}$ we have that
  \begin{displaymath}
|\iota_{p}(\zeta^{k})|_{p}=1 \and
 |\iota_{p}(\zeta^{k}+1)|_p=|\iota_{p}(\zeta^{k})+1|_p\leq 1     
  \end{displaymath}
  because of the ultrametric inequality for the $p$-adic absolute
  value.  Hence the formula for the height of $P(\omega_{d})$ from
  \cref{prop:3} reduces to its Archimedean contribution, namely
\begin{multline*}
\h(P(\omega_{d}))=
\frac{1}{\varphi(d)}\sum_{k\in(\mathbb{Z}/d\mathbb{Z})^\times}\log\max(1,|\eu^{2\pi ik/d}+1|_{\infty}) \\ =
\frac{1}{\varphi(d)}\sum_{k\in(\mathbb{Z}/d\mathbb{Z})^\times}\log\max(1,\sqrt{2+2\cos(2\pi k/d)}).
\end{multline*}
It follows that
\begin{displaymath}
\h(P(\omega_{2}))=\h(P(\omega_{3}))=0, \ \h(P(\omega_{4}))=\frac{1}{2}\log(2),\ \h(P(\omega_{5}))=\frac{1}{4}\log\bigg(\frac{3+\sqrt{5}}{2}\bigg), \ \dots  
\end{displaymath}
\end{example}

The main result of this section is the following, in which we
establish the range of values of the height of the intersection point
$P(\omega)$ and determine the extremal cases.

\begin{proposition}\label{prop: bounds for the height} 
  Let $\omega \in \Gm^{2}(\overline{\mathbb{Q}})$ be a nontrivial torsion point. Then the corresponding height verifies the inequalities
  \begin{math}
{0\leq\h(P(\omega))\leq\log(2)}.   
  \end{math}
  The lower bound is attained exactly when
\[
{\omega}\in\{(1,\zeta),(\zeta,1),(\zeta,\zeta)\mid \zeta\in\mu_{\infty}\setminus\{1\}\}\cup\{(\zeta,\zeta^2) \mid \zeta\in \mu_{3}^{\circ}\},
\]
whereas the upper bound is attained  exactly when
\[
{\omega}\in\{(-1,\zeta),(\zeta,-1),(\zeta,-\zeta)\mid\zeta\in\mu_{\infty}\text{ with }\ord(\zeta)\neq2^k\text{ for all }k\geq0\}.
\]
\end{proposition}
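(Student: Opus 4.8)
The plan is to combine the explicit description \eqref{eq:7} of $P(\omega)$ with the Galois-orbit formula of \cref{prop:3} and a few elementary facts about roots of unity, the crucial one being the value $\Phi_{n}(1)$ of a cyclotomic polynomial at $1$. Writing $d=\ord(\omega)$, one has $\mathbb{Q}(\omega_{1},\omega_{2})=\mathbb{Q}(\mu_{d})$ with Galois group $(\mathbb{Z}/d\mathbb{Z})^{\times}$ acting by $\omega_{i}\mapsto\omega_{i}^{k}$, and the Galois orbit of the coordinate vector of $P(\omega)$ has exactly $\varphi(d)$ elements; hence \cref{prop:3} yields $\h(P(\omega))=\frac{1}{\varphi(d)}\sum_{k}S_{k}$, where $S_{k}$ is the sum over $v\in M_{\mathbb{Q}}$ of $\log\max(|\iota_{v}(\omega_{2}^{-k}-\omega_{1}^{-k})|_{v},|\iota_{v}(1-\omega_{2}^{-k})|_{v},|\iota_{v}(\omega_{1}^{-k}-1)|_{v})$, which is essentially \eqref{eq:32}. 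The two bounds then come for free: at an Archimedean place the three arguments of the maximum are bounded by $2$ (each is a difference of two numbers of absolute value at most $1$) and at a finite place by $1$ (ultrametric inequality), so $S_{k}\le\log(2)$ and therefore $\h(P(\omega))\le\log(2)$, while $\h(P(\omega))\ge0$ is the general positivity of the height.

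For the lower bound equality, Kronecker's theorem says $\h(P(\omega))=0$ exactly when $P(\omega)$ admits homogeneous coordinates that are $0$'s and roots of unity. I distinguish whether one of the coordinates in \eqref{eq:7} vanishes: if $\omega_{1}=1$, $\omega_{2}=1$, or $\omega_{1}=\omega_{2}$, then $P(\omega)$ is $[1:-1:0]$, $[1:0:-1]$, or $[0:1:-1]$, producing the first three families. Otherwise all coordinates are nonzero and the condition becomes that $u=\tfrac{1-\omega_{2}^{-1}}{\omega_{2}^{-1}-\omega_{1}^{-1}}$ and $w=\tfrac{\omega_{1}^{-1}-1}{\omega_{2}^{-1}-\omega_{1}^{-1}}$ are roots of unity, and since $u+w=-1$ a short comparison of absolute values (the identity $|u+w|^{2}=2+2\Re(u\overline{w})$) forces $\{u,w\}=\mu_{3}^{\circ}$; solving the resulting linear relation under the constraint $|\omega_{1}|=|\omega_{2}|=1$ then forces $\omega=(\zeta,\zeta^{2})$ with $\zeta\in\mu_{3}^{\circ}$. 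Conversely each listed $\omega$ plainly has height $0$.

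For the upper bound equality, since $\h(P(\omega))$ is the average of the numbers $S_{k}\le\log(2)$, equality forces $S_{1}=\log(2)$, so the Archimedean summand of $S_{1}$ equals $\log(2)$ and each of its finite summands vanishes. Equality in the Archimedean bound requires one of the coordinates in \eqref{eq:7} to have absolute value $2$, i.e.\ $\omega_{1}=-1$, $\omega_{2}=-1$, or $\omega_{2}=-\omega_{1}$. In each of these three (entirely analogous) cases the ``large'' coordinate equals $-2$ times a root of unity, hence has $2$-adic absolute value $\tfrac12$, so the vanishing of the finite summand at $p=2$ forces one of the remaining two coordinates --- which are of the form $1\pm\xi$ for the surviving root of unity $\xi\in\{\omega_{1}^{-1},\omega_{2}^{-1}\}$ --- to be a $2$-adic unit. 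Here the cyclotomic input is used: from $|1-\xi|_{2}\,|1+\xi|_{2}=|1-\xi^{2}|_{2}$ together with the fact that, for a root of unity $\eta$, one has $|1-\eta|_{p}=1$ precisely when $\ord(\eta)$ is not a power of $p$ (a consequence of $\Phi_{n}(1)$ being $1$ or a prime), one deduces $\max(|1-\xi|_{2},|1+\xi|_{2})=1$ if and only if $\ord(\xi)$ is not a power of $2$. Since $\xi^{k}$ has the same order as $\xi$ whenever $k$ is coprime to it, one then checks that for such $\omega$ the equality $S_{k}=\log(2)$ holds for every $k$ --- the Archimedean summand staying $\log(2)$ because the large coordinate keeps Archimedean absolute value $2$ under the Galois action, and the odd finite places contributing nothing --- so that $\h(P(\omega))=\log(2)$. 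Altogether the upper bound is attained precisely for $\omega$ in the three families with $\ord(\zeta)\neq2^{k}$.

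The routine part is the two bounds; the substance, and the main obstacle, lies in the equality analyses. Concretely I expect the delicate points to be, on the one hand, confirming that in the lower-bound case the ``no vanishing coordinate'' branch yields \emph{only} $\omega\in\{(\zeta,\zeta^{2}):\zeta\in\mu_{3}^{\circ}\}$, and on the other hand, in the upper-bound case, correctly turning the $2$-adic nonvanishing condition into the order restriction $\ord(\zeta)\neq2^{k}$ via the values of cyclotomic polynomials at $1$ --- uniformly across the three symmetric cases and over all Galois conjugates simultaneously.
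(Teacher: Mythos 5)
Your proposal is correct and follows essentially the same route as the paper: the Galois-orbit local height formula (\cref{prop:3}, \cref{lem:5}), the triangle/ultrametric inequalities for the bounds, Kronecker's theorem plus the classification of unit-coordinate points on $C$ (via $1+a+b=0$ with $|a|=|b|=1$) for the lower-bound equality, and the $k=1$ Archimedean analysis combined with the $2$-adic condition coming from $\Phi_d(1)$ (\cref{lem:2}, \cref{lem:1}) for the upper-bound equality. The only differences are cosmetic (exchanging the order of the sums over $k$ and $v$, and deducing $\max(|1-\xi|_2,|1+\xi|_2)=1$ from $|1-\xi|_2\,|1+\xi|_2=|1-\xi^2|_2$ rather than from the ultrametric reduction), and the glossed steps — discarding the trivial solution $\omega=(1,1)$ in the $[1:\zeta:\zeta^2]$ branch, and checking all conjugate summands equal $\log 2$ — go through exactly as in the paper.
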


Before proving the proposition, we choose the vector of homogeneous coordinates of
$P(\omega)$ given by
\begin{equation}
  \label{eq:48}
  \mathcal{P}(\omega)=
(\omega_{2}^{-1}-\omega_{1}^{-1},1-\omega_{2}^{-1}, \omega_{1}^{-1}-1) \in \overline{\mathbb{Q}}^{3}.
\end{equation}
The next lemma
gives a formula for its local heights. 

\begin{lemma}
  \label{lem:5}
  Set~$d=\ord(\omega)$. Then for each $v\in M_{\mathbb{Q}}$ we have
  that
\begin{displaymath}
  \h_{v}(\mathcal{P}(\omega))=\frac{1}{\varphi(d)}\sum_{k \in (\mathbb{Z}/d\, \mathbb{Z})^{\times}}  \log \max ( 
  |\iota_{v}(\omega_{2}^{k})-\iota_{v}(\omega_{1}^{k})|_{v}, |\iota_{v}(\omega_{2}^{k})-1|_{v},  |\iota_{v}(\omega_{1}^{k})-1|_{v}),
\end{displaymath}
where $\iota_v$ is the  embedding in~\eqref{eq:12}. 
\end{lemma}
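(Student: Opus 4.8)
The plan is to apply the Galois-orbit formula of \cref{prop:3} to the vector $\mathcal{P}(\omega)$ introduced in~\eqref{eq:48}, which reduces the statement to an explicit description of the orbit $O(\mathcal{P}(\omega))$ under $\Gal(\overline{\mathbb{Q}}/\mathbb{Q})$ together with a computation of its cardinality. To this end I would fix a primitive $d$-th root of unity $\zeta$ and write $\omega_{1}=\zeta^{c_{1}}$ and $\omega_{2}=\zeta^{c_{2}}$ with $\gcd(c_{1},c_{2},d)=1$, as in~\eqref{eq:44}, so that all three coordinates of $\mathcal{P}(\omega)$ lie in the cyclotomic field $\mathbb{Q}(\zeta)$. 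The Galois action on $\mathcal{P}(\omega)$ thus factors through $\Gal(\mathbb{Q}(\zeta)/\mathbb{Q})\cong(\mathbb{Z}/d\,\mathbb{Z})^{\times}$, the class of $k$ acting by $\zeta\mapsto\zeta^{k}$, hence by $\omega_{i}\mapsto\omega_{i}^{k}$; consequently $O(\mathcal{P}(\omega))$ consists of the triples $(\omega_{2}^{-k}-\omega_{1}^{-k},\,1-\omega_{2}^{-k},\,\omega_{1}^{-k}-1)$ with $k$ ranging over $(\mathbb{Z}/d\,\mathbb{Z})^{\times}$, and a priori it has at most $\varphi(d)$ elements.

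Next I would show that this parametrization is injective, so that $\#O(\mathcal{P}(\omega))=\varphi(d)$. If $k$ and $k'$ produce the same triple, comparing the second and third coordinates gives $\omega_{2}^{k}=\omega_{2}^{k'}$ and $\omega_{1}^{k}=\omega_{1}^{k'}$, i.e.\ $k\equiv k'$ modulo each of $\ord(\omega_{1})$ and $\ord(\omega_{2})$; since $\ord(\omega)=\lcm(\ord(\omega_{1}),\ord(\omega_{2}))=d$, this forces $k\equiv k'\pmod{d}$. The identical computation carried out inside $\mathbb{C}_{v}$ shows that $\iota_{v}$ does not collapse the orbit either, so that $\#O(\mathcal{P}(\omega))_{v}=\varphi(d)$ as well.

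Finally I would feed this into \cref{prop:3}, obtaining that $\h_{v}(\mathcal{P}(\omega))$ equals $\frac{1}{\varphi(d)}\sum_{k\in(\mathbb{Z}/d\,\mathbb{Z})^{\times}}\log\max$ of the $v$-adic absolute values of the coordinates of $\iota_{v}$ applied to the triple indexed by $k$. Since $-1\in(\mathbb{Z}/d\,\mathbb{Z})^{\times}$, reindexing $k\mapsto -k$ turns this triple into $(\iota_{v}(\omega_{2}^{k})-\iota_{v}(\omega_{1}^{k}),\,1-\iota_{v}(\omega_{2}^{k}),\,\iota_{v}(\omega_{1}^{k})-1)$, and using $|1-\iota_{v}(\omega_{2}^{k})|_{v}=|\iota_{v}(\omega_{2}^{k})-1|_{v}$ yields exactly the asserted formula. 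No serious difficulty is expected here; the one step that requires genuine care is the cardinality count, which relies on $\ord(\omega)$ being the least common multiple of the orders of its two coordinates, with everything else amounting to bookkeeping.
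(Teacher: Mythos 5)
Your proposal is correct and follows essentially the same route as the paper: apply \cref{prop:3} to $\mathcal{P}(\omega)$, identify the Galois orbit via the $d$-th cyclotomic extension with $k\in(\mathbb{Z}/d\mathbb{Z})^{\times}$ acting by $\omega_i\mapsto\omega_i^{k}$, check the orbit has exactly $\varphi(d)$ elements (the paper phrases this as the stabilizer being trivial because $\ord(\omega)=d$, which is the same fact as your lcm argument), and absorb the signs/inverses by the reindexing $k\mapsto -k$. The only cosmetic difference is that the paper performs this reindexing at the level of the orbit set rather than inside the sum, and the equality $\#O(\mathcal{P}(\omega))_v=\#O(\mathcal{P}(\omega))$ you reprove is already built into the setup preceding \cref{prop:3}.
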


\begin{proof}
  Both $\omega_{1}$ and $\omega_{2}$ are contained in the $d$-th
  cyclotomic extension of~$\mathbb{Q}$, and so are the coordinates
  of~$\mathcal{P}(\omega)$. Hence the Galois orbit of this vector
  coincides with its orbit under the action of the Galois group of
  this cyclotomic extension.  The latter is isomorphic
  to~$(\mathbb{Z}/d\, \mathbb{Z})^{\times}$, and by \eqref{eq:44} the
  action of each element $k$ of this group maps $\omega_{i}$ to
  $\omega_{i}^{k}$ for~$i=1,2$.  Hence the Galois orbit of
  $\mathcal{P}(\omega)$ writes down as
\begin{multline*} 
  O(\mathcal{P}(\omega))=\{ (\omega_{2}^{-k}-\omega_{1}^{-k},1-\omega_{2}^{-k}, \omega_{1}^{-k}-1) \mid k\in (\mathbb{Z}/d\mathbb{Z})^{\times}\}\\
  =\{ (\omega_{2}^{k}-\omega_{1}^{k},1-\omega_{2}^{k}, \omega_{1}^{k}-1) \mid k\in (\mathbb{Z}/d\mathbb{Z})^{\times}\}.
\end{multline*}
The elements in this last set are pairwise distinct as $k$
ranges in~$(\mathbb{Z}/d\mathbb{Z})^{\times}$. Indeed any element $k$
in the stabilizer of $\mathcal{P}(\omega)$ has to satisfy
$\omega^k=\omega$ and then it must be trivial in
$(\mathbb{Z}/d\mathbb{Z})^{\times}$ as a consequence of the hypothesis
that $\omega$ has order~$d$.  The statement then follows from
\cref{prop:3}.
\end{proof}

We will need two further auxiliary results. The first is the classical
formula for the value of a cyclotomic polynomial at~1, which will also
play an important role in \cref{sec: non-Archimedean height,sec: limit
  of archimedean heights}.  Its proof is elementary and can be found
in~\cite[page~74]{LangAlgebraicNumberTheory}.

\begin{lemma}
  \label{lem:2}
For $d \ge 2$ let  $\Phi_{d}$ be the $d$-th cyclotomic polynomial. Then
\begin{displaymath}
\Phi_d(1)=
\begin{cases}
p&\text{if $d$ is a power of a prime $p$},\\
1&\text{otherwise}.
\end{cases}
\end{displaymath}
\end{lemma}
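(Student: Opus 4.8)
The plan is to prove \cref{lem:2}, the evaluation $\Phi_d(1)$ of the $d$-th cyclotomic polynomial at $1$, starting from the factorization identity
\begin{displaymath}
x^{n}-1=\prod_{d\mid n}\Phi_{d}(x),
\end{displaymath}
which is the defining recursive property of the cyclotomic polynomials. First I would isolate the case $d\ge 2$ by dividing out the factor $\Phi_{1}(x)=x-1$: writing $\frac{x^{n}-1}{x-1}=1+x+\cdots+x^{n-1}=\prod_{d\mid n,\,d>1}\Phi_{d}(x)$ and evaluating at $x=1$ gives the clean identity
\begin{displaymath}
\prod_{\substack{d\mid n\\ d>1}}\Phi_{d}(1)=n \qquad\text{for all } n\ge 1.
\end{displaymath}
This single identity, valid for every $n$, is the engine of the whole argument.

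Next I would extract $\Phi_{d}(1)$ from this product by an induction (or equivalently a Möbius-type) argument on $d$. \textbf{Case 1: $d=p^{k}$ a prime power.} Apply the identity with $n=p^{k}$: the divisors $d>1$ of $p^{k}$ are exactly $p,p^{2},\dots,p^{k}$, so $\prod_{j=1}^{k}\Phi_{p^{j}}(1)=p^{k}$. Applying it also with $n=p^{k-1}$ gives $\prod_{j=1}^{k-1}\Phi_{p^{j}}(1)=p^{k-1}$, and dividing yields $\Phi_{p^{k}}(1)=p$. \textbf{Case 2: $d$ not a prime power.} Here I would argue by strong induction on $d$, the base cases $d=p$ (prime powers) being settled above; assume $\Phi_{e}(1)=1$ for every $e$ with $1<e<d$ that is not a prime power, and $\Phi_{e}(1)$ equals the appropriate prime for prime-power $e$. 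Apply the master identity with $n=d$: $\prod_{1<e\mid d}\Phi_{e}(1)=d$. Separate the proper divisors $e<d$ from the top term $\Phi_{d}(1)$. Among the proper divisors, the prime-power ones contribute $\prod_{p^{j}\mid d}p=\prod_{p\mid d}p^{v_{p}(d)}=d$ (every prime power $p^{j}$ with $p^{j}\mid d$ and $j\ge 1$ appears once, contributing a factor $p$, and for each prime $p\mid d$ there are exactly $v_{p}(d)$ such $j$); the non-prime-power proper divisors contribute $1$ each by the inductive hypothesis. Hence $d\cdot\Phi_{d}(1)=d$, giving $\Phi_{d}(1)=1$.

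Alternatively — and this might be cleaner to write up — one can avoid the case split in the induction by taking logarithms of the master identity and applying Möbius inversion directly: setting $f(d)=\log\Phi_{d}(1)$ for $d>1$ and $f(1)=0$, the identity $\sum_{d\mid n}f(d)=\log n$ (now including $d=1$ harmlessly since $f(1)=0$) inverts to $f(n)=\sum_{d\mid n}\mu(n/d)\log d=\Lambda(n)$, the von Mangoldt function, which is $\log p$ when $n=p^{k}$ and $0$ otherwise. Exponentiating gives the claim at once. The only genuinely delicate point in either route is justifying that the top-degree factor $\Phi_{d}(1)$ is a well-defined nonzero integer one is allowed to divide by, which follows since each $\Phi_{d}\in\mathbb{Z}[x]$ is monic of positive degree and — because $d\ge 2$ — does not vanish at $1$ (its roots are primitive $d$-th roots of unity, none equal to $1$); this last observation is exactly what makes the division steps legitimate. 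I do not expect any serious obstacle here: the result is classical and the computation is a short bookkeeping exercise once the master identity $\prod_{1<d\mid n}\Phi_{d}(1)=n$ is in hand. As noted in the excerpt, a complete proof is in \cite[page~74]{LangAlgebraicNumberTheory}.
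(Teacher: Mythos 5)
Your proof is correct. The paper itself does not prove this lemma: it only points to \cite[page~74]{LangAlgebraicNumberTheory}, so there is no in-text argument to compare against; your write-up supplies the standard elementary derivation that the citation stands in for. The master identity $\prod_{1<e\mid n}\Phi_{e}(1)=n$ obtained by evaluating $(x^{n}-1)/(x-1)$ at $x=1$, the telescoping between $n=p^{k}$ and $n=p^{k-1}$ for the prime-power case, and the strong induction for the non-prime-power case (where the prime-power proper divisors already contribute $\prod_{p\mid d}p^{v_{p}(d)}=d$) are all sound, and you correctly flag that the divisions are legitimate because $\Phi_{d}(1)$ is a nonzero integer for $d\ge 2$. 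One small caveat on your alternative route: setting $f(d)=\log\Phi_{d}(1)$ requires $\Phi_{d}(1)>0$, not merely $\Phi_{d}(1)\ne 0$. This is easy to see (for $d>2$ the polynomial $\Phi_{d}$ is monic with no real roots, hence positive on $\mathbb{R}$, and $\Phi_{2}(1)=2$), or one can simply observe that the inductive route already produces the values $p$ and $1$, which are positive; but if you present the M\"obius-inversion version as the main argument, add that one line.
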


The second auxiliary result gives the $p$-adic distance of a root of unity to
the~point~$1$.

\begin{lemma}
  \label{lem:1}
  Let $p$ be a prime and~$d\ge 2$. Then for all
  $\zeta\in \mu_{d}^{\circ}$ we have that
  \begin{displaymath}
|\iota_{p}(\zeta)-1|_{p}    =
\begin{cases}
 p^{-1/\varphi(d)}&\text{if } d \text{ is a power of } p,\\
1&\text{otherwise}.
\end{cases}
  \end{displaymath}
\end{lemma}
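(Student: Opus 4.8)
The plan is to reduce everything to the factorization of the cyclotomic polynomial $\Phi_d$ and the previous lemma's value $\Phi_d(1)$. First I would fix the embedding $\iota_p\colon\overline{\mathbb Q}\hookrightarrow\mathbb C_p$ and, to lighten notation, simply write $\zeta$ for $\iota_p(\zeta)\in\mathbb C_p$; the absolute value $|\cdot|_p$ on $\mathbb C_p$ is the unique extension of the $p$-adic absolute value, so $|\cdot|_p$ is constant on the Galois orbit of $\zeta$, namely on the set $\{\zeta^k : k\in(\mathbb Z/d\mathbb Z)^\times\}$ of all primitive $d$-th roots of unity. Hence $|\zeta-1|_p$ does not depend on the choice of $\zeta\in\mu_d^\circ$, and it suffices to compute the product of $|\zeta^k-1|_p$ over all $k\in(\mathbb Z/d\mathbb Z)^\times$, which equals $|\zeta-1|_p^{\varphi(d)}$. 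But that product is exactly $|\Phi_d(1)|_p$, since $\Phi_d(T)=\prod_{k\in(\mathbb Z/d\mathbb Z)^\times}(T-\zeta^k)$ and evaluation at $T=1$ is multiplicative for the absolute value. Therefore
\[
|\zeta-1|_p^{\varphi(d)}=|\Phi_d(1)|_p.
\]

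Now I invoke \cref{lem:2}. If $d$ is a power of the prime $p$, then $\Phi_d(1)=p$, so $|\Phi_d(1)|_p=p^{-1}$ and $|\zeta-1|_p=p^{-1/\varphi(d)}$. If $d$ is a power of a prime $q\neq p$, then $\Phi_d(1)=q$, which is a $p$-adic unit, so $|\Phi_d(1)|_p=1$ and $|\zeta-1|_p=1$. If $d$ is not a prime power, then $\Phi_d(1)=1$, again giving $|\zeta-1|_p=1$. In all cases where $d$ is not a power of $p$ we obtain $|\zeta-1|_p=1$, which is the second branch of the claimed formula; and the first branch is the prime-power case. This completes the argument.

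I do not expect a genuine obstacle here: the only point requiring a word of care is the claim that $|\cdot|_p$ is constant along the Galois orbit of $\zeta$, which is immediate from the uniqueness of the extension of $|\cdot|_p$ from $\mathbb Q_p$ to the algebraic closure (equivalently, Galois-invariance of the extended absolute value), together with the fact that $\operatorname{Gal}(\mathbb Q(\zeta_d)/\mathbb Q)\cong(\mathbb Z/d\mathbb Z)^\times$ acts transitively on $\mu_d^\circ$ by $\zeta\mapsto\zeta^k$. Everything else is the identity $\Phi_d(T)=\prod_{k}(T-\zeta^k)$ evaluated at $1$ combined with \cref{lem:2}. (Alternatively, one could argue ramification-theoretically: $p$ is totally ramified in $\mathbb Q(\zeta_{p^m})$ with $\zeta_{p^m}-1$ a uniformizer, and unramified in $\mathbb Q(\zeta_d)$ when $p\nmid d$; but the cyclotomic-polynomial computation is shorter and self-contained given \cref{lem:2}.)
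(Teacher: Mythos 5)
Your reduction to $\prod_{k\in(\mathbb{Z}/d\mathbb{Z})^{\times}}|\iota_p(\zeta^{k})-1|_p=|\Phi_d(1)|_p$ followed by \cref{lem:2} is exactly the paper's route, but the one step you single out as needing care is justified by a false principle, and that step is the only nontrivial point of the lemma. Uniqueness of the extension of $|\cdot|_p$ to $\overline{\mathbb{Q}_p}$ gives invariance of $|\cdot|_p$ under $\Gal(\overline{\mathbb{Q}_p}/\mathbb{Q}_p)$, not under $\Gal(\overline{\mathbb{Q}}/\mathbb{Q})$ acting through the fixed embedding $\iota_p$: an automorphism $\zeta\mapsto\zeta^{k}$ of $\mathbb{Q}(\zeta_d)$ preserves $|\iota_p(\cdot)|_p$ only if it lies in the decomposition group of the place determined by $\iota_p$, and in general the $\zeta^{k}$ with $k\in(\mathbb{Z}/d\mathbb{Z})^{\times}$ are not all conjugate over $\mathbb{Q}_p$ (for instance, if $p\equiv1\pmod d$ then $\Phi_d$ splits over $\mathbb{Q}_p$, $\zeta\in\mathbb{Q}_p$, and the decomposition group is trivial). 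The principle you invoke really is false for algebraic numbers: the two roots of $x^{2}+x+p$ are conjugate over $\mathbb{Q}$, yet under any embedding into $\mathbb{C}_p$ they have absolute values $1$ and $p^{-1}$. So "Galois-invariance of the extended absolute value plus transitivity on $\mu_d^{\circ}$" does not give $|\iota_p(\zeta^{k})-1|_p=|\iota_p(\zeta)-1|_p$ for all $k$, and without that identity your passage from the product $|\Phi_d(1)|_p$ to the individual value $|\iota_p(\zeta)-1|_p^{\varphi(d)}$ is unsupported.

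The needed constancy is nevertheless true, and it is precisely what the paper proves, by an ultrametric rather than a Galois argument: from $\zeta^{k}-1=(\zeta^{k-1}+\cdots+1)(\zeta-1)$ and $|\iota_p(\zeta)|_p=1$ one gets $|\iota_p(\zeta^{k})-1|_p\le|\iota_p(\zeta)-1|_p$, and since $\gcd(k,d)=1$ one may write $\zeta$ as a power of $\zeta^{k}$ and obtain the reverse inequality; equivalently, $\zeta-1$ and $\zeta^{k}-1$ are associates in $\mathbb{Z}[\zeta]$, so they have the same absolute value at every place. Substituting this for your invariance claim makes your proof correct and essentially identical to the paper's. (Your ramification-theoretic aside is sound for $d$ a power of $p$ and for $p\nmid d$, but as stated it omits the mixed case $d=p^{r}m$ with $r\ge1$ and $m\ge2$, where one still has to check that $\zeta-1$ is a unit above $p$.)
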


\begin{proof}
  The Galois conjugates of $\zeta$ are the elements of the form
  $\zeta^{k}$ for~$k\in (\mathbb{Z}/d\mathbb{Z})^{\times}$. For each
  $k$ we have that $\zeta^{k}-1= (\zeta^{k-1}+\cdots+1)(\zeta-1)$ and
  so the ultrametric inequality implies that
  \begin{displaymath}
    |\iota_{p}(\zeta^{k})-1|_{p}\le |\iota_{p}(\zeta)-1|_{p}.
  \end{displaymath}
  By symmetry, the reverse inequality also holds. Hence 
  $ |\iota_{p}(\zeta^{k})-1|_{p}= |\iota_{p}(\zeta)-1|_{p}$ for all~$k$, and so 
  \begin{displaymath}
    |\iota_{p}(\zeta)-1|_{p}^{\varphi(d)}=\prod_{k}  |\iota_{p}(\zeta^{k})-1|_{p}=|\Phi_{d}(1)|_{p}.
  \end{displaymath}
The result follows then from  \cref{lem:2}.
\end{proof}

\begin{proof}[Proof of \cref{prop: bounds for the height}]
  The lower bound comes from the fact that the height is nonnegative.
  By Kronecker's theorem, the height of $P(\omega)$ vanishes if and only if this
  point can be written as $[\xi_{0}:\xi_{1}:\xi_{2}]$ with
  $ \xi_{j}\in \mu_{\infty}\cup \{0\}$ for all~$j$.  The
  determination of such points in $C$ implies that the considered
  height is equal to  $0$ if and only if
  \begin{equation*}
P(\omega)\in\{[1:-1:0],[1:0:-1],[0:1:-1]\}\quad\text{or}\quad P(\omega)\in\{[1:\zeta:\zeta^{2}] \mid \zeta\in \mu_{3}^{\circ}\}.    
  \end{equation*}
  A comparison with the explicit form of $P(\omega) $ in \eqref{eq:7}
  shows that the first alternative holds if and only if $\omega$ is
  equal to $(1,\zeta),(\zeta,1)$ or $(\zeta,\zeta)$
  with~$\zeta\in \mu_{\infty}\setminus \{1\}$.
  For the
    second, we have that $P(\omega)=[1:\zeta:\zeta^{2}]$ with
    $\zeta\in \mu_{3}^{\circ}$ if and only if
    \begin{equation}
      \label{eq:9}
      \omega_2^{-1}-\omega_1^{-1}\ne 0, \quad \frac{1-\omega_{2}^{-1}}{\omega_2^{-1}-\omega_1^{-1}}=\zeta,
      \quad \frac{\omega_{1}^{-1}-1}{\omega_2^{-1}-\omega_1^{-1}}=\zeta^{2}.
    \end{equation}
    This implies that~$1+\zeta\omega_1^{-1}+\zeta^2\omega_2^{-1}=0$
    and so~$[1:\zeta\omega_1^{-1}:\zeta^2\omega_2^{-1}] \in
    C(\overline{\mathbb{Q}})$.  Our previous knowledge of the points
    of $C$ with homogenous coordinates that are roots of unity implies
    that either $\omega_{1}=\omega_{2}=1$ or~$\omega_{1}=\zeta^{2}$,~$\omega_{2}=\zeta$.
    Since the second
    possibility is the only one satisfying the conditions in~\eqref{eq:9}, this proves our claim concerning the points of $C$
    attaining the lower bound.

  Now let~$v \in M_{\mathbb{Q}}$.  Using the fact that
  $|\iota_{v}(\omega_{1})|_{v}=|\iota_{v}(\omega_{2})|_{v}=1$ for every~$v$, the triangular
  inequality when $v=\infty$ and the ultrametric inequality otherwise,
  we deduce from \cref{lem:5} that
  \begin{equation}
    \label{eq:6}
\h_{v}(\mathcal{P}(\omega))\le 
    \begin{cases}
      \log (2) & \text{ if } v=\infty, \\
      0 & \text{ if } v\ne \infty,
    \end{cases}
  \end{equation}
  which readily implies the upper bound.

This upper bound is attained if and only if all the
    inequalities in \eqref{eq:6} are in fact equalities.  When~$v=\infty$, this requirement forces in particular that the summand for $k=1$  in the
    formula in \cref{lem:5} coincides with~$\log(2)$, which
    happens if and only if $\omega$ is of the form
  \begin{equation}
    \label{eq:38}
(-1,\zeta), \quad (\zeta,-1) \quad \text{or} \quad (\zeta,-\zeta) \quad \text{ with } \zeta\in \mu_{\infty}.
\end{equation}
These torsion points have even order and so the indexes in that
formula are necessarily odd numbers, which implies that all the
summands coincide with~$\log(2)$. We conclude that for the Archimedean
place the inequality in \eqref{eq:6} is an equality exactly when
$\omega$ is of the form described in~\eqref{eq:38}.

Set~$d=\ord(\zeta)$. To realize the upper bound, for each of the above
possibilities for~$\omega$ we have to furthermore ensure that for
every prime $p$ we have that
\begin{displaymath}
\max(|2|_p, |\iota_{p}(\zeta^{k})-1|_p,|\iota_{p}(\zeta^{k})+1|_p)=1 \quad \text{ for all } k\in (\mathbb{Z}/d\mathbb{Z})^{\times}.
\end{displaymath}
This is nontrivial only when~$p=2$, in which case it is equivalent to
the condition that $|\iota_{2}(\zeta^{k})-1|_2=1$ for all~$k$, because
of the ultrametric property.  When $d=1$ this condition fails, whereas
when $d\ge 2$ it holds exactly when $d$ is not a
power of~$2$, by \cref{lem:1}. This completes the proof.
\end{proof}

\begin{remark}
  \label{rem:2}
  In the more general context of Arakelov geometry, the results of
  \cite{MartinezSombra} and \cite[Chapter 5]{GualdiThesis} provide
  upper bounds for the height of a complete intersection. However,
  neither of them is sharp in our particular situation.
\end{remark}

\section{The negligibility of the non-Archimedean heights}\label{sec:
  non-Archimedean height}

We now turn to our main object of study, that is the limit value of
the height of the intersection of the line $C$ with its translates by
torsion points in a strict sequence. Here we focus on the
non-Archimedean contribution to these heights, achieving an explicit
expression for it.

We first notice that these non-Archimedean local heights can
be nonvanishing.  In fact, for any non-Archimedean place of
$\mathbb{Q}$ it is easy to construct choices of~${\omega}$ for which
the corresponding local height is nontrivial, as the next example
shows.

\begin{example}
  Let $p$ be a prime and~$\zeta \in \mu_{p}^{\circ}$. Then by \cref{lem:5,lem:2},
  \begin{displaymath}
    \h_{p}(\mathcal{P}(\zeta,1))=\frac{1}{\varphi(p)} \sum_{k\in (\mathbb{Z}/p\, \mathbb{Z})^{\times}} \log|\iota_{p}(\zeta^{k})-1|_p     = \frac{\log|\Phi_{p}(1)|_{p}}{p-1}= -\frac{\log(p) }{p-1}.    
  \end{displaymath}
\end{example}

However, the situation emerging from this example already contains the
worst possible behavior of these non-Archimedean local heights, as their explicit computation in the next proposition makes evident.

\begin{proposition}\label{prop: explicit non-Archimedean height}
  Let $\omega \in \Gm^{2}(\overline{\mathbb{Q}})$ be a nontrivial
  torsion point and $p$ a prime. Then 
\[
\h_{p}(\mathcal{P}(\omega))=
\begin{cases}
-\displaystyle{\frac{\log (p)}{ p^{r-1} (p-1)}}&\text{ if } \ord({\omega})=p^r \text{ for some } r\geq1,\\
0&\text{otherwise}.
\end{cases}
\]
\end{proposition}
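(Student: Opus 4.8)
The plan is to start from the local height formula of \cref{lem:5} and reduce the computation to the $p$-adic distances of roots of unity to $1$, which are controlled by \cref{lem:1}. Set $d=\ord(\omega)$ and, following \eqref{eq:44} and the order formula displayed right after it, write $\omega=(\zeta^{c_1},\zeta^{c_2})$ with $\zeta\in\mu_d^{\circ}$ and $\gcd(c_1,c_2,d)=1$. Since $|\iota_p(\omega_i^{k})|_p=1$ and $\iota_p$ is a field embedding, one has $|\iota_p(\omega_2^{k})-\iota_p(\omega_1^{k})|_p=|\iota_p(\zeta^{k(c_2-c_1)})-1|_p$, so the three arguments of the maximum in \cref{lem:5} all take the form $|\iota_p(\zeta^{m})-1|_p$ and
\[
\h_p(\mathcal P(\omega))=\frac{1}{\varphi(d)}\sum_{k\in(\mathbb Z/d\mathbb Z)^{\times}}\log\max_{m\in\{kc_1,\,kc_2,\,k(c_2-c_1)\}}\bigl|\iota_p(\zeta^{m})-1\bigr|_p .
\]
As $\zeta^{m}$ is a primitive $(d/\gcd(m,d))$-th root of unity (or equals $1$ when $d\mid m$), \cref{lem:1} describes every term explicitly; note also that each term is $\le 1$ by the ultrametric inequality.

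I would then split into two cases according to whether $d$ is a power of $p$. Suppose first it is not, and write $d=p^{r}s$ with $r\ge 0$, $\gcd(p,s)=1$ and $s\ge 2$. Using $\gcd(m,p^{r}s)=\gcd(m,p^{r})\gcd(m,s)$ one sees that the order $d/\gcd(m,d)$ of $\zeta^{m}$ is a power of $p$ exactly when $s\mid m$; hence, by \cref{lem:1}, $|\iota_p(\zeta^{m})-1|_p=1$ whenever $s\nmid m$ (the order being then $\ge 2$ and not a power of $p$). Fix $k\in(\mathbb Z/d\mathbb Z)^{\times}$, so that $\gcd(k,s)=1$ and therefore $s\mid kc_i\iff s\mid c_i$ and $s\mid k(c_2-c_1)\iff s\mid c_2-c_1$. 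If all three of $kc_1,kc_2,k(c_2-c_1)$ were divisible by $s$, we would get $s\mid c_1$ and $s\mid c_2$, hence $s\mid\gcd(c_1,c_2,d)=1$, a contradiction. So for every $k$ at least one term equals $1$, the inner maximum is $1$, its logarithm vanishes, and $\h_p(\mathcal P(\omega))=0$.

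Finally, suppose $d=p^{r}$ with $r\ge 1$. Then every $\zeta^{m}\neq 1$ has order $p^{\,r-\min(v_p(m),r)}$, a power of $p$, and by \cref{lem:1} the corresponding term equals $p^{-1/\varphi(p^{\,r-\min(v_p(m),r)})}$, a quantity strictly increasing as $\min(v_p(m),r)$ decreases. Since $\gcd(k,p)=1$ we have $v_p(kc_i)=v_p(c_i)$ and $v_p(k(c_2-c_1))=v_p(c_2-c_1)$, while the hypothesis $\gcd(c_1,c_2,p^{r})=1$ forces $\min(v_p(c_1),v_p(c_2))=0$, hence $\min\{v_p(kc_1),v_p(kc_2),v_p(k(c_2-c_1))\}=0$. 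Thus for every $k$ the inner maximum equals $p^{-1/\varphi(p^{r})}$, independently of $k$, and summing over the $\varphi(p^{r})$ classes $k$ gives $\h_p(\mathcal P(\omega))=-\log(p)/\varphi(p^{r})=-\log(p)/(p^{r-1}(p-1))$, as claimed. The argument is elementary throughout; the only point requiring a little care is the bookkeeping that identifies, for each $m$, when the order of $\zeta^{m}$ is a power of $p$ and then extracts its $p$-adic valuation, after which \cref{lem:1,lem:2} do the rest.
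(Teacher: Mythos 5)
Your proof is correct. It shares the paper's skeleton — reduce via \cref{lem:5} to a sum over $k\in(\mathbb{Z}/d\mathbb{Z})^{\times}$ of logarithms of maxima of quantities $|\iota_p(\zeta^{m})-1|_p$, then feed in the value of such distances — but it diverges in the middle step. The paper invokes \cref{lem: non-Archimedean absolute values for roots of unity}, a Bezout-type identity $\max(|\zeta^{c_1}-1|,|\zeta^{c_2}-1|)=|\zeta-1|$ for $\gcd(c_1,c_2,d)=1$, which collapses the per-$k$ maximum to $|\iota_p(\zeta^{k})-1|_p$ uniformly in $d$, so that a single application of \cref{lem:1} finishes the computation. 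You bypass that lemma entirely: you evaluate each of the three terms through \cref{lem:1} and identify the maximum by explicit bookkeeping, splitting into the case $d$ not a $p$-power (where coprimality of $c_1,c_2$ with $d$ guarantees at least one term equals $1$, and all terms are $\le 1$ by the ultrametric bound) and the case $d=p^r$ (where the maximum is governed by the minimal $p$-adic valuation among $c_1,c_2,c_2-c_1$, which is $0$). The paper's route buys a cleaner, reusable structural identity that makes the final formula a one-line consequence of \cref{lem:1}; your route is self-contained at this spot but in effect re-proves the content of that lemma in valuation-theoretic terms, at the cost of a slightly longer case analysis. Both arguments rest on the same two ingredients (\cref{lem:5} and \cref{lem:1}/\cref{lem:2}) and are equally elementary.
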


Before proving this proposition, we give an elementary lemma.

\begin{lemma}\label{lem: non-Archimedean absolute values for roots of unity}
  Let $(F,|\cdot|)$ be a field equipped with a non-Archimedean
  absolute value. Then for all $c\in \mathbb{Z}^{2}$ and
  $d\in \mathbb{Z}_{\ge 1}$ such that $\gcd(c_{1},c_{2},d)=1$ and all
  primitive $d$-root of unity $\zeta$ in $F$ we have that
\[
\max(|\zeta^{c_{1}}-1|,|\zeta^{c_{2}}-1|)=|\zeta-1|.
\]
\end{lemma}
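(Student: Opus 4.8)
The plan is to reduce to the case where $F$ is algebraically closed and complete (so that $|\cdot|$ extends and we may invoke the structure of roots of unity under a non-Archimedean valuation), and then argue place by place. Concretely, write $|\cdot|$ for the extension to an algebraic closure; the key dichotomy is whether the residue characteristic $p$ divides $d$ or not. If $p \nmid d$ (including the case where the residue characteristic is $0$), then $\zeta$ is a root of unity of order prime to $p$, hence $\zeta \equiv \zeta' \pmod{\mathfrak m}$ forces $\zeta = \zeta'$ for distinct such roots of unity, so $|\zeta^{a}-1| = 1$ whenever $\zeta^{a}\neq 1$; since $\gcd(c_1,c_2,d)=1$ we cannot have both $\zeta^{c_1}=1$ and $\zeta^{c_2}=1$, so $\max(|\zeta^{c_1}-1|,|\zeta^{c_2}-1|)=1=|\zeta-1|$. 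This disposes of the generic case.

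The substantive case is $p \mid d$. First I would write $d = p^{s} m$ with $p \nmid m$, and factor $\zeta = \eta \cdot \xi$ where $\eta$ has order $p^{s}$ and $\xi$ has order $m$; the point is that $|\zeta^{a}-1|$ depends only on the $p$-part of the exponent, because multiplying by a root of unity of order prime to $p$ does not change the absolute value of a difference of the form $(\text{unit})-1$ when that difference already has $|\cdot| < 1$ — more precisely, $|\zeta^a - 1| = |\eta^a - 1|$ whenever $p \mid \operatorname{ord}(\eta^a)$, and $|\zeta^a-1|=1$ otherwise. So the claim reduces to the purely $p$-power case: for a primitive $p^{s}$-th root of unity $\eta$ and exponents $a_1,a_2$ with $\gcd(a_1,a_2,p^{s})$ a power of $p$ strictly dividing $p^s$ — wait, here I must track that $\gcd(c_1,c_2,d)=1$ translates to: not both $c_i$ are divisible by $p$. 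Then $\min(v_p(c_1),v_p(c_2)) = 0$, so $\eta^{c_i}$ is again primitive of order $p^{s}$ for at least one $i$, giving $|\eta^{c_i}-1| = |\eta-1|$ for that $i$ by \cref{lem:1}-type reasoning (Galois conjugacy of all primitive $p^s$-th roots of unity), while for the other index $|\eta^{c_j}-1| \le |\eta-1|$ since $\eta^{c_j}-1$ is divisible by $\eta-1$ in $\mathbb{Z}[\eta]$ (as $\eta^{c_j}-1 = (\eta^{c_j-1}+\cdots+1)(\eta-1)$). Taking the max recovers $|\eta-1| = |\zeta-1|$.

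An alternative, cleaner route I would seriously consider is to invoke \cref{lem:1} directly after base-changing: for each primitive $d'$-th root of unity $\vartheta$ appearing among $\zeta^{c_1}, \zeta^{c_2}, \zeta$, the value $|\iota_p(\vartheta)-1|_p$ is $p^{-1/\varphi(d')}$ if $d'$ is a power of $p$ and $1$ otherwise. Since $\zeta^{c_i}$ has order $d/\gcd(c_i,d)$ and $\gcd(c_1,c_2,d)=1$ ensures $\gcd(c_1,d)$ and $\gcd(c_2,d)$ are coprime, the orders $\operatorname{ord}(\zeta^{c_1})$ and $\operatorname{ord}(\zeta^{c_2})$ have coprime "complementary parts" in $d$; a short divisibility check shows that whichever of the two has order a power of $p$ has in fact order exactly $p^{s}$ (the full $p$-part of $d$), hence the same absolute value $p^{-1/\varphi(p^s)}$ as $\zeta$ itself (note $|\zeta-1| = |\eta-1| = p^{-1/\varphi(p^s)}$ by the same splitting argument), while the other term has absolute value $\le 1$ times that — and the maximum is attained. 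The main obstacle is bookkeeping: making sure the condition $\gcd(c_1,c_2,d)=1$ is used exactly where needed (it is precisely what forbids both $\zeta^{c_i}$ from having order with a strictly smaller $p$-adic valuation than $\zeta$), and handling the reduction from general $F$ to the complete algebraically closed case so that \cref{lem:1} is applicable verbatim.
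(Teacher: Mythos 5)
Your scaffolding (split on the residue characteristic $p$, then on the $p$-part of $d$) could be repaired, but the pivotal claim on which your substantive case rests is false. Writing $d=p^{s}m$ with $p\nmid m$ and $\zeta=\eta\xi$ as you do, it is \emph{not} true that $|\zeta^{a}-1|=|\eta^{a}-1|$ whenever $\eta^{a}\neq 1$: if $\xi^{a}\neq 1$ then $\zeta^{a}-1=\xi^{a}(\eta^{a}-1)+(\xi^{a}-1)$ with $|\xi^{a}(\eta^{a}-1)|<1$ and $|\xi^{a}-1|=1$, so $|\zeta^{a}-1|=1$ no matter what $\eta^{a}$ is. Concretely, for $d=6$, $p=3$, $a=1$ one has $\zeta-1=\zeta^{2}$ (since $\Phi_{6}(\zeta)=0$), a primitive cube root of unity, so $|\zeta-1|_{3}=1\neq 3^{-1/2}=|\eta-1|_{3}$; equivalently, your parenthetical ``$|\zeta-1|=|\eta-1|=p^{-1/\varphi(p^{s})}$'' contradicts \cref{lem:1}, which gives $|\zeta-1|=1$ whenever $d$ is not a prime power. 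Consequently your bookkeeping ``$\gcd(c_{1},c_{2},d)=1$ means not both $c_{i}$ are divisible by $p$'' only settles the case $m=1$; when $m>1$ the relevant point is that not both $c_{i}$ can be divisible by $m$ (else $m\mid\gcd(c_{1},c_{2},d)$), which is what forces some $|\zeta^{c_{i}}-1|=1=|\zeta-1|$. Two secondary problems: the divisibility claim in your alternative route is also false (for $d=12$, $c_{1}=6$, $c_{2}=1$ one has $\ord(\zeta^{c_{1}})=2\neq p^{s}=4$), and ``Galois conjugacy'' is not available over an arbitrary $(F,|\cdot|)$ — the absolute value need not be automorphism-invariant, and \cref{lem:1} is a statement about $\mathbb{C}_{p}$, so it does not apply verbatim (e.g.\ to fields of positive characteristic); the fact you actually need, that $\gcd(k,d)=1$ implies $|\zeta^{k}-1|=|\zeta-1|$, must be proved by the elementary two-sided divisibility argument.

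For comparison, the paper's proof needs none of this structure theory and no case analysis: the ultrametric inequality gives $|\zeta^{e}-1|\leq|\zeta-1|$ for all $e\in\mathbb{Z}$ (hence the inequality $\leq$ in the statement), and choosing $b_{1},b_{2}$ with $b_{1}c_{1}+b_{2}c_{2}\equiv 1\pmod{d}$ — this is exactly where $\gcd(c_{1},c_{2},d)=1$ enters — and writing $\zeta-1=\zeta^{b_{2}c_{2}}(\zeta^{b_{1}c_{1}}-1)+(\zeta^{b_{2}c_{2}}-1)$ gives the reverse inequality, uniformly for any field with a non-Archimedean absolute value, with no completion, residue field, or appeal to \cref{lem:1}. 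If you want to keep your approach, you must replace the false reduction by the correct dichotomy ($|\zeta^{a}-1|=1$ if $m\nmid a$, and $=|\eta^{a}-1|$ if $m\mid a$) and treat the cases $m>1$ and $m=1$ separately as indicated above.
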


\begin{proof}
  Since $\zeta$ is a root of unity, its absolute value is equal
  to~$1$. It follows from the ultrametric inequality that
  $|\zeta^e-1|= |\zeta^{e-1}+\dots+1| \, |\zeta-1| \leq |\zeta-1|$
  for every~$e\geq1$.  As the same inequality holds for
  $e \le -1$ because of the relation~$|\zeta^{e}-1|=|\zeta^{-e}-1|$
  and it also holds trivially for~$e=0$, we have that
  \begin{equation}
  \label{eq:10}
|\zeta^e-1|\le |\zeta-1| \quad \text{ for all } e\in\mathbb{Z}.
\end{equation}
Therefore, $\max(|\zeta^{c_{1}}-1|,|\zeta^{c_{2}}-1|)\le |\zeta-1|$.

Else, since~$\gcd(c_{1},c_{2},d)=1$ there is $b\in\mathbb{Z}^{2}$ such
that~$b_{1} c_{1}+b_{2}c_{2}=1 \pmod d$.  This implies that
$\zeta-1=\zeta^{b_{2}c_{2}}(\zeta^{b_{1}c_{1}}-1)+(\zeta^{b_{2}c_{2}}-1)$,
which by the ultrametric inequality and \eqref{eq:10} ensures that
\[
|\zeta-1|\leq\max(|\zeta^{b_{1}  c_{1}}-1|,|\zeta^{b_{2}  c_{2}}-1|)\leq\max(|\zeta^{c_{1}}-1|,|\zeta^{c_{2}}-1|),
\]
completing the proof.
\end{proof}

\begin{proof}[Proof of \cref{prop: explicit non-Archimedean height}]
  Set~$d=\ord({\omega})$; since $\omega$ is nontrivial, we have that~$d\geq2$.
  Choose~$\zeta \in \mu_{d}^{\circ}$.
  As~$\omega\in \mu_{d}^{2}$,
  by \eqref{eq:44} there is $c\in\mathbb{Z}^{2}$ with
  $\gcd(c_{1},c_{2},d)=1$ such that
  \begin{displaymath}
{\omega}=(\zeta^{c_{1}},\zeta^{c_{2}}).
   \end{displaymath}
   The ultrametric inequality together with \cref{lem:5,lem:
     non-Archimedean absolute values for roots of unity} then implies
   that
\begin{align*}
\h_{p}(\mathcal{P}({\omega}))
  &=\frac{1}{\varphi(d)}\sum_{k}\log\max(|\iota_{p}(\zeta^{k \, c_{2}})-\iota_{p}(\zeta^{k \, c_{1}})|_{p},|\iota_{p}(\zeta^{k\, c_{2}})-1|_{p},|
    \iota_{p}(\zeta^{k\, c_{1}})-1|_{p})\\
&=\frac{1}{\varphi(d)}\sum_{k}\log|\iota_{p}(\zeta^{k})-1|_{p}.
\end{align*}
Therefore the statement follows from \cref{lem:1}.
\end{proof}

Summing over all finite places, \cref{prop: explicit non-Archimedean height} shows that the non-Archimedean contribution to the height
of ${P}({\omega})$ is always nonpositive and that moreover it vanishes
precisely when the order of $\omega$ has at least two different prime
factors.

\begin{corollary}
  \label{cor:1}
  For every nontrivial torsion point~${\omega}$ of
  $\Gm^{2}(\overline{\mathbb{Q}})$ we have that
  \begin{displaymath}
    \sum_{v \in M_{\mathbb{Q}} \setminus \{ \infty \}}\h_{v}(\mathcal{P}(\omega))=-\frac{\Lambda(\ord(\omega))}{\varphi(\ord(\omega))},
  \end{displaymath}
 where $\Lambda$ denotes the von Mangoldt function.
\end{corollary}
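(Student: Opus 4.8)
The plan is to sum the formula from \cref{prop: explicit non-Archimedean height} over all primes $p$ and recognize the result in terms of the von Mangoldt function. Set $n = \ord(\omega) \ge 2$. By \cref{prop: explicit non-Archimedean height}, for each prime $p$ the local height $\h_{p}(\mathcal{P}(\omega))$ is zero unless $n$ is a prime power $p^{r}$ with $r \ge 1$, in which case it equals $-\log(p)/(p^{r-1}(p-1))$. Hence at most one term in the sum $\sum_{v \ne \infty} \h_{v}(\mathcal{P}(\omega))$ is nonzero.

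First I would split into two cases according to whether $n$ is a prime power or not. If $n$ has at least two distinct prime factors, then for every prime $p$ we have $\ord(\omega) = n \ne p^{r}$, so every non-Archimedean local height vanishes and the sum is $0$; on the other hand $\Lambda(n) = 0$ by definition of the von Mangoldt function, so the claimed identity reads $0 = 0$. If instead $n = p^{r}$ is a prime power with $r \ge 1$, then the only nonvanishing term in the sum is the one for that particular $p$, giving
\begin{displaymath}
  \sum_{v \in M_{\mathbb{Q}} \setminus \{\infty\}} \h_{v}(\mathcal{P}(\omega)) = -\frac{\log(p)}{p^{r-1}(p-1)}.
\end{displaymath}
It remains to check that this equals $-\Lambda(n)/\varphi(n)$. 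Here $\Lambda(p^{r}) = \log(p)$, and $\varphi(p^{r}) = p^{r-1}(p-1)$, so indeed $-\Lambda(n)/\varphi(n) = -\log(p)/(p^{r-1}(p-1))$, matching the above.

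There is really no substantial obstacle here: the corollary is a direct bookkeeping consequence of \cref{prop: explicit non-Archimedean height} together with the elementary facts that $\Lambda$ is supported on prime powers with value $\log(p)$ there, and that $\varphi(p^{r}) = p^{r-1}(p-1)$. The only point requiring a moment's care is to note that the sum over finite places is genuinely finite — which is automatic here since at most one summand is nonzero — and to make sure the case $\Lambda(n) = 0$ (when $n$ is divisible by at least two primes, or trivially if one wished to include $n=1$, which is excluded) is correctly aligned with the vanishing of the sum. I would write this up in a couple of sentences, invoking \cref{prop: explicit non-Archimedean height} and the definitions of $\Lambda$ and $\varphi$.
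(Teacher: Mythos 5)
Your argument is correct and is exactly how the paper obtains this statement: the corollary is stated as an immediate consequence of \cref{prop: explicit non-Archimedean height} (with \cref{rem:9} spelling out the two cases), using that $\Lambda$ is supported on prime powers with value $\log(p)$ and that $\varphi(p^{r})=p^{r-1}(p-1)$. Nothing is missing.
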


\begin{remark}
  \label{rem:9}
  More explicitly, \cref{cor:1} says  that this
  part of the height of $P(\omega)$ is equal to
  \[
-\frac{\log (p)}{p^{r-1} (p-1)}
\]
if $ \ord({\omega})=p^r$  for some prime $p$  and~$r\geq1$, and to $0$ otherwise.
\end{remark}

In turn, this result implies that the non-Archimedean contribution to
the height of~${P}({\omega})$ approaches to zero when $\ord(\omega)$
is large.

\begin{corollary}
  \label{cor:2}
  Let $(\omega_{\ell})_{\ell\ge 1}$ be a sequence of nontrivial
  torsion points of $\Gm^{2}(\overline{\mathbb{Q}})$ with
  $\lim_{\ell\to+\infty}\ord(\omega_{\ell})=+\infty$. Then
  \begin{displaymath}
    \lim_{\ell\to+\infty}\sum_{v \in M_{\mathbb{Q}} \setminus \{ \infty \}}\h_{v}(\mathcal{P}(\omega_{\ell}))=0.
  \end{displaymath}
\end{corollary}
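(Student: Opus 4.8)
The plan is to deduce Corollary~\ref{cor:2} directly from the explicit formula in Corollary~\ref{cor:1}. By that result, for each $\ell$ the non-Archimedean contribution to the height equals $-\Lambda(\ord(\omega_\ell))/\varphi(\ord(\omega_\ell))$, so it suffices to show that
\[
\lim_{\ell \to +\infty} \frac{\Lambda(\ord(\omega_\ell))}{\varphi(\ord(\omega_\ell))} = 0.
\]
Write $d_\ell = \ord(\omega_\ell)$, so that $d_\ell \to +\infty$ by hypothesis. The von Mangoldt function $\Lambda(d_\ell)$ vanishes unless $d_\ell$ is a prime power $p^r$, in which case $\Lambda(d_\ell) = \log(p)$. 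Hence it is enough to control the ratio $\log(p)/\varphi(p^r)$ along those $\ell$ for which $d_\ell = p^r$ is a prime power.

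First I would dispose of the easy case: if $d_\ell$ is not a prime power, the summand is exactly $0$, so we only need to worry about the subsequence of prime-power orders. For such a term, $\varphi(p^r) = p^{r-1}(p-1) \ge p^{r}/2 \ge \sqrt{p^r}/2 = \sqrt{d_\ell}/2$ (using $p-1 \ge p/2$ for $p \ge 2$), while $\Lambda(d_\ell) = \log p \le \log(p^r) = \log d_\ell$. Therefore, on the prime-power subsequence,
\[
0 \le \frac{\Lambda(d_\ell)}{\varphi(d_\ell)} \le \frac{2 \log d_\ell}{\sqrt{d_\ell}},
\]
and the right-hand side tends to $0$ as $d_\ell \to +\infty$ since $\log(t)/\sqrt{t} \to 0$ as $t \to +\infty$. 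Combining the two cases, the full sequence $\Lambda(d_\ell)/\varphi(d_\ell)$ is squeezed between $0$ and a sequence tending to $0$, hence converges to $0$, which by Corollary~\ref{cor:1} gives the claim.

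I do not anticipate a genuine obstacle here: once Corollary~\ref{cor:1} is in hand, the statement is a routine estimate on arithmetic functions. The only mild subtlety worth stating cleanly is that $\Lambda$ is supported on prime powers, so that one really only needs the bound $\log(p)/\varphi(p^r) \to 0$, which follows from the crude inequality $\varphi(p^r) \ge \sqrt{p^r}/2$ above; an even softer argument simply notes that $\log p \le p - 1 \le \varphi(p^r)$ for $p$ large, so the ratio is bounded by $1$ and in fact goes to $0$. Either way the proof is a couple of lines.
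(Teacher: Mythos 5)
Your proof is correct and follows essentially the same route as the paper: both deduce the statement from \cref{cor:1}, reduce to the case where $\ord(\omega_\ell)$ is a prime power, and conclude with an elementary estimate showing $\Lambda(d)/\varphi(d)\to 0$ as $d\to+\infty$ (the paper factors the ratio as a bounded term times $\log(d)/d$, you bound $\varphi(p^r)\ge p^r/2$ and $\Lambda(p^r)\le\log(p^r)$ — the same routine computation in slightly different clothing).
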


\begin{proof}
  By \cref{cor:1} and \cref{rem:9}, we can reduce without loss of
  generality to the case when
  $ \ord(\omega_{\ell})=p_{\ell}^{r_{\ell}}$ with $p_{\ell}$ a prime
  and $r_{\ell}\ge 1$ for all~$\ell$. Then the sum of the
  non-Archimedean local heights of the vector
  $\mathcal{P}(\omega_{\ell})$ is, up to sign, equal to
\begin{displaymath}
 \frac{\log(p_{\ell})}{p_{\ell}^{r_{\ell}-1}(p_{\ell}-1)} =  \frac{p_{\ell}}{r_{\ell}\, (p_{\ell}-1)} \, 
\frac{\log(p_{\ell}^{r_{\ell}})}{p_{\ell}^{r_{\ell}}}=  \frac{p_{\ell}}{r_{\ell}\, (p_{\ell}-1)} \, 
\frac{\log(\ord(\omega_{\ell}))}{\ord(\omega_{\ell})}.
\end{displaymath}
Since the first factor in the right-hand side is bounded, this
quantity tends to $0$ whenever~$\ell\to +\infty$.
\end{proof}

\section{The limit of the Archimedean height}\label{sec: limit of archimedean heights}

Having computed the non-Archimedean contribution to the height of the
intersection of the line $C$ with its translate by a nontrivial
torsion point, we turn to the limit behavior of its Archimedean
counterpart for strict sequences of such points.

Set for short~$|\cdot|=|\cdot|_{\infty}$. We denote by
\begin{displaymath}
  \mathbb{S}=(S^{1})^{2}=\{z \in (\mathbb{C}^{\times})^{2}\mid |z_{1}|=|z_{2}|=1\} 
\end{displaymath}
the compact torus of the complex torus
$\Gm^{2}(\mathbb{C})=(\mathbb{C}^{\times})^{2}$ and by $\nu$ its
probability Haar measure. Consider the function
\begin{equation}
  \label{eq:29}
F\colon (\mathbb{C}^{\times})^{2}\longrightarrow \mathbb{R}\cup \{-\infty\} ,\quad  z\longmapsto\log \max (|z_{2}-z_{1}|, |z_{2}-1|, |z_{1}-1|).
\end{equation}
Consider also the \emph{co-tropicalization map}
\begin{displaymath}
\cotrop\colon (\mathbb{C}^{\times})^{2}\longrightarrow (\mathbb{R}/2 \pi \mathbb{Z})^{2},\quad z\longmapsto(\arg(z_{1}), \arg(z_{2}))
\end{displaymath}
and the function
\begin{equation}
  \label{eq:11}
  f\colon (\mathbb{R}/2 \pi \mathbb{Z})^{2} \longrightarrow
\mathbb{R}\cup\{-\infty\}, \quad
u\longmapsto\log \max (|\eu^{\iu u_{2}}-\eu^{\iu u_{1}}|,|\eu^{\iu u_{2}}-1|,|\eu^{\iu u_{1}}-1|).
\end{equation}
The direct image measure $\cotrop_{*}\nu$ coincides with the
normalized Lebesgue measure on~$(\mathbb{R}/2 \pi \mathbb{Z})^{2}$,
and the inverse image $\cotrop^{*}f$ coincides on $\mathbb{S}$ with the restriction of~$F$.

The next result is an asymptotic Archimedean counterpart of
\cref{cor:2}.

\begin{proposition}
  \label{prop:1}
  Let $(\omega_{\ell})_{\ell\ge 1}$ be a strict sequence in
  $\Gm^{2}(\overline{\mathbb{Q}})$ of nontrivial torsion points. Then
  \begin{displaymath}
\lim_{\ell\to +\infty}    \h_{\infty}(\mathcal{P}(\omega_{\ell})) = 
\int_{\mathbb{S}} F \, d\nu =  \frac{1}{(2 \pi)^{2}} \int_{(\mathbb{R}/2 \pi \mathbb{Z})^{2}} f(u)\, du_{1} du_{2}.
  \end{displaymath}
\end{proposition}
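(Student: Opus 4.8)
The plan is to deduce the convergence of $\h_{\infty}(\mathcal{P}(\omega_{\ell}))$ from the standard equidistribution of Galois orbits of torsion points on the compact torus $\mathbb{S}$, combined with a control of the logarithmic singularity of $F$ along the subtorus where the argument vanishes. First I would rewrite, using \cref{lem:5}, the Archimedean local height as the average of $F$ over the $\infty$-adic Galois orbit of $\omega_{\ell}$: if $d_{\ell}=\ord(\omega_{\ell})$, then
\begin{displaymath}
\h_{\infty}(\mathcal{P}(\omega_{\ell}))=\frac{1}{\varphi(d_{\ell})}\sum_{k\in(\mathbb{Z}/d_{\ell}\mathbb{Z})^{\times}}F(\iota_{\infty}(\omega_{\ell}^{k})).
\end{displaymath}
Since $(\omega_{\ell})$ is strict, the orders $d_{\ell}$ tend to infinity, and the Galois orbits $\{\iota_{\infty}(\omega_{\ell}^{k})\}_{k}$ equidistribute on $\mathbb{S}$ with respect to the Haar measure $\nu$ (Bilu's equidistribution theorem for torsion points of tori). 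If $F$ were continuous on $\mathbb{S}$ this would immediately give the first equality; the second equality is then just the identity $\cotrop_{*}\nu=$ normalized Lebesgue measure together with $\cotrop^{*}f=F$ on $\mathbb{S}$, as noted before the statement, so the integral $\int_{\mathbb{S}}F\,d\nu$ equals $(2\pi)^{-2}\int f\,du$.

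The genuine difficulty is that $F$ is \emph{not} continuous: it takes the value $-\infty$ at $(1,1)$, is unbounded below near that point, and a torsion point $\omega_{\ell}^{k}$ may land arbitrarily close to $(1,1)$. So I need a quantitative handle on how close orbit points can get, and a bound showing the negative contributions cannot blow up. Here the key input is the value of the cyclotomic polynomial at $1$, \cref{lem:2}, transported to the Archimedean place. The plan is as follows. Upper-boundedness is not an issue: $F$ is bounded above by $\log 2$ on $\mathbb{S}$, so $\limsup_{\ell}\h_{\infty}(\mathcal{P}(\omega_{\ell}))\le\int_{\mathbb{S}}F\,d\nu$ by taking continuous functions $\ge F$ approximating from above (a Portmanteau-type argument, since $F$ is upper semicontinuous and $\nu(\{F=-\infty\})=0$). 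For the matching lower bound I would show $\liminf_{\ell}\h_{\infty}(\mathcal{P}(\omega_{\ell}))\ge\int_{\mathbb{S}}F\,d\nu$ by truncating: for $\varepsilon>0$ set $F_{\varepsilon}=\max(F,\log\varepsilon)$, a continuous (hence bounded) function on $\mathbb{S}$ with $F_{\varepsilon}\downarrow F$ pointwise and $\int_{\mathbb{S}}F_{\varepsilon}\,d\nu\to\int_{\mathbb{S}}F\,d\nu$ as $\varepsilon\to0$ by monotone convergence (using $\int_{\mathbb{S}}F\,d\nu>-\infty$, which follows from \cref{prop:2} or directly from integrability of $\log|\eu^{\iu u}-1|$). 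The error term is then
\begin{displaymath}
\h_{\infty}(\mathcal{P}(\omega_{\ell}))\ \ge\ \frac{1}{\varphi(d_{\ell})}\sum_{k}F_{\varepsilon}(\iota_{\infty}(\omega_{\ell}^{k}))\ -\ \frac{1}{\varphi(d_{\ell})}\sum_{k}\bigl(F_{\varepsilon}-F\bigr)(\iota_{\infty}(\omega_{\ell}^{k})),
\end{displaymath}
and the first sum converges to $\int_{\mathbb{S}}F_{\varepsilon}\,d\nu$ by equidistribution, so it remains to show the correction term tends to $0$ uniformly enough.

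Controlling that correction term is the main obstacle, and this is where I would use arithmetic. Writing $\omega_{\ell}=(\zeta^{c_{1}},\zeta^{c_{2}})$ with $\zeta\in\mu_{d_{\ell}}^{\circ}$ and $\gcd(c_{1},c_{2},d_{\ell})=1$, the argument of \cref{lem: non-Archimedean absolute values for roots of unity} shows (the same proof works verbatim at the Archimedean place, since it only uses that $|\zeta|=1$ and a Bézout relation) that $\max(|\zeta^{kc_{1}}-1|,|\zeta^{kc_{2}}-1|)$ is comparable, up to factors bounded by $d_{\ell}$, to $|\zeta^{k}-1|$; hence $F(\iota_{\infty}(\omega_{\ell}^{k}))\ge\log|\iota_{\infty}(\zeta^{k})-1|-\log d_{\ell}$ and in particular $F(\iota_{\infty}(\omega_{\ell}^{k}))\ge\log|\iota_{\infty}(\zeta^{k})-1|-C$ for $\ell$ large. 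Therefore $F_{\varepsilon}-F\le (\log\varepsilon-\log|\iota_{\infty}(\zeta^{k})-1|+C)^{+}$, and
\begin{displaymath}
\frac{1}{\varphi(d_{\ell})}\sum_{k}\bigl(F_{\varepsilon}-F\bigr)(\iota_{\infty}(\omega_{\ell}^{k}))\ \le\ \frac{1}{\varphi(d_{\ell})}\sum_{k}\Bigl(\log\tfrac{1}{|\iota_{\infty}(\zeta^{k})-1|}\Bigr)^{+}+O\bigl(|\log\varepsilon|+C\bigr)\cdot\frac{\#\{k:\ |\iota_{\infty}(\zeta^{k})-1|<\varepsilon'\}}{\varphi(d_{\ell})}.
\end{displaymath}
The full average $\frac{1}{\varphi(d_{\ell})}\sum_{k}\log|\iota_{\infty}(\zeta^{k})-1|$ equals $\frac{1}{\varphi(d_{\ell})}\log|\Phi_{d_{\ell}}(1)|\le\frac{\log d_{\ell}}{\varphi(d_{\ell})}\to0$ by \cref{lem:2}, which bounds the positive part of $\sum_{k}\log|\iota_{\infty}(\zeta^{k})-1|$ (splitting into terms $\ge1$ and $<1$ in absolute value); this forces the number of $k$ with $\zeta^{k}$ very close to $1$ to be $o(\varphi(d_{\ell}))$, making the second term negligible as $\ell\to\infty$ then $\varepsilon\to0$. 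Combining, $\liminf_{\ell}\h_{\infty}(\mathcal{P}(\omega_{\ell}))\ge\int_{\mathbb{S}}F_{\varepsilon}\,d\nu$ for every $\varepsilon$, hence $\ge\int_{\mathbb{S}}F\,d\nu$, which together with the $\limsup$ bound completes the proof. (Alternatively, one could invoke the logarithmic equidistribution theorems of Chambert-Loir--Thuillier or Dimitrov--Habegger to handle the singular function $F$ directly, as the authors remark; the elementary route above only uses Bilu's theorem and \cref{lem:2}.)
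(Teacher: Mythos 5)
Your reduction of $\h_{\infty}(\mathcal{P}(\omega_{\ell}))$ to an orbit average of $F$ via \cref{lem:5}, the use of equidistribution for the regular part, the easy $\limsup$ bound from $F\le F_{\varepsilon}$, and the change of variables giving the second equality all agree with the paper's setup. The gap is in your control of the singular part for the $\liminf$. The Archimedean analogue of \cref{lem: non-Archimedean absolute values for roots of unity} does \emph{not} hold verbatim: replacing the ultrametric inequality by the triangle inequality costs factors of size up to $d_{\ell}$ both in $|\zeta^{e}-1|\le |e|\,|\zeta-1|$ and in the B\'ezout step, so what one actually gets is $\max(|\zeta^{kc_{1}}-1|,|\zeta^{kc_{2}}-1|)\ge |\zeta^{k}-1|/(2d_{\ell})$, i.e.\ $F(\iota_{\infty}(\omega_{\ell}^{k}))\ge \log|\iota_{\infty}(\zeta^{k})-1|-\log(2d_{\ell})$. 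Your passage ``in particular $\ge \log|\iota_{\infty}(\zeta^{k})-1|-C$ for $\ell$ large'' reverses this (the loss grows with $\ell$, it does not shrink), and the constant-loss inequality is simply false: for odd $d$, $\omega=(\zeta^{2},\zeta^{2})$ (so $c=(2,2)$ with $\gcd(2,2,d)=1$) and $k=(d+1)/2\in(\mathbb{Z}/d\mathbb{Z})^{\times}$, one has $|\zeta^{k}-1|\approx 2$ while $F(\iota_{\infty}(\omega^{k}))=\log|\iota_{\infty}(\zeta)-1|\approx -\log d$. Strictness does not rescue the pointwise claim, since it only excludes each \emph{fixed} subgroup eventually and does not prevent some conjugates of points lying on subgroups of growing complexity from collapsing onto $(1,1)$ much faster than $|\zeta^{k}-1|$ indicates. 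With the true loss $\log(2d_{\ell})$ your threshold becomes $\varepsilon'\asymp d_{\ell}\varepsilon$, and the ``second term'' in your display is no longer negligible.

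Moreover, even granting a bounded loss, the displayed estimate does not close: its first term $\frac{1}{\varphi(d_{\ell})}\sum_{k}\bigl(\log\frac{1}{|\iota_{\infty}(\zeta^{k})-1|}\bigr)^{+}$ does not tend to $0$ — by equidistribution it converges to the mean of $\bigl(\log\frac{1}{|\eu^{\iu s}-1|}\bigr)^{+}$ over $s\in[0,2\pi]$, a positive constant, and the identity $\sum_{k}\log|\iota_{\infty}(\zeta^{k})-1|=\log\Phi_{d_{\ell}}(1)$ only bounds it by $\log 2$. Likewise that identity gives, for a fixed threshold $\delta$, only $\#\{k:\,|\iota_{\infty}(\zeta^{k})-1|<\delta\}\le \varphi(d_{\ell})\log 2/\log(1/\delta)$, not $o(\varphi(d_{\ell}))$. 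So the correction term is not shown to vanish. The paper's proof avoids any comparison constant: it bounds $F$ below by $G(z)=\log|z_{1}-1|$, whose orbit averages are computed \emph{exactly} as $\frac{1}{\varphi(e_{\ell})}\log|\Phi_{e_{\ell}}(1)|$ with $e_{\ell}=\ord(\omega_{\ell,1})$ (using the surjectivity in \cref{lem:6}), uses strictness precisely to force $e_{\ell}\to+\infty$ so that this tends to $0=\int_{\mathbb{S}}G\,d\nu$, and then sandwiches $G\le F\le \log 2$ over shrinking tube neighborhoods $U_{m}$ of $\{z_{1}=1\}$, on whose complement $F$ and $G$ are continuous. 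If you replace your comparison with $|\zeta^{k}-1|$ by this first-coordinate bound, and your $\varepsilon$-truncation by truncation outside $U_{m}$, your outline becomes the paper's argument.
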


For its proof, we need the next lemma. Given $d,e\ge 1$ with~$e\mid d$, consider the associated \emph{reduction homomorphism}
between the respective groups of modular units
\begin{displaymath}
  \pi_{d,e}\colon   (\mathbb{Z}/d\mathbb{Z})^{\times}\longrightarrow
  (\mathbb{Z}/e\mathbb{Z})^{\times}.
\end{displaymath}
Let $d= \prod_{p}p^{r_{p}}$ and $e=\prod_{p}p^{s_{p}}$ be their
respective irreducible factorizations. Under the splitting given by the Chinese reminder theorem
we can write
\begin{equation}
  \label{eq:42}
  \pi_{d,e}=\bigoplus_{p}\pi_{p^{r_{p}},p^{s_{p}}},
\end{equation}
where $\pi_{p^{r_{p}},p^{s_{p}}} \colon (\mathbb{Z}/p^{r_{p}}\mathbb{Z})^{\times}\rightarrow
(\mathbb{Z}/p^{
  s_{p}}\mathbb{Z})^{\times}$ denotes the corresponding reduction map.

\begin{lemma}
  \label{lem:6}
  The homomorphism $\pi_{d,e}$ is surjective.
\end{lemma}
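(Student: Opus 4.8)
The plan is to reduce the surjectivity of $\pi_{d,e}$ to the prime-power case via the decomposition \eqref{eq:42} and then handle each local factor directly. First I would invoke the Chinese Remainder Theorem isomorphisms $(\mathbb{Z}/d\mathbb{Z})^{\times}\cong\prod_{p}(\mathbb{Z}/p^{r_{p}}\mathbb{Z})^{\times}$ and $(\mathbb{Z}/e\mathbb{Z})^{\times}\cong\prod_{p}(\mathbb{Z}/p^{s_{p}}\mathbb{Z})^{\times}$, under which $\pi_{d,e}$ becomes the direct sum $\bigoplus_{p}\pi_{p^{r_{p}},p^{s_{p}}}$ as recorded in \eqref{eq:42}. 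Since a direct sum of group homomorphisms is surjective if and only if each summand is surjective, and since $\pi_{p^{r_{p}},p^{s_{p}}}$ is the identity (hence trivially onto) when $s_{p}=0$, it suffices to prove that for a prime $p$ and integers $r\ge s\ge 1$ the reduction map $(\mathbb{Z}/p^{r}\mathbb{Z})^{\times}\to(\mathbb{Z}/p^{s}\mathbb{Z})^{\times}$ is surjective.

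For this last step I would argue as follows: given a class $\bar a\in(\mathbb{Z}/p^{s}\mathbb{Z})^{\times}$, pick any integer representative $a$ with $\gcd(a,p)=1$; then $a$ is automatically coprime to $p^{r}$, so its class mod $p^{r}$ lies in $(\mathbb{Z}/p^{r}\mathbb{Z})^{\times}$ and maps to $\bar a$. Equivalently, one may note that the reduction ring homomorphism $\mathbb{Z}/p^{r}\mathbb{Z}\to\mathbb{Z}/p^{s}\mathbb{Z}$ is surjective and that a local ring homomorphism between these artinian local rings sends units onto units because the preimage of the maximal ideal is the maximal ideal. This shows each local factor is onto, and assembling via the product decomposition gives the claim.

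There is essentially no serious obstacle here: the only point requiring a moment's care is that surjectivity of the underlying ring map does not by itself guarantee surjectivity on unit groups, which is why one must use the structure of the maximal ideals (or, concretely, the observation that an integer coprime to $p$ represents a unit in every $\mathbb{Z}/p^{k}\mathbb{Z}$). Everything else is a routine bookkeeping exercise with the Chinese Remainder Theorem. I would keep the write-up to a few lines, emphasizing the reduction to prime powers and the coprimality observation.
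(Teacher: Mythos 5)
Your proposal is correct and follows essentially the same route as the paper: reduce via the Chinese Remainder splitting in \eqref{eq:42} to the prime-power case, then use the observation that being a unit modulo a power of $p$ is just coprimality to $p$, so any representative coprime to $p$ lifts. (Only a cosmetic slip: when $s_{p}=0$ the local factor is the map onto the trivial group, not the identity, but it is of course still surjective.)
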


\begin{proof}
  The splitting in \eqref{eq:42} allows to reduce to the case in which
  $d=p^{r}$ and $e=p^{s}$ with~$r\ge s\ge 0$. The statement follows
  then from the fact that an element $k\in \mathbb{Z}$ is a unit
  modulo $p^{r}$ if and only if it is a unit modulo $p^{s}$, since
  both conditions are equivalent to~$p\nmid k$.
\end{proof}

In the setting of \cref{prop:1}, for each $\ell\ge 1$  \cref{lem:5} shows that the
 Archimedean local height corresponding to the torsion point $\omega_{\ell}$ writes down as
   \begin{equation}
     \label{eq:19}
     \h_{\infty}(\mathcal{P}(\omega_{\ell})) = \frac{1}{\varphi(d_{\ell})}\sum_{k} F(\iota_{\infty}(\omega_{\ell}^{k}))
     \end{equation}
     with $d_{\ell}=\ord(\omega_{\ell})$ and $k$ ranging over the
     elements of~$(\mathbb{Z}/d_{\ell}\mathbb{Z})^{\times}$.
     
     Consider the uniform probability measure on the $\infty$-adic
     Galois orbit of~$\omega_{\ell}$, that is the discrete measure on the compact torus defined as
     \begin{equation}
       \label{eq:20}
\delta_{O(\omega_{\ell})_{\infty}}=\frac{1}{\varphi(d_{\ell})} \sum_{k} \delta_{\iota_{\infty}(\omega_{\ell}^{k})} ,
     \end{equation}
     where each $\delta_{\iota_{\infty}(\omega_{\ell}^{k})} $ denotes
     the Dirac delta measure on the corresponding point.
     Then the formula in~\eqref{eq:19} can be written as
     \begin{equation}
       \label{eq:21}
       \h_{\infty}(\mathcal{P}(\omega_{\ell})) = \int_{\mathbb{S}} F \, d\delta_{O(\omega_{\ell})_{\infty}}.
     \end{equation}

     It is well-known that the sequence of probability measures in
     \eqref{eq:20} converges weakly to the probability Haar measure
     $\nu$ as~$\ell\to +\infty$. Precisely, for any bounded and
     $\nu$-almost everywhere continuous real-valued function~$\phi$~on~$\mathbb{S}$,
\begin{equation}
  \label{eq:1}
\lim_{\ell\to +\infty}  \int_{\mathbb{S}} \phi \,  d\delta_{O(\omega_{\ell})_{\infty}}= \int_{\mathbb{S}} \phi\, d\nu.
\end{equation}
When $\phi$ is continuous, this is a particular case of Bilu's
equidistribution theorem for the Galois orbits of points of small
height~\cite{Bilu}, whereas its extension to the situation when $\phi$
is just bounded and $\nu$-almost everywhere continuous follows from
general results of measure theory like \cite[Lemme~6.3]{CLT}. But
since the limit in~\eqref{eq:1} concerns torsion
points and not arbitrary points of small height, it can also be proven
in an elementary way using classical facts on Gaussian exponential sums,
as it is done for its quantitative version in
\cite[Proposition~3.3]{DimitrovHabegger}.

In view of~\eqref{eq:21}, \cref{prop:1} can be seen as an equidistribution result for a test
function $\phi$ that is continuous everywhere except at the point~$z=(1,1)$,
where it has a logarithmic singularity.  Hence it is a
particular case of both Chambert-Loir and Thuillier's logarithmic
equidistribution theorem for Galois orbits of points of small height~\cite{CLT},
and of Dimitrov and Habegger's logarithmic
equidistribution theorem for Galois orbits of torsion points of
algebraic tori~\cite{DimitrovHabegger}. In spite of that, we give here
a simple proof relying solely on the more classical equidistribution
theorem in~\eqref{eq:1} and on the formula for the value of a cyclotomic
polynomial at~$1$.

\begin{proof}[Proof of \cref{prop:1}]
  For each $\ell\ge 1$ set $d_\ell=\ord(\omega_\ell)$
  and~$e_{\ell}=\ord(\omega_{\ell,1})$, the latter being a divisor of
  the first.  Since the sequence $(\omega_{\ell})_{\ell \ge 1}$ is
  strict, the condition~\eqref{eq:22} implies that
  \begin{equation}
    \label{eq:23}
    \lim_{\ell\to+\infty}e_{\ell}=+\infty.
  \end{equation}
  
  Consider the function
  $G\colon (\mathbb{C}^{\times})^{2}\to \mathbb{R}\cup\{-\infty\}$
  defined as $G(z)=\log|z_{1}-1|$.  Its integral with respect to the
   measure $\nu$ is the logarithmic Mahler measure of
  the polynomial~$z_{1}-1$, and Jensen's formula applied to it shows
  that this quantity vanishes. On the other hand, its integral with
  respect to the discrete measure in \eqref{eq:20} can be computed~as
  \begin{multline*}
    \int_{\mathbb{S}} G \, d \delta_{O(\omega_{\ell})_{\infty}} = \frac{1}{\varphi(d_{\ell})} \sum_{k\in (\mathbb{Z}/d_{\ell}\mathbb{Z})^{\times}}\log|\iota_{\infty}(\omega_{\ell,1})^{k}-1|
    \\=
    \frac{1}{\varphi(e_{\ell})} \sum_{k\in (\mathbb{Z}/e_{\ell}\mathbb{Z})^{\times}}\log|\iota_{\infty}(\omega_{\ell,1})^{k}-1|
    = \frac{1}{\varphi(e_{\ell})} \log|\Phi_{e_{\ell}}(1)|.
  \end{multline*}
  The second equality follows from the fact that the summand indexed
  by an element $k\in(\mathbb{Z}/d_{\ell}\mathbb{Z})^{\times}$ takes a
  value that depends only on its image under the reduction
  homomorphism~$\pi_{d_{\ell},e_{\ell}}$, which by \cref{lem:6} is
  surjective with fibers of
  cardinality~$\varphi(d_\ell)/\varphi(e_\ell)$.  Hence \cref{lem:2}
  together with~\eqref{eq:23} and the same argument in the proof of
  \cref{cor:2} implies that
  \begin{equation}
    \label{eq:27}
   \lim_{\ell\to +\infty}
 \int_{\mathbb{S}} G \, d \delta_{O(\omega_{\ell})_{\infty}} = 0 = \int_{\mathbb{S}} G\, d\nu.  
  \end{equation}
  
  Now let $(U_{m})_{m\ge 1}$ be the nested sequence of neighborhoods
  of the closed subset $\{z\in \mathbb{S}\mid z_{1}=1 \}$ of the
  compact torus defined as
  \[
  U_m=\Big\{z\in\mathbb{S}\mid\arg(z_1)\in\Big(-\frac{1}{m},\frac{1}{m}\Big)\pmod{2\pi}\Big\}.
  \]
  Since both $F$ and $G$ are continuous outside that closed subset,
  for each $m\ge 1$ the equidistribution theorem in~\eqref{eq:1} shows
  that
  \begin{equation}
    \label{eq:25}
      \lim_{\ell\to+\infty}
    \int_{\mathbb{S}\setminus U_{m}} \hspace{-1mm}F\, d \delta_{O(\omega_{\ell})_{\infty}} =     \int_{\mathbb{S}\setminus U_{m}}  \hspace{-1mm} F \, d\nu,\quad \lim_{\ell\to+\infty} \int_{\mathbb{S}\setminus U_{m}}  \hspace{-1mm} G \, d
  \delta_{O(\omega_{\ell})_{\infty}} = \int_{\mathbb{S}\setminus
    U_{m}}  \hspace{-1mm} G\, d\nu.
  \end{equation}
  The second limit in \eqref{eq:25} together with that in \eqref{eq:27}
  implies~that
    \begin{equation}
      \label{eq:28}
      \lim_{\ell\to+\infty}
      \int_{U_{m}} G \, d \delta_{O(\omega_{\ell})_{\infty}} =
      -      \lim_{\ell\to+\infty}
      \int_{\mathbb{S}\setminus U_{m}}  \hspace{-2mm}G \, d \delta_{O(\omega_{\ell})_{\infty}} 
      =      -      \int_{\mathbb{S}\setminus U_{m}}  \hspace{-2mm}G\, d\nu = 
      \int_{U_{m}} G\, d\nu.
    \end{equation}
We have that $G(z) \le F(z)\le \log(2)$ for all
    $z\in \mathbb{S}$ and so for each~$\ell\ge 1$,
  \begin{equation}
    \label{eq:26}
    \int_{U_{m}} G \, d \delta_{O(\omega_{\ell})_{\infty}}  \le
    \int_{U_{m}} F \, d \delta_{O(\omega_{\ell})_{\infty}}  \le 
    \int_{U_{m}} \log(2) \, d \delta_{O(\omega_{\ell})_{\infty}}.  
  \end{equation}
We deduce from \eqref{eq:25}, \eqref{eq:28} and \eqref{eq:26} that
  \begin{multline*}
    \int_{\mathbb{S}\setminus U_{m}}  \hspace{-1mm} F \, d \nu+   \int_{U_{m}} G \, d \nu \le
    \liminf_{\ell}    \int_{\mathbb{S}} F \, d \delta_{O(\omega_{\ell})_{\infty}}  \\ \le     \limsup_{\ell}    \int_{\mathbb{S}} F \, d \delta_{O(\omega_{\ell})_{\infty}}  \le 
     \int_{\mathbb{S} \setminus U_{m}}  \hspace{-1mm} F \, d\nu +   \log(2) \, \nu(U_{m}).
   \end{multline*}

   The first equality in the statement follows taking the limit for~$m\to +\infty$, using the fact that
  $ \lim_{m\to+\infty}\nu(U_{m})=0$ and the absolute continuity of the
  Lebesgue integral, whereas the second is a direct consequence of the
  first together with the change of variables formula.
  \end{proof}

\section{Computing the integral}
\label{sec:computing-integral}

Next we compute the integral giving the limit of the Archimedean
heights of the vectors $\mathcal{P}(\omega_{\ell})$ for a strict
sequence of nontrivial torsion points (\cref{prop:1}), namely 
  \begin{displaymath} 
    I=\frac{1}{(2 \pi)^{2}} \int_{(\mathbb{R}/2
      \pi \mathbb{Z})^{2}} \log \max (|\eu^{\iu u_{2}}-\eu^{\iu u_{1}}|,|\eu^{\iu u_{2}}-1|,|\eu^{\iu u_{1}}-1|)\, du_{1} du_{2}.
  \end{displaymath}
  More precisely, we will prove the following.

\begin{proposition}
  \label{prop:2}
  \begin{math} \displaystyle I= \frac{2\,
      \zeta(3)}{3\, \zeta(2)}=0.487175\dots\, .
  \end{math}
\end{proposition}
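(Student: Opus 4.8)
The plan is to compute the integral
\[
I=\frac{1}{(2\pi)^{2}}\int_{(\mathbb{R}/2\pi\mathbb{Z})^{2}}\log\max\bigl(|\eu^{\iu u_{2}}-\eu^{\iu u_{1}}|,|\eu^{\iu u_{2}}-1|,|\eu^{\iu u_{1}}-1|\bigr)\,du_{1}\,du_{2}
\]
by exploiting the symmetry of the integrand. Setting $a=|\eu^{\iu u_{1}}-1|=2|\sin(u_{1}/2)|$, $b=|\eu^{\iu u_{2}}-1|=2|\sin(u_{2}/2)|$ and $c=|\eu^{\iu u_{2}}-\eu^{\iu u_{1}}|=2|\sin((u_{1}-u_{2})/2)|$, these are exactly the side lengths of the triangle inscribed in the unit circle with vertices $1,\eu^{\iu u_{1}},\eu^{\iu u_{2}}$. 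The integrand is $\log\max(a,b,c)$, the logarithm of the longest side. First I would use the full symmetry: the integrand is symmetric under permuting the three circle points $1,\eu^{\iu u_{1}},\eu^{\iu u_{2}}$ (which acts on the three arc-lengths they cut out), so after a change of variables to arc-length coordinates the integral becomes $3$ times the integral over the region where one fixed side is the longest, divided by the appropriate normalization.

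Concretely, I would pass to the arc-length parametrization. Write the three points as dividing the circle into three arcs of (signed) lengths summing to $2\pi$; parametrize by $(s,t)$ with $s,t\ge 0$ and $s+t\le 2\pi$ being two of the arc lengths, the third being $2\pi-s-t$. Each chord length is $2\sin(\cdot/2)$ of the corresponding arc, but one must be careful: a chord subtends two arcs, and its length is $2|\sin(\theta/2)|$ where $\theta$ is either arc (they give the same chord). So with arcs $\alpha,\beta,\gamma\ge 0$, $\alpha+\beta+\gamma=2\pi$, the chord lengths are $2\sin(\alpha/2),2\sin(\beta/2),2\sin(\gamma/2)$ when all arcs are $\le\pi$; when one arc exceeds $\pi$ one replaces it by its complement $2\pi$ minus it for the chord formula. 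After reducing by symmetry to the fundamental domain where $\gamma=2\pi-\alpha-\beta$ is the largest arc (hence $\ge 2\pi/3$, so possibly $>\pi$), the longest chord is the one subtending $\gamma$, of length $2\sin((2\pi-\gamma)/2)=2\sin((\alpha+\beta)/2)$ when $\gamma>\pi$, and $2\sin(\gamma/2)$ when $\gamma\le\pi$. So
\[
I=\frac{3}{(2\pi)^{2}}\int\!\!\int_{\substack{\alpha,\beta\ge 0\\ \alpha+\beta\le 2\pi/3}}\log\bigl(2\sin\tfrac{\alpha+\beta}{2}\bigr)\,d\alpha\,d\beta
+\frac{3}{(2\pi)^{2}}\int\!\!\int_{\substack{\alpha,\beta\ge 0,\ \alpha+\beta\ge 2\pi/3\\ \alpha\le\pi,\ \beta\le\pi\ (\text{say})}}\log\bigl(2\sin\tfrac{2\pi-\alpha-\beta}{2}\bigr)\,d\alpha\,d\beta
\]
up to carefully checking the region decomposition and the constant. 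Changing variables to $\sigma=\alpha+\beta$ and integrating out the transverse variable reduces each piece to a one-dimensional integral of the form $\int_{0}^{\pi}(\text{linear in }\sigma)\log(2\sin(\sigma/2))\,d\sigma$, i.e.\ integrals of $\sigma^{k}\log(2\sin(\sigma/2))$ for $k=0,1$ over $[0,\pi]$.

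These one-dimensional integrals are classical: $\int_{0}^{\pi}\log(2\sin(\sigma/2))\,d\sigma=0$ (Jensen), and $\int_{0}^{\pi}\sigma\log(2\sin(\sigma/2))\,d\sigma$ can be evaluated by expanding $\log(2\sin(\sigma/2))=-\sum_{n\ge 1}\frac{\cos(n\sigma)}{n}$ and integrating term by term against $\sigma$, which produces $\sum_{n\ge 1}\frac{1}{n}\cdot\frac{1-(-1)^{n}}{n^{2}}=2\sum_{n\ \mathrm{odd}}n^{-3}=2\cdot\frac{7}{8}\zeta(3)=\frac{7}{4}\zeta(3)$ up to sign and the boundary term. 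More precisely $\int_{0}^{\pi}\sigma\log(2\sin(\sigma/2))\,d\sigma$ involves the Clausen-type series and comes out proportional to $\zeta(3)$. Collecting the pieces, the $k=0$ integrals cancel or combine with the area factors, and the surviving $\zeta(3)$ terms assemble into $I=\frac{2\zeta(3)}{3\zeta(2)}$ once one writes $\frac{1}{(2\pi)^{2}}=\frac{1}{4\pi^{2}}=\frac{1}{24\zeta(2)}$ using $\zeta(2)=\pi^{2}/6$. I expect the main obstacle to be bookkeeping: correctly decomposing the arc-length domain into the regions where each chord is longest and where an arc exceeds $\pi$ (so the chord formula switches from $\sin(\theta/2)$ to $\sin(\pi-\theta/2)$), and tracking the Jacobian and combinatorial factor $3$ from the symmetry reduction. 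Once the region decomposition is right, the remaining work is the two elementary one-variable integrals above, with the appearance of $\zeta(3)$ coming entirely from $\int_{0}^{\pi}\sigma\log(2\sin(\sigma/2))\,d\sigma$ and of $\zeta(2)$ from rewriting $4\pi^{2}$.
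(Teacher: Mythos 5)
Your strategy is sound and, once the bookkeeping is done, it collapses onto the paper's own proof: the paper exploits the same symmetry through an explicit order-$12$ group of automorphisms of $(\mathbb{R}/2\pi\mathbb{Z})^{2}$ with a triangular fundamental domain on which the integrand is $\log|\eu^{\iu u_{1}}-1|$, and then evaluates the resulting weighted one-dimensional log-sine integrals by the Fourier expansion $\log(2\sin(s/2))=-\sum_{k\ge1}\cos(ks)/k$. Your arcs-of-the-inscribed-triangle picture is just a repackaging of that symmetry: passing from $(u_1,u_2)$ to two arcs $(\alpha,\beta)$ is a measure-preserving (2-to-1 onto the simplex $\{\alpha,\beta\ge0,\ \alpha+\beta\le2\pi\}$) change of variables, the longest chord is always the one subtending the largest arc, and restricting to the region where $\gamma=2\pi-\alpha-\beta$ is the largest arc and integrating out the transverse variable gives exactly
\[
I=\frac{6}{(2\pi)^{2}}\int_{0}^{4\pi/3}\min(\sigma,\,4\pi-3\sigma)\,\log\bigl(2\sin(\sigma/2)\bigr)\,d\sigma,
\]
which is literally the paper's formula after Fubini. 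So this is not a genuinely different route, but a legitimate alternative presentation of the same reduction.

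As written, however, the proposal has concrete gaps that would block the final assembly. First, the displayed region is not the region where the $\gamma$-chord is longest: the correct constraints are $2\alpha+\beta\le2\pi$ and $\alpha+2\beta\le2\pi$ (your region contains, e.g., $(\alpha,\beta)=(\pi,\pi)$, where $\gamma=0$ and the $\gamma$-chord is the shortest), and your two displayed integrands are in fact the same function, since $\sin((2\pi-\sigma)/2)=\sin(\sigma/2)$, so the split by $\gamma\lessgtr\pi$ is vacuous; the genuine case split comes from the cross-sectional length $\min(\sigma,4\pi-3\sigma)$. Second, the prefactor is $6/(2\pi)^{2}$, not $3/(2\pi)^{2}$, because of the 2-to-1 passage from the torus to the arc simplex. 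Third, and most importantly, the claim that everything reduces to $\int_{0}^{\pi}\sigma^{k}\log(2\sin(\sigma/2))\,d\sigma$ for $k=0,1$, with the $k=0$ terms cancelling, is not correct: besides $\int_{0}^{\pi}\sigma\log(2\sin(\sigma/2))\,d\sigma=\tfrac{7}{4}\zeta(3)$ you need
\[
\int_{\pi}^{4\pi/3}(4\pi-3\sigma)\log\bigl(2\sin(\sigma/2)\bigr)\,d\sigma=\frac{11}{12}\,\zeta(3),
\]
whose evaluation requires the Fourier series at the endpoint $4\pi/3$ (sums over $k$ divisible by $3$, i.e.\ the cube-root-of-unity contribution), exactly as in the paper's second integral lemma; without this term the coefficient $\tfrac{2\zeta(3)}{3\zeta(2)}$ cannot be recovered. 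With these three points fixed one gets $I=\frac{6}{4\pi^{2}}\bigl(\tfrac{7}{4}+\tfrac{11}{12}\bigr)\zeta(3)=\frac{4\zeta(3)}{\pi^{2}}=\frac{2\zeta(3)}{3\zeta(2)}$, and your argument is complete. (A minor simplification: no case distinction for arcs exceeding $\pi$ is ever needed, since $2\sin(\theta/2)$ is the correct chord length for all $\theta\in[0,2\pi]$.)
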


We will exploit the existing symmetries to simplify the
calculation of this integral. To this end, consider the linear
automorphisms of $\mathbb{R}^{2}$ given as
\begin{equation}
  \label{eq:8}
  \alpha(u_{1},u_{2})=(u_{2},u_{1}), \quad \beta(u_{1},u_{2})=(-u_{2},u_{1}-u_{2}), \quad \gamma(u_{1},u_{2})=(-u_{1},-u_{2}).
\end{equation}
Since they restrict to automorphisms of the lattice~$(2\pi\mathbb{Z})^2$,
each of them also induces an automorphism of the quotient
space~$(\mathbb{R}/2\pi\mathbb{Z})^{2}$, that we denote with the
corresponding overlined letter. We set $H$ for the group of
automorphisms of $(\mathbb{R}/2\pi\mathbb{Z})^{2}$ generated by them.

Recall that a \emph{fundamental domain} for the action of $H$ on
$(\mathbb{R}/2\pi\mathbb{Z})^{2}$ is a closed subset of $(\mathbb{R}/2\pi\mathbb{Z})^{2}$ whose translates by elements of $H$
cover this space, and such that the intersection of any two different
translates has empty interior.  

For the sequel, denote by $ D $ the triangle of $\mathbb{R}^{2}$ with vertices
$(0,0)$, $(\pi, 0)$ and $(4 \pi/3, 2 \pi/ 3)$, and by~$\overline{D}$
its image in~$(\mathbb{R}/2\pi\mathbb{Z})^{2}$.  Set also $f$ for the
integrand of~$I$, which coincides with the function in~\eqref{eq:11}.
    
\begin{proposition}
  \label{prop:4}
  The automorphism group $H$ verifies the following properties:
  \begin{enumerate}[leftmargin=*]
  \item \label{item:4} $H = \{ \bar\alpha^{e_{1}}\, \bar\beta^{e_{2}}\, \bar\gamma^{e_{3}}\mid e_{1},e_{3}=0,1, \ e_{2}=0,1,2\}$,
  \item \label{item:7} $\# H=12$, 
  \item \label{item:3} the set  $\overline{D}$ is a
    fundamental domain for the action of~$H$,
  \item \label{item:1} the Lebesgue measure on
    $(\mathbb{R}/2 \pi \mathbb{Z})^{2}$ is invariant under~$H$,
  \item \label{item:2} the function $f$ is invariant under~$H$, 
  \item \label{prop:6} for each $u \in \overline{D} $ we have
    that~$f(u)=\log |\eu^{\iu u_{1}}-1|$.
  \end{enumerate}
\end{proposition}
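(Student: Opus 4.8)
The plan is to verify the six assertions more or less in the order listed, since each one feeds into the next. First I would pin down the group $H$ by working out the relations among $\bar\alpha$, $\bar\beta$, $\bar\gamma$. A direct matrix computation shows that $\alpha^2 = \gamma^2 = \mathrm{id}$, that $\beta^3 = \mathrm{id}$ (one checks $\beta^2(u_1,u_2) = (u_2-u_1,-u_1)$ and $\beta^3 = \mathrm{id}$), that $\gamma$ is central (it is $-\mathrm{id}$, so it commutes with everything), and that $\alpha\beta\alpha = \beta^{-1}$ up to a power of $\gamma$; concretely one verifies $\alpha\beta = \beta^2\alpha\gamma$ or a similar braiding relation. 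From these relations every element of $H$ can be brought to the normal form $\bar\alpha^{e_1}\bar\beta^{e_2}\bar\gamma^{e_3}$ with $e_1,e_3\in\{0,1\}$ and $e_2\in\{0,1,2\}$, giving \eqref{item:4}; counting these normal forms gives at most $12$ elements, and exhibiting $12$ distinct matrices (or noting $H\cong D_6\times\{\pm 1\}$ modulo the identification, i.e. $H$ acts faithfully) gives $\#H = 12$, which is \eqref{item:7}. Part \eqref{item:1} is immediate: each of $\alpha,\beta,\gamma$ has determinant $\pm 1$, hence preserves Lebesgue measure, and the same holds for the induced maps on the torus.

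Next I would handle the fundamental domain claim \eqref{item:3}. The triangle $D$ with vertices $(0,0)$, $(\pi,0)$, $(4\pi/3,2\pi/3)$ has area $\tfrac12\bigl|\pi\cdot\tfrac{2\pi}{3}\bigr| = \tfrac{\pi^2}{3}$, which is exactly $(2\pi)^2/12$, so $\overline D$ has the correct measure to be a fundamental domain for a group of order $12$. To get the covering property, I would produce the $12$ translates $\bar h(\overline D)$ explicitly and check they tile $(\mathbb R/2\pi\mathbb Z)^2$: the six images under the dihedral subgroup generated by $\bar\alpha,\bar\beta$ fit together around the origin inside the region $\{0\le u_2\le u_1\le 2\pi - u_1\}$-type sectors, and applying $\bar\gamma = -\mathrm{id}$ reflects this half onto the other half of the torus. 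Since the area bookkeeping already forces equality once we know the translates cover, the only real content is the covering statement, and that is a finite (if slightly tedious) check with the explicit vertices. I would present it as: the images of the three vertices under the normal-form elements enumerate a set of lattice-like points subdividing the square, and adjacency is checked on the three edges of $D$.

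For \eqref{item:2} I would show $f\circ \bar h = f$ for $h\in\{\alpha,\beta,\gamma\}$, which suffices since these generate $H$. Writing $z_j = \eu^{\iu u_j}$, the three quantities inside the max are $|z_2-z_1|$, $|z_2-1|$, $|z_1-1|$. Invariance under $\alpha$ swaps $u_1\leftrightarrow u_2$, hence swaps $|z_2-1|\leftrightarrow|z_1-1|$ and fixes $|z_2-z_1|$: the max is unchanged. Invariance under $\gamma$ replaces each $z_j$ by $\bar z_j$, so each modulus is unchanged. The only nontrivial case is $\beta$: it sends $(u_1,u_2)\mapsto(-u_2,u_1-u_2)$, i.e. $(z_1,z_2)\mapsto(\bar z_2,\, z_1\bar z_2)$; then $|z_1-1|\mapsto |\bar z_2 - 1| = |z_2-1|$, $|z_2-1|\mapsto|z_1\bar z_2 - 1| = |z_1 - z_2|$, and $|z_2-z_1|\mapsto |\bar z_2 - z_1\bar z_2| = |1 - z_1| = |z_1-1|$, so the three arguments of the max are cyclically permuted and $f$ is invariant. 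This is the key computation and the main (minor) obstacle — getting the $\beta$-bookkeeping exactly right; everything else is formal. Finally, for \eqref{prop:6} I would argue that on $\overline D$ the term $|\eu^{\iu u_1}-1|$ dominates the other two: for $u$ in the triangle one has $0\le u_1$, $0\le u_2\le u_1$ roughly, and $u_1 \ge u_1 - u_2 \ge 0$ and $u_1 \ge 2\pi - (\text{stuff})$; more precisely $|\eu^{\iu u}-1| = 2|\sin(u/2)|$ is increasing in $u$ on $[0,2\pi]$ up to $u=\pi$ — wait, it increases then decreases — so I would instead compare $2|\sin(u_1/2)|$, $2|\sin(u_2/2)|$, $2|\sin((u_1-u_2)/2)|$ directly using that on $D$ we have $u_2 \le u_1$ and $u_1 - u_2 \le u_1$ and $u_1 \le 4\pi/3$, together with the fact that $|\sin(t/2)|$ is nondecreasing on $[0,2\pi/3]$ and the three angles $u_1,u_2,u_1-u_2$ all lie in a range where the one with largest "$\sin(t/2)$" is $u_1$; the defining inequalities of the triangle $D$ are chosen exactly so this holds. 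Hence $f(u) = \log(2|\sin(u_1/2)|) = \log|\eu^{\iu u_1}-1|$ on $\overline D$, completing the proof.
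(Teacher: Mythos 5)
Items (1)--(5) of your plan are essentially the paper's argument: normal form from a braid-type relation between $\bar\alpha$ and $\bar\beta$ with $\bar\gamma$ central, determinants of absolute value $1$ for measure invariance, and the cyclic permutation of $|z_2-z_1|,|z_2-1|,|z_1-1|$ under $\bar\beta$ is exactly the paper's computation for (5). Two remarks there: the concrete relation you propose, $\alpha\beta=\beta^2\alpha\gamma$, is wrong --- a direct check gives $\alpha\beta=\beta^2\alpha$ (equivalently $\beta\alpha=\alpha\beta^2$, $\alpha\beta\alpha=\beta^{-1}$ with no $\gamma$) --- though your hedge ``or a similar braiding relation'' and the resulting normal form and count are fine, since the twelve products are distinct integer matrices and hence distinct torus automorphisms. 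Your area bookkeeping for (3) ($\vol(D)=\pi^2/3=(2\pi)^2/12$, so once the translates cover, overlaps are null and hence have empty interior) is a nice way to reduce (3) to the covering statement; the covering itself you leave as a ``tedious finite check with the vertices'', which is at the same level of explicitness as the paper's picture-based verification.

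The genuine gap is in item (6). After correctly noting that $|\eu^{\iu t}-1|=2|\sin(t/2)|$ is not monotone on $[0,2\pi]$, you fall back on ``$|\sin(t/2)|$ is nondecreasing on $[0,2\pi/3]$'' together with $u_2\le u_1$ and $u_1-u_2\le u_1$, and then the decisive claim (``the three angles all lie in a range where the one with largest $\sin(t/2)$ is $u_1$; the inequalities of $D$ are chosen exactly so this holds'') is asserted rather than proved. Monotonicity on $[0,2\pi/3]$ does not cover the relevant range: on $D$ the coordinate $u_1$ runs up to $4\pi/3$, so $u_1/2$ runs up to $2\pi/3$, past the maximum of the sine, and ``larger angle'' no longer implies ``larger $|\eu^{\iu t}-1|$''. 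What is needed is the two-sided criterion the paper uses: for $s,s'\in[0,2\pi]$ one has $|\eu^{\iu s}-1|\le|\eu^{\iu s'}-1|$ if and only if $s\le s'\le 2\pi-s$; then from the defining inequalities of $D$, namely $u_2\ge 0$, $u_1-2u_2\ge 0$, $u_2-2u_1+2\pi\ge 0$, one must deduce both $u_2\le u_1\le 2\pi-u_2$ and $u_1-u_2\le u_1\le 2\pi-(u_1-u_2)$. The second chain is immediate (its upper bound is literally the third inequality), but the bound $u_1\le 2\pi-u_2$ requires combining the inequalities (for instance $4u_2\le 2u_1\le 2\pi+u_2$ gives $u_2\le 2\pi/3$, whence $u_1\le \pi+u_2/2\le 2\pi-u_2$), so it is not handed to you ``by the choice of $D$''. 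Once this comparison is spelled out, your argument for (6) closes and coincides with the paper's.
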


\begin{proof}
  The generators of $H$ verify the relations
  \begin{displaymath}
    \bar\alpha^{2}=\bar\beta^{3}=\bar\gamma^{2}=1, \quad \bar\gamma\, \bar\alpha=\bar\alpha\, \bar\gamma, \quad \bar\gamma\, \bar\beta= \bar\beta\, \bar\gamma, \quad \bar\beta\, \bar\alpha=\bar\alpha\, \bar\beta^{2}. 
  \end{displaymath}
  The last three imply that all the elements of $H$ are of the form
  $\bar\alpha^{e_{1}}\, \bar\beta^{e_{2}}\, \bar\gamma^{e_{3}}$ with
  $e_{1},e_{2},e_{3}\ge0$, whereas the others give the stated upper
  bounds for these exponents, proving~\eqref{item:4}.

  To prove~\eqref{item:3}, notice that the action of $\beta$ on the
  nonzero vertices of $ D $ is given by
 \begin{displaymath}
    \xymatrix{
  (\pi,0)\ar[r]^-{\beta}& (0,\pi)\ar[r]^-{\beta}& (-\pi,-\pi), 
  &\big(\frac{4\pi}{3},\frac{2\pi}{3}\big)\ar[r]^-{\beta}& \big(\frac{-2\pi}{3},\frac{2\pi}{3}\big)\ar[r]^-{\beta}& \big(\frac{-2\pi}{3},\frac{-4\pi}{3}\big)
}.
  \end{displaymath}
  The actions on $\mathbb{R}^{2}$ of $\alpha$ and $\gamma$ are both
  easy to visualize, since these linear maps are the symmetries with
  respect to the diagonal line and to the origin, respectively. The
  first picture in \cref{fig:1} describes the action on $ D $ of these
  three linear maps and their compositions, whereas the second shows
  these regions on the quotient space~$(\mathbb{R}/2\pi\mathbb{Z})^{2}$, represented by the square
  $[0,2\pi]^{2}$ with opposite edges identified.
  \begin{figure}[!htbp]
    \begin{tikzpicture}[scale=0.70]
      \draw[<->] (-4-10, 0+3) -- (4-10, 0+3);
      \draw[<->] (0-10, -4+3) -- (0-10, 4+3);
    \draw[fill=green!25] (0-10,0+3) -- (3-10,0+3) -- (4-10,2+3) -- cycle; \node at (2.4-10,0.6+3) {$\scriptscriptstyle  D $};
    \draw  (4-10-6,2-6+3) -- (6-10-6,6-6+3) -- (3-10-6,3-6+3) -- cycle; \node at (4.1-10-6,3.4-6+3) {$\scriptscriptstyle \beta^{2}  D $};
    \draw  (0-10,0+3) -- (2-10,4+3) -- (0-10,3+3) -- cycle; \node at (0.7-10,2.7+3) {$\scriptscriptstyle \alpha D$};
    \draw  (2-10-6,4-6+3) -- (3-10-6,3-6+3) -- (6-10-6,6-6+3) -- cycle; \node at (3.2-10-6,4.1-6+3) {$\scriptscriptstyle \alpha\beta^{2} D$};
    \draw  (2-10,4-6+3) -- (0-10,6-6+3) -- (3-10,6-6+3) -- cycle; \node at (1.8-10,5.2-6+3) {$\scriptscriptstyle \alpha\beta D$};
    \draw  (6-10-6,0+3) -- (6-10-6,3+3) -- (4-10-6,2+3) -- cycle; \node at (5.3-10-6,1.6+3) {$\scriptscriptstyle \beta D$};
    \draw (0-10,0+3) -- (-3-10,0+3) -- (-4-10,-2+3) -- cycle; \node at (-2.4-10,-0.6+3) {$\scriptscriptstyle  \gamma D$};
    \draw (0-10,0+3) -- (3-10,3+3) -- (2-10,4+3) -- cycle; \node at (2.2-10,3+3) {$\scriptscriptstyle  \beta^{2}\gamma D$};
    \draw  (4-10,2+3) -- (3-10,3+3) -- (0-10,0+3) -- cycle; \node at (2.8-10,2+3) {$\scriptscriptstyle \alpha \beta^{2}\gamma D$};
    \draw (3-10-6,0+3) -- (6-10-6,0+3) -- (4-10-6,2+3)  -- cycle; \node at (4.3-10-6,0.6+3) {$\scriptscriptstyle \alpha \beta \gamma D$};
    \draw  (6-10-6,3-3) -- (6-10-6,6-3) -- (4-10-6,2-3) -- cycle; \node at (5.35-10-6,3.4-3) {$\scriptscriptstyle \alpha \gamma D$};                                 \draw  (0-10,3-3) -- (2-10,4-3) -- (0-10,6-3) -- cycle; \node at (0.8-10,4.3-3) {$\scriptscriptstyle \beta\gamma D$};

    \begin{scope}[xshift=-25mm]
    \draw[fill=green!25] (3,0) -- (0,0) -- (4,2) -- cycle; \node at (2.4,0.6) {$\scriptscriptstyle \bar{D}$};
    \draw (3,0) -- (6,0) -- (4,2)  -- cycle; \node at (4.3,0.6) {$\scriptscriptstyle \bar \alpha\bar\beta \bar\gamma \bar{D}$};
    \draw  (6,0) -- (4,2) -- (6,3)  -- cycle; \node at (5.3,1.6) {$\scriptscriptstyle \bar\beta \bar{D}$};                                
    \draw  (6,3) -- (4,2) -- (6,6)  -- cycle; \node at (5.4,3.4) {$\scriptscriptstyle \bar\alpha \bar\gamma \bar{D}$};                                
    \draw  (4,2) -- (6,6) -- (3,3) -- cycle; \node at (4.1,3.4) {$\scriptscriptstyle \bar\beta^{2} \bar{D}$};                                
    \draw  (4,2) --  (0,0) -- (3,3) -- cycle; \node at (2.85,2) {$\scriptscriptstyle \bar\alpha \bar\beta^{2}\bar\gamma \bar{D}$};
    \draw (3,3) -- (0,0) -- (2,4) -- cycle; \node at (2.2,3) {$\scriptscriptstyle \bar\beta^{2}\bar\gamma \bar{D}$};
    \draw  (0,0) -- (2,4) -- (0,3) -- cycle; \node at (0.7,2.7) {$\scriptscriptstyle \bar\alpha \bar{D}$};
    \draw  (0,3) -- (2,4) -- (0,6) -- cycle; \node at (0.8,4.3) {$\scriptscriptstyle \bar\beta\bar\gamma \bar{D}$};
    \draw  (0,6) -- (2,4) -- (3,6) -- cycle; \node at (1.8,5.2) {$\scriptscriptstyle \bar\alpha\bar\beta \bar{D}$};
    \draw (6,6) -- (2,4) --  (3,6)  -- cycle; \node at (3.6,5.5) {$\scriptscriptstyle \bar\gamma \bar{D}$};
    \draw  (2,4) -- (3,3) -- (6,6) -- cycle; \node at (3.2,4.1) {$\scriptscriptstyle \bar\alpha\bar\beta^{2} \bar{D}$};
    \end{scope}
    \end{tikzpicture}
    \caption{The action of $H$ on the fundamental domain $\overline{D}$}
\label{fig:1}
\end{figure}

This second picture shows that the only product
$\bar\alpha^{e_{1}}\, \bar\beta^{e_{2}}\, \bar\gamma^{e_{3}}$ with
$e_{1},e_{3}=0,1$ and $e_{2}=0,1,2$ that fixes $ \overline{D}$ occurs
when $e_{1}=e_{2}=e_{3}=0$. This proves~\eqref{item:7}.  The statement
in \eqref{item:3} can also be checked from the picture: the translates
of $\overline{D}$ by the elements of~$H$ fit in 
$(\mathbb{R}/2\pi \mathbb{Z})^{2}$ like the pieces of a puzzle. Hence
these translates cover the whole of the space, and the intersections
of any two different translates has empty interior.

To prove the statement in~\eqref{item:1}, note that the linear
automorphisms in \eqref{eq:8} preserve the Lebesgue measure of
$\mathbb{R}^{2}$ because their determinants are equal to~$1$, and so
do the induced automorphisms of~$(\mathbb{R}/2 \pi \mathbb{Z})^{2}$.

For~\eqref{item:2}, consider the functions on
$(\mathbb{R}/2 \pi \mathbb{Z})^{2}$ defined as
  \begin{displaymath}
\phi_{1}(u)=|\eu^{\iu u_{2}}-\eu^{\iu u_{1}}|,     \quad \phi_{2}(u)=|\eu^{\iu u_{1}}-1|, \quad \phi_{3}(u)=|\eu^{\iu u_{2}}-1|. 
  \end{displaymath}
  They are invariant under~$\bar\gamma$, whereas $\bar \alpha$ leaves
  invariant $\phi_{1}$ and exchanges $\phi_{2}$ with $\phi_{3}$, and
  $\beta$ makes a cyclic permutation.  We have that
  $f=\log\max(\phi_{1},\phi_{2},\phi_{3})$ and so this function is
  invariant under~$H$.

  Finally, note that $|\eu^{\iu s}-1|=\sqrt{2-2\cos(s)}$ and so for~$s,s'\in [0,2\pi]$,
  \begin{equation}
    \label{eq:31}
    |\eu^{\iu s}-1|\le |\eu^{\iu s'}-1| \text{ if
  and only if } s\le s'\le 2\pi-s.
  \end{equation}
  On the other hand, a point $u\in \mathbb{R}^{2}$ lies in
  $ D $ if and only if it verifies the inequalities
  \begin{displaymath}
    u_{2}\ge 0,\quad u_{1}-2 u_{2}\ge 0, \quad u_{2}-2u_{1}+2\pi\ge 0.
  \end{displaymath}
  In particular, $u_1-u_2\in[0,2\pi]$ for every~$u\in D$.  These
  inequalities imply those in \eqref{eq:31} for $s'=u_{1}$ and
  $s=u_{1}-u_{2}, u_{2}$. Hence $\phi_{2}\ge \phi_{1},\phi_{3}$
  on~$\overline{D}$, which proves~\eqref{prop:6}.
\end{proof}

We also need the next integral formul\ae.

\begin{lemma}
  \label{lem:3}
The following equalities hold:
  \begin{enumerate}[leftmargin=*]
  \item \label{item:5} $  \displaystyle \int_{0}^{\pi}  s   \log |\eu^{\iu s}-1|\, ds= \frac{7}{4}\, \zeta(3)$,
  \item \label{item:6} $ \displaystyle     \int_{\pi}^{\frac{4 \pi}{3}} (4 \pi-3 s)  \log |\eu^{\iu s}-1|\, ds = \frac{11}{12}\, \zeta(3)$.    
  \end{enumerate}
\end{lemma}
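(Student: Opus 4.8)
The plan is to substitute into both integrals the classical Fourier expansion
\[
\log|\eu^{\iu s}-1|=\log\bigl(2\sin(s/2)\bigr)=-\sum_{n\ge1}\frac{\cos(ns)}{n},
\]
valid for every $s\in\mathbb{R}$ with $s\not\equiv0\pmod{2\pi}$, and then to integrate term by term against the linear weights $s$ and $4\pi-3s$, reducing each statement to an elementary trigonometric integral followed by the resummation of a series built from the values of $n\mapsto n^{-3}$ along arithmetic progressions. The only delicate point is the interchange of sum and integral: the series above converges uniformly only on compact subsets of $(0,2\pi)$, and in part~\eqref{item:5} the integrand has an (integrable) logarithmic singularity at the endpoint $s=0$. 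I would deal with this by first integrating over $[\delta,\pi]$, where the convergence is uniform, and then letting $\delta\to0^{+}$, controlling the omitted piece through the elementary bound $\bigl|\int_{0}^{\delta}s\cos(ns)\,ds\bigr|\le\min(C\delta^{2},C/n^{2})$, which makes the corresponding tail of the series an $O(\delta^{2}\log(1/\delta))$; alternatively, one may integrate by parts using the Clausen function $\mathrm{Cl}_{2}(s)=\sum_{n\ge1}\sin(ns)/n^{2}$, whose defining series converges uniformly on all of $\mathbb{R}$ and which is a primitive of $\log(2\sin(s/2))$ on $(0,2\pi)$.

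For part~\eqref{item:5}, a single integration by parts gives $\int_{0}^{\pi}s\cos(ns)\,ds=\bigl((-1)^{n}-1\bigr)/n^{2}$, and hence the integral equals
\[
-\sum_{n\ge1}\frac{(-1)^{n}-1}{n^{3}}=2\sum_{\substack{n\ge1\\ n\ \mathrm{odd}}}\frac{1}{n^{3}}=2\Bigl(1-\frac{1}{2^{3}}\Bigr)\zeta(3)=\frac{7}{4}\,\zeta(3).
\]
For part~\eqref{item:6}, the boundary terms of the integration by parts vanish, since $4\pi-3s=0$ at $s=4\pi/3$ and $\sin(n\pi)=0$ at $s=\pi$, leaving
\[
\int_{\pi}^{4\pi/3}(4\pi-3s)\cos(ns)\,ds=\frac{3}{n}\int_{\pi}^{4\pi/3}\sin(ns)\,ds=\frac{3}{n^{2}}\bigl((-1)^{n}-\cos(4n\pi/3)\bigr),
\]
so the integral equals $-3\sum_{n\ge1}\bigl((-1)^{n}-\cos(4n\pi/3)\bigr)/n^{3}$. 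Now $\sum_{n\ge1}(-1)^{n}/n^{3}=-\tfrac34\zeta(3)$, while $\cos(4n\pi/3)$ equals $1$ when $3\mid n$ and $-\tfrac12$ otherwise, so splitting according to the class of $n$ modulo $3$ gives $\sum_{n\ge1}\cos(4n\pi/3)/n^{3}=\tfrac{1}{27}\zeta(3)-\tfrac12\cdot\tfrac{26}{27}\zeta(3)=-\tfrac49\zeta(3)$. Combining the two sums, the integral equals $-3\bigl(-\tfrac34+\tfrac49\bigr)\zeta(3)=\tfrac{11}{12}\zeta(3)$, as claimed.

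Apart from the justification of the term-by-term integration at the singular endpoint, no real obstacle is expected: each identity is just the bookkeeping above, with part~\eqref{item:6} needing only the extra observation that $\cos(4n\pi/3)$ depends on $n$ solely through its residue modulo $3$.
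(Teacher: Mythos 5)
Your proposal is correct and follows essentially the same route as the paper: both expand $\log|\eu^{\iu s}-1|$ as (the real part of) the series $-\sum_{n\ge 1}\cos(ns)/n$, integrate term by term against the linear weight via integration by parts after truncating away the singular endpoint $s=0$, and evaluate the resulting series by splitting according to residues modulo $2$ and modulo $3$ (your $\cos(4n\pi/3)$ is exactly the paper's $\Re(\rho^{n})$ with $\rho=\eu^{4\pi\iu/3}$). The only cosmetic difference is bookkeeping of the cutoff: the paper keeps $\varepsilon$ inside the termwise formula and lets $\varepsilon\to 0$ at the end, while you estimate the omitted piece directly (note your bound on $\int_{0}^{\delta}s\cos(ns)\,ds$ for $n\delta>1$ should read $O(\delta/n)+O(1/n^{2})$ rather than $O(1/n^{2})$, which still yields the same $O(\delta^{2}\log(1/\delta))$ tail), or invokes the Clausen function.
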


\begin{proof}
  Let~$0<\varepsilon\le \pi$. The Dirichlet-Hardy test for the convergence of
  series \cite[page~42]{JeJe:mmph} implies that the sequence of
  partial sums $\sum_{k=1}^{\ell} {\eu^{\iu k s}}/{k} $, $\ell\ge 1$,
  converges uniformly on the interval $[\varepsilon , \pi]$ to the
  principal determination of~$-\log(1-\eu^{\iu s})$.
  Hence the sequence
  \begin{equation}
    \label{eq:15}
\Re\bigg(\sum_{k=1}^{\ell} \frac{\eu^{\iu k  s}}{k} \bigg), \quad \ell\ge1,
  \end{equation}
  converges uniformly on this
  interval to the function~$-\log|\eu^{\iu s}-1|$. This implies~that
  \begin{equation}
    \label{eq:14}
    \int_{\varepsilon}^{\pi}  s   \log |\eu^{\iu s}-1|\, ds= -\sum_{k\ge1}   \frac{1}{k} \Re\bigg(  \int_{\varepsilon}^{\pi} s\, \eu^{\iu k  s} \, ds \bigg).
  \end{equation}
  For each $k\ge 1$, integration by parts gives
  \begin{displaymath}
     \int_{\varepsilon}^{\pi} s\, \eu^{\iu k  s} \, ds = \frac{1}{\iu k}\Big( \Big[ s\, \eu^{\iu k s} \Big]_{\varepsilon}^{\pi} - \int_{\varepsilon}^{\pi} \eu^{\iu k  s} \, ds\Big) = \frac{1}{\iu k} \Big( \pi (-1)^{k} - \varepsilon  \, \eu^{\iu k \varepsilon} - \frac{1}{\iu k} ((-1)^{k}-\eu^{\iu k \varepsilon})\Big).
  \end{displaymath}
We deduce from \eqref{eq:14} that
  \begin{displaymath}
    \int_{\varepsilon}^{\pi}  s   \log |\eu^{\iu s}-1|\, ds= \sum_{k} \Big(\frac{\varepsilon}{k^{2}} \Im(\eu^{\iu k \varepsilon }) + \frac{1}{k^{3}} ( \Re(\eu^{\iu k \varepsilon})-(-1)^{k})\Big).
  \end{displaymath}
  Taking the limit $\varepsilon \to 0$ we obtain that
  \begin{multline*}
    \int_{0}^{\pi}  s   \log |\eu^{\iu s}-1|\, ds= \sum_{k} \frac{1-(-1)^{k}}{k^{3}} =2\, \sum_{2\nmid k} \frac{1}{k^{3}} \\=
    2\, \bigg( \sum_{k} \frac{1}{k^{3}} - \sum_{2\mid k} \frac{1}{k^{3}}\bigg) = 2\, \Big( 1-\frac{1}{8}\Big) \sum_{k} \frac{1}{k^{3}} = \frac{7}{4} \, \zeta(3),
  \end{multline*}
  proving~\eqref{item:5}. For the formula in~\eqref{item:6}, we deduce from the uniform convergence of the sequence in \eqref{eq:15} that
  \begin{equation}
    \label{eq:17}
    \int_{\pi}^{\frac{4 \pi}{3}} (4 \pi-3 s)  \log |\eu^{\iu s}-1|\, ds 
    = -\sum_{k} \frac{1}{k} \Re\bigg(\int_{\pi}^{\frac{4 \pi}{3}}  (4 \pi-3 s)  \, \eu^{\iu k s}\, ds\bigg).
  \end{equation}
  For each~$k\ge 1$, integrating by parts now gives
  \begin{multline*}
    \int_{\pi}^{\frac{4 \pi}{3}}  (4 \pi-3 s)  \, \eu^{\iu k s}\, ds =  \frac{1}{\iu k}  \Big(  \Big[ (4 \pi-3 s ) \, \eu^{\iu ks}\Big]_{\pi}^{\frac{4\pi}{3}}
    +3 \int_{\pi}^{\frac{4\pi}{3}} \eu^{ \iu k s} \, ds \Big)  \\
=  \frac{1}{\iu k}  \Big(  -\pi \, (-1)^{k}+\frac{3}{\iu k} \, (\rho^{k} -(-1)^{k})\Big)
  \end{multline*}
  with
  $\rho= \eu^{\iu \frac{4\pi}{3}} = \frac{-1}{2}-\iu
  \frac{\sqrt{3}}{2}$. By~\eqref{eq:17}, we conclude that
  \begin{align*}
    \int_{\pi}^{\frac{4 \pi}{3}} (4 \pi-3 s)  \log |\eu^{\iu s}-1|\, ds = &3 \sum_{k}  \frac{1}{k^{3}} (\Re(\rho^{k}) -(-1)^{k})\\
= &3 \bigg(  \sum_{3\mid k}\frac{1}{k^{3}} -\frac{1}{2}
    \sum_{3\nmid k}\frac{1}{k^{3}}  -  \sum_{2\mid k} \frac{1}{k^{3}} +\sum_{2\nmid k}\frac{1}{k^{3}}    \bigg) \\
= &3 \bigg(  \frac{3}{2} \sum_{3\mid k}\frac{1}{k^{3}} -\frac{1}{2}
    \sum_{k}\frac{1}{k^{3}}  -  2\, \sum_{2\mid k} \frac{1}{k^{3}} +\sum_{ k}\frac{1}{k^{3}}    \bigg) \\
    = &3\, \Big( \frac{3}{2} \, \frac{1}{27} -\frac{1}{2} - 2\, \frac{1}{8} +1 \Big) \sum_{k}\frac{1}{k^{3}} \\ = & \frac{11}{12}\, \zeta(3),
  \end{align*}
  as stated.
\end{proof}

\begin{proof}[Proof of \cref{prop:2}]
  \cref{prop:4} together with Fubini's theorem implies that
  \begin{displaymath}
I=\frac{12}{(2 \pi)^{2}} \int_{\overline{D}}\log |\eu^{\iu u_{1}}-1| \, du_{1} du_{2}
    = \frac{12}{(2 \pi)^{2}} \int_{0}^{\frac{4 \pi}{3}} \min\Big( \frac{u_{1}}{2}, 2 \pi-\frac{3 u_{1}}{2}\Big)  \log |\eu^{\iu u_{1}}-1|\, du_{1}.
  \end{displaymath}
  By  \cref{lem:3}, this quantity is equal to
  \begin{displaymath}
\frac{12}{(2 \pi)^{2}} \Big( \frac{1}{2}\, \frac{7}{4}\, \zeta(3)+\frac{1}{2}\, \frac{11}{12}\, \zeta(3)\Big)=   \frac{4\, \zeta(3)}{\pi^{2}} = \frac{2\, \zeta(3)}{3\, \zeta(2)}
  \end{displaymath}
thanks to Euler's formula~$\zeta(2)=\pi^{2}/6$. 
\end{proof}

\section{The distribution of the height}
\label{sec:distribution-height}

Finally we can join together the different pieces from the previous sections to obtain our first main
result. Set for short
\begin{displaymath}
  \eta
  =\frac{2\, \zeta(3)}{3\, \zeta(2)}
  =0.487175\dots \, .
\end{displaymath}

\begin{theorem}
  \label{thm:1}
  Let $(\omega_{\ell})_{\ell\ge 1}$ be a strict sequence in
  $\Gm^{2}(\overline{\mathbb{Q}})$ of nontrivial torsion points. Then
  \begin{displaymath}
\lim_{\ell\to +\infty}    \h( C\cap \omega_{\ell} C)=\eta.
  \end{displaymath}
\end{theorem}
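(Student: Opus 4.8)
The plan is to decompose the height into its Archimedean and non-Archimedean parts and handle each with the results already established. Since $\omega_{\ell}$ is nontrivial we have $C\cap \omega_{\ell}C=\{P(\omega_{\ell})\}$ with $P(\omega_{\ell})$ as in~\eqref{eq:7}, so that $\h(C\cap \omega_{\ell}C)=\h(P(\omega_{\ell}))$. Choosing the vector of homogeneous coordinates $\mathcal{P}(\omega_{\ell})$ from~\eqref{eq:48}, the definition of the canonical height in~\eqref{eq:4} together with \cref{lem:5} gives
\[
  \h(C\cap \omega_{\ell}C)=\h_{\infty}(\mathcal{P}(\omega_{\ell}))+\sum_{v\in M_{\mathbb{Q}}\setminus\{\infty\}}\h_{v}(\mathcal{P}(\omega_{\ell})).
\]

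The first point is that a strict sequence of torsion points necessarily has diverging order: if $\ord(\omega_{\ell})\le N$ for infinitely many $\ell$, then for each such $\ell$ one has $\omega_{\ell,1}^{N!}=1$, so $\omega_{\ell}$ lies in the fixed proper algebraic subgroup $\{z\in\Gm^{2}(\overline{\mathbb{Q}})\mid z_{1}^{N!}=1\}$, contradicting strictness (alternatively this follows directly from~\eqref{eq:22} applied to suitable exponent vectors). Hence $\lim_{\ell\to+\infty}\ord(\omega_{\ell})=+\infty$, and \cref{cor:2} applies to show that the non-Archimedean contribution $\sum_{v\ne\infty}\h_{v}(\mathcal{P}(\omega_{\ell}))$ tends to~$0$. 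On the other hand, since the sequence is strict, \cref{prop:1} shows that the Archimedean contribution $\h_{\infty}(\mathcal{P}(\omega_{\ell}))$ tends to the integral $I$, which by \cref{prop:2} equals $\eta=2\,\zeta(3)/(3\,\zeta(2))$. Adding the two limits yields $\lim_{\ell\to+\infty}\h(C\cap \omega_{\ell}C)=\eta$.

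In effect, this theorem is just the assembly of the pieces developed in the previous sections, and no genuine new obstacle arises. The only step requiring a short independent argument is the observation that strictness forces $\ord(\omega_{\ell})\to+\infty$, needed to trigger \cref{cor:2}; the substantive input is \cref{prop:1}, whose proof is the technical heart of Part~I, while the identification of the limiting integral with the stated quotient of zeta values is supplied by \cref{prop:2}.
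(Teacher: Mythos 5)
Your proposal is correct and follows essentially the same route as the paper: decompose the height via \eqref{eq:4}, kill the non-Archimedean part with \cref{cor:2}, and identify the Archimedean limit via \cref{prop:1} and \cref{prop:2}. The only addition is your explicit verification that strictness forces $\ord(\omega_{\ell})\to+\infty$ (needed for \cref{cor:2}), a point the paper leaves implicit but which your subgroup argument handles correctly.
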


\begin{proof}
  This follows readily from the definition of the height in
  \eqref{eq:4} together with \cref{cor:2} and 
  \cref{prop:1,prop:2}.
\end{proof}

The following questions are natural in this context.

\begin{question}
  \label{que:2}
  Is it possible to obtain a quantitative version of \cref{thm:1}?
  This is understood as an estimate, for a given nontrivial torsion point~$\omega$,
  of the discrepancy between the height of $C\cap \omega C$ and the
  limit value $\eta$ in terms of the strictness degree of~$\omega$,
  that is, the minimal degree of a $1$-dimensional algebraic subgroup
  containing it.
\end{question}

\begin{question}
  \label{que:1}
  Can \cref{thm:1} be extended to points of small height? It would be
  interesting to prove (or disprove) that its conclusion holds for a
  strict sequence in $\Gm^{2}(\overline{\mathbb{Q}})$ of points whose
  height converges to zero, but are not necessarily torsion.
\end{question}

The notion of strict sequence of points can be extended to include the
finite subsets that appear naturally when doing statistics on the
values of the height, as follows.

\begin{defn}\label{def: strict sequences of sets}
Let $V$ be an algebraic subset of~$\Gm^{n}(\overline{\mathbb{Q}})$.
A sequence $(E_{\ell})_{\ell\ge1}$ of nonempty finite subsets of $V$
is \emph{strict} if for every algebraic subgroup $H\subset\Gm^{n}(\overline{\mathbb{Q}})$ not containing $V$ we have that
 \begin{displaymath}
\lim_{\ell\to+\infty}   \frac{\# (E_{\ell}\cap H)}{\#E_{\ell}} =0.
 \end{displaymath}
 \end{defn}
 
 The situation of interest in this section is the
 case~$V=\Gm^{n}(\overline{\mathbb{Q}})$.  The criterion
 in~\eqref{eq:22} can be extended to this setting: a sequence
 $(E_{\ell})_{\ell\ge1}$ is strict in $\Gm^{n}(\overline{\mathbb{Q}})$
 if and only if we have that
\begin{equation}
\label{eq:33}
\lim_{\ell\to+\infty}   \frac{\#  (E_{\ell} \cap \Ker(\chi^{a}))}{\#E_{\ell}} =0 \quad \text{ for all } a\in \mathbb{Z}^{n}\setminus \{(0,\dots,0)\}.
\end{equation}

\begin{example}
  \label{exm:1}
  The sequence of the sets of $d$-torsion points $\mu_d^n$ of $\Gm^{n}(\overline{\mathbb{Q}})$ is
  strict. Indeed, for each
  $a\in \mathbb{Z}^{n}\setminus \{(0,\dots, 0)\}$ and $d\ge 1$ we have
  that
  \begin{equation}
    \label{eq:2}
    \frac{\#  (\mu_{d}^{n} \cap \Ker(\chi^{a}))}{\# \mu_{d}^{n}} =
    \frac{\#    \Ker(\chi^{a}|_{\mu_{d}^{n}})}{\# \mu_{d}^{n}} =
    \frac{1}{\# \Im(\chi^{a}|_{\mu_{d}^{n}})} = \frac{\gcd(a_{1},\dots, a_{n},d)}{d},
  \end{equation}
  where the last equality comes from the fact that the image of
  $\mu_{d}^{n}$ under $\chi^{a}$ is generated by
  $\zeta^{\gcd(a_1,\ldots,a_n)}$ for any primitive $d$-root of
  unity~$\zeta$.  Hence the quotient in \eqref{eq:2} tends to $0$
  when~$d\to+\infty$. Since this holds for every~$a$, the criterion in
  \eqref{eq:33} implies that the sequence $ (\mu_{d}^{n})_{d\ge 1}$ is
  strict.
\end{example}

We next extend \cref{thm:1} to compute the typical value of the height
of the intersection of the line $C$ with its translates by the torsion
points in a strict sequence of nonempty finite subsets.

\begin{theorem}
  \label{thm:2}
  Let $(W_{\ell})_{\ell\ge 1}$ be a strict sequence of nonempty finite
  subsets of $\Gm^{2}(\overline{\mathbb{Q}})$ of nontrivial torsion
  points. Then for~each $\varepsilon >0$ we have that
  \begin{displaymath} 
    \lim_{\ell\to+\infty}    \frac{\# \{\omega\in W_{\ell} \mid |\h(C
      \cap \omega C)- \eta|<\varepsilon \}}{\#W_{\ell}} =1.
  \end{displaymath}
Moreover 
  \begin{math}
\displaystyle{    \lim_{\ell\to+\infty}
\frac{1}{\# W_{\ell}} \sum_{\omega\in W_{\ell}}\h(C\cap \omega C)=\eta.}
  \end{math}
\end{theorem}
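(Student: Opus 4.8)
The plan is to deduce both assertions from \cref{thm:1} by a contradiction argument resting on two elementary properties of strict sequences of finite subsets. Throughout write $g(\omega)=\h(C\cap\omega C)$, and recall from \cref{prop: bounds for the height} that $0\le g(\omega)\le\log(2)$, hence $|g(\omega)-\eta|\le\log(2)$, for every nontrivial torsion point $\omega$. First I would observe that the second assertion follows from the first: setting $G_{\ell}=\{\omega\in W_{\ell}\mid |g(\omega)-\eta|<\varepsilon\}$ one has
\[
\Bigl|\frac{1}{\#W_{\ell}}\sum_{\omega\in W_{\ell}}g(\omega)-\eta\Bigr|
\le\varepsilon\,\frac{\#G_{\ell}}{\#W_{\ell}}+\log(2)\,\frac{\#(W_{\ell}\setminus G_{\ell})}{\#W_{\ell}}
\le\varepsilon+\log(2)\,\frac{\#(W_{\ell}\setminus G_{\ell})}{\#W_{\ell}},
\]
so granting the first assertion the last fraction tends to $0$, whence the $\limsup$ of the left-hand side is at most $\varepsilon$; letting $\varepsilon\to0$ gives the claim. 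It thus remains to prove the first assertion.

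Next I would record two facts. \emph{(i)} If $(E_{\ell})_{\ell}$ is a strict sequence of finite subsets of $\Gm^{2}(\overline{\mathbb{Q}})$ and $E'_{\ell}\subseteq E_{\ell}$ are nonempty with $\#E'_{\ell}\ge c\,\#E_{\ell}$ for some fixed $c>0$, then $(E'_{\ell})_{\ell}$ is again strict, since $\#(E'_{\ell}\cap H)/\#E'_{\ell}\le c^{-1}\,\#(E_{\ell}\cap H)/\#E_{\ell}\to0$ for every proper algebraic subgroup $H$, so the criterion \eqref{eq:33} applies. \emph{(ii)} From a strict sequence $(E_{\ell})_{\ell}$ of finite subsets of nontrivial torsion points one can choose $\gamma_{\ell}\in E_{\ell}$ so that $(\gamma_{\ell})_{\ell}$ is a strict sequence of torsion points: enumerate as $H_{1},H_{2},\dots$ the countably many subgroups $\Ker(\chi^{a})$ with $a\in\mathbb{Z}^{2}\setminus\{(0,0)\}$, which by \eqref{eq:22} suffice to test strictness of a sequence of points; by \eqref{eq:33} and subadditivity $\#\bigl(E_{\ell}\cap(H_{1}\cup\dots\cup H_{m})\bigr)/\#E_{\ell}\to0$ as $\ell\to+\infty$ for each $m$, so there is a strictly increasing sequence $(L_{m})_{m}$ with $E_{\ell}\not\subseteq H_{1}\cup\dots\cup H_{m}$ whenever $\ell\ge L_{m}$ (the displayed density being $<1$ for $\ell$ large); choosing $\gamma_{\ell}\in E_{\ell}\setminus(H_{1}\cup\dots\cup H_{m})$ when $L_{m}\le\ell<L_{m+1}$, and $\gamma_{\ell}\in E_{\ell}$ arbitrarily for $\ell<L_{1}$, one gets $\gamma_{\ell}\notin H_{i}$ for all $\ell\ge L_{i}$, hence $(\gamma_{\ell})_{\ell}$ is strict by \eqref{eq:22}.

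With this in hand I would conclude by contradiction. If the first assertion fails, there are $\varepsilon_{0},\delta_{0}>0$ and, after passing to a subsequence still denoted $(W_{\ell})$, the sets $B_{\ell}=\{\omega\in W_{\ell}\mid |g(\omega)-\eta|\ge\varepsilon_{0}\}$ satisfy $\#B_{\ell}\ge\delta_{0}\,\#W_{\ell}$. By \emph{(i)} the sequence $(B_{\ell})_{\ell}$ of finite subsets of nontrivial torsion points is strict, so by \emph{(ii)} there are $\gamma_{\ell}\in B_{\ell}$ forming a strict sequence of nontrivial torsion points. Then $|g(\gamma_{\ell})-\eta|\ge\varepsilon_{0}$ for all $\ell$, contradicting \cref{thm:1}, which yields $g(\gamma_{\ell})\to\eta$. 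This proves the first assertion and hence the theorem. I expect the only mildly delicate point to be the diagonal extraction in \emph{(ii)} — in particular verifying that the chosen points eventually avoid each fixed subgroup $H_{i}$; everything else is bookkeeping, as the analytic substance is packaged entirely into \cref{thm:1} and the bound of \cref{prop: bounds for the height}.
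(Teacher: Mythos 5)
Your proposal is correct and follows essentially the same route as the paper: the paper packages your facts (i)--(ii) and the contradiction argument into a general transfer principle (\cref{lem:4}), whose proof is exactly your diagonal extraction of a strict sequence of ``bad'' points against a countable enumeration of algebraic subgroups, and then applies it with the boundedness from \cref{prop: bounds for the height} and the pointwise limit from \cref{thm:1}. Your reduction of the mean statement to the proportion statement via the bound $\log(2)$ matches the paper's deduction of part (2) of that lemma from part (1).
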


This is a consequence of the previous results and the following general
transfer principle.

\begin{lemma}
  \label{lem:4}
  Let $V$ be an algebraic subset of~$\Gm^{n}(\overline{\mathbb{Q}})$.
  Let $(E_{\ell})_{\ell\ge1}$ be a strict sequence of nonempty finite
  subsets of~$V$, $\phi $ a real-valued
  function on $\bigcup_{\ell\ge 1} E_{\ell}$ and $\kappa$ a real
  number such that for every strict sequence
  $(\gamma_{\ell})_{\ell\ge 1}$ in $V$ contained in this union we have that~$\lim_{\ell\to+\infty}\phi(\gamma_{\ell})=\kappa$. Then
  \begin{enumerate}[leftmargin=*]
  \item \label{item:10} for each $\varepsilon >0$ we have that
    $\displaystyle{\lim_{\ell\to+\infty} \frac{\# \{\gamma \in E_{\ell} \mid
      |\phi(\gamma)-\kappa|<\varepsilon \}}{\#E_{\ell}}=1}$,
\item \label{item:11} if $\phi$ is bounded on~$\bigcup_{\ell\ge 1} E_{\ell}$,
then
  \begin{math}
   \displaystyle{ \lim_{\ell\to+\infty}        \frac{1}{\# E_{\ell}} \sum_{\gamma\in E_{\ell}} \phi(\gamma) = \kappa.}
  \end{math}
  \end{enumerate}
\end{lemma}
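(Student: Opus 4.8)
The plan is to derive both statements from a single extraction principle that converts the combinatorial strictness of the sets $E_\ell$ into the strictness of a suitably chosen sequence of points, to which the hypothesis on $\phi$ then applies directly.

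I would first prove \eqref{item:10} by contradiction. If it fails for some $\varepsilon>0$, there are $\delta>0$ and an infinite index set $\mathcal L$ such that the ``bad'' sets $B_\ell\coloneqq\{\gamma\in E_\ell\mid |\phi(\gamma)-\kappa|\ge\varepsilon\}$ satisfy $\#B_\ell\ge\delta\,\#E_\ell$ for all $\ell\in\mathcal L$. The first, routine observation is that $(B_\ell)_{\ell\in\mathcal L}$ is again a strict sequence of nonempty finite subsets of $V$: for any algebraic subgroup $H$ not containing $V$ one has $B_\ell\cap H\subseteq E_\ell\cap H$, whence $\#(B_\ell\cap H)/\#B_\ell\le\delta^{-1}\,\#(E_\ell\cap H)/\#E_\ell$, which tends to $0$ along $\mathcal L$ by strictness of $(E_\ell)_\ell$.

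The key step — and the one I expect to require the most care — is to build from $(B_\ell)_{\ell\in\mathcal L}$ an honest strict \emph{sequence of points} contained in $\bigcup_{\ell\ge1}E_\ell$: one cannot simply pick points from the $B_\ell$ at will, since an arbitrary choice might always land in a fixed proper subgroup. Instead I would fix an enumeration $H_1,H_2,\dots$ of the proper algebraic subgroups of $\Gm^n(\overline{\mathbb Q})$ not containing $V$ (there are at most countably many, since they correspond to subgroups of the character lattice $\mathbb Z^n$). For each $m$, subadditivity of cardinality gives $\#\bigl(B_\ell\cap(H_1\cup\dots\cup H_m)\bigr)/\#B_\ell\le\sum_{i=1}^m\#(B_\ell\cap H_i)/\#B_\ell\to0$ along $\mathcal L$, so for $\ell\in\mathcal L$ large the set $B_\ell$ is not contained in $H_1\cup\dots\cup H_m$. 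This lets me choose recursively indices $\ell_1<\ell_2<\cdots$ in $\mathcal L$ and points $\gamma_m\in B_{\ell_m}\setminus(H_1\cup\dots\cup H_m)$. The resulting sequence $(\gamma_m)_{m\ge1}$ lies in $V$ and in $\bigcup_{\ell}E_\ell$, eventually avoids each $H_k$ (since $\gamma_m\notin H_k$ whenever $m\ge k$) and is therefore strict, yet $|\phi(\gamma_m)-\kappa|\ge\varepsilon$ for every $m$, contradicting the hypothesis $\phi(\gamma_m)\to\kappa$. This proves \eqref{item:10}.

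Finally, \eqref{item:11} follows formally: if $|\phi-\kappa|\le C$ on $\bigcup_\ell E_\ell$, then for any $\varepsilon>0$, splitting $E_\ell$ according to whether $|\phi(\gamma)-\kappa|$ is $<\varepsilon$ or $\ge\varepsilon$ yields
\[
\Bigl|\tfrac{1}{\#E_\ell}\sum_{\gamma\in E_\ell}\phi(\gamma)-\kappa\Bigr|\le\varepsilon+C\cdot\frac{\#\{\gamma\in E_\ell\mid|\phi(\gamma)-\kappa|\ge\varepsilon\}}{\#E_\ell},
\]
and the last term tends to $0$ by \eqref{item:10}; letting $\varepsilon\to0$ gives the claim. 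The degenerate case in which no proper algebraic subgroup misses $V$ — so $V=\{1\}$ and every sequence in $V$ is vacuously strict — is immediate and can be dispatched separately.
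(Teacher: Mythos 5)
Your proposal is correct and follows essentially the same route as the paper: argue by contradiction, enumerate the countably many algebraic subgroups $H_1,H_2,\dots$ not containing $V$ (with the degenerate case $V=\{(1,\dots,1)\}$ set aside), extract recursively a strict sequence of points $\gamma_m\in B_{\ell_m}\setminus(H_1\cup\dots\cup H_m)$ on which $|\phi-\kappa|\ge\varepsilon$, contradicting the hypothesis, and deduce \eqref{item:11} from \eqref{item:10} by boundedness. The only cosmetic difference is that you phrase the selection step via the strictness of the bad sets $B_\ell$ themselves, whereas the paper compares the bad proportion $\ge c$ directly with the proportion $<c$ covered by $H_1\cup\dots\cup H_k$; these are interchangeable.
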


\begin{proof}
  We argue by contradiction, supposing that there is $\varepsilon >0$
  for which the statement in \eqref{item:10} does not hold.
  Restricting to a subsequence, we can then assume that there is $c>0$
  such that
  \begin{displaymath}
        \frac{\# \{\gamma \in E_{\ell} \mid  |\phi(\gamma)-\kappa|\ge \varepsilon \}}{\# E_{\ell}} \ge c  \quad \text{ for all } \ell\ge 1.
  \end{displaymath}
  Let $(H_{j})_{j\geq1}$ be the complete list of algebraic subgroups
  of $\Gm^{n}(\overline{\mathbb{Q}})$ not containing~$V$.
  Unless~$V=\{(1,\ldots,1)\}$, in which case the statement is trivial,
  this is a countably infinite list.  Since $(E_{\ell})_{\ell\ge 1}$
  is strict in~$V$, we can take a subsequence $(E_{\ell_k})_{k\ge 1}$
  such~that
  \begin{displaymath}
  \frac{\#\big(    E_{\ell_k}\cap \bigcup_{j=1}^{k} H_{j} \big)}{\# E_{\ell_k}} <c \quad \text{ for all }k\ge 1.
  \end{displaymath}
  Then for each $k\ge 1$ we can choose 
  $\gamma_{k} \in E_{\ell_{k}} \setminus \bigcup_{j=1}^{k} H_{j} $ such
  that
  \begin{equation}
    \label{eq:18}
    |\phi(\gamma_{k})-\kappa|\ge \varepsilon. 
  \end{equation}
  The sequence $(\gamma_{k})_{k\ge 1}$ is strict in $V$ by construction, and
  so the inequality \eqref{eq:18} contradicts the hypothesis and
  proves~\eqref{item:10}. The statement in \eqref{item:11} follows
  easily from \eqref{item:10} and the hypothesis that $\phi$ is
  bounded.
\end{proof}

\begin{proof}[Proof of \cref{thm:2}] This is a direct
  consequence of \cref{lem:4} applied to the strict sequence
  $(W_{\ell})_{\ell\ge 1}$ of $\Gm^{2}(\overline{\mathbb{Q}})$ and to the real-valued function
  \begin{displaymath}
\bigcup_{\ell\ge1}    W_{\ell} \longrightarrow \mathbb{R}, \quad \omega\longmapsto \h(C\cap \omega C)
  \end{displaymath}
  together with \cref{thm:1} and \cref{prop: bounds for the height}.
\end{proof}

The next result is an easy consequence of \cref{exm:1} and
\cref{thm:2}.

\begin{corollary}
  \label{cor:3}
For each $\varepsilon >0$ we have that
\begin{displaymath}
  \lim_{d\to +\infty}  \frac{1}{d^{2}-1} \, \#\{\omega\in \mu_{d}^{2}\setminus\{(1,1)\} \mid |\h(C\cap \omega C)
  -\eta|<\varepsilon \} =1.
\end{displaymath}
Moreover 
  \begin{math}
\displaystyle{    \lim_{d\to+\infty}
\frac{1}{d^{2}-1} \hspace{-2mm}\sum_{\omega\in \mu_{d}^{2}\setminus\{(1,1)\} }\h(C\cap \omega C)=\eta.}
  \end{math}
\end{corollary}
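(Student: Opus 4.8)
The plan is to derive this from \cref{thm:2} applied to the particular strict sequence of finite sets $W_{d}=\mu_{d}^{2}\setminus\{(1,1)\}$, indexed by $d\ge 2$. First I would record that each $W_{d}$ is a nonempty finite set all of whose elements are nontrivial $d$-torsion points of $\Gm^{2}(\overline{\mathbb{Q}})$, and that $\#W_{d}=d^{2}-1$, so that the two displayed expressions in the corollary are exactly the ones appearing in \cref{thm:2} for this sequence once one substitutes $\#W_{d}=d^{2}-1$.

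The only point that genuinely requires an argument is that $(W_{d})_{d\ge 2}$ is strict in $\Gm^{2}(\overline{\mathbb{Q}})$, in the sense of \cref{def: strict sequences of sets}. Here I would invoke \cref{exm:1}, which gives the strictness of $(\mu_{d}^{2})_{d\ge 1}$: for any $a\in\mathbb{Z}^{2}\setminus\{(0,0)\}$ one has $\#(\mu_{d}^{2}\cap\Ker(\chi^{a}))/\#\mu_{d}^{2}\to 0$ by the computation in \eqref{eq:2}. Since $(1,1)\in\Ker(\chi^{a})$ for every such $a$, removing this single point gives $\#(W_{d}\cap\Ker(\chi^{a}))=\#(\mu_{d}^{2}\cap\Ker(\chi^{a}))-1$, whence
\[
\frac{\#(W_{d}\cap\Ker(\chi^{a}))}{\#W_{d}}\;\le\;\frac{d^{2}}{d^{2}-1}\cdot\frac{\#(\mu_{d}^{2}\cap\Ker(\chi^{a}))}{\#\mu_{d}^{2}}\;\longrightarrow\;0 .
\]
By the criterion in \eqref{eq:33}, this shows that $(W_{d})_{d\ge 2}$ is strict.

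With this in hand the corollary is immediate: applying \cref{thm:2} to the strict sequence $(W_{d})_{d\ge 2}$ of nontrivial torsion points yields both the concentration statement and the statement on the average, after dividing by $\#W_{d}=d^{2}-1$. I do not anticipate any real obstacle here; the only point of care is the bookkeeping above showing that deleting the trivial point $(1,1)$ — which must be excluded since $P(\omega)$ is only defined for nontrivial $\omega$ — does not destroy strictness, and this is handled by the displayed inequality.
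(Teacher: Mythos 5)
Your proposal is correct and is exactly the route the paper takes: the paper states \cref{cor:3} as an easy consequence of \cref{exm:1} and \cref{thm:2}, and your only added content is the (correct) bookkeeping showing that deleting the single point $(1,1)$ from $\mu_{d}^{2}$ preserves strictness via the criterion in \eqref{eq:33}.
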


\section{Intermezzo: sequences of torsion points in algebraic subgroups}
\label{sec:interm-sequ-tors}

Here we extend our previous study to sequences of torsion points which
are strict in a fixed irreducible component over the rationals of a
$1$-dimensional algebraic subgroup of the torus, thus finding other
interesting limit values for the height.

We start by recalling that the $1$-dimensional algebraic subgroups of
$\Gm^{2}(\overline{\mathbb{Q}})$ are the algebraic subsets defined by
a binomial of the form $\chi^{c}-1$
with~$c\in \mathbb{Z}^{2} \setminus \{(0,0)\}$.  Writing $c= l a$ for
a primitive vector $a\in \mathbb{Z}^{2}$ and a positive integer~$l$,
its irreducible decomposition~over~$\mathbb{Q}$~is
\begin{displaymath}
  Z(\chi^{c}-1)=\bigcup_{e\mid l}Z(\Phi_{e}(\chi^{a}))
\end{displaymath}
with $\Phi_{e}$ the $e$-th cyclotomic polynomial.

Hence for the rest of this section we fix a primitive vector
$a\in \mathbb{Z}^{2}$ and a positive integer~$e$, and we consider the
algebraic subset~$V_{a,e}=Z(\Phi_{e}(\chi^{a}))$. Its irreducible
decomposition over $\overline{\mathbb{Q}}$ is given by the disjoint union
\begin{displaymath}
    V_{a,e}=\bigcup_{\zeta\in \mu_{e}^{\circ}} Z(\chi^{a}-\zeta).
\end{displaymath}
Each irreducible component $Z(\chi^{a}-\zeta)$ is a \emph{torsion curve}, since it is the
translate of the $1$-dimensional (algebraic) subtorus
$Z(\chi^{a}-1) \simeq \Gm(\overline{\mathbb{Q}})$ by any torsion point
of it.

Let $(\omega_{\ell})_{\ell\ge 1}$ be a strict sequence in $ V_{a,e}$
of torsion points. Since any algebraic subgroup of the torus not
containing $V_{a,e}$ intersects this algebraic subset in a finite
number of points, the strictness of $(\omega_{\ell})_{\ell\ge 1}$ in
$V_{a,e}$ is equivalent to the fact that each torsion point appears at
most a finite number of times in the sequence. In turn, this is
equivalent to
\begin{equation}
  \label{eq:53}
  \lim_{\ell\to +\infty} \ord(\omega_{\ell})= +\infty.
\end{equation}
Recall that for each $\ell$ we denote by
$\delta_{O(\omega_{\ell})_{\infty}}$ the uniform probability measure
on the $\infty$-adic Galois orbit of $\omega_{\ell}$ as
in~\eqref{eq:20}.  Our first goal is to determine the limit of this
sequence of discrete measures with respect to the weak-$*$ topology.
To this end, consider the subset of the compact torus $\mathbb{S} $
defined as
  \begin{displaymath}
    \mathbb{S}_{a,e}=\{z\in \mathbb{S}\mid \Phi_{e}(\chi^{a}(z))=0\}
  = \bigcup_{\zeta\in \mu_{e}^{\circ}}  \{z\in \mathbb{S}\mid \chi^{a}(z)=\iota_{\infty}(\zeta)\}.
\end{displaymath}
It is a disjoint union of translates of the
circle~$\{z\in \mathbb{S}\mid \chi^{a}(z)=1\}\simeq S^{1}$ containing
the $\infty$-adic Galois orbit of any torsion point in~$V_{a,e}$.  For
each primitive $e$-root of unity~$\zeta$ we denote by
$\lambda_{a,e,\zeta}$ the
probability measure on $\mathbb{S}_{a,e}$ supported on the corresponding translate of~$S^1$, where it coincides with the measure induced by the
probability Haar measure of this circle. Consider then the probability
measure on $\mathbb{S}_{a,e}$ defined as
\begin{displaymath}
   \nu_{a,e}=\frac{1}{\varphi(e)} \sum_{\zeta \in \mu_{e}^{\circ}}\lambda_{a,e,\zeta}.
\end{displaymath}

The next result is the analogue in our present situation of~\eqref{eq:1}.
Its proof is done by reducing to a result of
Dimitrov and Habegger in~\cite{DimitrovHabegger} on the
equidistribution of the orbits of roots of unity under the action of
large subgroups of their Galois groups.

\begin{proposition}
  \label{prop:9}
  Let $(\omega_{\ell})_{\ell\ge 1}$ be a strict sequence in $ V_{a,e}$
  of torsion points and $\phi $ a bounded and $\nu_{a,e}$-almost
  everywhere continuous real-valued function on~$\mathbb{S}_{a,e}$. Then
  \begin{equation}\label{eq: equidistribution on Vae}
    \lim_{\ell\to +\infty} \int_{\mathbb{S}_{a,e}} \phi  \,d \delta_{O(\omega_{\ell})_{\infty}} = \int_{\mathbb{S}_{a,e}} \phi \, d\nu_{a,e}.
  \end{equation}
\end{proposition}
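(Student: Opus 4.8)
The plan is to transport the statement to a product situation by a monomial change of coordinates, split the $\infty$-adic Galois orbit into its fibres over the ``finite direction'' $\chi^{a}$, identify each such fibre as the orbit of a root of unity under a subgroup of bounded index in its cyclotomic Galois group, and then quote the equidistribution result of Dimitrov and Habegger. Concretely, since $a$ is primitive I would pick $b\in\mathbb{Z}^{2}$ with $\det(a,b)=\pm1$, so that $\psi=(\chi^{a},\chi^{b})\colon\Gm^{2}\to\Gm^{2}$ is an automorphism defined over~$\mathbb{Q}$. It carries $V_{a,e}$ onto $\mu_{e}^{\circ}\times\Gm$ and the subtorus $Z(\chi^{a}-1)$ onto $\{1\}\times\Gm$; restricted to the compact torus it carries $\mathbb{S}$ onto itself, $\mathbb{S}_{a,e}$ onto $\iota_{\infty}(\mu_{e}^{\circ})\times S^{1}$, and $\nu_{a,e}$ onto the product of the uniform probability measure on the finite set $\iota_{\infty}(\mu_{e}^{\circ})$ with the Haar measure of~$S^{1}$. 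A torsion point $\omega_{\ell}\in V_{a,e}$ goes to a pair $(\zeta_{\ell},\eta_{\ell})$ with $\zeta_{\ell}\in\mu_{e}^{\circ}$ and $\eta_{\ell}\in\mu_{\infty}$, and since $\psi$ preserves orders one has $d_{\ell}:=\ord(\omega_{\ell})=\lcm(e,m_{\ell})$ with $m_{\ell}:=\ord(\eta_{\ell})$; hence the hypothesis that $(\omega_{\ell})_{\ell}$ is strict in $V_{a,e}$, which by \eqref{eq:53} means $\ord(\omega_{\ell})\to+\infty$, becomes simply $m_{\ell}\to+\infty$. As $\psi$ commutes with the Galois action, it sends $\delta_{O(\omega_{\ell})_{\infty}}$ to the analogous discrete measure attached to $(\zeta_{\ell},\eta_{\ell})$, so it suffices to prove \eqref{eq: equidistribution on Vae} in these coordinates.

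Next I would examine the orbit $\{(\iota_{\infty}(\zeta_{\ell}^{k}),\iota_{\infty}(\eta_{\ell}^{k}))\mid k\in(\mathbb{Z}/d_{\ell}\mathbb{Z})^{\times}\}$. Projecting to the first coordinate and using \cref{lem:6}, the reduction $\pi_{d_{\ell},e}$ is surjective with all fibres of cardinality $\varphi(d_{\ell})/\varphi(e)$, so the pushforward of $\delta_{O(\omega_{\ell})_{\infty}}$ to $\iota_{\infty}(\mu_{e}^{\circ})$ is exactly the uniform probability measure, for every~$\ell$; consequently $\delta_{O(\omega_{\ell})_{\infty}}=\frac{1}{\varphi(e)}\sum_{\zeta\in\mu_{e}^{\circ}}\mu_{\ell,\zeta}$, where $\mu_{\ell,\zeta}$ is the uniform probability measure on the part of the orbit with first coordinate $\iota_{\infty}(\zeta)$. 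This part is indexed by a coset $k_{0}\,\Ker(\pi_{d_{\ell},e})$ of $(\mathbb{Z}/d_{\ell}\mathbb{Z})^{\times}$; since $\eta_{\ell}^{k}$ depends only on $k\bmod m_{\ell}$, and since $\Ker(\pi_{d_{\ell},e})\cap\Ker(\pi_{d_{\ell},m_{\ell}})=\{1\}$ because $\lcm(e,m_{\ell})=d_{\ell}$, reduction modulo $m_{\ell}$ maps this coset bijectively onto a coset of the subgroup $G_{\ell}:=\pi_{d_{\ell},m_{\ell}}(\Ker(\pi_{d_{\ell},e}))\le(\mathbb{Z}/m_{\ell}\mathbb{Z})^{\times}$. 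Thus $\mu_{\ell,\zeta}$ is the uniform measure on the orbit of some primitive $m_{\ell}$-th root of unity under $G_{\ell}$, and from $|G_{\ell}|=\varphi(d_{\ell})/\varphi(e)$ one gets $[(\mathbb{Z}/m_{\ell}\mathbb{Z})^{\times}:G_{\ell}]=\varphi(e)\,\varphi(m_{\ell})/\varphi(d_{\ell})\le\varphi(e)$, a bound independent of $\ell$ because $\varphi(m_{\ell})\le\varphi(d_{\ell})$ (as $m_{\ell}\mid d_{\ell}$).

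Finally I would invoke the result of Dimitrov and Habegger \cite{DimitrovHabegger} on the equidistribution of orbits of roots of unity under large subgroups of their Galois groups: as $m_{\ell}\to+\infty$ with the above uniformly bounded index, each $\mu_{\ell,\zeta}$ converges weakly to the Haar measure $\lambda_{a,e,\zeta}$ on the corresponding circle, and summing over the $\varphi(e)$ values of $\zeta$ yields $\int_{\mathbb{S}_{a,e}}\phi\,d\delta_{O(\omega_{\ell})_{\infty}}\to\int_{\mathbb{S}_{a,e}}\phi\,d\nu_{a,e}$ for every continuous~$\phi$. The passage from continuous $\phi$ to bounded $\nu_{a,e}$-almost everywhere continuous $\phi$ is then the same routine step as after \eqref{eq:1}: one squeezes $\phi$ between continuous functions agreeing with it off its discontinuity set, exactly as in \cite[Lemme~6.3]{CLT}.

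I expect the only genuine difficulty to be the middle step: tracking carefully how the two cyclotomic levels $e$ and $m_{\ell}$ sit inside $(\mathbb{Z}/d_{\ell}\mathbb{Z})^{\times}$, so as to realize each fibre of the orbit over $\chi^{a}$ as the orbit of a primitive $m_{\ell}$-th root of unity under a subgroup of $(\mathbb{Z}/m_{\ell}\mathbb{Z})^{\times}$ whose index stays bounded as $\ell\to+\infty$. This uniform index bound is precisely the input that the Dimitrov--Habegger theorem requires, and everything else in the argument is either exact (the first-coordinate projection) or standard measure theory.
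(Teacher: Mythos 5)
Your route is essentially the paper's: the unimodular change of coordinates $\psi=(\chi^{a},\chi^{b})$ is the paper's reduction to $a=(0,1)$; your splitting of $\delta_{O(\omega_{\ell})_{\infty}}$ into the $\varphi(e)$ fibres over $\mu_{e}^{\circ}$, with each fibre indexed by a coset of $\Ker(\pi_{d_{\ell},e})$ and then read off modulo $m_{\ell}$, is exactly the decomposition \eqref{eq:40} (your $m_{\ell}$ and $G_{\ell}$ are the paper's $e_{\ell}$ and $K_{\ell}$, and your injectivity argument is its isomorphism $J_{\ell}\simeq K_{\ell}$ coming from $d_{\ell}=\lcm(e,m_{\ell})$); and the reduction from bounded $\nu_{a,e}$-almost everywhere continuous test functions to continuous ones via \cite[Lemme~6.3]{CLT} is the same as well.

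The one step that does not hold up as written is the final appeal to Dimitrov--Habegger. The input of their Proposition~3.3(ii), as it is used here, is not a bound on the index of $G_{\ell}$ in $(\mathbb{Z}/m_{\ell}\mathbb{Z})^{\times}$ but a bound on its \emph{conductor}: $G_{\ell}$ must contain the kernel of a reduction map $(\mathbb{Z}/m_{\ell}\mathbb{Z})^{\times}\to(\mathbb{Z}/f\mathbb{Z})^{\times}$ with $f$ uniformly bounded. Bounded index does not imply bounded conductor (the quadratic residues in $(\mathbb{Z}/p\mathbb{Z})^{\times}$ have index $2$ and conductor $p$), so your inequality $[(\mathbb{Z}/m_{\ell}\mathbb{Z})^{\times}:G_{\ell}]\le\varphi(e)$ is not by itself the hypothesis of the theorem you invoke; qualitative equidistribution for bounded-index subgroups is in fact true, but it would require a separate character-sum argument rather than a citation of \cite{DimitrovHabegger}. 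The gap closes easily inside your own setup: since $d_{\ell}=\lcm(e,m_{\ell})$, a residue $k'\in(\mathbb{Z}/m_{\ell}\mathbb{Z})^{\times}$ with $k'\equiv1\pmod{\gcd(e,m_{\ell})}$ lifts to a unit modulo $d_{\ell}$ that is $\equiv1\pmod e$, so $G_{\ell}=\pi_{d_{\ell},m_{\ell}}(\Ker(\pi_{d_{\ell},e}))$ is exactly the kernel of the reduction $(\mathbb{Z}/m_{\ell}\mathbb{Z})^{\times}\to(\mathbb{Z}/\gcd(e,m_{\ell})\mathbb{Z})^{\times}$, hence has conductor at most $e$. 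This is precisely the verification the paper carries out for its $K_{\ell}$, after which Proposition~3.3(ii) of \cite{DimitrovHabegger} (together with a standard discrepancy-to-weak-convergence step, as in the paper's use of Harman's theorem) applies and the rest of your argument goes through.
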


\begin{proof}
As $a$ is primitive, after a  change of variables we can suppose
  without loss of generality that~$a=(0,1)$,  so that
  \begin{displaymath}
    V_{a,e}=Z(\Phi_{e}(t_{2})) = \bigcup_{\zeta\in\mu_{e}^{\circ}}\overline{\mathbb{Q}}^{\times} \times \{\zeta\}
    \quad\text{and}\quad
    \mathbb{S}_{a,e}=\bigcup_{\zeta\in \mu_{e}^{\circ}}S^1\times\{\iota_\infty(\zeta)\}.
  \end{displaymath}
  
  For each $\ell \ge 1$ set $d_{\ell}=\ord(\omega_{\ell})$
  and~$e_{\ell}=\ord(\omega_{\ell,1})$. Since both $e_\ell$ and $e$
  divide $d_\ell$ we can consider the reduction homomorphisms
  \begin{displaymath}
    \pi_{d_{\ell},e_\ell}\colon
  (\mathbb{Z}/d_{\ell}\mathbb{Z})^{\times}\longrightarrow
  (\mathbb{Z}/e_\ell\mathbb{Z})^{\times}\quad\text{and}\quad\pi_{d_{\ell},e}\colon
  (\mathbb{Z}/d_{\ell}\mathbb{Z})^{\times}\longrightarrow
  (\mathbb{Z}/e\mathbb{Z})^{\times}.
\end{displaymath}
Set $J_{\ell}\subset (\mathbb{Z}/d_{\ell}\mathbb{Z})^{\times}$ for the
kernel of $\pi_{d_{\ell},e}$ and
$K_{\ell}\subset (\mathbb{Z}/e_\ell\mathbb{Z})^{\times}$ for its image
under~$\pi_{d_{\ell},e_{\ell}}$.  As~$\ord(\omega_{\ell,2})=e$, we
have that $d_{\ell}=\lcm(e_{\ell},e)$ and so~$J_{\ell}\simeq K_{\ell}$.

Because of \cite[Lemme~6.3]{CLT} we can assume without loss of
generality that $\phi$ is continuous.  Then we write each integral in
its left hand side of~\eqref{eq: equidistribution on Vae} as a sum of
integrals over the different connected components
of~$\mathbb{S}_{a,e}$ by suitably partitioning the corresponding
Galois orbit $O(\omega_{\ell})=\{\omega_{\ell}^{k} \mid k\in
(\mathbb{Z}/d_{\ell}\mathbb{Z})^{\times}\} $.  To this aim, for each
$\zeta\in\mu_e^\circ$ we choose
$r_{\ell,\zeta}\in (\mathbb{Z}/d_{\ell}\mathbb{Z})^{\times}$ such
that~$\omega_{\ell,2}^{r_{\ell,\zeta}}=\zeta$, which is possible
because~$\pi_{d_{\ell},e}$ is surjective thanks to \cref{lem:6}.  In
fact, $\omega_{\ell,2}^k=\zeta$ if and only if
$k\in J_\ell\cdot r_{\ell,\zeta}$ and so
\begin{multline}
  \label{eq:40}
  \int_{\mathbb{S}_{a,e}} \phi  \,d \delta_{O(\omega_{\ell})_{\infty}} =  \frac{1}{\varphi(d_{\ell})} \sum_{k\in(\mathbb{Z}/d_{\ell}\mathbb{Z})^{\times}} \phi(\iota_{\infty}(\omega_{\ell}^{k})) \\ 
=
\frac{1}{\varphi(e)} \sum_{\zeta\in\mu_e^\circ}\frac{1}{\#J_\ell}\sum_{j\in J_\ell} \phi\big(\iota_\infty\big(\omega_{\ell,1}^{j\cdot r_{\ell,\zeta}}\big),\iota_\infty(\zeta)\big)= 
 \frac{1}{\varphi(e)} \sum_{\zeta\in\mu_e^\circ}
\int_{S^1}\phi_\zeta \,d\delta_{\iota_{\infty}(K_{\ell} \cdot \omega_{\ell,1}^{r_{\ell,\zeta}})},
\end{multline}
where $\phi_\zeta$ denotes the continuous function on
$ S^{1}$ defined by~$\phi_{\zeta}(z)= \phi(z,\iota_\infty(\zeta))$, which is integrated against
the uniform probability measure on the $\infty$-adic orbit
of~$\omega_{\ell,1}^{r_{\ell,\zeta}}$ under the action of~$K_{\ell}$.

Since~$d_{\ell}=\lcm(e_{\ell},e)$, the condition in \eqref{eq:53}
implies that
\begin{displaymath}
  \lim_{\ell\to+\infty} \ord(\omega_{\ell,1}^{r_{\ell,\zeta}})=\lim_{\ell\to+\infty} e_{\ell}=+\infty,
\end{displaymath}
and so~$(\omega_{\ell,1}^{r_{\ell,\zeta}})_{\ell\ge 1}$ is a strict
sequence in~$\Gm(\overline{\mathbb{Q}})$.
In the terminology of \cite[Section~3]{DimitrovHabegger}, the conductor of the subgroup
$K_{\ell}$ of $(\mathbb{Z}/e_{\ell}\mathbb{Z})^{\times}$ is equal
to~$\gcd(e_{\ell},e)$, because this subgroup coincides with the kernel of the reduction homomorphism to $(\mathbb{Z}/\gcd(e_{\ell},e)\mathbb{Z})^{\times}$. In particular,
it is uniformly bounded.  Proposition
3.3(ii) of \emph{loc. cit.} then shows that the discrepancy of the
$\infty$-adic orbit
$\iota_{\infty}(K_{\ell} \cdot \omega_{\ell,1}^{r_{\ell,\zeta}})$
converges to~$0$ as~$\ell\to +\infty$.
Using standard results from measure theory like \cite[Theorem~5.4]{Harman:mnt} we deduce that
\begin{displaymath}
  \lim_{\ell\to +\infty} \int_{S^1}\phi_\zeta \,d\delta_{\iota_{\infty}(K_{\ell} \cdot \omega_{\ell,1}^{r_{\ell,\zeta}})}
=\int_{S^1}\phi_\zeta \,d \text{\rm Haar}=\int_{\mathbb{S}_{a,e}}\phi \,d \lambda_{a,e,\zeta}.
\end{displaymath}
The statement then follows from~\eqref{eq:40}.
\end{proof}

The next result is the analogue in our situation of \cref{prop:1}, and gives the limit of the Archimedean local height 
corresponding to the torsion points in a strict sequence in~$V_{a,e}$.
To state it, consider the subset of
$(\mathbb{R}/2 \pi \mathbb{Z})^{2}$ defined as
\begin{displaymath}
\mathbb{D}_{a,e}= \bigcup_{j \in (\mathbb{Z}/e\mathbb{Z})^{\times}} \Big\{u\in (\mathbb{R}/2 \pi \mathbb{Z})^{2} \, \Big|\ a_{1}u_{1}+a_{2}u_{2} = \frac{2\pi j}{e}\Big\}.
\end{displaymath}
It is a union of $\varphi(e)$ parallel segments, and we denote by
$\tau_{a,e}$ the probability measure on it induced from the Euclidean
metric on these segments. Recall that
\begin{displaymath}
F \colon (\mathbb{C}^{\times})^{2}\to \mathbb{R}\cup \{-\infty\} \and
f \colon (\mathbb{R}/2\pi\mathbb{Z})^{2} \to \mathbb{R}\cup \{-\infty\}  
\end{displaymath}
denote the functions defined in \eqref{eq:29} and in~\eqref{eq:11},
respectively.

\begin{proposition}
\label{prop:7}
Let $(\omega_{\ell})_{\ell\ge 1}$ be a strict sequence in $ V_{a,e}$
of nontrivial torsion points. Then
  \begin{displaymath}
    \lim_{\ell\to +\infty}    \h_{\infty}(\mathcal{P}(\omega_{\ell})) = 
    \int_{\mathbb{S}_{{a,e}}} F \, d\nu_{{a,e}} = \int_{\mathbb{D}_{a,e}} f\, d\tau_{a,e},
  \end{displaymath}
  where $\mathcal{P}(\omega_{\ell})$ is the vector of homogeneous
  coordinates of the projective point $C \cap \omega_{\ell} C$ as
  in~\eqref{eq:48}.
\end{proposition}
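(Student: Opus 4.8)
The plan is to express the Archimedean local height $\h_{\infty}(\mathcal{P}(\omega_{\ell}))$ as an integral against the uniform probability measures $\delta_{O(\omega_{\ell})_{\infty}}$ on the $\infty$-adic Galois orbits, as provided by \cref{lem:5}, and then to pass to the limit using the equidistribution statement of \cref{prop:9}, treating separately the logarithmic singularity of the integrand.

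First, by \cref{lem:5} (cf. \eqref{eq:21}) one has $\h_{\infty}(\mathcal{P}(\omega_{\ell})) = \int_{\mathbb{S}} F \, d\delta_{O(\omega_{\ell})_{\infty}}$. Since $\omega_{\ell}\in V_{a,e}$, the element $\chi^{a}(\omega_{\ell})$ is a primitive $e$-th root of unity, and each Galois conjugate $\omega_{\ell}^{k}$ (with $k$ coprime to $\ord(\omega_{\ell})$, hence to $e$) satisfies $\chi^{a}(\omega_{\ell}^{k})=\chi^{a}(\omega_{\ell})^{k}\in\mu_{e}^{\circ}$; thus $O(\omega_{\ell})_{\infty}\subset\mathbb{S}_{a,e}$ and $\h_{\infty}(\mathcal{P}(\omega_{\ell})) = \int_{\mathbb{S}_{a,e}} F \, d\delta_{O(\omega_{\ell})_{\infty}}$. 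The second equality in the statement is then a change of variables: $\cotrop$ restricts to a homeomorphism $\mathbb{S}_{a,e}\to\mathbb{D}_{a,e}$ carrying $\nu_{a,e}$ to $\tau_{a,e}$, and $\cotrop^{*}f$ agrees with $F$ on $\mathbb{S}$. Hence it remains to show that $\int_{\mathbb{S}_{a,e}} F \, d\delta_{O(\omega_{\ell})_{\infty}}\to\int_{\mathbb{S}_{a,e}} F \, d\nu_{a,e}$.

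Now $F$ is continuous on $(\mathbb{C}^{\times})^{2}$ away from the single point $(1,1)$, and $(1,1)\in\mathbb{S}_{a,e}$ precisely when $\Phi_{e}(1)=0$, i.e. when $e=1$. If $e\ge 2$, then $F$ restricts to a continuous, hence bounded, function on the compact set $\mathbb{S}_{a,e}$, and the claim is exactly \cref{prop:9} applied to $\phi=F|_{\mathbb{S}_{a,e}}$. If $e=1$, then $\mathbb{S}_{a,1}$ is a circle through $(1,1)$, where $F$ has a logarithmic singularity, and I would mimic the truncation argument of the proof of \cref{prop:1}. Pick a primitive vector $v\in\{(1,0),(0,1),(1,-1)\}$ not proportional to $a$ (at least two of the three qualify, since they are pairwise non-proportional); then $|\chi^{v}(\cdot)-1|$ equals one of the three entries of the maximum defining $F$, so $G:=\log|\chi^{v}(\cdot)-1|$ satisfies $G\le F\le\log(2)$ on $\mathbb{S}_{a,1}$. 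Completing $a$ to a $\mathbb{Z}$-basis $\{a,b\}$ and writing $v=pa+qb$, we have $q\neq0$ and $\chi^{v}(\omega_{\ell})=\chi^{b}(\omega_{\ell})^{q}$, whose order $m_{\ell}=\ord(\omega_{\ell})/\gcd(q,\ord(\omega_{\ell}))\to+\infty$ by strictness (recall \eqref{eq:53}); combining \cref{lem:6,lem:2} as in the proof of \cref{cor:2} yields $\int_{\mathbb{S}_{a,1}}G\,d\delta_{O(\omega_{\ell})_{\infty}}=\varphi(m_{\ell})^{-1}\log|\Phi_{m_{\ell}}(1)|\to0$, while $\int_{\mathbb{S}_{a,1}}G\,d\nu_{a,1}=0$ by Jensen's formula. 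Taking $(U_{m})_{m\ge1}$ to be a nested fundamental system of open neighborhoods (finite unions of arcs, with $\nu_{a,1}(U_{m})\to0$) of the finite singular set $\{z\in\mathbb{S}_{a,1}\mid\chi^{v}(z)=1\}$ of $G$, which contains $(1,1)$, both $F$ and $G$ are continuous on $\mathbb{S}_{a,1}\setminus U_{m}$; \cref{prop:9} there, together with the two displayed limits, gives $\int_{U_{m}}G\,d\delta_{O(\omega_{\ell})_{\infty}}\to\int_{U_{m}}G\,d\nu_{a,1}$, and then squeezing with $G\le F\le\log(2)$ on $U_{m}$ and letting $m\to+\infty$ (using $\nu_{a,1}(U_{m})\to0$, the integrability of $F$ and $G$, and absolute continuity), exactly as in \eqref{eq:26}--\eqref{eq:28}, yields $\int_{\mathbb{S}_{a,1}}F\,d\delta_{O(\omega_{\ell})_{\infty}}\to\int_{\mathbb{S}_{a,1}}F\,d\nu_{a,1}$, as wanted.

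The main obstacle is the case $e=1$. There one needs an auxiliary function $G$ that simultaneously stays below $F$ on all of $\mathbb{S}_{a,1}$ (so the comparison survives on the truncating neighborhoods) and whose Galois-orbit averages can be evaluated through cyclotomic polynomials and shown to vanish in the limit; this pins $G$ down to the form $\log|\chi^{v}(\cdot)-1|$ with $v\in\{(1,0),(0,1),(1,-1)\}$, and forces one to check that such a $v$ can always be chosen non-proportional to $a$. A secondary, purely bookkeeping point is that on the circle this $G$ has $|\det(a,v)|$ logarithmic singularities rather than just the one at $(1,1)$, so the neighborhoods $U_{m}$ must surround all of them; this causes no harm, since $F$ is bounded above on $\mathbb{S}_{a,1}$ and $\nu_{a,1}(U_{m})\to0$.
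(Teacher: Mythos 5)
Your proof is correct. For $e\ge 2$, and for the second equality $\int_{\mathbb{S}_{a,e}}F\,d\nu_{a,e}=\int_{\mathbb{D}_{a,e}}f\,d\tau_{a,e}$, you argue exactly as the paper does: since $(1,1)\notin\mathbb{S}_{a,e}$ the restriction of $F$ is continuous and \cref{prop:9} applies directly, and the change of variables uses $\cotrop_{*}\nu_{a,e}=\tau_{a,e}$. The genuine divergence is in the singular case $e=1$. The paper parametrizes the circle by $\psi(s)=(s^{-a_{2}},s^{a_{1}})$ and splits $\psi^{*}F=(\psi^{*}F-\log|s-1|)+\log|s-1|$: on $S^{1}$ the three quotients $|s^{a_{1}}-s^{-a_{2}}|/|s-1|$, $|s^{a_{1}}-1|/|s-1|$, $|s^{-a_{2}}-1|/|s-1|$ tend to $|a_{1}+a_{2}|$, $|a_{1}|$, $|a_{2}|$, not all zero because $a$ is primitive, so the first summand extends continuously to $s=1$ and only the classical equidistribution of roots of unity on $S^{1}$ is needed, while the $\log|s-1|$ term goes to zero via \cref{lem:2} as in \eqref{eq:27}. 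You instead stay on $\mathbb{S}_{a,1}$ and rerun the truncation-and-squeeze argument of \cref{prop:1} with the minorant $G=\log|\chi^{v}(\cdot)-1|$ for some $v\in\{(1,0),(0,1),(1,-1)\}$ not proportional to $a$; your checks are all sound: at least two such $v$ qualify, $\ord(\chi^{v}(\omega_{\ell}))=d_{\ell}/\gcd(q,d_{\ell})\to+\infty$ by \eqref{eq:53} since $\chi^{a}(\omega_{\ell})=1$ and $\{a,b\}$ is a basis, the orbit averages of $G$ reduce through \cref{lem:6} to $\varphi(m_{\ell})^{-1}\log|\Phi_{m_{\ell}}(1)|\to 0$, the Haar integral of $G$ over the circle vanishes by Jensen (Mahler measure of $s^{N}-1$ with $N=\det(a,v)\neq 0$), and the neighborhoods $U_{m}$ must indeed surround all $|N|$ singular points of $G$, which is harmless. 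Comparing the two: the paper's subtraction trick removes the singularity exactly and so dispenses with the truncation, making the $e=1$ case a two-line reduction to one-variable equidistribution; your route is slightly longer but reuses the \cref{prop:1} machinery verbatim and only ever invokes \cref{prop:9} for bounded, almost everywhere continuous test functions, at the cost of the extra bookkeeping you correctly identify.
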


\begin{proof}
  For the first equality, note that when $e\ne 1$ the subset
  $\mathbb{S}_{a,e}$ does not contain the point~$(1,1)$. In this case, the
  restriction of the function $F$ to ${\mathbb{S}_{a,e}}$ is
  continuous, and therefore the statement follows from the formula in
  \eqref{eq:21} and \cref{prop:9}.

  When~$e=1$, the map
  $\psi\colon \Gm(\overline{\mathbb{Q}}) \to V_{a,1}$ defined as
  $\psi(s)=(s^{-a_{2}},s^{a_{1}})$ is an isomorphism of algebraic
  groups, because $a$ is primitive.  Denote also by
  $\psi\colon S^{1}\to \mathbb{S}_{a,1}$ the restriction to the unit
  circle of the complex version of this map. Then the function
  \begin{displaymath}
\psi^{*}F(s)-\log|s-1|=\log\max\Big(\frac{|s^{a_{1}}-s^{-a_{2}}|}{|s-1|},\frac{|s^{a_{1}}-1|}{|s-1|},
    \frac{|s^{-a_{2}}-1|}{|s-1|}\Big) 
  \end{displaymath}
  can be continuously extended to the point~$s=1$. Thus the formula in
  \eqref{eq:21} can be written as
  \begin{align*}
    \h_{\infty}(\mathcal{P}(\omega_{\ell})) & = 
    \int_{S^{1}}\psi^{*}F(s) \, d \delta_{O(\psi^{-1}(\omega_{\ell}))_{\infty}} \\ & =
    \int_{S^{1}}(\psi^{*}F(s)-\log|s-1|) \, d \delta_{O(\psi^{-1}(\omega_{\ell}))_{\infty}} +\int_{S^{1}}\log|s-1| \,
    d \delta_{O(\psi^{-1}(\omega_{\ell}))_{\infty}}.
  \end{align*}
  The first equality in the statement follows by applying the
  classical equidistribution theorem for roots of unity to the first
  integral in the formula above, and \cref{lem:2} to the second in a
  similar way as it was done to obtain the first equality in~\eqref{eq:27}.
  
  On the other hand, note that the image of $\mathbb{S}_{a,e}$ under
  the cotropicalization map coincides with the union of parallel
  segments $\mathbb{D}_{a,e}$ and that the direct image measure
  $\cotrop_{*}\nu_{a,e}$ coincides with~$\tau_{a,e}$. The second
  equality is then a direct consequence of the first together with the
  change of variables formula.
\end{proof}

Set for short
\begin{equation}
\label{eq:37}
\eta_{a,e}= \int_{\mathbb{D}_{a,e}} f\, d\tau_{a,e}.
\end{equation}
We have the following result in the spirit of  \cref{thm:1}. 

\begin{theorem}
  \label{thm:3}
  Let $(\omega_{\ell})_{\ell\ge 1}$ be a strict sequence in $ V_{a,e}$
  of nontrivial torsion points. Then
  \begin{displaymath}
    \lim_{\ell\to +\infty}    \h(C\cap \omega_{\ell} C) = \eta_{a,e}.
  \end{displaymath}
\end{theorem}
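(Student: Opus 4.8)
The plan is to follow the same template used to establish \cref{thm:1}, splitting the height into its Archimedean and non-Archimedean parts via the defining formula \eqref{eq:4} applied to the vector of homogeneous coordinates $\mathcal{P}(\omega_{\ell})$ of the point $C\cap\omega_{\ell}C$:
\[
\h(C\cap\omega_{\ell}C)=\h_{\infty}(\mathcal{P}(\omega_{\ell}))+\sum_{v\in M_{\mathbb{Q}}\setminus\{\infty\}}\h_{v}(\mathcal{P}(\omega_{\ell})).
\]
One then takes the limit $\ell\to+\infty$ of each summand separately.

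First I would record that the strictness of $(\omega_{\ell})_{\ell\ge 1}$ in $V_{a,e}$ forces $\ord(\omega_{\ell})\to+\infty$, as already observed around \eqref{eq:53}: since $V_{a,e}$ is positive-dimensional, any fixed torsion point of the torus is contained in some proper algebraic subgroup not containing $V_{a,e}$, so a strict sequence in $V_{a,e}$ can repeat each torsion point only finitely many times. With $\ord(\omega_{\ell})\to+\infty$ in hand, \cref{cor:2} applies verbatim and gives $\lim_{\ell}\sum_{v\ne\infty}\h_{v}(\mathcal{P}(\omega_{\ell}))=0$; note that \cref{cor:2} requires only the divergence of the orders and not strictness in the ambient torus, which is precisely what makes it still usable for a sequence confined to the proper subvariety $V_{a,e}$.

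For the Archimedean summand I would simply invoke \cref{prop:7}, which yields $\lim_{\ell}\h_{\infty}(\mathcal{P}(\omega_{\ell}))=\int_{\mathbb{D}_{a,e}}f\,d\tau_{a,e}$, and this equals $\eta_{a,e}$ by the very definition \eqref{eq:37}. Adding the two limits gives $\lim_{\ell}\h(C\cap\omega_{\ell}C)=\eta_{a,e}$, as claimed.

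There is no genuine obstacle at the level of this theorem: all the analytic content has been absorbed into \cref{prop:7} — which rests on the equidistribution statement \cref{prop:9}, itself a reduction to the results of Dimitrov and Habegger \cite{DimitrovHabegger} on orbits of roots of unity under large subgroups of their Galois groups — together with the elementary \cref{cor:2}. The only points deserving a line of verification are the implication ``strict in $V_{a,e}$'' $\Rightarrow$ ``$\ord(\omega_{\ell})\to+\infty$'' and the observation that \cref{cor:2} does not need more than the latter hypothesis.
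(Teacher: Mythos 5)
Your proposal is correct and follows exactly the paper's route: the paper's proof of this theorem is precisely the combination of \cref{prop:7} (Archimedean limit, equal to $\eta_{a,e}$ by \eqref{eq:37}) with \cref{cor:2} (vanishing non-Archimedean contribution), the latter applicable because strictness in $V_{a,e}$ forces $\ord(\omega_{\ell})\to+\infty$ as noted in \eqref{eq:53}. Your two verification remarks are exactly the implicit points the paper relies on, so there is nothing further to add.
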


\begin{proof}
  This follows readily from the definition of the height together with
  \cref{prop:7} and \cref{cor:2}.
\end{proof}

Using \cref{thm:3}, we can find other interesting  limit values for the height, as the next two examples show.

\begin{example}
  \label{exm:2}
  For $a=(0,1)$ and $e=1$ the segment $\mathbb{D}_{a,e}$ can be
  parametrized with the unit interval by the map~$w\mapsto (2\pi w,0)$. Hence
  \begin{displaymath}
\eta_{a,e}= \int_{0}^{1} \log\max(|\eu^{\iu 2\pi w}-1|, |0|,|\eu^{\iu 2\pi w}-1|) \, dw
    = \int_{0}^{1} \log|\eu^{\iu 2\pi w}-1| \, dw =0.
  \end{displaymath}
  By \cref{thm:3}, the limit of the height of $C\cap \omega_{\ell}C$
  for a strict sequence $(\omega_{\ell})_{\ell\ge 1}$ in $Z(t_{2}-1)$
  of nontrivial torsion points is equal to~$0$, in agreement with
  \cref{prop: bounds for the height}. A similar observation holds for~$a \in \{(1,0), (1,-1)\}$.
\end{example}

\begin{example}
  \label{exm:3}
  For $a=(2,-1)$ and $e=1$ the segment 
  $\mathbb{D}_{a,e} $ can be parametrized with the unit interval by the map~$w\mapsto (2\pi w, 4\pi w)$. Hence
  \begin{multline*}
\eta_{a,e}= \int_{0}^{1} \log \max(|\eu^{\iu 4\pi w}-\eu^{\iu 2\pi w}|, |\eu^{\iu 4\pi w}-1|, |\eu^{\iu 2\pi w}-1|) \, dw \\=
    \int_{0}^{1} \log |\eu^{\iu 2\pi w}-1| \, dw +    \int_{0}^{1} \log \max(1, |\eu^{\iu 2\pi w}+1|) \, dw = \m(x_{0}+x_{1}+x_{2}),
  \end{multline*}
  where the last equality follows from Jensen's formula. This
  logarithmic Mahler measure was computed by Smyth as
  \begin{displaymath}
    \m(x_{0}+x_{1}+x_{2}) = \frac{3\sqrt{3}}{4\pi} L(\chi_{-3},2)= 0.323065\dots
  \end{displaymath}
  for the $L$-function associated to odd Dirichlet character modulo~$3$~\cite{Smyth}.
  By \cref{thm:3}, this quantity gives the limit of
  the height of $C\cap \omega_{\ell}C$ for a strict sequence
  $(\omega_{\ell})_{\ell\ge 1}$ in $Z(t_{1}^{2}t_{2}^{-1}-1)$ of
  nontrivial torsion points, like that in in \cref{ex: height of some
    solutions}. A similar situation occurs when
  $a\in \{(1,1), (1,-2)\}$.
\end{example}

Similarly as in \cref{sec:distribution-height}, we can extend our
study to strict sequences of nonempty finite subsets of~$V_{a,e}$, in the sense of \cref{def: strict sequences of sets}.
Each algebraic subgroup of $\Gm^{2}(\overline{\mathbb{Q}})$ not
containing $V_{a,e}$ intersects this algebraic subset in finitely many
points, and therefore any sequence of nonempty finite subsets of
$V_{a,e}$ satisfying $\lim_{\ell\to +\infty}\# E_{\ell}=+\infty$ is
automatically strict in $V_{a,e}$ (the condition is not necessary,
though).

 \begin{example}
   \label{exm:5}
   The sequence of nonempty subsets of $d$-torsion points of $V_{a,e}$
   is strict. Indeed, let $d\ge 1$ and consider the monomial map~$\chi^{a}\colon \mu_{d}^{2}\to \mu_{d}$, so that
   \begin{equation}
     \label{eq:65}
     \mu_{d}^{2}\cap V_{a,e}=(\chi^{a})^{-1}(\mu_{d}\cap \mu_{e}^{\circ}).
   \end{equation}
   If $e\nmid d$ then $\mu_{d}\cap\mu_{e}^{\circ}=\emptyset$ and so~ $\mu_{d}^{2}\cap V_{a,e}=\emptyset$. Otherwise $e\mid d$ and
   $\mu_{e}^{\circ}\subset \mu_{d}$, and so it follows from
   \eqref{eq:65} and the surjectivity of $\chi^{a}$ that
   \begin{displaymath}
     \# (\mu_{d}^{2}\cap V_{a,e})=\# \Ker(\chi^{a}) \cdot \# \mu_{e}^{\circ}= d\, \varphi(e).
   \end{displaymath}
   Thus $(\mu_{d}^{2}\cap V_{a,e})_{d\ge 1, e\mid d}$ is a strict
   sequence in~$V_{a,e}$.
 \end{example}
 
 The next two results are the analogues in our current setting of
 \cref{thm:2} and \cref{cor:3}. The first is a direct consequence of \cref{lem:4} and
 \cref{thm:3}, and the second follows using \cref{exm:5}. 

\begin{theorem}
\label{thm:4}
Let $(W_{\ell})_{\ell\ge 1}$ be a strict sequence of nonempty finite
subsets of $ V_{a,e}$ of nontrivial torsion points. Then for each
$\varepsilon >0$ we have that
  \begin{displaymath} 
    \lim_{\ell\to+\infty}    \frac{\# \{\omega\in W_{\ell} \mid |\h(C
      \cap \omega C)-\eta_{a,e}|<\varepsilon \}}{\#W_{\ell}} =1.
  \end{displaymath}
Moreover 
  \begin{math}
\displaystyle{    \lim_{\ell\to+\infty}
\frac{1}{\# W_{\ell}} \sum_{\omega\in W_{\ell}}\h(C\cap \omega C)=\eta_{a,e}.}
\end{math}
\end{theorem}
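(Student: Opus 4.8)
The plan is to obtain \cref{thm:4} from the general transfer principle \cref{lem:4}, in exactly the way \cref{thm:2} was deduced from \cref{lem:4} and \cref{thm:1}. Concretely, I would apply \cref{lem:4} with ambient algebraic set $V=V_{a,e}$, strict sequence of nonempty finite subsets $(E_{\ell})_{\ell\ge1}=(W_{\ell})_{\ell\ge1}$, the function
\[
\phi\colon \bigcup_{\ell\ge1}W_{\ell}\longrightarrow\mathbb{R},\qquad \omega\longmapsto\h(C\cap\omega C),
\]
and the constant $\kappa=\eta_{a,e}$.

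To invoke the lemma I must verify its hypothesis: for every strict sequence $(\gamma_{\ell})_{\ell\ge1}$ in $V_{a,e}$ contained in $\bigcup_{\ell\ge1}W_{\ell}$ one has $\lim_{\ell\to+\infty}\phi(\gamma_{\ell})=\eta_{a,e}$. Here I would only need to notice that, since each $W_{\ell}$ consists of nontrivial torsion points, so does their union, hence any such $(\gamma_{\ell})_{\ell\ge1}$ is a strict sequence of nontrivial torsion points in $V_{a,e}$; \cref{thm:3} then gives $\lim_{\ell\to+\infty}\h(C\cap\gamma_{\ell}C)=\eta_{a,e}$ verbatim. Part \ref{item:10} of \cref{lem:4} now yields the first assertion. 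For the \emph{Moreover} statement, part \ref{item:11} of \cref{lem:4} additionally requires $\phi$ to be bounded on $\bigcup_{\ell\ge1}W_{\ell}$, which is immediate from \cref{prop: bounds for the height}: every nontrivial torsion point $\omega$ satisfies $0\le\h(C\cap\omega C)\le\log(2)$. This gives convergence of the Ces\`aro means to $\eta_{a,e}$ as well.

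I do not anticipate a real obstacle here: once \cref{thm:3} and \cref{prop: bounds for the height} are in place, \cref{thm:4} is a purely formal corollary requiring no new analysis. The genuine difficulty lies upstream, in \cref{thm:3}, which in turn rests on the Archimedean equidistribution of \cref{prop:7}, reduced through \cref{prop:9} to the equidistribution of orbits of roots of unity under large subgroups of their Galois groups from \cite{DimitrovHabegger}, together with the uniform height bound of \cref{prop: bounds for the height}. The only point in the present argument worth making explicit is the passage noted above — a sequence extracted from a union of sets of nontrivial torsion points is again a sequence of nontrivial torsion points — which is precisely what makes \cref{thm:3} directly applicable.
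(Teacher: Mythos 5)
Your proposal is correct and follows exactly the paper's route: \cref{thm:4} is obtained there as a direct consequence of the transfer principle \cref{lem:4} applied with $V=V_{a,e}$, using \cref{thm:3} for the limit along strict sequences and the bound from \cref{prop: bounds for the height} for the Ces\`aro statement, just as you describe. Your explicit remark that a strict sequence extracted from $\bigcup_{\ell}W_{\ell}$ consists of nontrivial torsion points, so \cref{thm:3} applies verbatim, is the same (implicit) observation the paper relies on.
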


\begin{corollary}
  \label{cor:6}
For each $\varepsilon >0$ we have that
\begin{displaymath}
  \lim_{\substack{d\to +\infty\\ e\mid d}}  \frac{\#\{\omega\in \mu_{d}^{2}\cap V_{a,e}\setminus\{(1,1)\} \mid |\h(C\cap \omega C)
  -\eta_{a,e}|<\varepsilon \}}{\# (\mu_{d}^{2}\cap V_{a,e}\setminus\{(1,1)\})} =1.
\end{displaymath}
Moreover
  \begin{math}
\displaystyle{  \lim_{\substack{d\to +\infty\\ e\mid d}}  
\frac{1}{\# (\mu_{d}^{2}\cap V_{a,e}\setminus\{(1,1)\})}\hspace{-1mm} \sum_{\omega \in \mu_{d}^{2}\cap V_{a,e}\setminus\{(1,1)\} } \hspace{-5mm}\h(C\cap \omega C)=\eta_{a,e}.}
\end{math}
\end{corollary}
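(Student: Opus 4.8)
The plan is to deduce the statement from \cref{thm:4} in exactly the way \cref{cor:3} is deduced from \cref{thm:2}, the new ingredient being the strictness recorded in \cref{exm:5}.

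First I would dispose of the excluded point. One has $(1,1)\in V_{a,e}=Z(\Phi_{e}(\chi^{a}))$ precisely when $\Phi_{e}(1)=0$, which by \cref{lem:2} (together with $\Phi_{1}(t)=t-1$) happens exactly for $e=1$. Set $E_{d}=\mu_{d}^{2}\cap V_{a,e}\setminus\{(1,1)\}$ for $d\ge1$ with $e\mid d$. By the cardinality count in \cref{exm:5} one then gets $\#E_{d}=d\,\varphi(e)$ if $e\ge2$ and $\#E_{d}=d-1$ if $e=1$; in either case $\#E_{d}\to+\infty$ as $d\to+\infty$, so $E_{d}$ is nonempty for $d$ large, and all its elements are nontrivial torsion points of $\Gm^{2}(\overline{\mathbb{Q}})$.

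Next I would observe that $(E_{d})_{d,\,e\mid d}$ is a strict sequence of finite subsets of $V_{a,e}$: by the remark preceding \cref{exm:5}, any algebraic subgroup $H$ of $\Gm^{2}(\overline{\mathbb{Q}})$ not containing $V_{a,e}$ meets $V_{a,e}$ in a finite set, so $\#(E_{d}\cap H)\le\#(V_{a,e}\cap H)$ stays bounded while $\#E_{d}\to+\infty$, forcing the ratio to $0$. Equivalently, one starts from the strictness of $(\mu_{d}^{2}\cap V_{a,e})_{d,\,e\mid d}$ established in \cref{exm:5} and notes that deleting the single point $(1,1)$ affects neither the divergence of the cardinalities nor the boundedness of the intersections with a fixed $H$.

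Finally, given any sequence $(d_{\ell})_{\ell\ge1}$ with $d_{\ell}\to+\infty$ and $e\mid d_{\ell}$, the family $W_{\ell}=E_{d_{\ell}}$ is a strict sequence of nonempty finite subsets of $V_{a,e}$ of nontrivial torsion points, so \cref{thm:4} applies verbatim: its first assertion yields, for each $\varepsilon>0$, that $\#\{\omega\in W_{\ell}\mid|\h(C\cap\omega C)-\eta_{a,e}|<\varepsilon\}/\#W_{\ell}\to1$, and its second yields $\frac{1}{\#W_{\ell}}\sum_{\omega\in W_{\ell}}\h(C\cap\omega C)\to\eta_{a,e}$, the boundedness of $\omega\mapsto\h(C\cap\omega C)$ needed there being supplied by \cref{prop: bounds for the height}. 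As $(d_{\ell})$ was arbitrary, both limits along $d\to+\infty$ with $e\mid d$ follow. I expect no genuine obstacle here; the only points calling for a line of care are the bookkeeping around the excluded point $(1,1)$ (hence the case split on $e=1$) and the fact that the stated limit ranges only over the $d$ divisible by $e$, the remaining $d$ contributing the empty intersection and therefore not entering the limit.
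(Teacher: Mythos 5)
Your proposal is correct and is essentially the paper's own argument: the paper obtains \cref{cor:6} by applying \cref{thm:4} to the strict sequence of subsets $\mu_d^2\cap V_{a,e}$ from \cref{exm:5}. Your additional bookkeeping (removing the point $(1,1)$, the case split at $e=1$ via \cref{lem:2}, and the boundedness from \cref{prop: bounds for the height}) just makes explicit what the paper leaves implicit.
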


\section{Visualizing the results}
\label{sec:visualizing-result}

In this section we present a series of computations done with the {\tt
  SageMath} notebook~\cite{GS_Notebook} that allow to visualize our
results while at the same time suggest further intriguing questions
and conjectures.

We focus on the height values of the points of the form
$P(\omega)=C\cap\omega C$ as $\omega$ ranges in the set of nontrivial
$d$-torsion points of~$\Gm^{2}(\overline{\mathbb{Q}})$.  The next
statement specifies how to enumerate these torsion points and compute
the corresponding heights.

\begin{proposition}
  \label{prop:8}
  Let $d\ge 1$ and~$\zeta \in \mu_{d}^{\circ}$. Then
  \begin{enumerate}[leftmargin=*]
  \item \label{item:14} the map
    $(\mathbb{Z}/d\mathbb{Z})^{2} \to
    \mu_{d}^{2} $ defined as
    $c\mapsto (\zeta^{c_{1}},\zeta^{c_{2}})$ is a bijection, 
  \item \label{item:15} for each~$c\in(\mathbb{Z}/d\mathbb{Z})^{2}\setminus \{(0,0)\}$, letting
    $e=d/\gcd(c_{1},c_{2},d)$ we have that
\begin{multline*}
  \h( C\cap (\zeta^{c_{1}},\zeta^{c_{2}}) C)=-\frac{\Lambda(e)}{\varphi(e)}  \\+ \frac{1}{\varphi(e)} \sum_{k\in (\mathbb{Z}/e\mathbb{Z})^{\times}}\hspace{-2mm}\log\max(|\eu^{{2\pi\iu c_{2}k}/{d}}-\eu^{{2\pi\iu c_{1}k}/{d}}|, |\eu^{{2\pi\iu c_{2}k}/{d}}
  -1|, |\eu^{{2\pi\iu c_{1}k}/{d}} -1|)
\end{multline*}
where  $\Lambda$ and $\varphi$ denote the von Mangoldt and the Euler totient
functions, 
\item \label{item:16} the function
  $(\mathbb{Z}/d\mathbb{Z})^{2}\setminus \{(0,0)\} \to\mathbb{R}$
  defined as
  $ (c_1,c_2)\mapsto\h(C\cap (\zeta^{c_{1}},\zeta^{c_{2}})C)$ does not
  depend on the choice of $\zeta\in\mu_d^\circ$ and it is invariant
  under the transformations 
\[
(c_1,c_2)\mapsto(c_2,c_1),\quad (c_1,c_2)\mapsto(-c_2,c_1-c_2),\quad (c_1,c_2)\mapsto(-c_1,-c_2)
\]
on~$(\mathbb{Z}/d\mathbb{Z})^{2}\setminus \{(0,0)\}$.
  \end{enumerate}  
\end{proposition}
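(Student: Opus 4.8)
The plan is to assemble the stated formula from the results of the previous sections, while keeping careful track of the two different roles of the integer $d$: in the statement it is the modulus of the torsion points considered, whereas the symbol playing the analogous role in \cref{lem:5} and in \cref{cor:1} is the \emph{order} of the torsion point, which here equals $e=d/\gcd(c_{1},c_{2},d)$. Part (1) is immediate: since $\zeta$ is a primitive $d$-root of unity, the map $\mathbb{Z}/d\mathbb{Z}\to\mu_{d}$, $c\mapsto\zeta^{c}$, is a group isomorphism, and taking its square gives the asserted bijection.

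For part (2), I would fix $c\in(\mathbb{Z}/d\mathbb{Z})^{2}\setminus\{(0,0)\}$ and set $\omega=(\zeta^{c_{1}},\zeta^{c_{2}})$, which by part (1) is a nontrivial torsion point of order $e=d/\gcd(c_{1},c_{2},d)$ by the order formula recalled after~\eqref{eq:44}. Using the expression of the height in~\eqref{eq:4} one writes $\h(C\cap\omega C)=\h_{\infty}(\mathcal{P}(\omega))+\sum_{v\ne\infty}\h_{v}(\mathcal{P}(\omega))$, the non-Archimedean part being $-\Lambda(e)/\varphi(e)$ directly by \cref{cor:1}. For the Archimedean part I would invoke \cref{lem:5} applied with the order~$e$: since the intrinsic quantity $\h_{\infty}(\mathcal{P}(\omega))$ does not depend on the chosen embedding $\iota_{\infty}$, one may compute it with $\iota_{\infty}(\zeta)=\eu^{2\pi\iu/d}$, so that $\iota_{\infty}(\omega_{i}^{k})=\eu^{2\pi\iu c_{i}k/d}$ and \cref{lem:5} yields exactly the displayed sum over $k\in(\mathbb{Z}/e\mathbb{Z})^{\times}$ (for an arbitrary $\iota_{\infty}$ the same sum is recovered after reindexing $k$ by the unit class of $\iota_{\infty}(\zeta)$ modulo~$e$, using that $e\,c_{i}\equiv 0\pmod d$). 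Adding the two contributions gives the formula.

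For part (3), the independence of the choice of $\zeta\in\mu_{d}^{\circ}$ is immediate from part (2), since the right-hand side there is visibly a function of $c_{1},c_{2},d$ alone. For the invariance under the three listed transformations, I would first check that each of them preserves $\gcd(c_{1},c_{2},d)$ — hence the order $e$ and the term $-\Lambda(e)/\varphi(e)$ — which is clear for $(c_{1},c_{2})\mapsto(c_{2},c_{1})$ and $(c_{1},c_{2})\mapsto(-c_{1},-c_{2})$, and for $(c_{1},c_{2})\mapsto(-c_{2},c_{1}-c_{2})$ follows from $\gcd(c_{2},c_{1}-c_{2})=\gcd(c_{1},c_{2})$. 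Then I would observe that the summand of index $k$ in the Archimedean sum is $f\bigl(2\pi c_{1}k/d,\,2\pi c_{2}k/d\bigr)$ with $f$ the function in~\eqref{eq:11}, and that the three transformations of $(c_{1},c_{2})$ act on this argument precisely as the induced automorphisms $\bar\alpha,\bar\beta,\bar\gamma$ of~\eqref{eq:8}; by the $H$-invariance of $f$ recorded in \cref{prop:4} and since the index set $(\mathbb{Z}/e\mathbb{Z})^{\times}$ is unchanged, each summand, and hence the whole sum, is preserved.

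The only genuinely delicate point, and the step I would write out most carefully, is the passage in part (2) from the Galois-orbit average of \cref{lem:5} — taken over $(\mathbb{Z}/e\mathbb{Z})^{\times}$ and expressed through an arbitrary embedding — to the clean exponential sum in the statement; everything else is routine bookkeeping with the von Mangoldt and Euler functions and with the symmetries already collected in \cref{prop:4}.
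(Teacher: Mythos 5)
Your proposal is correct and follows essentially the same route as the paper: part (1) from the parametrization in \eqref{eq:44}, part (2) by combining \cref{lem:5} (applied with the order $e=d/\gcd(c_1,c_2,d)$, not $d$) and \cref{cor:1} with the definition \eqref{eq:4} of the height, and part (3) as a direct consequence of the formula in (2) together with the invariance of $f$ under the automorphisms of \cref{prop:4}. Your extra care about the embedding $\iota_\infty$ and the well-definedness of $c_ik \bmod d$ for $k\in(\mathbb{Z}/e\mathbb{Z})^{\times}$ (via $e\,c_i\equiv 0 \pmod d$) just makes explicit what the paper leaves implicit.
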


\begin{proof}
  The statement in \eqref{item:14} is a particular case
  of~\eqref{eq:44}, whereas that in \eqref{item:15} follows directly
  from \cref{lem:5}, \cref{cor:1} and the definition of the height.
  Finally, the statement in \eqref{item:16} is an easy consequence of
  that in~\eqref{item:15}.
\end{proof}

\vspace{\baselineskip}
\noindent\textbf{Displaying the computations.}
For each $d\ge1 $ we can numerically compute the height of the point~$C\cap\omega C$
for each~$\omega\in\mu_d^2\setminus\{(1,1)\}$ using the formula in
\cref{prop:8}\eqref{item:15}, and display the outputs in two images.
The first shows the obtained height values as an unordered plot on the
unit interval, whereas the second represents them in a meaningful and
organized way in the unit square.

The latter image is produced as we next explain. Choosing~$\zeta\in \mu_{d}^{\circ}$,
\cref{prop:8}\eqref{item:14} identifies
$\mu_d^2$ with the set of grid points
$ \{ 0, {1}/{d}, \dots, {(d-1)}/{d}\}^{2}$ of the unit
square~$[0,1)^2$. To visualize the behavior of the function
\begin{displaymath}
\omega\mapsto\h(C\cap \omega C) \quad \text{ for } \omega\in\mu_d^2\setminus\{(1,1)\}   
\end{displaymath}
we subdivide this square into $d^{2}$-many cells centered at these
grid points. Apart from that with center~$(0,0)$, each  of these cells
corresponds to a nontrivial $d$-torsion point~$\omega$, and we color
it with a tone of gray that is as dark as the height of
$C\cap\omega C$ is larger within the range~$[0,\log(2)]$.  By virtue
of \cref{prop:8}\eqref{item:16}, the resulting image does not depend
on the choice of the primitive $d$-root of unity~$\zeta$.

\cref{fig:3} shows the resulting images for~$d=120$. For future
considerations, we enrich the first plot with the special values
\begin{equation}
  \label{eq:62}
  \eta=\frac{2\, \zeta(3)}{3\, \zeta(2)}=0.487175\dots
\and 
  \theta= \frac{3\sqrt{3}}{4\pi} L(\chi_{-3},2)= 0.323065\dots
\end{equation}
marked with a red line and an orange line, respectively.

\begin{figure}[!htbp]
  \includegraphics[scale=0.45]{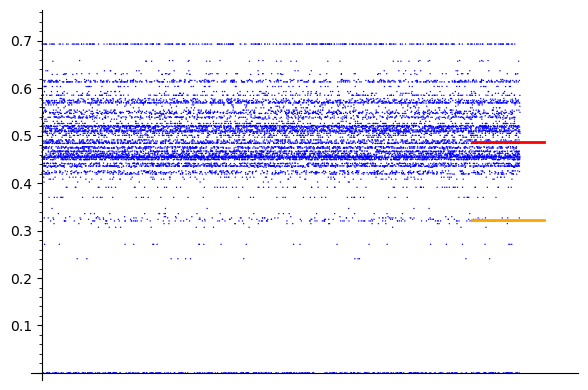} \quad\quad  \includegraphics[scale=0.55]{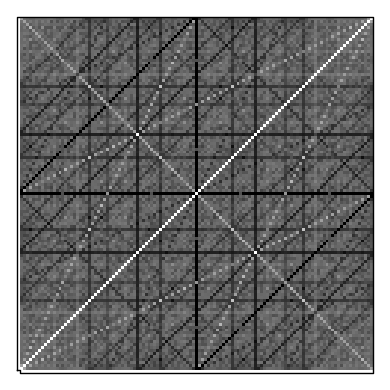}
  \caption{Distribution of heights associated to $d$-torsion points, $d=120$}
\label{fig:3}
\end{figure}

\vspace{\baselineskip}
\noindent\textbf{Bounds and symmetries.}
These figures allow to appreciate several features of the height values we study. 

As predicted by \cref{prop: bounds for the height}, the extremal
values are~$0$ and~$\log(2)$, as can be seen in the left image of
\cref{fig:3}. In the right
image, the minimal height (in white) is reached for the torsion
points corresponding to the cells in the horizontal, diagonal and
vertical lines passing through the origin of the square, plus at the grid points $(1/{3},2/{3})$ and~$(2/{3},1/{3})$
whenever~$3\mid d$.  The maximal height (in black) is only attained when
$d$ is even but not a power of~$2$. When this is the case, it is
visible on the horizontal and vertical lines through the center of the
square, and on the diagonal passing through the midpoint of an edge of
the square, excluding the cells centered at the grid points of the form
$(2^{-k}b_{1},2^{-k}b_{2})$ with $b_{1},b_2\in \mathbb{Z}$
and~$k \ge 0$.

This right image also makes apparent the invariance of the height values
under the transformations in \cref{prop:8}\eqref{item:16} and their
compositions. Up to rescaling, these are the same automorphisms on
$(\mathbb{R}/2\pi\mathbb{Z})^2$ in \cref{prop:4}, which explains the
analogy of this image with the right image in~\cref{fig:1}.  In
particular, the triangle with vertices $(0,0)$, $(1/2,0)$ and
$(2/3,1/3)$ is a fundamental domain for the height~values.

\vspace{\baselineskip}
\noindent\textbf{Limit value.}
The limit behavior of the height is also visible.  As predicted by
\cref{cor:3}, most of the points in the left image of \cref{fig:3}
cluster around the value indicated by the red line, and most of the
cells in the right image have a tone of gray that approaches the
intensity
\begin{displaymath}
  \frac{\eta}{\log(2)}= 0.702845\dots
\end{displaymath}
as~$d\to +\infty$.

We can make this phenomenon more visible with a couple of plots by taking
$\varepsilon >0$ and computing the quantities
\begin{displaymath}
  \frac{1}{d^{2}-1}   {\#\{\omega\in \mu_{d}^{2}\setminus\{(1,1)\} \mid |\h(C\cap \omega C) -\eta|<\varepsilon \}}    \and
  \frac{1}{d^{2}-1} \sum_{\omega\in \mu_{d}^{2}\setminus \{(1,1)\}} \h(C\cap \omega C)
\end{displaymath}
for a family of values of~$d$.  \cref{fig:2} collects them for all
$1\le d\le 250$ and~$\varepsilon =0.1$.

\begin{figure}[!htbp]
\includegraphics[scale=0.43]{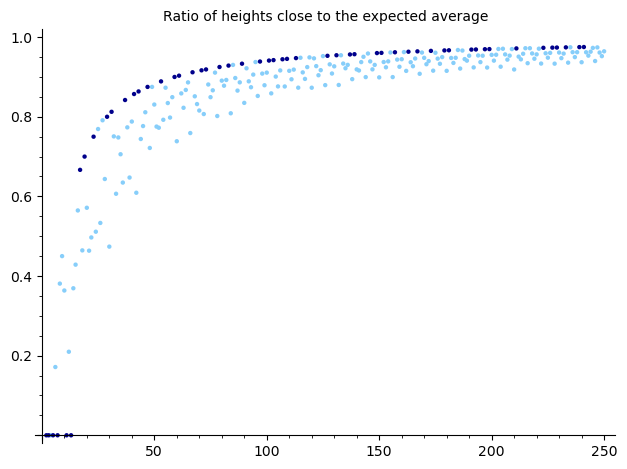}
  \quad
  \includegraphics[scale=0.43]{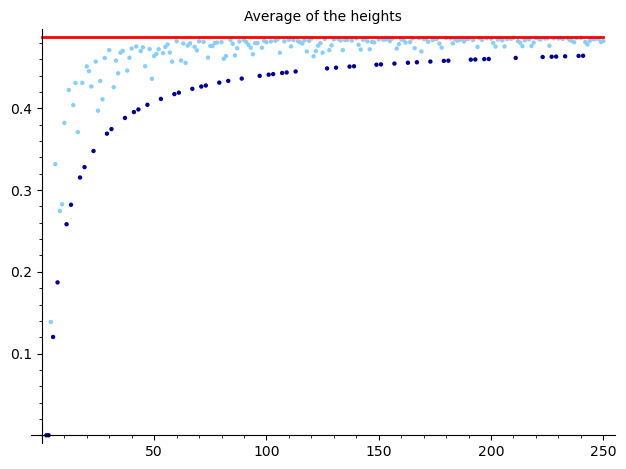}  
  \caption{Ratio of heights for $d$-torsion points around the
    limit and their mean, $1\le d\le 250$, with marked prime values}
\label{fig:2}
\end{figure}

In accordance with \cref{cor:3}, these plots confirm that most of the
considered heights cluster near $\eta$ as~$d \to +\infty$, and that
their mean converges to this limit value, marked in red in the image on the
right.

It would be interesting to understand other patterns suggested by
these computations. For instance, the right plot of
\cref{fig:2} hints to a positive answer to the following question.

\begin{question} 
  \label{conj:2}
  Does it hold that
  $\displaystyle{ \frac{1}{d^{2}-1} \hspace{-2mm} \sum_{\omega\in
      \mu_{d}^{2}\setminus \{(1,1)\}} \hspace{-3mm}\h(C\cap \omega C)
    <\eta}$ for all~$d\ge 1$?
\end{question}

\vspace{\baselineskip}
\noindent\textbf{Torsion curves.}
We can also visualize the asymptotics obtained in
\cref{sec:interm-sequ-tors}.  Recall that for a primitive vector
$a\in \mathbb{Z}^{2}$ and a positive integer~$e$ we consider the
disjoint union of torsion curves 
\begin{displaymath}
    V_{a,e}=\bigcup_{\zeta\in \mu_{e}^{\circ}} Z(\chi^{a}-\zeta) \subset \Gm^{2}(\overline{\mathbb{Q}}),
\end{displaymath}
and that the value $\eta_{a,e}$ in \eqref{eq:37} is the limit of the
heights corresponding to strict sequences of torsion points in this
algebraic subset.

\cref{cor:6} becomes apparent in the right
image of \cref{fig:3}, which shows that most of the height values
for the cells in the union of segments
\[
\bigcup_{j\in(\mathbb{Z}/e\mathbb{Z})^\times}\Big\{\Big(\frac{c_1}{d},\frac{c_2}{d}\Big)\,\Big|\,\frac{a_1c_1+a_2c_2}{d}=\frac{j}{e}\Big\}
\]
cluster around  $\eta_{a,e}$ as $d\to +\infty$
with~$e\mid d$. Indeed, the left image in this figure plus a bit of imagination
allows to see how these limit values are approached, and it would not
be difficult to produce plots analogous to those in \cref{fig:2} for a
given  algebraic subset~$V_{a,e}$.

\vspace{\baselineskip}
\noindent\textbf{Small heights.}
By Zhang's theorem proving the toric Bogomolov conjecture, the height
of the nontorsion points of the projective line $C$ is bounded below
by a positive constant, see \cite{Zagier:ancb01} for an effective
version. This can be appreciated in the left image of \cref{fig:3} as
a gap between 0 and the rest of the values.

A closer examination suggests that the first accumulation point for
the heights under consideration is the constant $\theta$ in~\eqref{eq:62}. 
As shown in \cref{exm:3}, this value is approached by
the heights corresponding to the grid points in the three segments
through the origin and orthogonal to the vectors $(2,-1)$, $(1,1)$
and~$(1,-2)$.
We can turn this observation into a
formal question as follows.

\begin{question}
  \label{conj:1}
  Let~$\varepsilon>0$. Is the set
  $\{\omega\in \mu_{\infty}^{2}\setminus\{(1,1)\} \mid 0<\h(C\cap \omega C)
  \le \theta-\varepsilon\}$ finite?
\end{question}

\vspace{-4mm}

\begin{figure}[!htbp]
  \includegraphics[scale=0.45]{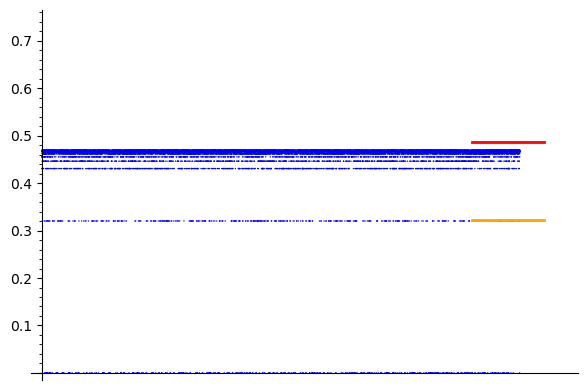} \quad\quad  \includegraphics[scale=0.55]{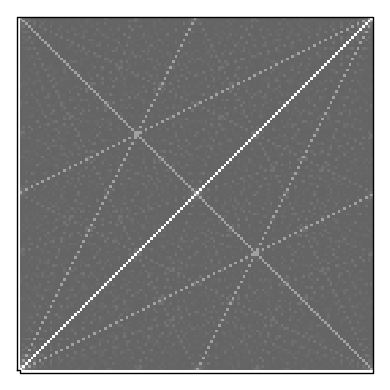}
  \caption{Distribution of heights associated to $d$-torsion points, $d=131$}
\label{fig:4}
\end{figure}

\noindent\textbf{Prime orders.}  In \cref{fig:2},  the quantities corresponding to
prime values of~$d$ are marked in dark blue, and they seem to have a
regular behavior for the two considered features. Indeed, both the
ratio of height values near~$\eta$ and their mean appear to follow
hyperbola-like patterns, at least for $d$ large enough. Moreover,
these patterns seem to bound, respectively from above and from below,
the corresponding quantity for a general~$d$. It would be interesting
to explain these behaviors.

In fact, when $d$ is prime, even the single height values corresponding to $d$-torsion points seem
to be simpler and tamer than those for composite~$d$, as can be
seen in \cref{fig:4} for~$d=131$.

% \begin{figure}[!htbp]
%   \includegraphics[scale=0.45]{height_plot_131.png} \quad\quad  \includegraphics[scale=0.55]{height_distribution_131.png}
%   \caption{Distribution of heights associated to $d$-torsion points, $d=131$}
% \label{fig:4}
% \end{figure}

This figure and similar ones suggest that for every prime~$d$ all the
height values corresponding to nontrivial $d$-torsion points lie
below~$\eta$, and that when they are nonzero, they are at least as
large as~$\theta$. In precise terms:

\begin{question}
  \label{conj:4}
  Let $p$ be a prime and $\omega\in \mu_{p}^{2} \setminus \{(1,1)\}$ with~$ \h(C\cap \omega C)\ne 0$. Does it hold that
  \begin{math}
    \theta\le \h(C\cap \omega C)<\eta?
  \end{math}
\end{question}

\phantomsection\label{part 2}
\begin{center}
\textbf{\textsc{PART II}}
\end{center}

\addcontentsline{toc}{chapter}{\hspace{2.9mm} Part II}

In this part we present an interpretation of~\cref{thm:1} from the
viewpoint of Arakelov geometry, that allows to recover it in a
more intrinsic way using the interplay between arithmetic and convex
objects from the Arakelov geometry of toric
varieties.

The first main result here is~\cref{thm: limit is Ronkin height},
linking the limit height in~\cref{thm:1} with the Arakelov height of
the subscheme of $\mathbb{P}^2_{\mathbb{Z}}$ defined by the linear
polynomial~$x_0+x_1+x_2$ with respect to suitable metrized line
bundles.  The second main result is \cref{cor: limit is integral over
  amoeba}, showing that this height agrees with the average of a
piecewise linear function over the Archimedean amoeba of this
subscheme, that can in turn be computed as a rational multiple of a
quotient of special values of the Riemann zeta function.

The treatment of this second part of the article is less elementary
and certainly not self-contained, building on the theory developed by
Gillet and Soul\'e~\cite{GilletSoule} and its refinement by
Maillot~\cite{Mail}, and on its study in the toric situation by Burgos
Gil, Philippon and the second named author~\cite{BPS} and by the first
named author~\cite{Gualdi}. We will recall the main objects and
results that we will use, assuming that the reader has some working
knowledge of these subjects, including the basics of complex analytic
geometry and of integral models of schemes.

\section{Semipositive metrics in complex geometry}\label{sec: semipositive metrics in complex geometry}

Let $X$ be a projective complex manifold with sheaf of holomorphic
functions~${O}_{X}$.  Let $L$ be a holomorphic line bundle on~$X$,
that is, a locally free sheaf of $O_{X}$-modules of rank~$1$.  A
\emph{(continuous) metric} $\|\cdot\|$ on $L$ is a rule that to every
open subset~$U$ of~$X$ and every section $s$ of $L$ on $U$ assigns
a continuous function
\begin{equation}
  \label{eq:43}
\|s(\cdot)\|\colon U\to\mathbb{R}_{\geq0}  
\end{equation}
that is compatible with restrictions to smaller open sets, and verifies
that
\begin{enumerate}[leftmargin=*]
\item \label{item:8} for every $p\in U$ we have that $\|s(p)\|=0$ if and only if~$s(p)=0$, 
\item \label{item:9} for every $p\in U$ and~$\lambda\in O_{X}(U)$ we have that~$\|(\lambda s)(p)\| =
  |\lambda(p)|\, \|s(p)\|$.
\end{enumerate}  
The pair $\overline{L}=(L,\|\cdot\|)$ is called a \emph{metrized line
  bundle} on~$X$.

\begin{remark}
  \label{rem:6}
  To define a metric on~$L$, it is enough to give a compatible
  choice of functions as those in \eqref{eq:43} for a family of
  nonvanishing sections of $L$ on open subsets that cover the whole
  of~$X$.
\end{remark}

To a metrized line bundle $\overline{L}$ on $X$ one can associate
its \emph{first Chern current}~$\chern(\overline{L})$, which is the
current of bidegree~$(1,1)$ defined  on any open subset $U$ of $X$ as
\begin{displaymath}
\chern(\overline{L})\big|_U=dd^{\ce}\big[-\log\|s\|\big|_{U}\big] 
\end{displaymath}
for any nonvanishing section $s$ on~$U$, where for a real-valued
function $f$ we denote by~$[f]$ the associated distribution, and the
operators $d$ and $d^{\ce}$ are taken in the sense of currents. This current
does not depend on the choice of the section because of the
Lelong--Poincar\'e formula.
% \cite[III.2.15]{Demailly}.

A metrized line bundle $\overline{L}$ is \emph{semipositive}
(respectively \emph{smooth}) if for every nonvanishing local section
$s$ the function $p\mapsto -\log\|s(p)\|$ is plurisubharmonic
(respectively smooth).
% ; see \cite[section III.1]{Demailly} for more details about these
% positivity notions.

\begin{example}
  \label{ex: trivial metric}
  The trivial line bundle $O_{X}$ admits a \emph{trivial
    metric}~$\|\cdot\|_{\tr}$, which is that defined by setting, for
  each holomorphic function $\lambda$ on an open subset $U$ of $ X$,
  \begin{displaymath}
    \|\lambda(p)\|_{\tr}=|\lambda(p)| \quad \text{ for all } p\in U. 
  \end{displaymath}
The corresponding metrized line bundle is
  denoted by~$\trOC$. Computing its first Chern current using the
  holomorphic function $\lambda=1$ we verify that~$\chern(\trOC)=0$,
  and so this metrized line bundle is semipositive.
\end{example}

\begin{example}
  \label{ex: canonical}
  Let~${O}(1)$ be the hyperplane line bundle of the complex projective
  space~$\mathbb{P}^{n}({\mathbb{C}})$. There is a one-to-one
  correspondence between its global sections and the linear
  polynomials in the homogeneous coordinates of~$\mathbb{P}^{n}({\mathbb{C}})$.
  For each global section $s$ we
  denote by~$l_s$ the corresponding linear polynomial and then~set
\[
\|s(p)\|_{\can}=\frac{|l_s(p_0,\ldots,p_n)|}{\max (|p_0|,\ldots,|p_n|)}  \quad \text{ for all } p=[p_{0}:\cdots:p_{n}]\in \mathbb{P}^{n}(\mathbb{C}).
\]
Since ${O}(1)$ is generated by its global sections, this assignment
determines through \cref{rem:6} a metric on this line bundle, which is
called the \emph{canonical metric} of~$O(1)$.
The corresponding metrized line bundle is denoted by~$\canOC$. It is
semipositive~\cite[Example~1.4.4]{BPS} but not smooth.
\end{example}

\begin{example}
  \label{ex: Ronkin metric}
  Consider the function
  $R\colon\mathbb{R}^{n+1}\setminus\{(0,\ldots,0)\}\to\mathbb{R}_{>0}$
  defined as
\begin{equation*}
  R(w)=
  \exp\Big(\int_{(S^{1})^{n}}\log|w_0+z_{1}w_{1}+\dots+z_{n} w_{n}|\ d\nu_{n}(z)\Big),
\end{equation*}
where $(S^{1})^{n}$ denotes the compact torus of
$(\mathbb{C}^\times)^n$ and $\nu_{n}$ its probability Haar measure.
Its value at a given point is the Mahler measure of an affine
polynomial, and so it is a well-defined real number.

The function $R$ is continuous and positive homogeneous of degree~$1$.
Similarly as for the canonical metric in the preceding example, the
\emph{Ronkin metric} of ${O}(1)$ is obtained by setting, for each
global section~$s$,
\[
\|s(p)\|_{\Ron}=\frac{|l_s(p_0,\ldots,p_n)|}{R(|p_0|,\ldots,|p_n|)} \quad \text{ for all } p=[p_{0}:\cdots:p_{n}]\in \mathbb{P}^{n}(\mathbb{C}).
\]
The corresponding metrized line bundle is denoted by~$\ronOC$.
\end{example}

\begin{remark}
  \label{rem:10}
  Ronkin metrics on line bundles on toric varieties were introduced
  in~\cite{Gualdi}, and they can be defined from any choice
  of nonzero Laurent polynomial compatible with the fan of the toric variety. The metric in \cref{ex: Ronkin
    metric} agrees with that from Definition~5.5 of
  \emph{loc. cit.}  for the Laurent polynomial~$1+t_1+\dots+t_n$. As
  explained therein, it is semipositive.
\end{remark}

Given a pure $d$-dimensional analytic cycle $Z$ of~$X$, we denote by
$\delta_Z$ its associated integration current and by $|Z|$ its
support.
Let also $\overline{L_{1}},\ldots,\overline{L_{d}}$ be a family of semipositive metrized line bundles on~$X$.
Then, Bedford--Taylor's theory
allows to construct a measure on~$X$, denoted~by
\[
\chern(\overline{L_{1}})\wedge\ldots\wedge\chern(\overline{L_{d}})\wedge\delta_{Z}
\]
and called the \emph{complex Monge--Amp\`ere measure} of
$\overline{L_{1}},\ldots,\overline{L_{d}}$ and~$Z$, see
\cite[Sections~III.3 and~III.4]{Demailly} for its definition and basic
properties.  This measure is supported on~$|Z|$, and has finite
integral against functions with at most logarithmic singularities
along analytic subsets of $|Z|$ of lower dimension. In particular, it
gives zero mass to those analytic subsets. Finally, this measure is
positive whenever~$Z$ is effective.

The next proposition is a particular case of well-known results in
analytic geometry, see for instance \cite[Lemma~8.17(ii)]{BE21}.  It
is also a direct consequence of the commutativity of the $*$-product
from Arakelov geometry shown  in \cite[Corollary~2.2.9]{GilletSoule} and
extended  in \cite[Proposition~5.3.6]{Mail} to metrized line
bundles that are not necessarily smooth.  We include its proof for
convenience and greater clarity.

\begin{proposition}[metric Weil reciprocity law]\label{prop: metric
    Weil reciprocity}
  Let $X$ be a smooth projective complex curve.  For~$i=1,2$, let also
  $\overline{L_i}=(L_i,\|\cdot\|_i)$ be a semipositive metrized line
  bundle on~$X$, and $s_i$ a nonzero rational section of~$L_i$.
  Suppose that the $0$-cycles
  \begin{displaymath}
    Z_{i}=\sum_{p\in X} \ord_{p}(s_{i}) \, [p] \quad \text{ for } i=1,2
  \end{displaymath}
  have disjoint supports. Then
  \begin{equation}
    \label{eq:47}
    \int_{X}\log\|s_1\|_1\, \big(\chern\big(\overline{L_2}\big)\wedge\delta_{X}-\delta_{Z_{2}}  \big) =
    \int_{X}\log\|s_2\|_2\, \big(
    \chern\big(\overline{L_1}\big)\wedge\delta_{X}-\delta_{Z_{1}}  \big).
  \end{equation}
\end{proposition}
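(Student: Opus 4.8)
The plan is to recast \eqref{eq:47} as a symmetry statement for a $dd^{\ce}$-pairing, prove it first for smooth metrics by an excision-and-Stokes computation, and reach the semipositive case by approximation. By the Lelong--Poincar\'e formula, for a nonzero rational section $s_{i}$ of a metrized line bundle $\overline{L_{i}}$ one has $\chern(\overline{L_{i}})\wedge\delta_{X}-\delta_{Z_{i}}=dd^{\ce}[-\log\|s_{i}\|_{i}]$ as currents on $X$. Hence, setting $u=\log\|s_{1}\|_{1}$ and $v=\log\|s_{2}\|_{2}$, the identity \eqref{eq:47} is equivalent to $\int_{X}u\,dd^{\ce}[v]=\int_{X}v\,dd^{\ce}[u]$. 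Both sides are finite: since $|Z_{1}|\cap|Z_{2}|=\emptyset$, the function $u$ is continuous on $|Z_{2}|$ and $v$ on $|Z_{1}|$, while $\chern(\overline{L_{i}})\wedge\delta_{X}$ is a positive measure with continuous local potential, hence integrates functions with logarithmic singularities.

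Suppose first that $\overline{L_{1}}$ and $\overline{L_{2}}$ are smooth, so that $\omega_{i}:=\chern(\overline{L_{i}})$ are smooth $(1,1)$-forms, the functions $u,v$ are smooth away from $|Z_{1}|,|Z_{2}|$ with $dd^{\ce}u=-\omega_{1}$ and $dd^{\ce}v=-\omega_{2}$ there, and each has a logarithmic singularity $\ord_{p}(s_{i})\log|z|+(\text{smooth})$ in a local coordinate $z$ at the points of its divisor. I would excise from $X$ the union $D_{\varepsilon}$ of small disjoint coordinate disks of radius $\varepsilon$ around the finitely many points of $|Z_{1}|\cup|Z_{2}|$ and apply Stokes' theorem on $X\setminus D_{\varepsilon}$ to get $\int_{X\setminus D_{\varepsilon}}(u\,dd^{\ce}v-v\,dd^{\ce}u)=-\int_{\partial D_{\varepsilon}}(u\,d^{\ce}v-v\,d^{\ce}u)$. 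As $\varepsilon\to0$ the left-hand side tends to $-\int_{X}u\,\omega_{2}+\int_{X}v\,\omega_{1}$ by dominated convergence. For the boundary term, around a point $p\in|Z_{1}|$ one has $\int_{\partial D_{\varepsilon}(p)}u\,d^{\ce}v\to0$ and $\int_{\partial D_{\varepsilon}(p)}v\,d^{\ce}u\to\ord_{p}(s_{1})\,v(p)$, using that $\int_{\partial D_{\varepsilon}(p)}d^{\ce}\log|z|=1$ (the normalization fixed by Lelong--Poincar\'e) and that $v$ is continuous at $p$; the behaviour around a point $q\in|Z_{2}|$ is symmetric. Collecting the limits and rearranging the resulting identity yields $\int_{X}v(\omega_{1}\wedge\delta_{X}-\delta_{Z_{1}})=\int_{X}u(\omega_{2}\wedge\delta_{X}-\delta_{Z_{2}})$, which is exactly \eqref{eq:47}.

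For the general case I would approximate the semipositive continuous metrics $\|\cdot\|_{i}$ by smooth semipositive metrics $\|\cdot\|_{i,k}$ on $L_{i}$ converging uniformly to them, which exist by regularization of quasi-plurisubharmonic functions. Writing $\overline{L_{i}}^{(k)}=(L_{i},\|\cdot\|_{i,k})$, the smooth case gives \eqref{eq:47} for the pair $\overline{L_{1}}^{(k)},\overline{L_{2}}^{(k)}$, and I would then let $k\to\infty$. The differences $\log\|s_{i}\|_{i,k}-\log\|s_{i}\|_{i}$ are continuous functions on $X$ tending to $0$ uniformly, so the divisorial contributions $\sum\ord_{q}(s_{2})\log\|s_{1}(q)\|_{1,k}$ over $|Z_{2}|$ converge to the corresponding sum for $\|\cdot\|_{1}$, the sets $|Z_{1}|$ and $|Z_{2}|$ being disjoint and finite. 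The measures $\chern(\overline{L_{i}}^{(k)})\wedge\delta_{X}$ are positive of bounded total mass and converge weakly to $\chern(\overline{L_{i}})\wedge\delta_{X}$, with uniformly bounded local potentials; combined with the uniform convergence $\log\|s_{1}\|_{1,k}\to\log\|s_{1}\|_{1}$ off $|Z_{1}|$, this forces $\int_{X}\log\|s_{1}\|_{1,k}\,\chern(\overline{L_{2}}^{(k)})\wedge\delta_{X}\to\int_{X}\log\|s_{1}\|_{1}\,\chern(\overline{L_{2}})\wedge\delta_{X}$, and symmetrically. Passing to the limit in the $k$-th instance of \eqref{eq:47} then gives the statement.

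The hard part will be this last convergence: integrating a function with logarithmic singularities against complex Monge--Amp\`ere measures that converge only weakly. This is where semipositivity is genuinely used, through the uniform bound on the potentials of the approximating measures, and it can be settled either by applying the excision argument once more (now integrating $\log|z|$ against the local potentials near $|Z_{1}|\cup|Z_{2}|$) or by quoting the corresponding continuity statement from pluripotential theory. Alternatively one can bypass the approximation altogether and invoke the Bedford--Taylor integration-by-parts formula for continuous plurisubharmonic functions together with the same excision near the supports of the two divisors.
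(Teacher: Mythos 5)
Your proposal follows essentially the same route as the paper: establish the identity for smooth metrics by excising small disks around $|Z_{1}|\cup|Z_{2}|$ and integrating by parts, with the boundary terms producing the divisorial contributions via a Cauchy-type computation, and then pass to continuous semipositive metrics by uniform approximation with smooth semipositive ones, the delicate point in both treatments being the continuity of the integrals against the Monge--Amp\`ere measures under uniform convergence of the metrics. The one difference worth noting is that the paper first uses bilinearity of both sides of \eqref{eq:47} to reduce to ample $L_{i}$, which is what legitimizes the existence of smooth semipositive approximants (plain quasi-plurisubharmonic regularization loses a bit of positivity unless one can borrow it from an ample class), whereas you invoke regularization directly; on a curve this is harmless, since a line bundle carrying a semipositive metric is either ample or of degree zero, in which case the metric is flat and already smooth, but the reduction is the cleaner way to state it.
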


\begin{proof}
  Both sides of the formula in \eqref{eq:47} are linear in the choice
  of the metrized line bundles and rational sections. Hence we can
  assume that $L_{1}$ and $L_{2}$ are ample. When this is the case,
  their metrics can be uniformly approached by smooth semipositive
  metrics on the same line bundles. Since both sides of this formula
  are continuous with respect to the uniform convergence of metrics,
  we can then reduce to the case in which both~$\|\cdot\|_{1}$ and
  $\|\cdot\|_{2}$ are smooth. We suppose this for the rest of the
  proof.
  
  For each $p\in |Z_{1}|\cup |Z_{2}|$ choose a closed neighborhood
  $B_{p}\subset X$ with smooth boundary.  Assume that all of these
  neighborhoods are disjoint and consider the open subset with smooth
  boundary
  \begin{displaymath}
    U=X\setminus \bigcup_{p\in |Z_{1}|\cup |Z_{2}|} B_{p}.
  \end{displaymath}

  Set $f_{i}=-\log\|s_{i}\|_{i}$ for each~$i$. Since the rational
  section $s_{i}$ is regular and nonvanishing on $U$ and the metric
  $\|\cdot\|_{i}$ is smooth, the restriction to $U$ of the complex
  Monge--Amp\`ere measure
  $ \chern\big(\overline{L_i}\big)\wedge\delta_{X}$ is given by the
  $(1,1)$-form~$dd^\ce f_i|_{U}$. By \cite[Chapter~III,
  Formula~3.1]{Demailly},
  \begin{multline}
    \label{eq:13}
    \int_{U}\log\|s_1\|_1\,
    \chern\big(\overline{L_2}\big)\wedge\delta_{X}- \log\|s_2\|_2\,
    \chern\big(\overline{L_1}\big)\wedge\delta_{X} \\=
    \int_{U} f_{2}\, dd^{\ce}f_{1}- f_{1}\,
    dd^{\ce}f_{2}=\int_{\partial U} f_{2} \, d^{\ce}f_{1} - f_{1} \,
    d^{\ce}f_{2}.
  \end{multline}
  Note that the integral in the left-hand side of \eqref{eq:13}
  approaches the same integral over the whole of $X$ when all the
  considered neighborhoods get arbitrarily~small.

  By spelling out the different connected components of $\partial U$
  we obtain that
\begin{equation}
  \label{eq:55}
\int_{\partial U} f_{1} \,
    d^{\ce}f_{2}  = -\sum_{p\in |Z_{1}|}\int_{\partial B_{p}} f_{1} \,
    d^{\ce}f_{2}    -\sum_{p\in |Z_{2}|}\int_{\partial B_{p}} f_{1} \,
    d^{\ce}f_{2}.     
  \end{equation}
  The first sum in the right-hand side of \eqref{eq:55} tends to $0$
  when these neighborhoods get small, as $d^{\ce}f_{2}$ is a smooth
  $(0,1)$-form on $B_{p}$ for every $p\in |Z_{1}|$ and~$f_{1}$ has a
  logarithmic singularity at every such point.
  %If the ball has radius R, the integral is upper bounded by max|f_1| times the length of the curve,
  %that is by something like R*log R, which goes to 0 as R goes to 0
  
  Now let~$p\in |Z_{2}|$. Choosing the neighborhood $B_p$
  appropriately, we can suppose that there is a closed disc
  $V\subset \mathbb{C}$ centered at the origin together with a
  biholomorphic map $\varphi\colon V\to B_{p}$ such
  that~$\varphi(0)=p$. Then
  $\|(s_{2}\circ \varphi)(z)\|_{2}=|z|^{m}h(z) $ for~$m=\ord_{p}(s_{2})$ and a nonvanishing
  smooth function~$h$. Hence
  \begin{multline}
    \label{eq:49}
   - \int_{\partial B_{p}}f_{1}\, d^{c}f_{2} = \int_{\partial V}
    f_{1}\circ \varphi \ d^{\ce}\log\|s_{2}\circ
    \varphi\|_{2}\\= \frac{m}{2}\int_{\partial V} f_{1}\circ \varphi \
    d^{\ce}\log (z \overline z ) +
    \int_{\partial V} f_{1}\circ \varphi \
    d^{\ce}  \log (h).
  \end{multline}
  Since $d^{\ce} \log (h)$ is a smooth $(0,1)$-form on~$V$,
  the second summand in the right-hand side of \eqref{eq:49}
  tends to $0$ when $B_{p}$ shrinks to the point~$p$. For the first
  summand, recall that
  $d^{\ce}= (\partial -\overline \partial)/2\pi\iu$ with $\partial$
  and $\overline \partial$ the Dolbeault operators. Hence
  \begin{displaymath}
    d^{\ce} \log (z \overline z ) =\frac{1}{2\pi\iu} (\partial \log(z) -\overline{\partial} \log(\overline{z}))=
    \frac{1}{2\pi \iu} \Big(\frac{dz}{z}-
    \frac{d\overline z}{\overline z}\Big),
  \end{displaymath}
  and noting that the function $f_{1}$ is real-valued we deduce that
  \begin{equation}
    \label{eq:50}
    \int_{\partial V}  f_{1}\circ \varphi \ d^{\ce}\log (z \overline z ) =\frac{1}{2\pi \iu} \int_{\partial V}f_{1}\circ \varphi  \ \Big(\frac{dz}{z}-
    \frac{d\overline z}{\overline z}\Big)
    =\Re\Big( \frac{1}{\pi \iu} \int_{\partial V}f_{1}\circ \varphi \ \frac{dz}{z} \Big).
  \end{equation}
  Since $B_{p}$ contains no point of~$|Z_{1}|$, the function
  $f_1\circ\varphi$ is smooth on~$V$. Then Cauchy's formula \cite[Chapter~I,
  Formula~3.2]{Demailly} gives
  \begin{equation}
    \label{eq:51}
    \frac{1}{2\pi \iu} \int_{\partial V} f_{1}\circ \varphi \ \frac{dz}{z} =(f_{1}\circ \varphi)(0) + \int_{V} \frac{1}{\pi z} \frac{ \partial f_{1}\circ \varphi}{\partial \overline{z}} 
\,    d \lambda(z),
  \end{equation}
  where $\lambda$ denotes the Lebesgue measure of~$\mathbb{C}$.  As
  $B_{p}$ shrinks to~$p$, the integral in the right-hand side
  of~\eqref{eq:51} converges to $0$ because the function
  $z\mapsto {1}/{z}$ is integrable on~$V$, and so the integral in the
  left-hand side tends to $(f_{1}\circ \varphi)(0)=f_{1}(p)$.
  From~\eqref{eq:49} and~\eqref{eq:50} we deduce  that $ -\int_{\partial B_{p}}f_{1}\, d^{c}f_{2} $ converges to~$\ord_{p}(s_{2}) f_{1}(p)$.
  
  Thus when all the considered neighborhoods get small, it follows
  from \eqref{eq:55} that the integral
  $\int_{\partial U} f_{1} \, d^{\ce}f_{2}$ converges to the quantity
  \begin{displaymath}
    \sum_{p\in |Z_{2}|} \ord_{p}(s_{2}) f_{1}(p)= -\int_{X}
    \log\|s_{1}\| \, \delta_{Z_{2}}.  
  \end{displaymath}
  A similar consideration shows
  that~$\int_{\partial U} f_{2} \, d^{\ce}f_{1}$ converges to
  $- \int_{X} \log\|s_{2}\| \, \delta_{Z_{1}} $.  Taking the limit
  when the union of these neighborhoods converges to~$|Z_{1}|\cup |Z_{2}|$,
  we deduce from \eqref{eq:13} that
  \begin{displaymath}
        \int_{X}\log\|s_1\|_1\,
    \chern\big(\overline{L_2}\big)\wedge\delta_{X}- \log\|s_2\|_2\,
    \chern\big(\overline{L_1}\big)\wedge\delta_{X}
    = \int_{X}
  \log\|s_{1}\| \, \delta_{Z_{2}} - \int_{X}
  \log\|s_{2}\| \, \delta_{Z_{1}},
  \end{displaymath}
  as stated.
\end{proof}

\begin{remark}
  The classical Weil reciprocity law is the equality
  \begin{equation}
    \label{eq:46}
    \prod_{p\in X} f_{1}(p)^{\ord_{p}(f_{2})}
    =    \prod_{p\in X} f_{2}(p)^{\ord_{p}(f_{1})}
  \end{equation}
  for any pair of nonzero rational functions $f_{1}$ and $f_{2}$ of a
  smooth projective curve $X$ whose associated $0$-cycles have
  disjoint support \cite[Exercise~2.11]{Silverman:aec}.

  \cref{prop: metric Weil reciprocity} can be seen as a metric version
  of this reciprocity law, that essentially contains it.  Indeed
  choosing both $\overline{L_1}$ and $\overline{L_2}$ as the trivial metrized
  line bundle of~$X$ from \cref{ex: trivial metric}, this proposition
  yields the equality between the absolute values of the right-hand and
  left-hand sides of~\eqref{eq:46}.
\end{remark}

\section{The limit height of the intersection as an Arakelov height}
\label{sec:limit-height-inters}

In this section we link~\cref{thm:1} to the theory of heights from
Arakelov geometry. To this end, we first recall the basic elements of this
theory for arithmetic varieties equipped with semipositive metrized
line bundles that are not necessarily smooth.

Let $\mathscr{X}$ be an \emph{arithmetic variety}, that is a regular
integral projective flat scheme over~$\spec(\mathbb{Z})$, with sheaf
of regular functions~$\mathscr{O}_{\mathscr{X}}$. By the regularity
assumption, its set of complex points $\mathscr{X}(\mathbb{C})$ is a
projective complex manifold.
Given a line bundle $\mathscr{L}$ on~$\mathscr{X}$,
its analytification $\mathscr{L}^{\an}$ is a
holomorphic line bundle on~$\mathscr{X}(\mathbb{C})$.

A \emph{semipositive metrized
  line bundle} on $\mathscr{X}$ is a pair
\[
\overline{\mathscr{L}}=(\mathscr{L},\|\cdot\|)
\]
where $\mathscr{L}$ is a line bundle on $\mathscr{X}$ and $\|\cdot\|$
a semipositive metric on $\mathscr{L}^{\an}$ that is invariant under
the involution on $\mathscr{X}(\mathbb{C})$ induced by the complex
conjugation.  We denote
by~$\overline{\mathscr{L}^{\an}}=(\mathscr{L}^{\an},\|\cdot\|)$ the
associated semipositive metrized line bundle
on~$\mathscr{X}(\mathbb{C})$.

For a pure $d$-dimensional cycle $\mathscr{Z}$ of $\mathscr{X}$ and a
family of semipositive metrized line bundles
~$\overline{\mathscr{L}_{i}}=(\mathscr{L}_{i},\|\cdot\|_{i})$ for~$i=1, \dots, d$, on~$\mathscr{X}$, we define its \emph{(Arakelov)
  height}
\[
\h_{\overline{\mathscr{L}_1},\ldots,\overline{\mathscr{L}_{d}}}(\mathscr{Z})
\]
by means of the following recursion on the dimension:

\begin{enumerate}[leftmargin=*]
\item \label{item:12} when~$d=0$, if $\mathscr{Z}$ is a prime cycle then it is an integral closed point
  of $\mathscr{X}$ and its function field $\K(\mathscr{Z})$ is
  finite.
  In this case its height is defined as
\[
  \h(\mathscr{Z})=\log(\#\K(\mathscr{Z})).
\]
In the general case, the height of $\mathscr{Z}$  is defined by linearity.

\item \label{item:13} when~$d\geq1$, pick a rational section $s$ of
  $\mathscr{L}_d$ that is regular and nonvanishing on a dense open
  subset of the support of $\mathscr{Z}$ and set
\begin{multline*}
  \h_{\overline{\mathscr{L}_1},\ldots,\overline{\mathscr{L}_d}}(\mathscr{Z})=\h_{\overline{\mathscr{L}_1},\ldots,\overline{\mathscr{L}_{d-1}}}(\div(s)
  \cdot \mathscr{Z})
  \\
  -\int_{\mathscr{X}(\mathbb{C})}\log\|s^{\an}\|_{d}\
  \chern\big(\overline{\mathscr{L}^{\an}_{1}}\big)\wedge\ldots\wedge\chern\big(\overline{\mathscr{L}^{\an}_{d-1}}\big)\wedge\delta_{\mathscr{Z}(\mathbb{C})},
\end{multline*}
where $\div(s)\cdot\mathscr{Z}$ denotes the intersection product of
the Cartier divisor $\div(s)$ and the cycle~$\mathscr{Z}$, and
$s^{\an}$ stands for $s$ considered as a meromorphic section of~$\mathscr{L}^{\an}_d$.
\end{enumerate}
% The height function defined here coincides with the one from
% \cite[Proposition-d\'efinition~5.5.1]{Mail} because of
% \cite[Remarque~5.5.4 and Th\'eor\`eme~5.5.6(6)]{Mail}.
The height is a real number that does not depend on the choice of the
rational section~$s$ in \eqref{item:13} nor on the order in which the
metrized line bundles are chosen.

\begin{remark}
  \label{rem:1}
  The Arakelov height was introduced by Bost, Gillet and Soul\'e for
  smooth metrics through arithmetic intersection theory~\cite{BGS},
  and later extended by Maillot to semipositive metrics that are not
  necessarily smooth~\cite{Mail}.
\end{remark}

\begin{example}\label{ex: canonical height from canonical metric}
  Let $\mathbb{P}^n_{\mathbb{Z}}$ be the projective space over~$\spec(\mathbb{Z})$.
  The pair
\[
\canOZ=(\mathscr{O}(1),\|\cdot\|_{\can})
\]
where $\mathscr{O}(1)$ denotes the hyperplane line bundle of
$\mathbb{P}^n_{\mathbb{Z}}$ and $\|\cdot\|_{\can}$ the canonical
metric of the holomorphic line bundle $O(1)$ from \cref{ex:
  canonical} is a semipositive metrized line bundle on this
arithmetic variety, called the \emph{canonical metrized line bundle}
of~$\mathbb{P}_{\mathbb{Z}}^{n}$.
\end{example}

Let $\mathbb{P}^{n}(\overline{\mathbb{Q}})$ be the projective space
introduced in~\cref{sec:preliminaries}.  A point
$\xi \in\mathbb{P}^{n}(\overline{\mathbb{Q}})$ with rational homogeneous
coordinates can be identified with a $\mathbb{Q}$-point of
$\mathbb{P}^{n}_{\mathbb{Z}}$ or equivalently with a scheme-theoretic
point in its generic fiber. Denote by $\overline{\xi}$ its closure in~$\mathbb{P}^n_{\mathbb{Z}}$,
which is an integral subscheme of
dimension~$1$. Then
\begin{equation}
  \label{eq:52}
\h_{\canOZ}(\overline{\xi})=\h(\xi),  
\end{equation}
the quantity on the right-hand side being the height of $\xi$ in the
sense of~\eqref{eq:4}. Proving this equality is a nice exercise that
can be solved by unwrapping the corresponding definitions and by using \cref{rem:4}.

\begin{remark}\label{rem: heights of points over Qbar}
  More generally, the Arakelov height can be defined for cycles
  of~$\mathscr{X}_{\overline{\mathbb{Z}}}$, the base change of
  $\mathscr{X}$ with respect to the integral closure of the ring of
  integers.  In this setting, a point
  $\xi\in \mathbb{P}^{n}(\overline{\mathbb{Q}})$ can be identified
  with a $\overline{\mathbb{Q}}$-point of
  $\mathbb{P}^{n}_{\overline{\mathbb{Z}}}$ or equivalently with a
  scheme-theoretic point in the generic fiber of~$\mathbb{P}_{\overline{\mathbb{Z}}}^{n}$.
  The equality in
  \eqref{eq:52} extends then to $\xi$ and its closure $\overline{\xi}$
  in~$\mathbb{P}_{\overline{\mathbb{Z}}}^{n}$.
\end{remark}

Now consider the semipositive metrized line bundles on
$\mathbb{P}^2_{\mathbb{Z}}$
\begin{equation}
  \label{eq:39}
\canOZ=(\mathscr{O}(1),\|\cdot\|_{\can})
\and \ronOZ=(\mathscr{O}(1),\|\cdot\|_{\Ron})  
\end{equation}
obtained by equipping the holomorphic line bundle $O(1)$ on
$\mathbb{P}^{2}(\mathbb{C})$ with the canonical and the Ronkin metrics
from~\cref{ex: canonical,ex: Ronkin metric}, respectively.  Consider
also the subscheme $\mathscr{C}$ of $\mathbb{P}^2_{\mathbb{Z}}$
defined by the homogeneous linear polynomial~$x_0+x_1+x_2$.  The line
$C \subset \mathbb{P}^{2}(\overline{\mathbb{Q}})$ studied through
\hyperref[part 1]{Part I} coincides with the set of
$\overline{\mathbb{Q}}$-points of~$\mathscr{C}$.

The following is our main result in this section.

\begin{theorem}\label{thm: limit is Ronkin height}
  Let $(\omega_{\ell})_{\ell\ge 1}$ be a strict sequence in
  $\Gm^{2}(\overline{\mathbb{Q}})$ of nontrivial torsion points. Then
\[
\lim_{\ell\to +\infty}\h(C\cap \omega_{\ell}C) =\h_{\canOZ,\,\ronOZ}(\mathscr{C}).
\]
\end{theorem}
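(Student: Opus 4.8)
The plan is to compute the Arakelov height $\h_{\canOZ,\ronOZ}(\mathscr C)$ by unwinding its recursive definition, to relate it to the point heights $\h(P(\omega_\ell))$ by means of the metric Weil reciprocity law, and to show that the discrepancy in that comparison tends to $0$ along the strict sequence; combining with \cref{thm:1} moreover identifies the common limit with $2\zeta(3)/3\zeta(2)$.

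First I would resolve the metrized line bundle $\ronOZ$ with the global section $s_1$ of $\mathscr{O}(1)$ attached to the coordinate $x_1$. Its divisor on $\mathscr C$ is the transverse intersection of the two distinct hyperplanes $\{x_0+x_1+x_2=0\}$ and $\{x_1=0\}$, that is the closure in $\mathbb P^2_{\mathbb Z}$ of the point $[1:0:-1]$, which has zero canonical height by Kronecker's theorem and \eqref{eq:52}; hence the recursive definition gives
\[
\h_{\canOZ,\ronOZ}(\mathscr C)=-\int_{C(\mathbb C)}\log\|s_1^{\an}\|_{\Ron}\ \chern(\canOC)\wedge\delta_{C(\mathbb C)}.
\]
Next, base changing to $\overline{\mathbb Z}$ and resolving $\ronOZ$ instead with the section $s_{\omega_\ell}$ attached to $x_0+\omega_{\ell,1}^{-1}x_1+\omega_{\ell,2}^{-1}x_2$, whose divisor on $\mathscr C_{\overline{\mathbb Z}}$ is the closure of $P(\omega_\ell)$ by \cref{rem: heights of points over Qbar}, the same recursion---together with the independence of the arithmetic intersection from the chosen section and from the order, which rests on \cref{prop: metric Weil reciprocity} at the Archimedean place and on classical Weil reciprocity at the finite ones---yields
\[
\h(P(\omega_\ell))=\h_{\canOZ,\ronOZ}(\mathscr C)+\frac{1}{\varphi(d_\ell)}\sum_{k\in(\mathbb Z/d_\ell\mathbb Z)^{\times}}\int_{C(\mathbb C)}\log\|s_{\omega_\ell^{k}}^{\an}\|_{\Ron}\ \chern(\canOC)\wedge\delta_{C(\mathbb C)},
\]
where $d_\ell=\ord(\omega_\ell)$ and the $k$-average is the average over the Galois orbit of $\omega_\ell$. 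The same relation can be obtained directly from \cref{prop: metric Weil reciprocity} applied on the curve $C(\mathbb C)$ to the pairs $(\ronOC|_{C(\mathbb C)},s_1)$ and $(\canOC|_{C(\mathbb C)},s_{\omega_\ell})$---whose divisors $[1:0:-1]$ and $[P(\omega_\ell)]$ are disjoint whenever $\omega_{\ell,2}\neq 1$, hence for all large $\ell$ by strictness---the two point contributions simplifying because $R(1,0,1)=1$ by Jensen's formula, so that the terms in $\log|1-\omega_{\ell,2}^{-1}|$ cancel.

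It then remains to show that the error term
\[
T_\ell=\frac{1}{\varphi(d_\ell)}\sum_{k\in(\mathbb Z/d_\ell\mathbb Z)^{\times}}\int_{C(\mathbb C)}\log\|s_{\omega_\ell^{k}}^{\an}\|_{\Ron}\ \chern(\canOC)\wedge\delta_{C(\mathbb C)}
\]
tends to $0$. I would write $T_\ell=\int_{\mathbb S}\Phi\ d\delta_{O(\omega_\ell)_\infty}$, with $\delta_{O(\omega_\ell)_\infty}$ as in \eqref{eq:20} and, for a point $\omega'=(\omega_1',\omega_2')$ of the compact torus $\mathbb S$,
\[
\Phi(\omega')=\int_{C(\mathbb C)}\log\|s_{\omega'}^{\an}\|_{\Ron}\ \chern(\canOC)\wedge\delta_{C(\mathbb C)}.
\]
Two facts have to be checked. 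First, $\Phi$ is bounded above and has at most a logarithmic singularity at $(1,1)$: on $C$ the linear form $x_0+(\omega_1')^{-1}x_1+(\omega_2')^{-1}x_2$ restricts to $((\omega_1')^{-1}-1)x_1+((\omega_2')^{-1}-1)x_2$, so that $\Phi(\omega')=\log\max\bigl(|(\omega_1')^{-1}-1|,|(\omega_2')^{-1}-1|\bigr)+O(1)$ uniformly, the bounded remainder following from the fact that $\chern(\canOC)\wedge\delta_{C(\mathbb C)}$ is a probability measure charging no point and integrating the functions $[p_1:p_2]\mapsto\log|ap_1+bp_2|$ on $C(\mathbb C)\simeq\mathbb P^1(\mathbb C)$ uniformly for $(a,b)$ in the unit sphere. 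Second, $\int_{\mathbb S}\Phi\ d\nu=0$: by Fubini and the very definition of the Ronkin function, for every $p$ one has
\[
\int_{\mathbb S}\log\|s_{\omega'}^{\an}(p)\|_{\Ron}\ d\nu(\omega')=\int_{(S^1)^2}\log|p_0+z_1p_1+z_2p_2|\ d\nu(z)-\log R(|p_0|,|p_1|,|p_2|)=0,
\]
the torus coordinates being rotated to replace each $p_i$ by $|p_i|$. Granting these, $T_\ell\to\int_{\mathbb S}\Phi\ d\nu=0$ follows from the logarithmic equidistribution of the Galois orbits of torsion points of $\mathbb S$---either by invoking \cite{CLT,DimitrovHabegger}, or by repeating the neighbourhood-exhaustion argument from the proof of \cref{prop:1}, with the function $\log|z_1-1|$ and \cref{lem:2} controlling the contribution near $(1,1)$. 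Passing to the limit then gives $\lim_{\ell\to+\infty}\h(C\cap\omega_\ell C)=\h_{\canOZ,\ronOZ}(\mathscr C)$.

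I expect the last step to be the main obstacle. Controlling $T_\ell$ amounts to running a logarithmic equidistribution theorem for the test function $\Phi$, whose only singularity sits at the single point $(1,1)$; the delicate analytic input is the uniform estimate for $\int_{C(\mathbb C)}\log|ap_1+bp_2|\ \chern(\canOC)\wedge\delta_{C(\mathbb C)}$ that makes $\Phi$ bounded above with an at most logarithmic singularity. A secondary, purely arithmetic, point is that $\div(s_{\omega_\ell})\cdot\mathscr C$ carries no spurious vertical components over $\overline{\mathbb Z}$, so that the recursion lands exactly on the canonical height $\h(P(\omega_\ell))$.
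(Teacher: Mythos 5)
Your architecture is genuinely different from the paper's: the paper identifies the limit with the explicit integral $\int_{\mathbb S}F\,d\nu$ via \cref{cor:2} and \cref{prop:1}, and separately shows $\h_{\canOZ,\,\ronOZ}(\mathscr C)$ equals the same integral by applying \cref{prop: metric Weil reciprocity} on the curve (\cref{lem: lemma Ronkin-integral on compact torus}); you instead resolve $\ronOZ$ with the moving section $s_{\omega_\ell}$ and argue that the Galois-averaged Archimedean discrepancy $T_\ell$ has $\nu$-mean zero and equidistributes to $0$. The idea is sound, but the arithmetic step as written fails: it is not true that $\div(s_{\omega_\ell})\cdot\mathscr C$ has no vertical components. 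Whenever $\ord(\omega_\ell)=p^r$ is a prime power, both $\omega_{\ell,1}$ and $\omega_{\ell,2}$ reduce to $1$ at the unique prime $\mathfrak p$ of $\mathbb Z[\mu_{p^r}]$ above $p$, so the forms $x_0+x_1+x_2$ and $x_0+\omega_{\ell,1}^{-1}x_1+\omega_{\ell,2}^{-1}x_2$ coincide on the fiber over $p$ and the intersection cycle contains the fiber of $\mathscr C$ as a vertical component, with multiplicity $\min_i v_{\mathfrak p}(\omega_{\ell,i}^{-1}-1)=1$. Its contribution to $\h_{\canOZ}(\div(s_{\omega_\ell})\cdot\mathscr C)$ is $\log p/(p^{r-1}(p-1))=\Lambda(d_\ell)/\varphi(d_\ell)$, precisely the negative of the non-Archimedean part of $\h(P(\omega_\ell))$ computed in \cref{prop: explicit non-Archimedean height} and \cref{cor:1}. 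Hence your key identity must be corrected to $\h(P(\omega_\ell))=\h_{\canOZ,\,\ronOZ}(\mathscr C)+T_\ell-\Lambda(d_\ell)/\varphi(d_\ell)$; the same omission affects the parenthetical claim that the relation follows from the purely Archimedean reciprocity on $C(\mathbb C)$, which only sees the infinite place while $\h(P(\omega_\ell))$ is a global quantity. The gap is repairable, since the extra term tends to $0$ along a strict sequence exactly as in \cref{cor:2}, but as stated the "no vertical components" assertion is false and the claimed exact identity with it.

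The two analytic facts you defer are also exactly what the paper's \cref{lem: lemma Ronkin-integral on compact torus} supplies: applying \cref{prop: metric Weil reciprocity} on the curve gives the closed form $\Phi(\omega')=\log\max(|\omega'_2-\omega'_1|,|\omega'_2-1|,|\omega'_1-1|)+c$ with $c=\int\log\|s_0^{\an}\|_{\Ron}\,\chern(\canOC)\wedge\delta_{\mathscr C(\mathbb C)}$ (your $\Phi$ differs from the paper's $F$ only by the inversion $z\mapsto z^{-1}$ on $\mathbb S$ and the constant $c$), and the interchange $\int_{\mathbb S}\Phi\,d\nu=0$ is then justified by Tonelli with an explicit majorant rather than by the uniform estimate on $\int\log|ap_1+bp_2|$ against $\chern(\canOC)\wedge\delta_{\mathscr C(\mathbb C)}$ that you flag as the main obstacle. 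With that closed form in hand, your equidistribution step reduces verbatim to the neighbourhood-exhaustion argument of \cref{prop:1}, so if you route the analysis through the metric Weil reciprocity computation (or simply cite the lemma), and insert the vertical-component correction above, your proof closes; what it buys over the paper's is that it never needs the explicit value $2\zeta(3)/3\zeta(2)$, at the cost of redoing the Part~I estimates inside the Arakelov bookkeeping.
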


Its proof relies on the next complex analytic equality.  As in
\cref{sec: limit of archimedean heights} we denote by
$\mathbb{S} =(S^{1})^{2} $ the compact torus of
$(\mathbb{C}^{\times})^2$ and by $ \nu$ its probability Haar measure.

\begin{lemma}\label{lem: lemma Ronkin-integral on compact torus}
  Let $s_0$ be the global section of $\mathscr{O}(1)$ corresponding to
  the homogeneous coordinate $x_0$ of~$\mathbb{P}_{\mathbb{Z}}^{2}$.
  Then
\[
\int_{\mathbb{P}^{2} (\mathbb{C})}\log\|s_0^{\an}\|_{\Ron}\, \chern\big(\canOC\big)\wedge\delta_{\mathscr{C}(\mathbb{C})}
=
-\int_{\mathbb{S}}\log\max(|z_2-z_1|,|z_2-1|,|z_1-1|)\, d \nu(z).
\]
\end{lemma}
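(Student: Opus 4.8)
The plan is to restrict the integral to the torus chart of the line $C(\mathbb{C})=\{x_{0}+x_{1}+x_{2}=0\}\subset\mathbb{P}^{2}(\mathbb{C})$ and then interchange the order of integration. In the affine chart $\{x_{0}\neq0\}$ the curve $C(\mathbb{C})$ is parametrized by $t_{1}=x_{1}/x_{0}$, with $x_{2}/x_{0}=-1-t_{1}$, and its only point outside this chart is $[0:1:-1]$, the unique zero of $s_{0}^{\an}$ on $C(\mathbb{C})$. Since $s_{0}^{\an}$ is nowhere vanishing on this chart, by \cref{ex: canonical} we have $-\log\|s_{0}^{\an}\|_{\can}\big|_{C}=g$ there, with $g(t_{1})=\log\max(1,|t_{1}|,|1+t_{1}|)$, so $\chern\big(\canOC\big)\wedge\delta_{\mathscr{C}(\mathbb{C})}$ restricts on $\{x_{0}\neq0\}$ to $dd^{\ce}[g]$. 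The function $g$ is subharmonic on $\mathbb{C}$ with $g(t_{1})-\log|t_{1}|\to0$ as $t_{1}\to\infty$, so $\mu:=dd^{\ce}g$ is a positive measure of total mass $1$; as the restriction of $\chern(\canOC)$ to the curve $C(\mathbb{C})$ is a positive measure of total mass equal to the degree of $\mathscr{O}(1)$ on $C$, namely $1$, and has no atom at $[0:1:-1]$ (locally near that point it is the $dd^{\ce}$ of a continuous function), it coincides with $\mu$ under the chart identification. Because $\mu$ integrates logarithmic singularities, the left-hand side of the lemma equals $\int_{\mathbb{C}}\log\|s_{0}^{\an}\|_{\Ron}\,d\mu$, computed on this chart.

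Next I would make the Ronkin factor explicit. By \cref{ex: Ronkin metric} and the rotation invariance of the Haar measure $\nu_{2}$ on $(S^{1})^{2}$ (absorbing the phases of the homogeneous coordinates into the integration variables), on the chart $\{x_{0}\neq0\}$, normalized so that $p_{0}=1$,
\[
\log\|s_{0}^{\an}\|_{\Ron}(t_{1})=-\int_{(S^{1})^{2}}\log|1+z_{1}t_{1}+z_{2}t_{2}|\,d\nu_{2}(z)=-\int_{(S^{1})^{2}}\log|P_{z}(t_{1})|\,d\nu_{2}(z),
\]
where $t_{2}=-1-t_{1}$ on $C$ and $P_{z}(t_{1})=(z_{1}-z_{2})t_{1}+(1-z_{2})$. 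Inserting this and applying Fubini--Tonelli (all the singularities being logarithmic, so that the double integral of $|\log|P_{z}(t_{1})||$ is finite), the left-hand side of the lemma becomes
\[
-\int_{(S^{1})^{2}}\Big(\int_{\mathbb{C}}\log|P_{z}(t_{1})|\,d\mu(t_{1})\Big)\,d\nu_{2}(z).
\]
It therefore remains to prove that for every $z$ with $z_{1}\neq z_{2}$ one has $\int_{\mathbb{C}}\log|P_{z}(t_{1})|\,d\mu(t_{1})=\log\max(|z_{2}-z_{1}|,|z_{2}-1|,|z_{1}-1|)$, since the outer integral then produces $-\int_{(S^{1})^{2}}F\,d\nu_{2}=-\int_{\mathbb{S}}F\,d\nu$, with $F$ as in \eqref{eq:29}.

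To establish this inner identity I would use elementary potential theory. Factoring $P_{z}(t_{1})=(z_{1}-z_{2})(t_{1}-\rho_{z})$ with $\rho_{z}=(z_{2}-1)/(z_{1}-z_{2})$, the integral equals $\log|z_{1}-z_{2}|+U_{\mu}(\rho_{z})$, where $U_{\mu}(w)=\int_{\mathbb{C}}\log|w-t_{1}|\,d\mu(t_{1})$ is the logarithmic potential of $\mu$. Now $U_{\mu}$ is subharmonic, satisfies $dd^{\ce}U_{\mu}=\mu=dd^{\ce}g$, and obeys $U_{\mu}(w)-\log|w|\to0$ as $w\to\infty$; hence $g-U_{\mu}$ is harmonic on all of $\mathbb{C}$ and vanishes at infinity, so $g=U_{\mu}$ by Liouville's theorem. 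Consequently $\int_{\mathbb{C}}\log|P_{z}(t_{1})|\,d\mu(t_{1})=\log|z_{1}-z_{2}|+g(\rho_{z})$, and since $|\rho_{z}|=|z_{2}-1|/|z_{1}-z_{2}|$ and $1+\rho_{z}=(z_{1}-1)/(z_{1}-z_{2})$,
\[
g(\rho_{z})=\log\max\!\Big(1,\tfrac{|z_{2}-1|}{|z_{1}-z_{2}|},\tfrac{|z_{1}-1|}{|z_{1}-z_{2}|}\Big)=\log\max(|z_{1}-z_{2}|,|z_{2}-1|,|z_{1}-1|)-\log|z_{1}-z_{2}|,
\]
which gives the claimed identity and finishes the proof.

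The main obstacle is this last potential-theoretic step. The identification $g=U_{\mu}$ is routine once one knows that $\int|\log|t_{1}||\,d\mu<\infty$ and that $U_{\mu}(w)-\log|w|\to0$ at infinity, but the support of $\mu$ is unbounded, so both facts require a small argument: $\mu$ is carried by a bounded union of arcs together with the line $\Re(t_{1})=-\tfrac12$, along which its density decays quadratically, and this decay is exactly what makes the logarithmic energy finite and controls $U_{\mu}$ at infinity. The Fubini interchange and the absence of an atom of $\chern(\canOC)|_{C}$ at $[0:1:-1]$ are the remaining, routine verifications. (Alternatively, one could route the first paragraph through the metric Weil reciprocity law of \cref{prop: metric Weil reciprocity} applied on the curve $C(\mathbb{C})$, but the direct computation above seems the shortest path.)
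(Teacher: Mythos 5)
Your proposal is correct, but it reaches the key identity by a genuinely different route than the paper. Both proofs reduce the statement, via a Fubini--Tonelli interchange, to showing that for a.e.\ $z\in\mathbb{S}$ the integral of $\log\bigl|{(p_0+z_1p_1+z_2p_2)}/{p_0}\bigr|$ against $\mu=\chern\bigl(\canOC\bigr)\wedge\delta_{\mathscr{C}(\mathbb{C})}$ equals $\log\max(|z_2-z_1|,|z_2-1|,|z_1-1|)$ --- your inner identity is exactly the paper's \eqref{eq:59}. The paper obtains it abstractly: it pulls back the trivial and canonical metrized line bundles to the curve, applies the metric Weil reciprocity law (\cref{prop: metric Weil reciprocity}) to the rational section $(x_0+z_1x_1+z_2x_2)/x_0$ and an auxiliary linear section, and evaluates the resulting canonical and trivial norms at the two intersection points; the interchange of integrals is then justified by a Tonelli argument applied to the nonnegative difference $h-g$ of two explicitly comparable integrands. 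You instead work in the chart $\{x_0\neq 0\}$, identify $\mu$ there with $dd^{\ce}$ of the explicit function $g(t_1)=\log\max(1,|t_1|,|1+t_1|)$, and prove that the logarithmic potential of $\mu$ equals $g$ by a Riesz-measure-plus-Liouville argument, after which evaluating $g$ at the root $\rho_z$ of the linear form gives the same answer. What each buys: the paper's route needs no control of the measure at infinity (the unbounded support and the quadratic decay of its density along $\Re(t_1)=-\tfrac12$ never enter) and reuses a reciprocity law it proves anyway, so it transfers to settings where no explicit potential is available; your route is more elementary and self-contained, at the cost of the finiteness of $\int\log(1+|t_1|)\,d\mu$, the continuity of $U_\mu$, and the asymptotics $U_\mu(w)-\log|w|\to 0$, which you correctly isolate as the only nontrivial verifications and which do go through thanks to the stated decay. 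The remaining points you flag as routine --- functoriality of the Monge--Amp\`ere operator on the chart, the absence of an atom of $\chern\bigl(\canOC\bigr)\wedge\delta_{\mathscr{C}(\mathbb{C})}$ at $[0:1:-1]$, rotation invariance absorbing the phases, and the integrability of $|\log|P_z(t_1)||$ for the interchange (note the mild degeneration of $P_z$ as $z\to(1,1)$, which only produces a logarithmic singularity in $z$, integrable on $\mathbb{S}$) --- are indeed routine and match the checks the paper performs.
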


\begin{proof}
  Set for short
  $\mu=\chern(\canOC)\wedge\delta_{\mathscr{C}(\mathbb{C})}$.
  By construction, it is
  a measure on~$\mathbb{P}^2(\mathbb{C})$ supported on the
  line~$\mathscr{C}(\mathbb{C})$.  For each
  $z\in \mathbb{S}\setminus \{(1,1)\}$ we consider the integral
  \begin{equation}
    \label{eq:60}
F(z)=\int_{\mathbb{P}^{2}(\mathbb{C})}\log\Big|\frac{p_0+z_1p_1+z_2p_2}{p_0}\Big|\, d\mu(p),    
  \end{equation}
  which is finite because the restriction of the integrand to this
 line is a function which is continuous at all but two points, where it has at most logarithmic singularities.

 We will find a simpler expression for this integral as an application
 of the metric Weil reciprocity law. To this end, consider the inclusion
 $\jmath\colon \mathscr{C}(\mathbb{C})\to\mathbb{P}^2(\mathbb{C})$ and
 the semipositive metrized line bundles on $ \mathscr{C}(\mathbb{C})$
 given by the inverse image with respect to this map of the trivial
 and the canonical metrized line bundles on~$\mathbb{P}^2(\mathbb{C})$, that is
\begin{displaymath}
  \overline{L_1}=\jmath^*\trOP=\trOcurve
  \quad\text{and}\quad\overline{L_2}=\jmath^*\canOC.  
\end{displaymath}

On the one hand, the rational function $s_1=(x_0+z_1x_1+z_2x_2)/x_0$ of
$\mathbb{P}^{2}(\mathbb{C})$ restricts to a nonzero rational function
of~$\mathscr{C}(\mathbb{C})$, which gives a nonzero rational section $\jmath^*s_1$
of~$L_{1}=O_{\mathscr{C}(\mathbb{C})}$. On
the other hand, given complex numbers $\alpha_{1}$ and~$\alpha_{2}$
not both equal to~$1$, the linear
polynomial~$x_0+\alpha_{1} x_1+\alpha_{2} x_2$ corresponds to a global section $s_2$
of~${O}(1)$, which restricts to the nonzero global section $\jmath^*s_2$
of~$L_2=\jmath^*{O}(1)$.  The $0$-cycles of
$\mathscr{C}(\mathbb{C})$ respectively defined by these pulled-back rational
sections are
  \begin{equation}
    \label{eq:57}
Z_{1}=[z_1-z_2:z_2-1:1-z_1]-[0:1:-1] \and Z_{2}=[\alpha_{1}-\alpha_{2}:\alpha_{2}-1:1-\alpha_{1}].
  \end{equation}
  % Indeed, the obtained rational function on $C_\mathbb{C}$ is
  % regular in the affine subset $\{x_0\neq0\}$ and vanishes with
  % multiplicity one in the unique point appearing with positive sign;
  % as the degree of a rational function on a projective curve
  % vanishes and $C_{\mathbb{C}}$ has a unique point outside that open
  % affine set, this forces the other summand of the cycle.
  For an appropriate choice of~$\alpha_{1}$ and~$\alpha_{2}$, the
  supports of these $0$-cycles are disjoint.

  The restriction of the measure~$\mu$ to the line coincides
  with~$\chern\big(\overline{L_{2}}\big)$, by the functoriality of the
  Monge--Amp\`ere operator.
  Moreover~$\chern\big(\overline{L_1}\big)=0$, as shown in \cref{ex:
    trivial metric}.  Hence \cref{prop: metric Weil reciprocity}
  together with~\eqref{eq:60} and \eqref{eq:57} implies that
\begin{multline}
  \label{eq:58}
  F(z)=\int_{\mathscr{C}(\mathbb{C})} \log \|\jmath^*s_{1}\|_{1} \, \chern\big(\overline{L_{2}}\big) =\int_{\mathscr{C}(\mathbb{C})} \log \|\jmath^*s_{1}\|_{1} \, \delta_{Z_{2}} - \log\|\jmath^*s_{2}\|_{2} \, \delta_{Z_{1}}\\
  =
  \log\bigg(\frac{\|s_1([\alpha_{1}-\alpha_{2}:\alpha_{2}-1:1-\alpha_{1}])\|_{\tr}\cdot\|s_2([0:1:-1])\|_{\can}}{\|s_2([z_1-z_2:z_2-1:1-z_1])\|_{\can}}\bigg).
\end{multline}
We have that
\begin{align*}
  \|s_1([\alpha_{1}-\alpha_{2}:\alpha_{2}-1:1-\alpha_{1}])\|_{\tr}&= \frac{|(\alpha_{1}-\alpha_{2})+ z_{1}(\alpha_{2}-1)+z_{2}(1-\alpha_{1})|}{|\alpha_{1}-\alpha_{2}|}, \\
  \|s_2([0:1:-1])\|_{\can}&= |\alpha_{1}-\alpha_{2}|,\\
  \|s_2([z_1-z_2:z_2-1:1-z_1])\|_{\can}&= \frac{|(z_1-z_2)+\alpha_{1}(z_2-1)+\alpha_{2}(1-z_1)|}{\max(|z_2-z_1|,|z_2-1|,|z_1-1|)}.
\end{align*}
Hence  \eqref{eq:58} simplifies to 
\begin{equation}
  \label{eq:59}
  F(z) =\log \max(|z_2-z_1|,|z_2-1|,|z_1-1|),
\end{equation}
in accordance with the notation in~\eqref{eq:29}.

We now consider the functions
$g,h\colon \mathbb{S}\times \mathbb{P}^{2}(\mathbb{C})\to
\mathbb{R}\cup\{-\infty\}$ defined as
\begin{displaymath}
  g(z,p)=\log\Big|\frac{p_0+z_{1}p_1+z_{2}p_2}{p_0}\Big| \and
h(z,p)= \log \Big(\frac{|p_0|+|p_1|+|p_2|}{|p_0|}\Big)
\end{displaymath}
whenever~$(z,p)\notin Z(p_{0})$, and as and arbitrary constant
otherwise. The subset $Z(p_{0})$ has zero mass with respect to the
product measure~$\nu\times \mu$, and so the integrals of these
functions on $\mathbb{S}\times \mathbb{P}^{2}(\mathbb{C})$ do not
depend on the choice of this constant.

The function $h$ is constant with respect to the first variable,
whereas, as a function of the second one, its restriction to the line
$\mathscr{C}(\mathbb{C})$ only has a logarithmic singularity at a
point, making it integrable with respect to~$\nu\times \mu$. Moreover
$h-g$ is nonnegative, and from \eqref{eq:60} and~\eqref{eq:59} we
deduce that the iterated integral
\begin{displaymath}
\int_{\mathbb{S}}   \int_{\mathbb{P}^{2}(\mathbb{C})}   (h(z,p)-g(z,p))\, d\mu(p)  \, d\nu(z)
\end{displaymath}
is finite.  Tonelli's theorem then implies that the function~$h-g$, and
\emph{a fortiori}~$g$, is integrable with respect to the product
measure~$\nu\times \mu$, and that
\begin{equation}
  \label{eq:61}
  \int_{\mathbb{S}}   \int_{\mathbb{P}^{2}(\mathbb{C})}   g(z,p) \, d\mu(p) \, d\nu(z)
  = \int_{\mathbb{P}^{2}(\mathbb{C})}     \int_{\mathbb{S}} g(z,p) \, d\nu(z)  \, d\mu(p).
\end{equation}

Finally note that for $p=[p_0:p_1:p_2]\in\mathbb{P}^{2}(\mathbb{C})$
with~$p_0\neq0$ we have that
\begin{displaymath}
  \log\|s_0^{\an}(p)\|_{\Ron}
  =-\int_{\mathbb{S}} g(z,p)\, 
  d\nu(z).
\end{displaymath}
It follows from \eqref{eq:61} that
\begin{displaymath}
  \int_{\mathbb{P}^{2}(\mathbb{C})}\log\|s_0^{\an}(p)\|_{\Ron}\, d\mu(p)
  =-\int_{\mathbb{S}} \int_{\mathbb{P}^{2}(\mathbb{C})}g(z,p) \, d\mu(p) \, d\nu(z) = -\int_{\mathbb{S}} F(z) \, d \nu(z).
\end{displaymath}
Together with~\eqref{eq:59}, this gives the statement.
\end{proof}

\begin{proof}[Proof of \cref{thm: limit is Ronkin height}] 
  By \cref{cor:2} and~\cref{prop:1},
\[
\lim_{\ell\to +\infty}\h(C\cap \omega_{\ell}C)
=
\int_{\mathbb{S}} \log \max (|z_{2}-z_{1}|, |z_{2}-1|, |z_{1}-1|) \, d\nu(z).
\]
On the other hand, the recursive definition of the Arakelov height shows that
\[
\h_{\canOZ,\,\ronOZ}(\mathscr{C})
=
\h_{\canOZ}(\div(s_0)\cdot\mathscr{C})
-\int_{\mathbb{P}^2({\mathbb{C}})}\log\|s_0^{\an}\|_{\Ron}\, \chern\big(\canOC\big)\wedge\delta_{\mathscr{C}(\mathbb{C})}.
\]
The divisor $\div(s_0)\cdot\mathscr{C}$ coincides with the
$1$-dimensional irreducible subscheme of $\mathbb{P}_{\mathbb{Z}}^2$
arising as the Zariski closure of the point in the generic fiber of this scheme corresponding
to the point~$[0:1:-1] \in \mathbb{P}^{2}(\overline{\mathbb{Q}})$.
So \eqref{eq:52} gives
\begin{displaymath}
  \h_{\canOZ}(\div(s_0)\cdot\mathscr{C})=\h([0:1:-1] )=0.
\end{displaymath}
The statement then follows from~\cref{lem: lemma Ronkin-integral on
  compact torus}.
\end{proof}

\section{A toric perspective}\label{sec: toric perspective}

The rest of the article is concerned with expressing and computing the
Arakelov height in \cref{thm: limit is Ronkin height} in terms of convex
geometry through the theory of heights of toric varieties and
of their hypersurfaces, respectively studied in \cite{BPS} and
in~\cite{Gualdi}. Since the only toric variety that we need to
consider here is the projective space, we restrict our
presentation to this specific case. In spite of that, this is
sufficient to give a taste of the general theory.

Let $B \subset \mathbb{R}^{n}$ be a convex subset and
$g\colon B\to \mathbb{R}$ a concave function on it, that is a function
satisfying
\[
g(\lambda u+(1-\lambda)v)\geq \lambda g(u)+(1-\lambda)g(v) \quad \text{ for all } u,v\in B\text{ and } \lambda\in[0,1]. 
\]
Its \emph{stability set} is the subset of $\mathbb{R}^{n}$ defined as
\begin{displaymath}
  \stab(g)=\big\{ x\in \mathbb{R}^{n} \ \big|\,  \inf_{u\in B}(\langle x,u\rangle-g(u)) >-\infty \big\},
\end{displaymath}
where $\langle x,u \rangle= x_{1}u_{1}+\cdots+x_{n}u_{n} $ denotes the
usual scalar product. Then the \emph{Legendre--Fenchel dual} of $g$ is
the function $g^\vee\colon \stab(g)\rightarrow \mathbb{R} $ defined as
\[
g^{\vee}(x)=\inf_{u\in B}(\langle x,u\rangle-g(u)).
\]
It is concave and upper semicontinuous. If $g$ is upper
semicontinuous, then~$g^{\vee\vee}=g$.

\begin{example}\label{ex: duality indicator-support}
  Let $\Delta_{n}$ be the standard simplex of~$\mathbb{R}^n$, that is the
  $n$-dimensional simplex whose vertices are the origin and the
  vectors in the standard basis of this vector space.  Its
  \emph{support function} is the function $ \Psi_{\Delta_{n}}\colon
  \mathbb{R}^n \to \mathbb{R}$ defined as
  \begin{displaymath}
  \Psi_{\Delta_{n}}(u)= \inf_{x\in \Delta_{n}} \langle x,u\rangle =  \min(0,u_1,\ldots,u_n).    
  \end{displaymath}
  It is piecewise linear and concave, its stability set coincides with~$\Delta_{n}$,
  and its Legendre--Fenchel dual is~$0_{\Delta_{n}}$,
  the zero function on this simplex.
\end{example}

\begin{example}\label{ex: ronkin as concave function}
  Let $(S^{1})^{n}$ be the compact torus of $(\mathbb{C}^\times)^n$
  and $\nu_{n}$ its probability Haar measure.  The \emph{Ronkin
    function} is the function
  $\rho_{n}\colon\mathbb{R}^n\rightarrow\mathbb{R}$ defined as
\begin{displaymath}
\rho_{n}(u)=-\int_{(S^{1})^{n}}\log|1+z_{1}\eu^{-u_1}+\dots+z_{n}\eu^{-u_n}|\, d\nu_{n}(z).
\end{displaymath}
It is linked to the function $R$ in \cref{ex: Ronkin metric} by the relation
\begin{equation}
  \label{eq:35}
\rho_{n}(u)=-\log R(1,\eu^{-u_1},\ldots,\eu^{-u_n}).  
\end{equation}

The function $\rho_{n}$ is concave and its difference with the support
function $\Psi_{\Delta_{n}}$ is uniformly bounded
\cite[Proposition~2.10]{Gualdi}. This implies that its stability set
is the standard simplex $\Delta_{n}$ and that its Legendre--Fenchel
dual $\rho_{n}^\vee$ is a continuous concave function on it.
\end{example}

\begin{remark}
  \label{rem:5}
  Ronkin functions can be associated to any Laurent polynomial, or
  more generally to any holomorphic function on an open and
  $(S^{1})^{n}$-invariant subset of~$(\mathbb{C}^{\times})^{n}$.  As
  such, they were introduced by Ronkin in \cite{Ronkin} and further
  investigated by Passare and Rullg{\aa}rd in~\cite{PR}.

  Up to its sign, the function $\rho_{n}$ in \cref{ex: ronkin as concave
    function} is the special case corresponding to the Laurent
  polynomial~$1+t_1+\dots+t_n$.  For~$n=2$, Cassaigne and Maillot have
  found an explicit expression for it in terms of the Bloch--Wigner
  dilogarithm \cite[Proposition~7.3.1]{Mail}.  In spite of being our
  case of interest, we do not need such a description in this article.
\end{remark}

The importance of concave functions in our context stems from their
interplay with semipositive toric metrics on the hyperplane line
bundle of the complex projective space.  A metric $\|\cdot\|$ on
$O(1)$ is \emph{toric} if
\begin{equation}
  \label{eq:24}
\|s_0( z p )\| = \|s_{0}(p)\| \quad \text{ for all } p\in \mathbb{P}^{n}(\mathbb{C}) \text{ and } z\in(S^{1})^{n},
\end{equation}
where $z p $ denotes the translation of $p$ by $z$ as
in \eqref{eq:30} and $s_{0}$ the global section of~$O(1)$
corresponding to the homogeneous coordinate $x_0$
of~$\mathbb{P}^n(\mathbb{C})$.

To a toric metric $\|\cdot\|$ on $O(1)$ one can associate its
\emph{metric function}, which is the function
$ \psi_{\|\cdot\|}\colon \mathbb{R}^{n}\rightarrow \mathbb{R}$ defined
as
\begin{equation*}
 \psi_{\|\cdot\|}( u)=\log\|s_0([1:\eu^{-u_1}:\dots:\eu^{-u_n}])\|.
\end{equation*}
A toric metric on $O(1)$ is semipositive if and only if its metric
function is concave~\cite[Theorem~4.8.1(1)]{BPS}. When this is the
case, the stability set of this concave function is the standard
simplex~$\Delta_{n}$, and one also associates to the toric metric its
\emph{roof function}, which is the continuous concave function
\begin{displaymath}
 \vartheta_{\|\cdot\|} \colon \Delta_{n} \longrightarrow \mathbb{R} 
\end{displaymath}
defined as the Legendre--Fenchel dual of~$\psi_{\|\cdot\|}$.

\begin{example}\label{ex: toric metric from functions}
  The canonical metric of $O(1)$ from \cref{ex: canonical} is toric,
  as it can be readily checked from the condition in~\eqref{eq:24},
  and its metric function coincides with the support function $\Psi_{\Delta_{n}}$ from \cref{ex: duality indicator-support}. This recovers the semipositivity of $\|\cdot\|_{\can}$ and implies that its roof
  function is the zero function~$0_{\Delta_{n}}$.
\end{example}

\begin{example}
  \label{exm:6}
  The Ronkin metric of $O(1)$ from \cref{ex: Ronkin metric} is also
  toric by definition, and the associated metric function agrees with 
  the Ronkin function $\rho_{n}$ from \cref{ex: ronkin as concave function},
  by means of the relation~\eqref{eq:35}.
  In particular, it is a semipositive metric on $O(1)$ having as roof function
  its Legendre--Fenchel dual~$\rho_{n}^\vee$.
\end{example}

The main results of \cite{BPS} and \cite{Gualdi} allow to express the
height of $\mathbb{P}^n_{\mathbb{Z}}$ and of its hypersurfaces with
respect to a family of semipositive toric metrics on $\mathscr{O}(1)$
in terms of its associated family of roof functions.  This is achieved
through the \emph{mixed integral}~$\MI$, which is an operator on
families of $(n+1)$-many continuous concave functions on compact
convex subsets of $\mathbb{R}^{n}$ that is obtained by polarizing the
usual integral operator~\cite[Definition~2.7.16]{BPS}.

Since our case of interest is~$\mathbb{P}^2_{\mathbb{Z}}$, we adapt
the statements from the adelic setting of the \emph{loc. cit.} to the
schematic point of view of the present paper to obtain~the~following.

\begin{proposition}\label{prop: height of curve as Ronkin height and mixed integral}
  Let $\canOZ$ and $\ronOZ$ be the canonical and Ronkin metrized line
  bundles of $\mathbb{P}^{2}_{\mathbb{Z}}$ as in~\eqref{eq:39}, and
  ~$\mathscr{C}$ the subscheme defined by the linear
  polynomial~$x_{0}+x_{1}+x_{2}$. Then
\[
 \h_{\canOZ,\,\ronOZ} (\mathscr{C})
  =
 \h_{\canOZ,\,\ronOZ,\,\ronOZ}(\mathbb{P}^{2}_{\mathbb{Z}})
  = \MI(0_{\Delta_{2}},\rho_{2}^\vee,\rho_{2}^\vee).
\]
\end{proposition}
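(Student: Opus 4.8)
The plan is to deduce each of the two equalities from one of the two works on which the statement rests. Throughout, write $s$ for the global section of $\mathscr{O}(1)$ corresponding to the linear form $x_{0}+x_{1}+x_{2}$, so that $\mathscr{C}=\div(s)$, and note that the Newton polytope of the associated Laurent polynomial $1+t_{1}+t_{2}$ is the standard simplex $\Delta_{2}$, which is precisely the polytope attached to the polarization $\mathscr{O}(1)$ on $\mathbb{P}^{2}_{\mathbb{Z}}$. Recall also that $\canOZ$ and $\ronOZ$ are semipositive metrized line bundles on $\mathbb{P}^{2}_{\mathbb{Z}}$, by \cref{ex: canonical height from canonical metric} and \cref{rem:10}, and that by \cref{ex: toric metric from functions} and \cref{exm:6} the roof functions of their Archimedean restrictions are $0_{\Delta_{2}}$ and $\rho_{2}^{\vee}$, respectively.

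For the first equality I would invoke the main result of \cite{Gualdi}, an arithmetic analogue of the Bernstein--Kushnirenko theorem: for a toric hypersurface cut out by the divisor of a section of a semipositive toric metrized line bundle, its height with respect to a family of further semipositive toric metrized line bundles equals the height of the ambient toric arithmetic variety with respect to that same family together with one extra copy of the Ronkin metrized line bundle built from the defining Laurent polynomial. Since here the defining polynomial is $1+t_{1}+t_{2}$, whose Newton polytope is $\Delta_{2}$, the corresponding Ronkin metrized line bundle on $\mathbb{P}^{2}_{\mathbb{Z}}$ is exactly $\ronOZ$ from \eqref{eq:39}; applying the formula to $\mathscr{C}=\div(s)$ and the family $(\canOZ,\ronOZ)$ yields $\h_{\canOZ,\,\ronOZ}(\mathscr{C})=\h_{\canOZ,\,\ronOZ,\,\ronOZ}(\mathbb{P}^{2}_{\mathbb{Z}})$. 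The points requiring care are the verification that the semipositivity and polytope-compatibility hypotheses of \cite{Gualdi} hold, that $\div(s)$ carries no vertical component (so that it equals $\mathscr{C}$ as a cycle), and that the schematic setting of the present section matches the adelic one of that reference; this last issue is handled by observing that the model $\mathbb{P}^{2}_{\mathbb{Z}}$ together with $\mathscr{O}(1)$ induces at every finite place the canonical $v$-adic toric metric.

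For the second equality I would use the main theorem of \cite{BPS}, which computes the height of a toric arithmetic variety of dimension $n$ equipped with a family of $n+1$ semipositive toric metrized line bundles as the sum, over all places, of the mixed integrals of the associated local roof functions. For $\mathbb{P}^{2}_{\mathbb{Z}}$ and the triple $(\canOZ,\ronOZ,\ronOZ)$ the Archimedean term is $\MI(0_{\Delta_{2}},\rho_{2}^{\vee},\rho_{2}^{\vee})$ by the identification of roof functions recalled above. At each finite place $v$ the standard model induces the canonical $v$-adic metric on $\mathscr{O}(1)$ for all three bundles, whose roof function is $0_{\Delta_{2}}$; since the mixed integral of a triple of zero functions vanishes, the finite places contribute nothing and the sum collapses to $\MI(0_{\Delta_{2}},\rho_{2}^{\vee},\rho_{2}^{\vee})$.

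The main obstacle is not any single computation but rather the careful bookkeeping needed to transport the statements of \cite{BPS} and \cite{Gualdi}, phrased there in the adelic language of metrized divisors over number fields, into the schematic framework used here, together with checking in each case that the relevant semipositivity and Newton-polytope conditions are in force. Once this translation is carried out and the roof functions are identified as in the Examples, both equalities follow at once.
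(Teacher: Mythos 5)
Your proposal is correct and follows essentially the same route as the paper: the second equality via the toric height formula of \cite{BPS} (their Theorem~5.2.5), with the non-Archimedean roof functions vanishing because the standard model induces the canonical metrics at finite places, and the first equality via the arithmetic Bernstein--Kushnirenko theorem of \cite{Gualdi} (Theorem~5.12 there) applied to the Ronkin metrized bundle of $1+t_{1}+t_{2}$, together with the observation that $\mathscr{C}$ has no vertical components. The schematic-to-adelic bookkeeping you flag as the main obstacle is exactly how the paper's proof is organized, so no gap remains.
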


\begin{proof}
  This proof assumes a working knowledge of the adelic approach to the
  theory of Arakelov heights as exposed in \cite[Chapter~1]{BPS}.

  Let $\mathbb{P}^{2}_{\mathbb{Q}}$ be the projective plane over
  $\spec(\mathbb{Q})$ and $D$ the Cartier divisor of the line at
  infinity.  The pair $(\mathbb{P}_{\mathbb{Z}}^{2},\mathscr{O}(1))$
  is an integral model of~$(\mathbb{P}^{2}_{\mathbb{Q}}, O(D))$, and
  so the canonical and the Ronkin metrized line bundles on
  $\mathbb{P}_{\mathbb{Z}}^{2}$ induce quasi-algebraic semipositive
  adelic metrized divisors
  \[\canD\quad\and\quad\ronD
  \]
  on $\mathbb{P}^{2}_{\mathbb{Q}}$ in the sense of
  \cite[Section~1.5]{BPS}.  Namely, their Archimedean metrics coincide
  respectively with those of $\canOZ$ and~$\ronOZ$, and so do their
  Archimedean roof functions.  Instead, their non-Archimedean metrics
  are those induced by the integral model
  $(\mathbb{P}_{\mathbb{Z}}^{2},\mathscr{O}(1))$ as in
  \cite[Example~1.3.11]{BPS}, and then their roof functions agree with the
  zero function on~${\Delta_{n}}$.
  
  This correspondence is compatible with the associated heights, and
  thus \cite[Theorem~5.2.5]{BPS} together with \cref{ex: toric metric
    from functions,exm:6} specializes to
\begin{displaymath}
    \h_{\canOZ,\, \ronOZ,\,\ronOZ}(\mathbb{P}^{2}_{\mathbb{Z}})
 =    \h_{\canD,\, \ronD,\,\ronD}(\mathbb{P}^{2}_{\mathbb{Q}})
 = \MI(0_{\Delta_{2}},\rho_{2}^\vee,\rho_{2}^\vee),
  \end{displaymath}
  which gives the second equality in the statement.  We also have that
  \begin{displaymath}
 \h_{\canOZ,\, \ronOZ}(\mathscr{C})
 =    \h_{\canD,\, \ronD}(\mathscr{C}_{\mathbb{Q}})
  \end{displaymath}
because  $\mathscr{C}$  contains no vertical
  component, and \cite[Theorem~5.12]{Gualdi} gives
  \begin{displaymath}
    \h_{\canD,\, \ronD}(\mathscr{C}_{\mathbb{Q}})     =  \h_{\canD,\, \ronD,\, \ronD}(\mathbb{P}^{2}_{\mathbb{Q}})
  \end{displaymath}
  as $ \ronD$ coincides with the Ronkin metrized divisor of the
  Laurent polynomial $1+t_{1}+t_{2}$ in the sense of~\cite{Gualdi}.
  This gives the first equality and completes the
  proof.
\end{proof}

Combining this result with \cref{thm: limit is Ronkin height} and the
equality in \eqref{eq:52} yields the following statement.

\begin{corollary}
  \label{cor:5}
  Let $(\omega_{\ell})_{\ell\ge 1}$ be a strict sequence in
  $\Gm^{2}(\overline{\mathbb{Q}})$ of nontrivial torsion points. Then
\begin{displaymath}
\lim_{\ell\to +\infty}\h_{\canOZ}(Z(x_{0}+x_{1}+x_{2},\, x_{0}+\omega_{\ell,1}^{-1}x_{1}+\omega_{\ell,2}^{-1}x_{2}))=
    \h_{\canOZ,\,\ronOZ,\,\ronOZ}(\mathbb{P}^{2}_{\mathbb{Z}}),
  \end{displaymath}
  where
  $Z(x_{0}+x_{1}+x_{2},x_{0}+\omega_{\ell,1}^{-1}x_{1}+\omega_{\ell,2}^{-1}x_{2})$
  denotes the $1$-dimensional subscheme of~$ \mathbb{P}^{2}_{\overline{\mathbb{Z}}}$ defined by these linear
  forms.
\end{corollary}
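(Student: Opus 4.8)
The plan is to chain together the three ingredients already established: the identification of the Arakelov height of a closed point with the classical height from Part~I, the limit formula of \cref{thm: limit is Ronkin height}, and the toric identity of \cref{prop: height of curve as Ronkin height and mixed integral}.

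First I would unwind the geometry of the left-hand side. For a nontrivial torsion point $\omega_{\ell}=(\omega_{\ell,1},\omega_{\ell,2})$, the two linear forms $x_{0}+x_{1}+x_{2}$ and $x_{0}+\omega_{\ell,1}^{-1}x_{1}+\omega_{\ell,2}^{-1}x_{2}$ cut out over $\overline{\mathbb{Q}}$ the single point $P(\omega_{\ell})$ of \eqref{eq:7}, and one checks that the subscheme $Z(x_{0}+x_{1}+x_{2},x_{0}+\omega_{\ell,1}^{-1}x_{1}+\omega_{\ell,2}^{-1}x_{2})$ of $\mathbb{P}^{2}_{\overline{\mathbb{Z}}}$ is integral of dimension~$1$ and coincides with the Zariski closure $\overline{P(\omega_{\ell})}$ of this point. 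By \cref{rem: heights of points over Qbar}, the height of this subscheme with respect to the canonical metrized line bundle is therefore
\[
\h_{\canOZ}\big(Z(x_{0}+x_{1}+x_{2},x_{0}+\omega_{\ell,1}^{-1}x_{1}+\omega_{\ell,2}^{-1}x_{2})\big)=\h(P(\omega_{\ell}))=\h(C\cap\omega_{\ell}C).
\]

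Next I would pass to the limit $\ell\to+\infty$. By \cref{thm: limit is Ronkin height} this limit equals $\h_{\canOZ,\,\ronOZ}(\mathscr{C})$, and by the first equality of \cref{prop: height of curve as Ronkin height and mixed integral} the latter equals $\h_{\canOZ,\,\ronOZ,\,\ronOZ}(\mathbb{P}^{2}_{\mathbb{Z}})$. Combining these with the displayed identity gives the statement.

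The argument is essentially a bookkeeping step once \cref{thm: limit is Ronkin height} and \cref{prop: height of curve as Ronkin height and mixed integral} are in hand, so no serious analytic or arithmetic difficulty remains. The one point deserving genuine care is the scheme-theoretic claim of the first step: a priori the ideal generated by the two linear forms could define a closed subscheme of $\mathbb{P}^{2}_{\overline{\mathbb{Z}}}$ that fails to be reduced or flat over $\overline{\mathbb{Z}}$ at primes dividing $\ord(\omega_{\ell})$, and one must argue that its associated $1$-cycle is nonetheless the closure of the generic point $P(\omega_{\ell})$, so that the extension of \eqref{eq:52} recorded in \cref{rem: heights of points over Qbar} applies verbatim. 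Alternatively, one can sidestep this entirely by defining the height of the left-hand side directly through the cycle attached to $\div$ of the two sections of $\mathscr{O}(1)$ and invoking \cref{rem: heights of points over Qbar} for that cycle.
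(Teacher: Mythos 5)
Your proposal is correct and follows essentially the same route as the paper: identify the left-hand side with $\h(P(\omega_{\ell}))=\h(C\cap\omega_{\ell}C)$ via \cref{rem: heights of points over Qbar}, then chain \cref{thm: limit is Ronkin height} with the first equality of \cref{prop: height of curve as Ronkin height and mixed integral}. The scheme-theoretic point you flag is dealt with in the paper simply by reading $Z(x_{0}+x_{1}+x_{2},\,x_{0}+\omega_{\ell,1}^{-1}x_{1}+\omega_{\ell,2}^{-1}x_{2})$ as the $1$-dimensional \emph{integral} subscheme, i.e.\ the closure of $P(\omega_{\ell})$ in $\mathbb{P}^{2}_{\overline{\mathbb{Z}}}$, so your extra care (or the cycle-theoretic workaround you suggest) is compatible with, but not needed beyond, the paper's convention.
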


This limit formula is a particular case of a conjectural arithmetic
analogue of the geometric fact that for a family of $n$-many line
bundles on an $n$-dimensional algebraic variety, the cardinality of
the zero set of a generic $n$-tuple of their global sections coincides
with the degree of the variety with respect to these line bundles.
Formulating this conjecture requires the language of adelic metrized
line bundles on varieties over global fields, which would take us too
far away from our setting. Instead, we content ourselves by explaining
a particular case that can be expressed with the objects at hand and
still gives a hint of the general case.

For~$d\ge 1$, consider the $d$-tensor power line bundle
$O(d)=O(1)^{\otimes d}$ on~$\mathbb{P}^{n}(\mathbb{C})$. Its global
sections are in one-to-one correspondence with the homogeneous
polynomials of degree $d$ in the homogeneous coordinates of this
projective space. Extending \cref{ex: Ronkin metric}, we define the
\emph{Ronkin metric} of $O(d)$ setting, for each global section~$s$,
\begin{displaymath}
  \|s(p)\|_{\Ron}=\frac{|l_s(p_0,\ldots,p_n)|}{R(|p_0|^d,\ldots,|p_n|^d)}
  \quad \text{ for all } p=[p_{0}:\cdots:p_{n}]\in \mathbb{P}^{n}(\mathbb{C}),
\end{displaymath}
where $l_{s}$ is the homogeneous polynomial of degree $d$
corresponding to~$s$, and $R$ is the same function as that of the
referred example.  This metric is toric and semipositive, and the
corresponding metrized line bundle on $\mathbb{P}^{2}_{\mathbb{Z}}$ is
denoted
by~$\overline{\mathscr{O}(d)}\vphantom{\mathscr{O}(d)}^{\Ron}$.

\begin{conjecture}\label{conj: conjecture for curves}
  Let $(\omega_\ell)_{\ell\geq1}$ be a strict sequence in
  $\Gm^{2}(\overline{\mathbb{Q}})$ of nontrivial torsion points. Then
  for~$d_1,d_2\geq1$ we have that
  \begin{multline*}
  \lim_{\ell\to
    +\infty}\h_{\canOZ}(Z(x_{0}^{d_{1}}+x_{1}^{d_{1}}+x_{2}^{d_{1}}, \, 
  x_{0}^{d_{2}}+
  \omega_{\ell,1}^{-d_{2}}x_{1}^{d_{2}}+\omega_{\ell,2}^{-d_{2}}x_{2}^{d_{2}}))\\
  =
  \h_{\canOZ,\,\overline{\mathscr{O}(d_1)}\vphantom{\mathscr{O}(d_1)}^{\Ron},\, \overline{\mathscr{O}(d_2)}\vphantom{\mathscr{O}(d_2)}^{\Ron}}(\mathbb{P}^{2}_{\mathbb{Z}}).    
  \end{multline*}
\end{conjecture}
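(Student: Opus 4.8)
The plan is to replay the derivation of \cref{cor:5}, which is the case $d_1=d_2=1$ of the conjecture, establishing in the required generality the two results it rests upon. Write $f_i=x_0^{d_i}+x_1^{d_i}+x_2^{d_i}$, let $\mathscr{C}_{d_i}\subset\mathbb{P}^2_{\mathbb{Z}}$ be the hypersurface it defines, and for a torsion point $\omega=(\omega_1,\omega_2)$ put $f_2^{\omega}=x_0^{d_2}+\omega_1^{-d_2}x_1^{d_2}+\omega_2^{-d_2}x_2^{d_2}$, so that the subscheme appearing in the statement is $\mathscr{C}_{d_1}\cap\omega_\ell\,\mathscr{C}_{d_2}$ and, by~\eqref{eq:52} and \cref{rem: heights of points over Qbar}, its height with respect to $\canOZ$ is the naive height of the associated $0$-cycle. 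The first ingredient is the convex-geometric identity
\[
\h_{\canOZ,\,\overline{\mathscr{O}(d_2)}\vphantom{\mathscr{O}(d_2)}^{\Ron}}(\mathscr{C}_{d_1})
=
\h_{\canOZ,\,\overline{\mathscr{O}(d_1)}\vphantom{\mathscr{O}(d_1)}^{\Ron},\,\overline{\mathscr{O}(d_2)}\vphantom{\mathscr{O}(d_2)}^{\Ron}}(\mathbb{P}^2_{\mathbb{Z}}),
\]
the analogue of \cref{prop: height of curve as Ronkin height and mixed integral}: it follows from the general form of \cite[Theorem~5.12]{Gualdi} applied to the hypersurface $\mathscr{C}_{d_1}$ (that is, to the Laurent polynomial $1+t_1^{d_1}+t_2^{d_1}$, whose Newton polytope is $d_1\Delta_2$ and whose Ronkin metrized divisor is exactly $\overline{\mathscr{O}(d_1)}\vphantom{\mathscr{O}(d_1)}^{\Ron}$), together with the symmetry of mixed heights; this step is routine.

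The heart of the matter is the analogue of \cref{thm: limit is Ronkin height}, namely that $\lim_\ell \h_{\canOZ}(\mathscr{C}_{d_1}\cap\omega_\ell\,\mathscr{C}_{d_2})=\h_{\canOZ,\,\overline{\mathscr{O}(d_2)}\vphantom{\mathscr{O}(d_2)}^{\Ron}}(\mathscr{C}_{d_1})$. I would unwind one step of the recursive definition of the Arakelov height along $\mathscr{C}_{d_1}$ with respect to $\overline{\mathscr{O}(d_2)}\vphantom{\mathscr{O}(d_2)}^{\Ron}$. Let $s_\ell$ be the global section of $\mathscr{O}(d_2)$ cutting out $\omega_\ell\,\mathscr{C}_{d_2}$. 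Since the coefficients of $f_2^{\omega_\ell}$ are units at every finite place, $\div(s_\ell)\cdot\mathscr{C}_{d_1}$ has no vertical component and equals the closure of the $0$-cycle $\mathscr{C}_{d_1}\cap\omega_\ell\,\mathscr{C}_{d_2}$; working over the ring of integers of the cyclotomic field generated by these coefficients and normalizing by its degree, one obtains
\[
\h_{\canOZ}(\mathscr{C}_{d_1}\cap\omega_\ell\,\mathscr{C}_{d_2})
=
\h_{\canOZ,\,\overline{\mathscr{O}(d_2)}\vphantom{\mathscr{O}(d_2)}^{\Ron}}(\mathscr{C}_{d_1})+E_\ell,
\qquad
E_\ell=\int_{\mathbb{P}^2(\mathbb{C})}\log\|s_\ell^{\an}\|_{\Ron}\,\chern\big(\canOC\big)\wedge\delta_{\mathscr{C}_{d_1}(\mathbb{C})}.
\]
Everything therefore reduces to showing $E_\ell\to0$. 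The left-hand side above is Galois invariant and the first term on the right is independent of $\omega_\ell$, so $E_\ell$ is unchanged when $\omega_\ell$ is replaced by any Galois conjugate $\omega_\ell^k$; averaging, $E_\ell=\int_{\mathscr{C}_{d_1}(\mathbb{C})}\big(\tfrac{1}{\varphi(\ord\omega_\ell)}\sum_k\log\|s_{\ell,k}^{\an}\|_{\Ron}\big)\,d\mu$, where $s_{\ell,k}$ cuts out $\omega_\ell^k\,\mathscr{C}_{d_2}$ and $\mu=\chern(\canOC)\wedge\delta_{\mathscr{C}_{d_1}(\mathbb{C})}$. At a point $p$ of $\mathscr{C}_{d_1}(\mathbb{C})$ with $p_0p_1p_2\neq0$, and using \cref{lem:6} to see that the $(\omega_\ell^k)^{-d_2}$ run over the Galois orbit of the torsion point $\omega_\ell^{-d_2}$ with constant multiplicity, this average equals the mean over that orbit of $z\mapsto\log|p_0^{d_2}+z_1p_1^{d_2}+z_2p_2^{d_2}|$ minus $\log R(|p_0|^{d_2},|p_1|^{d_2},|p_2|^{d_2})$. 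As $(\omega_\ell^{-d_2})_\ell$ is again strict, the logarithmic equidistribution theorems of \cite{CLT,DimitrovHabegger} --- or the elementary input behind \cref{prop:1} --- give that this mean converges to $\int_{\mathbb{S}}\log|p_0^{d_2}+z_1p_1^{d_2}+z_2p_2^{d_2}|\,d\nu(z)=\log R(|p_0|^{d_2},|p_1|^{d_2},|p_2|^{d_2})$, so the averaged integrand tends to $0$ for $\mu$-almost every $p$.

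The main obstacle is to pass from this pointwise convergence to convergence of the integrals $E_\ell$. The averaged integrands are bounded above by $\log 3$ uniformly in $\ell$ and $p$, so the reverse Fatou lemma already yields $\limsup_\ell E_\ell\le0$; the reverse inequality requires a uniform-integrability bound for their negative parts, i.e.\ an estimate $\mu\{\,p:\tfrac{1}{\varphi(\ord\omega_\ell)}\sum_k\log|f_2^{\omega_\ell^k}(p)|<-T\,\}\le\varepsilon(T)$ with $\varepsilon(T)\to0$ \emph{independently} of $\ell$. Equivalently, the resultant-type polynomial $\prod_k f_2^{\omega_\ell^k}$, of degree $\sim d_2\,\varphi(\ord\omega_\ell)\to\infty$, must satisfy a Remez--{\L}ojasiewicz type inequality on the compact Riemann surface $\mathscr{C}_{d_1}(\mathbb{C})$ with constants that do not degenerate with the degree. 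This is precisely the ``adelic logarithmic equidistribution problem'' referred to in the introduction and is where the genuine technical work lies; a parallel route starting instead from the Galois-orbit formula of \cref{prop:3} for the $0$-cycle runs into the same difficulty, since there the intersection points have large degree and non-abelian Galois orbits, forcing one to prove a full adelic equidistribution theorem for these orbits with simultaneous quantitative control at all places. Once the uniform bound is available (and \cref{prop: metric Weil reciprocity}, applied as in the proof of \cref{lem: lemma Ronkin-integral on compact torus}, may be used to recast $E_\ell$ as an integral over the compact torus and simplify the bookkeeping), combining the two ingredients above with~\eqref{eq:52} gives the conjecture.
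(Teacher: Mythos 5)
You are attempting to prove a statement that the paper itself states only as a conjecture: the authors give no proof and explicitly say that "establishing this extension would require a substantial technical effort to solve the adelic logarithmic equidistribution problem that arises", deferring it to a subsequent article. Your reduction is reasonable and essentially mirrors what the known case $d_1=d_2=1$ suggests, but it is not a proof: the decisive step — the uniform integrability bound (your Remez--{\L}ojasiewicz type estimate) needed to upgrade $\mu$-a.e.\ convergence of the averaged $\log\|s_{\ell,k}^{\an}\|_{\Ron}$ to convergence of the integrals $E_\ell$, with constants independent of $\ell$ — is precisely the open adelic/logarithmic equidistribution problem, and you acknowledge that you do not establish it. Keep in mind that the paper's argument for $d_1=d_2=1$ (\cref{thm: limit is Ronkin height}) avoids this issue entirely: it solves the linear system explicitly (\cref{eq:7}), controls all finite places through cyclotomic evaluations (\cref{cor:1,cor:2}), proves the Archimedean convergence by the elementary \cref{prop:1}, and only then converts the limit into an Arakelov height via the Weil reciprocity computation of \cref{lem: lemma Ronkin-integral on compact torus} with the \emph{fixed} section $x_0$; none of these tools transfer when $d_1,d_2\ge 2$, since the intersection cycle has large degree, non-abelian Galois orbits, and no explicit description.

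There is also a concrete error in your non-Archimedean reduction. The claim that, the coefficients being roots of unity, $\div(s_\ell)\cdot\mathscr{C}_{d_1}$ has no vertical component is false in general: already for $d_1=d_2=1$ and $\ord(\omega_\ell)=p^{r}$ one has $f_2^{\omega_\ell}\equiv x_0+x_1+x_2$ modulo any prime above $p$, so $s_\ell$ vanishes identically on the fiber of $\mathscr{C}_{1}$ at $p$; this is exactly the source of the nonzero finite-place contributions computed in \cref{prop: explicit non-Archimedean height}. Consequently your identity
\begin{equation*}
\h_{\canOZ}(\mathscr{C}_{d_1}\cap\omega_\ell\,\mathscr{C}_{d_2})
=\h_{\canOZ,\,\overline{\mathscr{O}(d_2)}\vphantom{\mathscr{O}(d_2)}^{\Ron}}(\mathscr{C}_{d_1})+E_\ell
\end{equation*}
must be corrected by the canonical heights of the vertical components (equivalently, the non-Archimedean local contributions), and proving that these are asymptotically negligible is itself a nontrivial part of the problem: in Part~I this step rested on the explicit formula for $P(\omega)$ and on cyclotomic evaluations (\cref{lem:1,lem:2}), for which no analogue is provided here. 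So the proposal correctly identifies the strategy and the obstruction, but both the Archimedean uniform bound and the finite-place control remain genuine gaps; as it stands the conjecture is not proved.
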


In contrast with \cref{cor:5}, this statement would allow to predict
this kind of limit heights in situations where the considered systems
of equations cannot be explicitly solved.

\section{Integrals over an amoeba}
\label{sec:integr-over-amoeb}

In \cref{sec: limit of archimedean heights} we considered the
co-tropicalization map on $(\mathbb{C}^\times)^2$ that sends each
point to the arguments of its coordinates.  Looking instead at their
absolute values gives the more classical \emph{tropicalization map}:
\[
\trop\colon(\mathbb{C}^\times)^2\longrightarrow\mathbb{R}^2,\qquad z\longmapsto(-\log|z_1|,-\log|z_2|).
\]
The image of a curve of~$(\mathbb{C}^\times)^2$ under this map is
called its \emph{amoeba}. The term was coined
in~\cite[Section~6.1]{GKZ}, where these tentacle-shaped subsets were
introduced.

Here we will be concerned with the amoeba of the curve of
$(\mathbb{C}^{\times})^{2}$ defined by the Laurent polynomial~$1+t_{1}+t_{2}$,
that is
\begin{equation}\label{eq: amoeba}
\mathscr{A}
  =\{(-\log|z_{1}|,-\log|z_{2}|) \mid
  \gamma\in (\mathbb{C}^\times)^2 \text{ such that } 1+z_{1}+z_{2}=0\}.
\end{equation}
We will refer to this subset of $\mathbb{R}^{2}$ as the \emph{Archimedean
  amoeba} of~$C$, the line of $ \mathbb{P}^{2}(\overline{\mathbb{Q}})$
studied throughout \hyperref[part 1]{Part~I}. It is depicted in
\cref{figure amoeba 1+x+y}, and its computation is explained
in~\cite{GualdiAmoeba}.
\begin{figure}[!htbp]
\scalebox{.70}{
\begin{tikzpicture}
\fill [blue!10!white, domain=-2.95:3.5, variable=\x] (-3,-3)-- plot (\x,{-ln(1+exp(-\x))})--(3.5, 0)--(0,0)-- cycle;
\fill [blue!10!white, domain=-2.95:3.5, variable=\x] (-3,-3)-- plot ({-ln(1+exp(-\x))},\x)--(0,3.5)--(0,0)-- cycle;
%%first quadrant, in two halves
\fill [blue!10!white, domain=3.5:ln(2), variable=\x] (0,0)-- (0,3.5)--plot ({-ln(1-exp(-\x))},\x)-- cycle;
\fill [blue!10!white, domain=ln(2):3.5, variable=\x] (0,0)-- plot (\x,{-ln(1-exp(-\x))})--(3.5,0)-- cycle;
\draw[blue!10!white,thick] (-3,-3)--(0.693,0.693);%to cover the little white space
%%boundary
\draw [blue,thick,domain=ln(2):3.5] plot (\x, {-ln(1-exp(-\x))});
\draw [blue,thick,domain=ln(2):3.5] plot ({-ln(1-exp(-\x))},\x);
\draw [blue,thick,domain=-2.95:3.5] plot (\x,{-ln(1+exp(-\x))});
\draw [blue,thick,domain=-2.95:3.5] plot ({-ln(1+exp(-\x))},\x);
\node [above right] at (0.7,0.7) {$\eu^{-u_1}+\eu^{-u_2}=1$};
\node [below right] at (0.3,-0.5) {$\eu^{-u_1}+1=\eu^{-u_2}$};
\node [above left] at (-0.3,0.7) {$\eu^{-u_2}+1=\eu^{-u_1}$};
\fill [blue!50!white, domain=-2.95:3.25, variable=\x] (-2.4,-2.4)-- plot (\x,{-ln(1+exp(-\x))})--(3.25, 0)--(0,0)-- cycle;
\draw (-3,0)--(3.5,0);
\draw (0,-3)--(0,3.5);
\end{tikzpicture}}
\caption{The Archimedean amoeba of $C$ with its contour lines and south region}
\label{figure amoeba 1+x+y}
\end{figure}
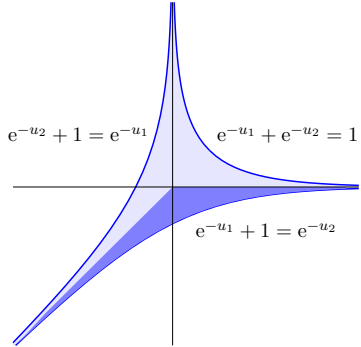
  
\begin{remark}
  \label{rem:11}
  This terminology can be justified as follows.  The line
  $C \subset \mathbb{P}^{2}(\overline{\mathbb{Q}})$ is defined over
  the rationals, and so it can be identified with an integral
  subscheme of~$\mathbb{P}^{2}_{\mathbb{Q}}$ whose Archimedean
  analytification is the complex
  line~$Z(x_{0}+x_{1}+x_{2})\subset \mathbb{P}^{2}(\mathbb{C})$. Then
  the subset $\mathscr{A}$ coincides with the amoeba of the
  restriction of this complex line to the dense open
  subset~$\mathbb{P}^{2}(\mathbb{C})\setminus Z(x_{0}x_{1}x_{2})\simeq
  (\mathbb{C}^{\times})^{2}$.
\end{remark}

For the sequel, consider   the subset of this Archimedean amoeba given as
\[
\mathscr{A}_{\mathrm{south}}=\{u\in \mathscr{A} \mid \min (0,u_1)\ge u_{2}\},
\]
which is the region colored in dark blue in~\cref{figure amoeba
  1+x+y}. 
The next result shows that the integrals on this region of the powers
of a coordinate function are given by special values of the Riemann
zeta function.

% \begin{figure}[h]
%   \scalebox{.65}{
%     \begin{tikzpicture}
% \fill [blue!50!white, domain=-2.3:3.25, variable=\x] (-2.4,-2.4)-- plot (\x,{-ln(1+exp(-\x))})--(3.25, 0)--(0,0)-- cycle;
% \fill [blue!20!white, domain=-2.3:3.25, variable=\x] (-2.4,-2.4)-- plot ({-ln(1+exp(-\x))},\x)--(0,3.25)--(0,0)-- cycle;
% %%first quadrant, in two halves
% \fill [blue!20!white, domain=3.25:ln(2), variable=\x] (0,0)-- (0,3.25)--plot ({-ln(1-exp(-\x))},\x)-- cycle;
% \fill [blue!20!white, domain=ln(2):3.25, variable=\x] (0,0)-- plot (\x,{-ln(1-exp(-\x))})--(3.25,0)-- cycle;
% \draw[blue!20!white,thick] (0,0)--(0.693,0.693);
% \draw (-2.4,0)--(3.25,0);
% \draw (0,-2.4)--(0,3.25);
% \end{tikzpicture}
% }
% \caption{The southern part of $\mathscr{A}$}
% \label{figure amoeba}
% \end{figure}

\begin{proposition}\label{prop: zeta values and amoebas}
For all~$m\in\mathbb{N}$ we have that
\[
\int_{\mathscr{A}_{\mathrm{south}}}\hspace{-5mm}u_2^m\, du_{1}du_{2}=(-1)^m \, m! \, \zeta(m+2).
\]
\end{proposition}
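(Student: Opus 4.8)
The plan is to make the region $\mathscr{A}_{\mathrm{south}}$ explicit as the image under $\trop$ of an arc of the complex line $1+z_1+z_2=0$, and then change variables so that the integral over the amoeba becomes a one-dimensional integral against the density coming from the Jacobian of $\trop$. Concretely, on $\mathscr{C}(\mathbb{C})\cap(\mathbb{C}^\times)^2$ one can parametrize by $z_1=-1-z_2$, and the tropicalization map sends $z_2\mapsto(u_1,u_2)=(-\log|1+z_2|,-\log|z_2|)$. Writing $z_2=r\,\eu^{\iu\varphi}$ with $r>0$ and $\varphi\in(-\pi,\pi]$, the condition $\min(0,u_1)\ge u_2$ defining $\mathscr{A}_{\mathrm{south}}$ singles out the branch where $|z_2|\ge 1$ and $|z_2|\ge|1+z_2|$, i.e. the half of the unit-modulus-or-larger region; the contour lines in \cref{figure amoeba 1+x+y} show that $\mathscr{A}_{\mathrm{south}}$ is bounded by the curves $u_2=0$ (where $|z_2|=1$) and $u_2=u_1$ (where $|z_2|=|1+z_2|$, i.e. $\Re(z_2)=-\tfrac12$). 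So I would set up coordinates $(\varphi,t)$, or better use $u_2$ itself as one coordinate and the argument $\varphi=\arg(z_2)$ as the other, and compute the Jacobian $\partial(u_1,u_2)/\partial(\varphi,\cdot)$ from the explicit formulas.

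The cleanest route, I expect, is to fiber $\mathscr{A}_{\mathrm{south}}$ over the $u_2$-axis: for fixed $u_2=-\log|z_2|\ge 0$, the point $z_2$ ranges over the circle $|z_2|=\eu^{-u_2}=:r$, and $u_1=-\log|1+z_2|=-\tfrac12\log(1+2r\cos\varphi+r^2)$. The fiber in $\mathscr{A}_{\mathrm{south}}$ is the set of $u_1$ with $u_1\ge u_2$, which by the computation $u_1\ge u_2\iff |1+z_2|\le|z_2|\iff\Re(z_2)\le-\tfrac12$ corresponds to $\cos\varphi\le -\tfrac{1}{2r}$. Then
\[
\int_{\mathscr{A}_{\mathrm{south}}}u_2^m\,du_1\,du_2=\int_0^\infty u_2^m\Big(\int_{\text{fiber}}du_1\Big)\,du_2,
\]
and the inner integral is computed by substituting $u_1=-\tfrac12\log(1+2r\cos\varphi+r^2)$, $du_1=\tfrac{r\sin\varphi}{1+2r\cos\varphi+r^2}\,d\varphi$ over the allowed range of $\varphi$. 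Evaluating the length of the fiber in $u_1$ should give a simple closed form in $r$ (I anticipate something like $-\log(1-r)+\log r$ or $\log\frac{r}{1-r}$ type expression once the endpoints $\Re(z_2)=-\tfrac12$ and $\Re(z_2)=-r$ (tangency) are plugged in); in fact from \cref{figure amoeba 1+x+y} the fiber over $u_2$ runs from $u_1=u_2$ out to the right tentacle, so its $u_1$-length is $+\infty$ only for the tentacle directions — but with the weight from the second coordinate this still converges. I would double-check by instead integrating in the order $du_2\,du_1$ if the $u_1$-fibers turn out simpler.

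The expected main obstacle is bookkeeping the region and getting the resulting one-variable integral into the form $\int_0^1 \frac{(\log r)^m}{\text{something}}\,\cdot\,dr$ that evaluates to $m!\,\zeta(m+2)$. The target identity strongly suggests that after all substitutions the integral collapses to
\[
\int_{\mathscr{A}_{\mathrm{south}}}u_2^m\,du_1\,du_2=\int_0^1(-\log r)^m\cdot\big(-\log(1-r)\big)\,\frac{dr}{r}
\quad\text{or a close variant},
\]
and then expanding $-\log(1-r)=\sum_{k\ge 1}r^k/k$ and using $\int_0^1(-\log r)^m r^{k-1}\,dr=m!/k^{m+1}$ gives $\sum_{k\ge1}\frac{m!}{k^{m+2}}=m!\,\zeta(m+2)$, with the sign $(-1)^m$ absorbed if one writes $u_2^m=(-\log r)^m\cdot(-1)^m\cdot(-1)^m$ consistently (note $u_2=-\log|z_2|\ge 0$ on $\mathscr{A}_{\mathrm{south}}$, so $u_2^m$ is positive and the $(-1)^m$ on the right must come from orienting $u_2=\log|z_2|$ somewhere — I will have to be careful about this sign and reconcile it with the statement). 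So the real work is: (i) pin down the boundary curves of $\mathscr{A}_{\mathrm{south}}$ rigorously from the three contour conditions in the figure; (ii) perform the change of variables and verify the Jacobian; (iii) recognize the Euler–type integral. Step (ii), keeping track of which arc of the complex line maps onto which piece of the amoeba and orienting things so the measure is positive, is where I expect to spend the most care.
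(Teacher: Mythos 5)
Your plan is essentially the paper's proof: after Tonelli one fibres $\mathscr{A}_{\mathrm{south}}$ over the $u_2$-axis, the fibre is an interval whose length is $-\log(1-\eu^{u_2})$, and expanding this logarithm as a geometric-type series and using $\int_{-\infty}^0 u_2^m\,\eu^{k u_2}\,du_2=(-1)^m\,m!/k^{m+1}$ (your $\int_0^1(-\log r)^m r^{k-1}\,dr=m!/k^{m+1}$ after $r=\eu^{u_2}$) gives $\sum_{k\ge1}(-1)^m\,m!/k^{m+2}=(-1)^m\,m!\,\zeta(m+2)$. Two small corrections, both of which you flagged: on $\mathscr{A}_{\mathrm{south}}$ one has $u_2\le 0$, not $u_2\ge0$ --- your own condition $|z_2|\ge1$ says exactly this --- so the outer integral runs over $(-\infty,0]$ and the sign $(-1)^m$ comes simply from $u_2^m=(\log r)^m=(-1)^m(-\log r)^m$ with $r=\eu^{u_2}\in(0,1]$; there is no orientation issue to reconcile. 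Second, the fibre over a fixed $u_2<0$ is the \emph{bounded} interval $[\,u_2,\,-\log(\eu^{-u_2}-1)\,]$, with left endpoint where $|1+z_2|=|z_2|$ (your $\Re(z_2)=-\tfrac12$) and right endpoint where $z_2=-|z_2|$ (your tangency point), of length $\log\frac{|z_2|}{|z_2|-1}=-\log(1-\eu^{u_2})$; it is unbounded only on the measure-zero slice $u_2=0$, so the only convergence point is the integrability of $-u_2^m\log(1-\eu^{u_2})$ as $u_2\to0^-$, which is clear since the singularity is logarithmic (the paper handles the term-by-term integration by monotone convergence, since all summands of the series have the same sign). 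With these fixes your one-variable integral is exactly the one evaluated in the paper.
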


\begin{proof}
  The nontrivial boundary of the region $\mathscr{A}_{\mathrm{south}}$
  is given by $u_1=-\log(\eu^{-u_2}-1)$, as indicated in ~\cref{figure
    amoeba 1+x+y}. Then Tonelli's theorem and a direct
  computation~yield
\begin{multline}\label{eq: computation in the relation between zeta and integral over amoeba}
  \int_{\mathscr{A}_{\mathrm{south}}}\hspace{-5mm}u_2^m\, du_{1}du_{2}=\int_{-\infty}^0\bigg(\int_{u_2}^{-\log(\eu^{-u_2}-1)}du_1\bigg)\, u_2^m\, du_2\\
  =\int_{-\infty}^0u_2^m(-\log(\eu^{-u_2}-1)-u_2)\, du_2=\int_{-\infty}^0-u_2^m\log(1-\eu^{u_2})\, du_2.
\end{multline}

The absolute convergence of the series for the logarithm on the open
unit disk and the obvious statement for $u_2=0$ imply the pointwise
convergence
\[
-u_2^m\log(1-\eu^{u_2})=\sum_{k=1}^\infty\frac{u_2^m}{k}\eu^{ku_2} \quad \text{ for } u_2\in(-\infty,0],
\]
as functions with values in~$\mathbb{R}\cup\{\pm\infty\}$.  Since
every summand of the series has the same sign, Levi's monotone
convergence theorem implies that
\begin{multline}
\label{eq:3}
    \int_{-\infty}^0-u_2^m\log(1-\eu^{u_2})\, du_2
=
\int_{-\infty}^0\sum_{k=1}^\infty\frac{u_2^m}{k}\eu^{ku_2}\, du_2
=
\sum_{k=1}^\infty\frac{1}{k}\int_{-\infty}^0u_2^m\eu^{ku_2}\, du_2
\\=
\sum_{k=1}^\infty\frac{1}{k}\bigg[\eu^{ku_2}\sum_{\ell=0}^m\frac{(-1)^\ell m!}{k^{\ell+1}(m-\ell)!}u_2^{m-\ell}\bigg]_{-\infty}^0
\\=\sum_{k=1}^\infty(-1)^m\frac{m!}{k^{m+2}}=(-1)^m\, m!\,\zeta(m+2).
\end{multline}
The statement follows from \eqref{eq: computation in the relation
  between zeta and integral over amoeba} and~\eqref{eq:3}.
\end{proof}

This result can be employed to recover the computation of the area of
$\mathscr{A}$ by Passare and Rullg{\aa}rd in \cite[page~502]{PR}. A
similar computation appears in
\cite[page~919]{KramerPippich_communication}.

\begin{example}\label{ex: area of amoeba} 
  For~$m=0$, \cref{prop: zeta values and amoebas} gives that
  $\vol(\mathscr{A}_{\mathrm{south}})= \zeta(2)=\pi^{2}/6$.  Note that
  $\mathscr{A}$ is disjointly covered, up to subsets of measure zero,
  by the images of the south region under the map
  $(u_{1},u_{2}) \mapsto (-u_2,u_1-u_2)$ and under the symmetry along the
  diagonal. Since these maps preserve the Lebesgue measure, we deduce
  that
\[
  \vol(\mathscr{A})=3\,\vol(\mathscr{A}_{\mathrm{south}}) =3 \zeta(2) = \frac{\pi^2}{2}.
\]
\end{example}

The previous proposition also allows to compute the integral on the
Archimedean amoeba of $C$ of the support function of the
$2$-dimensional standard simplex.

\begin{example}
  \label{exm:4}
  For~$m=1$, \cref{prop: zeta values and amoebas} gives that
  $ \int_{\mathscr{A}_{\mathrm{south}}}u_2\, du_{1}du_{2}=- \zeta(3)$.
  Consider the subsets of $\mathscr{A}$ defined as
\[
  \mathscr{A}_{\mathrm{east}}=\{u\in \mathscr{A} \mid \min (u_1,u_{2}) \ge 0\}
  \and
  \mathscr{A}_{\mathrm{west}}=\{u\in \mathscr{A} \mid \min (0,u_{2})\ge u_{1}\}.
\]
Then
\begin{multline*}
  \int_{\mathscr{A}}\min(0,u_{1},u_{2}) \, du_{1}du_{2}= \int_{\mathscr{A}_{\mathrm{east}}} \hspace{-3.5mm} 0\, du_{1}du_{2}+\int_{\mathscr{A}_{\mathrm{west}}} \hspace{-3.5mm}u_{1} \, du_{1}du_{2}
  +\int_{\mathscr{A}_{\mathrm{south}}}\hspace{-5mm} u_{2} \, du_{1}du_{2}
  \\
  =2 \int_{\mathscr{A}_{\mathrm{south}}}\hspace{-5mm} u_{2} \, du_{1}du_{2}
    =- 2\,\zeta(3),
  \end{multline*}
  where the second equality follows from the symmetries of this
  integral.
\end{example}

\begin{remark}
  \label{rem:8}
  It would be interesting to explore if there are other integrals of piecewise
  polynomial functions on amoebas of curves that can be expressed in terms of
  special values of~$L$-functions.
\end{remark}

% We will explore the extension of this more conceptual approach to more general situations in a future article.

%
%
%
%
%
%
%
%
%
%
%
%
%
%
%
%
%
%
%
%
%
%
%
%
%
%
%
%
%
%
%
%
%
%
%
%
%
%
%
%
%
%
%
%

\section{Computing a mixed integral}\label{sec: mixed integral}

Finally, we show how the mixed integral in \cref{prop: height of curve as Ronkin height and mixed integral} relates to the
integral of a piecewise linear function on the amoeba treated in
\cref{sec:integr-over-amoeb}. The obtained equality offers a further
expression for the limit height under study, and it can be used to
recover our main result.

Set $\Delta=\Delta_{2}$ for the standard simplex of $\mathbb{R}^2$ and
denote by $0_{\Delta}$ the zero function on it.  Set also
$\Psi=\Psi_{\Delta_{2}}$ for the support function of this simplex
as in \cref{ex: duality indicator-support}, and~$\rho=\rho_{2}$ for the
Ronkin function of $1+t_1+t_2$ according to \cref{ex: ronkin as concave
  function}. Recall that $\mathscr{A}$ denotes the Archimedean amoeba
of the line $C$ as in~\eqref{eq: amoeba}.

\begin{theorem}
  \label{thm: mixed integral is integral over amoeba}
With notation as above, we have that
\[
\MI(0_\Delta,\rho^\vee,\rho^\vee)
=
-\frac{1}{\vol(\mathscr{A})}\int_{\mathscr{A}}\Psi(u) \, d u_{1} d u_{2},
\]
where
$\vol$ denotes the Lebesgue measure of~$\mathbb{R}^{2}$.
\end{theorem}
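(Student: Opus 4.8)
The plan is to read both entries of the mixed integral as roof functions of semipositive toric metrics on the hyperplane line bundle of $\mathbb{P}^2(\mathbb{C})$: by \cref{ex: toric metric from functions} the function $0_\Delta$ is the roof function of the canonical metric, whose metric function is $\Psi=\Psi_{\Delta_2}$, and by \cref{exm:6} the function $\rho^\vee$ is the roof function of the Ronkin metric, whose metric function is $\rho=\rho_2$. With this identification, the statement should follow by combining the dictionary from \cite{BPS} between mixed integrals of roof functions and integrals of the associated metric functions against real (mixed) Monge--Amp\`ere measures, with the description due to Passare and Rullg\aa rd \cite{PR} of the Monge--Amp\`ere measure of the Ronkin function of $1+t_1+t_2$.

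First I would reduce the mixed integral to an integral against $\operatorname{MA}(\rho):=\operatorname{MM}(\rho,\rho)$. Using that $\MI$ is symmetric and additive for the sup-convolution, together with the toric variation formula of \cite{BPS} — replacing one roof function by another with the same stability set changes the mixed integral by the integral over $\mathbb{R}^2$ of the difference of the corresponding metric functions against the mixed Monge--Amp\`ere measure of the remaining metric functions — one passes from $\rho^\vee$ to $0_\Delta$ in the first slot; this is legitimate because $\rho-\Psi$ is bounded (\cref{ex: ronkin as concave function}) and the amoeba $\mathscr{A}$ is compact, so all the integrals that appear converge. After integrating by parts to cancel the terms pairing $\rho$ against $\operatorname{MA}(\rho)$, carefully tracking the affine correction terms, one arrives at
\[
\MI(0_\Delta,\rho^\vee,\rho^\vee)=-\int_{\mathbb{R}^2}\Psi\,\operatorname{MA}(\rho).
\]
Alternatively, this identity can be quoted directly from \cite{BPS} as the special case, for $\mathbb{P}^2_{\mathbb{Z}}$, of the formula expressing a mixed integral with one canonical entry as $-\int_{\mathbb{R}^2}\Psi_{\Delta_2}\,\operatorname{MM}(\psi_1,\psi_2)$.

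Next I would identify the measure. Since $\rho$ differs from $\Psi$ by a bounded function and has stability set $\Delta_2$, the measure $\operatorname{MA}(\rho)$ is positive, compactly supported, and of total mass $\MV(\Delta_2,\Delta_2)=2\,\vol(\Delta_2)=1$. By the results of Passare and Rullg\aa rd \cite{PR} — using \cref{rem:11} to identify the curve cut out by $1+t_1+t_2$ with $C\cap(\mathbb{C}^\times)^2$ — its support is exactly the amoeba $\mathscr{A}$, and its density is bounded above by $1/\vol(\mathscr{A})$; this bound is a reformulation of their area inequality for amoebas, once the value $\vol(\mathscr{A})=\pi^2/2$ of \cref{ex: area of amoeba} is taken into account. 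Comparing the density bound with the total mass $1=\tfrac{1}{\vol(\mathscr{A})}\vol(\mathscr{A})$ forces $\operatorname{MA}(\rho)=\tfrac{1}{\vol(\mathscr{A})}\,\mathbf 1_{\mathscr{A}}\,du_1du_2$, whence $\MI(0_\Delta,\rho^\vee,\rho^\vee)=-\tfrac{1}{\vol(\mathscr{A})}\int_{\mathscr{A}}\Psi(u)\,du_1du_2$, as claimed.

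The hard part is purely one of conventions: matching the sign and normalization choices of the toric Arakelov formalism of \cite{BPS} (Legendre duality, orientation of the real Monge--Amp\`ere operator, normalization of mixed volumes and mixed integrals) with those of \cite{PR} for the Monge--Amp\`ere measure of a Ronkin function, and in particular checking that the line \emph{saturates} the Passare--Rullg\aa rd density bound, so that $\operatorname{MA}(\rho)$ really is a constant multiple of Lebesgue measure on $\mathscr{A}$ rather than merely absolutely continuous. Once the conventions are pinned down, the integration-by-parts identities used in the reduction are valid for exactly the reasons already present in the set-up: $\rho-\Psi$ is bounded and $\mathscr{A}$ is compact.
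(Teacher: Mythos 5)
Your overall route is the same as the paper's (reduce the mixed integral to an integral of $\Psi$ against the Monge--Amp\`ere measure of the Ronkin function, then identify that measure via Passare--Rullg{\aa}rd), but two steps are genuinely defective as written. First, the reduction. There is no unconditional formula $\MI(0_\Delta,\vartheta_1,\vartheta_2)=-\int_{\mathbb{R}^2}\Psi\,d\operatorname{MM}(\psi_1,\psi_2)$ in \cite{BPS} to quote, and none can exist: replacing $\rho$ by $\rho+c$ leaves the right-hand side unchanged (the Monge--Amp\`ere measure is insensitive to additive constants), while it shifts $\rho^{\vee}$ by $-c$ and hence changes $\MI(0_\Delta,\rho^\vee,\rho^\vee)$ by $-2c\,\MV(\Delta,\Delta)=-2c$. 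So ``$\rho-\Psi$ bounded and $\mathscr{A}$ compact'' cannot legitimize the step; what is needed is the normalization of the additive constant, namely that $\rho$ agrees with $\Psi$ off the amoeba, equivalently that $\rho^{\vee}$ vanishes on $\partial\Delta$. This is exactly \cref{lem: Ronkin dual vanishes on boundary}, proved in the paper via Jensen's formula \eqref{eq: Ronkin outside amoeba} and explicit limits along the contour of $\mathscr{A}$, and it is the hypothesis under which \cite[Theorem~1.6]{Gualdi} gives $\MI(0_\Delta,\rho^\vee,\rho^\vee)=-2\int_{\mathbb{R}^2}\Psi\,d\mathcal{M}(\rho)$, which is your identity once you note $\operatorname{MM}(\rho,\rho)=2\,\mathcal{M}(\rho)$. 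Your ``carefully tracking the affine correction terms'' is precisely where this lemma must be proved, and your proposal neither states nor establishes it.

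Second, the identification of the measure. You quote the Passare--Rullg{\aa}rd bound in the wrong direction: \cite{PR} prove that the Monge--Amp\`ere measure of a Ronkin function of a curve is bounded \emph{below} by $\pi^{-2}$ times Lebesgue measure on the amoeba (that is how they deduce the \emph{upper} bound $\vol(\mathscr{A}_f)\le\pi^2\vol(\Delta_f)$ on areas), not above. With an upper density bound, mass saturation does not force uniformity: a singular part supported on $\mathscr{A}$ could compensate for a smaller absolutely continuous density, so your ``forcing'' step fails as stated. With the correct lower bound the argument does close: $\operatorname{MM}(\rho,\rho)\ge\frac{1}{\vol(\mathscr{A})}\,\lambda|_{\mathscr{A}}$ and both measures have total mass $\MV(\Delta,\Delta)=1$ by \cref{ex: area of amoeba}, hence they coincide; alternatively one can simply cite \cite[Theorem~7 and the example on p.~502]{PR}, as the paper does in \eqref{eq:45}. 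So this part is fixable by flipping the inequality, but as written it is both a misquotation of \cite{PR} and a logically insufficient deduction.
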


To prove this result, we first need to understand the behavior of the
Legendre--Fenchel dual of this Ronkin function on the boundary of its
domain.

\begin{lemma}\label{lem: Ronkin dual vanishes on boundary}
  The function $\rho^\vee$ vanishes on the boundary of~$\Delta$.
\end{lemma}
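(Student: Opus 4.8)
The plan is to exploit the relation between the Ronkin function and the support function of the standard simplex together with the recession (asymptotic) behavior of $\rho$. Recall from \cref{ex: ronkin as concave function} that $\rho = \rho_2$ is concave, has stability set $\Delta = \Delta_2$, and differs from the support function $\Psi = \Psi_{\Delta}$ by a uniformly bounded amount; write $\rho = \Psi + b$ with $b$ bounded on $\mathbb{R}^2$. The Legendre--Fenchel dual is $\rho^\vee(x) = \inf_{u\in\mathbb{R}^2}(\langle x,u\rangle - \rho(u))$ for $x\in\Delta$, and the dual of $\Psi$ is the zero function $0_\Delta$ (\cref{ex: duality indicator-support}). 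Since $\rho^\vee$ is a continuous concave function on $\Delta$ with $\rho^\vee \ge 0_\Delta^\vee$-type comparisons and $\rho^\vee(x_0)$ at an interior point is not obviously zero, the content is precisely that it dies on $\partial\Delta$.

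First I would identify the boundary of $\Delta$: it consists of the three edges lying on the hyperplanes $\{x_1 = 0\}$, $\{x_2 = 0\}$, and $\{x_1 + x_2 = 1\}$, which correspond (via the standard dictionary between the simplex and the three rays of the fan of $\mathbb{P}^2$) to the three directions $e_1 = (1,0)$, $e_2 = (0,1)$, and $-e_1 - e_2 = (-1,-1)$ in which $\Psi$ is ``active''. The key analytic fact I would establish is that for a concave function $g$ on $\mathbb{R}^2$ with recession function $\Psi$, the dual $g^\vee$ restricted to a face of $\Delta$ is controlled by a one-lower-dimensional Legendre transform of the restriction of $g$ along the corresponding asymptotic cone, and that the contributions of the bounded perturbation $b$ wash out at the boundary. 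Concretely: for $x \in \partial\Delta$ there is a direction $u_*$ (a generator of an edge-cone of the normal fan) with $\langle x, u_*\rangle - \Psi(u_*) = 0$, i.e. $\langle x, t u_*\rangle - \Psi(t u_*) = 0$ for all $t\ge 0$; then $\rho^\vee(x) \le \langle x, t u_*\rangle - \rho(t u_*) = -b(t u_*)$. The delicate point is the matching lower bound.

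For the lower bound I would argue that $\rho^\vee \ge 0$ on all of $\Delta$: indeed for any $u$, $\langle x,u\rangle - \rho(u) \ge \langle x,u\rangle - \Psi(u) - \sup b \ge -\sup b$ gives only boundedness below, so instead I would use concavity of $\rho$ and the fact that $\rho(0) = 0$ (since $\int_{(S^1)^2}\log|1| = 0$, as $1+z_1+z_2$ has Mahler measure... actually $\rho(0) = -\m(1+t_1+t_2) \le 0$), which by itself is not enough. The cleanest route is: $\rho^\vee$ is concave and upper semicontinuous on $\Delta$, so it attains its minimum over $\Delta$ on an extreme point, i.e. a vertex of $\Delta$; so it suffices to check $\rho^\vee = 0$ at each vertex and that $\rho^\vee \ge 0$ everywhere — and then the ``vanishes on $\partial\Delta$'' claim actually requires more, namely that $\rho^\vee$ is linear (hence zero) along each edge. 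I would get this from the sandwiching $0 \le \rho^\vee(x) \le -b(t_x u_{*,x})$ combined with a precise asymptotic: $\lim_{t\to+\infty} b(t u_*) = 0$ in the edge directions $u_* \in \{e_1, e_2, -e_1-e_2\}$, which is exactly the statement that the Mahler-measure-type integral $-\int_{(S^1)^2}\log|1 + z_1 e^{-tu_{*,1}} + z_2 e^{-tu_{*,2}}|\,d\nu_2$ converges to $\Psi(t u_*)$ up to $o(1)$ as $t\to\infty$: along these rays one of the three monomials $1, e^{-u_1}, e^{-u_2}$ dominates and Jensen's formula makes the remaining integral exponentially small.

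The main obstacle I anticipate is making the recession/asymptotic argument clean and rigorous in a way that (i) correctly handles all three edges of $\Delta$ symmetrically, and (ii) upgrades ``$\rho^\vee(x) \le o(1)$ along a ray to $\partial\Delta$'' into ``$\rho^\vee \equiv 0$ on $\partial\Delta$'' — for which I would invoke concavity: $\rho^\vee$ is concave on $\Delta$, $\rho^\vee \ge 0$ (which follows from $\rho \le \Psi + \sup b$ together with $\rho^\vee \ge (\Psi + \sup b)^\vee = 0_\Delta - \sup b$... again only boundedness; the honest source of nonnegativity is that $\rho^\vee$ is, by \cref{exm:6}, the roof function of a semipositive toric metric dominated by the canonical one, whose roof function is $0_\Delta$, giving $\rho^\vee \ge 0_\Delta = 0$), and $\rho^\vee \le 0$ on $\partial\Delta$ by the ray estimate; hence $\rho^\vee = 0$ on $\partial\Delta$. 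I would fill the asymptotic estimate $b(tu_*)\to 0$ by the dominated-monomial argument plus Jensen, and note this is essentially \cite[Proposition~2.10]{Gualdi} refined at the rays. This reduces the whole lemma to: $\rho^\vee \ge 0$ on $\Delta$ (from semipositivity, dominance by $\canOC$) and $\rho^\vee \le 0$ on $\partial\Delta$ (from the explicit asymptotics of the Ronkin integral in the three simplex-edge directions).
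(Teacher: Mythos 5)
Your plan is correct in outline and rests on the same sandwich as the paper's proof: $\rho^\vee\ge 0$ on all of $\Delta$ by comparison with the support function via Legendre--Fenchel duality, and $\rho^\vee\le 0$ on $\partial\Delta$ by evaluating $\langle x,u\rangle-\rho(u)$ along points escaping to infinity. The differences lie in how each half is implemented. For the lower bound you appeal to domination of the Ronkin metric by the canonical one; this amounts to the inequality $R(w)\ge\max_i |w_i|$, equivalently $\rho\le\Psi$ on $\mathbb{R}^2$, which you assert but never justify --- this is the one real hole in the plan. It is easily filled, either by an iterated Jensen computation (integrating first in $z_1$ against the Haar measure of $S^1$ gives $\log\max(|w_0+z_2w_2|,w_1)\ge\log w_1$, and symmetrically in the other coordinates), or, as the paper does, by noting that $\rho=\Psi$ outside the amoeba $\mathscr{A}$ (again Jensen) and that concavity then forces $\rho\le\Psi$ everywhere. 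For the upper bound you use the rays $tu_*$ with $u_*\in\{(1,0),(0,1),(-1,-1)\}$, on which $\langle x,tu_*\rangle=\Psi(tu_*)$ for $x$ in the corresponding face of $\partial\Delta$, together with the asymptotic $\Psi(tu_*)-\rho(tu_*)\to 0$ as $t\to+\infty$; this asymptotic is correct and your Jensen argument gives it (though along each such ray two of the three monomials are of comparable size and only the third is negligible, so ``one monomial dominates'' is not literally accurate, but harmless). The paper avoids any asymptotic estimate of $\rho$ by instead taking test points on the contour of $\mathscr{A}$, where $\rho$ coincides with $\Psi$ exactly by the identity off the amoeba and continuity, so that $\rho(u_\varepsilon)$ is known in closed form and only $\langle x,u_\varepsilon\rangle\to 0$ needs checking. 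Both variants prove the lemma; the paper's choice makes both halves rest on the single fact $\rho=\Psi$ off $\mathscr{A}$, while yours trades that for the metric-domination inequality plus ray asymptotics, each of which you must still write out.
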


\begin{proof}
It is a consequence of Jensen's formula that
\begin{equation}\label{eq: Ronkin outside amoeba}
\rho(u)=\Psi(u)\quad\text{for all }u\in\mathbb{R}^2\setminus\mathscr{A},
\end{equation}
see for instance \cite[Proposition~7.3.1(2)]{Mail}. Since $\rho$ is
concave, this implies that the inequality $\rho\leq\Psi$ holds on the
entire real plane.  Taking Legendre--Fenchel duals we obtain that
$\rho^\vee(x)\geq 0_\Delta(x)=0$ for all~$x\in\Delta$.

Hence, to complete the proof it is enough to show that $\rho^\vee$ is
nonpositive on the boundary of~$\Delta$.  To this aim, first consider
a point $x=(\lambda,0)\in\partial\Delta$ with~$\lambda\in[0,1]$.  For
$\varepsilon>0$ the point
$u_{\varepsilon}=(\varepsilon,-\log(1-\eu^{-\varepsilon}))$ belongs to
the east border of~$\mathscr{A}$, as shown in \cref{figure amoeba
  1+x+y}.  By \eqref{eq: Ronkin outside amoeba} and the continuity of
the Ronkin function we necessarily have
that~$\rho(u_{\varepsilon})=0$.  Hence
\[
\rho^\vee(x)= \inf_{u\in \mathbb{R}^{2}} (\langle x,u\rangle -\rho(u)) 
\leq 
\lim_{\varepsilon\to 0^{+}}(\langle x,u_\varepsilon\rangle-\rho(u_\varepsilon))
= \lim_{\varepsilon\to 0^{+}}\lambda \varepsilon =0,
\]
which implies that~$\rho^\vee(x)=0$. The fact that this also holds
when~$x=(0,\lambda)$ with $\lambda\in[0,1]$ can be proven similarly.

To conclude, let $x=(\lambda,1-\lambda)$ with~$\lambda\in[0,1]$.
For~$\kappa>0$ consider the
point~$u_\kappa=(-\log(\eu^\kappa-1),-\kappa)$, which lies in the
southern border of~$\mathscr{A}$.  Again by \eqref{eq: Ronkin outside
  amoeba} and the continuity of the Ronkin function we have
that~$\rho(u_\kappa)=-\kappa$. Hence
\begin{multline*}
\rho^\vee(x) 
\leq
\lim_{\kappa\to+\infty}(\langle(x,u_\kappa\rangle-\rho(u_\kappa))
=
\lim_{\kappa\to+\infty} (\lambda \cdot (-\log(\eu^\kappa-1)) +(1-\lambda) \cdot (-\kappa) -(-\kappa)) \\
= \lim_{\kappa\to+\infty} -\lambda \log(1-\eu^{-\kappa})= 0,
\end{multline*}
which implies that~$\rho^\vee(x)=0$ also holds in this case, as
stated.
\end{proof}

We can now move to the proof of \cref{thm: mixed integral is integral
  over amoeba}, which relies on a relation between the mixed integral
and an integral on the dual space with respect to the real
Monge--Amp\`ere measure of a concave function.  The latter is a
positive measure having higher density where the function is more
concave \cite[Definition~2.7.1]{BPS}.

\begin{proof}[Proof of \cref{thm: mixed integral is integral over amoeba}]
Since both $0_\Delta$ and $\rho^\vee$ are continuous concave functions
  on the simplex $\Delta$ of~$\mathbb{R}^2$, it  is a consequence
  of \cite[Theorem~1.6]{Gualdi} together with the vanishing of
  $\rho^\vee$ on the boundary of $\Delta$ shown in \cref{lem: Ronkin dual
    vanishes on boundary} that
  \begin{equation}
    \label{eq:41}
\MI(0_\Delta,\rho^\vee,\rho^\vee)=- 2\int_{\mathbb{R}^2}\Psi\, d\mathcal{M}(\rho),    
  \end{equation}
where $\mathcal{M}(\rho)$ stands for the real Monge--Amp\`ere measure
of the Ronkin function.

By \cite[Theorem~7 and Example on page~502]{PR} the real
Monge--Amp\`ere measure of $\rho$ agrees with the Lebesgue measure of
$\mathbb{R}^{2}$ restricted to the amoeba~$\mathscr{A}$, and scaled by
the constant~$\pi^{-2}$, and so
\begin{equation}
  \label{eq:45}
  \int_{\mathbb{R}^2}\Psi\, d\mathcal{M}(\rho) =  \frac{1}{\pi^{2}} \int_{\mathscr{A}}\Psi(u)\, du_{1}d u_{2}.
\end{equation}
The statement follows from \eqref{eq:41} and \eqref{eq:45} together
with~\cref{ex: area of amoeba}.
\end{proof}

The following corollary is an immediate consequence of the chain of
equalities given by \cref{thm: limit is Ronkin height}, 
\cref{prop: height of curve as Ronkin height and
  mixed integral} and  \cref{thm: mixed integral is integral over amoeba}.

\begin{corollary}\label{cor: limit is integral over amoeba}
  Let $(\omega_{\ell})_{\ell\ge 1}$ be a strict sequence in
  $ \Gm^{2}(\overline{\mathbb{Q}})$ of nontrivial torsion points. Then
\[
\lim_{\ell\to +\infty}\h(C\cap \omega_{\ell}C)
=
\h_{\canOZ,\,\ronOZ}(\mathscr{C})
=
-\frac{1}{\vol(\mathscr{A})}\int_{\mathscr{A}}\Psi(u) \,  d u_{1} du_{2},
\]
where  $\mathscr{C}$ is the subscheme of $\mathbb{P}^2_{\mathbb{Z}}$
  defined by~$x_0+x_1+x_2$, whereas
  $\canOZ$ and
  $\ronOZ$ are the canonical and Ronkin
  semipositive metrized line bundles on~$\mathbb{P}^2_{\mathbb{Z}}$.
\end{corollary}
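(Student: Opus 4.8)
The plan is to assemble this corollary from the three main results of Part~II that precede it, since each of the individual links in the chain has already been established. First I would invoke Theorem~\ref{thm: limit is Ronkin height}, which identifies $\lim_{\ell\to+\infty}\h(C\cap\omega_\ell C)$ with the Arakelov height $\h_{\canOZ,\,\ronOZ}(\mathscr{C})$ of the subscheme of $\mathbb{P}^2_{\mathbb{Z}}$ cut out by $x_0+x_1+x_2$ with respect to the canonical and Ronkin metrized line bundles; this is the step that absorbs the arithmetic input from Part~I (via Corollary~\ref{cor:2} and Proposition~\ref{prop:1}) together with the metric Weil reciprocity law and the analytic computation in Lemma~\ref{lem: lemma Ronkin-integral on compact torus}.

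Next I would translate this Arakelov height into convex geometry. By Proposition~\ref{prop: height of curve as Ronkin height and mixed integral}, one has $\h_{\canOZ,\,\ronOZ}(\mathscr{C})=\MI(0_{\Delta_2},\rho_2^\vee,\rho_2^\vee)$, where $0_{\Delta_2}$ is the roof function of the canonical metric (Example~\ref{ex: toric metric from functions}) and $\rho_2^\vee$ the roof function of the Ronkin metric (Example~\ref{exm:6}). Then Theorem~\ref{thm: mixed integral is integral over amoeba} evaluates this mixed integral as $-\vol(\mathscr{A})^{-1}\int_{\mathscr{A}}\Psi(u)\,du_1du_2$, with $\Psi=\Psi_{\Delta_2}=\min(0,u_1,u_2)$ the support function of the standard simplex and $\mathscr{A}$ the Archimedean amoeba of $C$. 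Concatenating the three equalities produces the asserted identity.

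The only thing requiring attention is bookkeeping: one checks that the abbreviations $\Delta=\Delta_2$, $\rho=\rho_2$, $\Psi=\Psi_{\Delta_2}$ fixed at the start of Section~\ref{sec: mixed integral} are the same objects introduced in Section~\ref{sec: toric perspective}, and that the hypothesis on $(\omega_\ell)_{\ell\ge1}$ — a strict sequence of nontrivial torsion points of $\Gm^2(\overline{\mathbb{Q}})$ — is exactly the one under which Theorem~\ref{thm: limit is Ronkin height} applies, while Proposition~\ref{prop: height of curve as Ronkin height and mixed integral} and Theorem~\ref{thm: mixed integral is integral over amoeba} hold unconditionally. Since all three inputs are already proved, there is no genuine obstacle here; the substantive difficulty was entirely pushed into those statements, in particular into Lemma~\ref{lem: lemma Ronkin-integral on compact torus} and, for Theorem~\ref{thm: mixed integral is integral over amoeba}, into the Passare and Rullg{\aa}rd description of the Monge--Amp\`ere measure of the Ronkin function together with Lemma~\ref{lem: Ronkin dual vanishes on boundary}.
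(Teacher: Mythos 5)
Your proposal matches the paper's own proof: the corollary is obtained precisely by concatenating the equality of Theorem~\ref{thm: limit is Ronkin height} with those of Proposition~\ref{prop: height of curve as Ronkin height and mixed integral} and Theorem~\ref{thm: mixed integral is integral over amoeba}, exactly as you describe. The bookkeeping remarks you add are sound but not needed beyond what the cited statements already guarantee.
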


This result highlights the role played by our particular choice of
both height function and curve. Indeed, the considered limit height
turns out to be computed by the average of $\Psi$, which is metric
function associated  to the canonical height in the toric
correspondence from~\cite{BPS}, on the Archimedean amoeba
$\mathscr{A}$ of the line~$C$.

Finally, the  right-hand side in the equalities in \cref{cor: limit is integral over amoeba}
can be directly computed using \cref{ex: area of amoeba,exm:4}, namely 
\[
-\frac{1}{\vol(\mathscr{A})}\int_{\mathscr{A}}\Psi(u)\, du_{1}d{u_{2}}
=
\frac{2\, \zeta(3)}{3\, \zeta(2)},
\]
thus recovering \cref{thm:1} through this more conceptual point of
view.

\addtocontents{toc}{\protect\vspace{7pt}}

\bibliographystyle{amsalpha}
\bibliography{bibliography}

\end{document}